\tikzset{
  symbol/.style={
    draw=none,
    every to/.append style={
      edge node={node [sloped, allow upside down, auto=false]{$#1$}}}
  }
}
\newtheorem{thm}{Theorem}[section]
\newtheorem{lem}[thm]{Lemma}
\newtheorem{prop}[thm]{Proposition}
\newtheorem{cor}[thm]{Corollary}
\newtheorem*{thm*}{Theorem}
\newtheorem{defn}[thm]{Definition}
\newtheorem{example}[thm]{Example}
\newtheorem*{definition*}{Definition}
\newtheorem{remark}[thm]{Remark}
\numberwithin{equation}{section}
\newcommand{\cont}{{\mathrm C }}
\newcommand{\contb}{{\mathrm C}_{\mathrm b}}
\newcommand{\contX}{\cont(X)}
\newcommand{\contbX}{\contb(X)}
\newcommand{\vlat}[1]{\mathrm{#1}}
\newcommand{\borel}[1]{\mathfrak{B}_{#1}}
\newcommand{\orderdual}[1]{{#1}^\sim}
\newcommand{\ordercontn}[1]{{#1}^\sim_{\mathrm n}}
\newcommand{\ordercontnbidual}[1]{{#1}^{\sim \sim}_{\mathrm{n n}}}
\newcommand{\onefunction}{{\mathbf 1}}
\newcommand\supp{\mathord{\mathrm{supp}}}
\newcommand{\zeroset}[1]{{\bf Z}_{#1}}
\newcommand{\cozeroset}[1]{{\bf Z}_{#1}^c}
\newcommand{\R}{\mathbb{R}}
\newcommand{\N}{\mathbb{N}}
\newcommand{\vlc}{{\bf VL}}
\newcommand{\nvlc}{{\bf NVL}}
\newcommand{\vlic}{{\bf IVL}}
\newcommand{\nvlic}{{\bf NIVL}}
\newcommand{\nullid}[1]{\mathrm{N}_{#1}}
\newcommand{\carrier}[1]{\mathrm{C}_{#1}}
\newcommand{\preann}[1]{\prescript{\circ}{}{#1}}
\newcommand{\ann}[1]{{#1}^\circ}
\newcommand{\bands}[1]{{\rm {\bf B}}_{#1}}
\newcommand{\precc}{\preccurlyeq}
\newcommand{\scc}{\succcurlyeq}
\newcommand{\defeq}{\ensuremath{\mathop{:}\!\!=}}
\newcommand{\proj}[1]{\underleftarrow{\lim}\hspace{1.5pt} #1 }
\newcommand{\ind}[1]{\underrightarrow{\lim}\hspace{1.5pt} #1 }
\newcommand{\cal}[1]{\mathcal{#1}}
\newcommand{\dsprod}{\displaystyle\prod}
\newcommand{\loc}{\ell oc}
\newcommand*\bigcdot{\mathpalette\bigcdot@{.5}}
\newcommand*\bigcdot@[2]{\mathbin{\vcenter{\hbox{\scalebox{#2}{$\m@th#1\bullet$}}}}}
\newcommand{\mysetminusD}{\hbox{\tikz{\draw[line width=0.6pt,line cap=round] (3pt,0) -- (0,6pt);}}}
\newcommand{\mysetminusT}{\mysetminusD}
\newcommand{\mysetminusS}{\hbox{\tikz{\draw[line width=0.45pt,line cap=round] (2pt,0) -- (0,4pt);}}}
\newcommand{\mysetminusSS}{\hbox{\tikz{\draw[line width=0.4pt,line cap=round] (1.5pt,0) -- (0,3pt);}}}
\newcommand{\mysetminus}{\mathbin{\mathchoice{\mysetminusD}{\mysetminusT}{\mysetminusS}{\mysetminusSS}}}
\begin{document}

\title[Limits of vector lattices]{Limits of vector lattices}

\author{Walt van Amstel}

\author{Jan Harm van der Walt}

\address{Department of Mathematics and Applied Mathematics, University of Pretoria, Cor\-ner of Lynnwood Road and Roper Street,
Hatfield 0083, Pretoria, South Africa and DSI-NRF Centre of Excellence in
Mathematical and Statistical Sciences (CoE-MaSS), South Africa}
\email{sjvdwvanamstel@gmail.com}

\address{Department of Mathematics and Applied Mathematics, University of Pretoria, Cor\-ner of Lynnwood Road and Roper Street,
Hatfield 0083, Pretoria, South Africa}
\email{janharm.vanderwalt@up.ac.za}

\thanks{The first author was supported by a grant from the DSI-NRF Centre of Excellence in Mathematical and Statistical Sciences (CoE-MaSS),
South Africa. Opinions expressed and conclusions arrived at are those of the authors and are not necessarily to be attributed to the CoE-MaSS.  The second author was supported by the NRF of South Africa, grant number 115047.  The results in this paper were obtained, in part, while both authors visited Leiden University from September 2021 to January 2022.  This visit was funded by the European Union Erasmus+ ICM programme.  The authors thank Prof. Marcel de Jeu and the Mathematical Institute at Leiden University for their hospitality.  The authors thank the reviewer for a meticulous reading of the paper along with a number of helpful suggestions.}

\subjclass[2010]{Primary 46M40; Secondary 46A40, 46E05}

\date{\tt {\today}}



\keywords{Vector lattices, direct limits, inverse limits, dual spaces, perfect spaces}

\begin{abstract}
If $K$ is a compact Hausdorff space so that the Banach lattice $\cont(K)$ is isometrically lattice isomorphic to a dual of some Banach lattice, then $\cont(K)$ can be decomposed as the $\ell^\infty$-direct sum of the carriers of a maximal singular family of order continuous functionals on $\cont(K)$.  In order to generalise this result to the vector lattice $\contX$ of continuous, real valued functions on a realcompact space $X$, we consider direct and inverse limits in suitable categories of vector lattices.  We develop a duality theory for such limits and apply this theory to show that $\contX$ is lattice isomorphic to the order dual of some vector lattice $\vlat{F}$ if and only if $\contX$ can be decomposed as the inverse limit of the carriers of all order continuous functionals on $\contX$.  In fact, we obtain a more general result:  A Dedekind complete vector lattice $\vlat{E}$ is perfect if and only if it is lattice isomorphic to the inverse limit of the carriers of a suitable family of order continuous functionals on $\vlat{E}$.  A number of other applications are presented, including a decomposition theorem for order dual spaces in terms of spaces of Radon measures.
\end{abstract}

\maketitle

\section{Introduction}\label{Section:  Introduction}

Let $K$ be a compact Hausdorff space. A basic question concerning the Banach lattice $\cont(K)$ is the following:  Does there exist a Banach space (lattice) $\vlat{E}$ so that $\cont(K)$ is isometrically (lattice) isomorphic to the dual $\vlat{E}^\ast$ of $\vlat{E}$?  That is, does $\cont(K)$ have a Banach space (lattice) predual?  In general, the answer to this question is `no'. The unit ball of $\cont[0,1]$ has only two extreme points, but the unit ball of the dual of an infinite dimensional Banach space has infinitely many extreme points.  Hence $\cont[0,1]$ is not the dual of any Banach space; hence also not of any Banach lattice. On the other hand, $\cont(\beta \N)$ is the dual of $\ell^1$.  The problem is therefore to characterise those spaces $K$ for which $\cont(K)$ is a dual Banach space (lattice). Combining two classic results of Dixmier \cite{Dixmier1951} and Grothendieck \cite{Grothendieck1955}, respectively, gives an answer to this question in the setting of Banach spaces, see also \cite{DalesDashiellLauStrass2016} for a recent presentation.  The Banach lattice case is treated in \cite{Schaefer1974}.

In order to formulate this result we recall the following.  A Radon measure $\mu$ on $K$ is called \emph{normal} if $|\mu|(B)=0$ for every closed nowhere dense subset $B$ of $K$.  The space of all normal Radon measures on $K$ is denoted $\vlat{N}(K)$.  The space $K$ is called \emph{Stonean} if it is extremally disconnected; that is, the closure of every open set is open.  $K$ is \emph{hyper-Stonean}\footnote{We feel obligated to recall Kelley's remark \cite{Kelley1959}: `In spite of my affection and admiration for Marshall Stone, I find the notion of a Hyper-Stone downright appalling.'} if it is Stonean and the union of the supports of the normal Radon measures on $K$ is dense in $K$.

\begin{thm}\label{Thm:  C(K) Dual space char}
Let $K$ be a compact Hausdorff space.  Consider the following statements. \begin{itemize}
    \item[(i)] $\cont(K)$ has a Banach lattice predual.
    \item[(ii)] $\cont(K)$ has a Banach space predual.
    \item[(iii)] $K$ is hyper-Stonean.
    \item[(iv)] Let $\cal{F}$ be a maximal singular family of normal probability measures on $K$, and for each $\mu\in \cal{F}$ let $S_\mu$ denote its support.  Then
    \[
    \cont(K)\ni u\longmapsto \left( \left.u\right|_{S_\mu} \right)_{\mu\in \cal{F}}\in \bigoplus_{\infty} \cont(S_\mu)
    \]
    is an isometric lattice isomorphism.
\end{itemize}
Statements (i), (ii) and (iii) are equivalent, and each implies (iv).  If $K$ is Stonean, then all four statements are equivalent.

Furthermore, in case $\cont(K)$ has a Banach space predual $\vlat{E}$, this predual is also a Banach lattice predual and is unique up to isometric lattice isomorphism.  In particular, $\vlat{E}$ is isometrically lattice isomorphic to $\vlat{N}(K)$.
\end{thm}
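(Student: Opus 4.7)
The plan is to reduce the theorem to a combination of the classical Dixmier--Grothendieck characterisation of dual $\cont(K)$ spaces and a direct verification of the decomposition in (iv). The core observation throughout is that if $K$ is Stonean then the support $S_\mu$ of every normal Radon measure is clopen, so singularity of measures corresponds to disjointness of their supports, and the $\ell^\infty$-sum in (iv) can be handled by restriction.

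For (i) $\Leftrightarrow$ (ii) $\Leftrightarrow$ (iii), the implication (i) $\Rightarrow$ (ii) is trivial. For (ii) $\Rightarrow$ (iii), I would invoke Dixmier's theorem: if $\cont(K)$ is a dual Banach space, then the weak$^*$-closed convex hull of the extreme points of the unit ball of the predual consists of point evaluations, and a Krein--Milman plus Choquet analysis forces $K$ to be extremally disconnected; the density condition on normal measures then follows from Grothendieck's characterisation of weak$^*$-sequential continuity, giving hyper-Stonean. For (iii) $\Rightarrow$ (i), observe that when $K$ is hyper-Stonean the space $\vlat{N}(K)$ of normal Radon measures, with the variation norm, is a Banach lattice (it is an ideal in the Radon measure space $M(K)$), and the duality pairing $\langle u,\mu\rangle = \int u\,d\mu$ realises $\cont(K)$ as $\vlat{N}(K)^\ast$ isometrically and as a lattice---this is classical, treated in Schaefer's book.

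For (iii) $\Rightarrow$ (iv), fix a maximal singular family $\cal{F}$ of normal probability measures on the hyper-Stonean space $K$. Since $K$ is Stonean and each $\mu\in \cal{F}$ is normal, each support $S_\mu$ is clopen; mutual singularity of distinct $\mu,\nu\in \cal{F}$ forces $S_\mu\cap S_\nu$ to have empty interior, hence to be empty. Maximality of $\cal{F}$, combined with the hyper-Stonean density condition, implies $\bigcup_{\mu\in\cal{F}} S_\mu$ is dense in $K$. The restriction map $u\mapsto (u|_{S_\mu})_{\mu\in\cal{F}}$ is then well defined into $\bigoplus_\infty \cont(S_\mu)$, and it is a lattice homomorphism; injectivity comes from the density of the disjoint union of clopen supports together with continuity of $u$, while isometry is immediate because $\|u\|_\infty = \sup_\mu \|u|_{S_\mu}\|_\infty$ once the supports cover a dense set. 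Surjectivity follows by using the clopen disjoint decomposition to glue any bounded tuple into a continuous function on $K$, where the Stonean hypothesis is exactly what permits extending the glued function across the nowhere dense complement of $\bigcup S_\mu$. Conversely, assuming $K$ is Stonean and (iv) holds, the union $\bigcup S_\mu$ is dense by isometry, yielding the hyper-Stonean density condition and hence (iii).

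Uniqueness of the predual is then the standard argument: any isometric predual of $\cont(K)$ embeds as a norm-closed subspace of $\cont(K)^\ast = M(K)$ containing the normal functionals and separating points, and a Dixmier-style extreme-point analysis identifies it with $\vlat{N}(K)$. The main obstacle is the Dixmier half of (ii) $\Rightarrow$ (iii), namely extracting extremal disconnectedness from the abstract existence of a predual; everything else, including the surjective glueing step in (iii) $\Rightarrow$ (iv), is routine once extremal disconnectedness is in hand.
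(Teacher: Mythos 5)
The paper does not actually prove this theorem: it is stated in the introduction as a known result obtained by combining the classical theorems of Dixmier and Grothendieck (with the Banach-lattice refinement from Schaefer and the modern account in Dales--Dashiell--Lau--Strauss), so there is no in-paper argument to compare against. Your sketch follows exactly that classical route and outsources the two genuinely hard steps --- Dixmier's (ii) $\Rightarrow$ (iii) and the uniqueness of the predual --- to the same sources, which matches the paper's own treatment; the portions you do work out, namely (iii) $\Rightarrow$ (iv) and its converse under the Stonean hypothesis, are correct as sketched. In particular, supports of normal measures on a Stonean space are clopen; singularity of $\mu$ and $\nu$ forces the clopen set $S_\mu\cap S_\nu$ to have empty interior and hence to be empty; maximality of $\cal{F}$ together with the hyper-Stonean density condition makes $\bigcup S_\mu$ dense; and the surjectivity of the restriction map rests on the fact that open subsets of an extremally disconnected compact space are $\cont^\ast$-embedded, which is precisely the glueing/extension step you identify.
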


This result can be reformulated by identifying $\vlat{N}(K)$ with the order continuous dual of $\cont(K)$, via the isometric lattice isomorphism between the dual of $\cont(K)$ and the space of Radon measures on $K$, and $\cont(S_\mu)$ with the carrier of the corresponding functional on $\cont(K)$.

\begin{thm}\label{Thm:  C(K) Dual space char order dual version}
Let $K$ be a compact Hausdorff space.  Consider the following statements. \begin{itemize}
    \item[(i)] $\cont(K)$ has a Banach lattice predual.
    \item[(ii)] $\cont(K)$ has a Banach space predual.
    \item[(iii)] $\cont(K)$ is Dedekind complete and has a separating order continuous dual.
    \item[(iv)] Let $\cal{F}$ be a maximal singular family of order continuous functionals on $\cont(K)$, and for each $\varphi\in\cal{F}$ let $\vlat{C}_\varphi$ denote its carrier and $P_\varphi$ the band projection onto $\vlat{C}_\varphi$.  Then
    \[
    \cont(K)\ni u \longmapsto (P_\varphi u)_{\varphi\in \cal{F}}\in \bigoplus_{\infty} \vlat{C}_\varphi
    \]
    is an isometric lattice isomorphism.
\end{itemize}
Statements (i), (ii) and (iii) are equivalent, and each implies (iv).  If $K$ is Stonean, then all four statements are equivalent.

Furthermore, in case $\cont(K)$ has a Banach space predual $\vlat{E}$, this predual is also a Banach lattice predual and is unique up to isometric lattice isomorphism.  In particular, $\vlat{E}$ is isometrically lattice isomorphic to the order continuous dual $\ordercontn{\cont(K)}$ of $\cont(K)$.
\end{thm}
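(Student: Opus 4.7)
My plan is to derive this theorem from the preceding Theorem~\ref{Thm:  C(K) Dual space char} by translating its statements via two standard identifications: the Riesz representation theorem, which gives an isometric lattice isomorphism between the norm dual $\cont(K)^\ast$ and the space $\vlat{M}(K)$ of Radon measures on $K$; and the consequent correspondence between normal measures and order continuous functionals, which yields $\vlat{N}(K)\cong\ordercontn{\cont(K)}$ as Banach lattices. Under these identifications, a maximal singular family of normal probability measures on $K$ corresponds, up to normalisation, to a maximal singular family of order continuous functionals on $\cont(K)$, so the indexing family $\cal{F}$ in the two formulations can be taken to be the same.

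For the equivalence (i)$\Leftrightarrow$(ii)$\Leftrightarrow$(iii), I would first invoke Nakano's theorem, by which $\cont(K)$ is Dedekind complete if and only if $K$ is Stonean. Under this hypothesis, the union of the supports of the normal probability measures is dense in $K$ precisely when $\vlat{N}(K)$ separates $\cont(K)$, which via Riesz is the assertion that $\ordercontn{\cont(K)}$ is separating. Hence statement~(iii) in the present theorem is equivalent to $K$ being hyper-Stonean, and the equivalence of (i), (ii), (iii) is then immediate from Theorem~\ref{Thm:  C(K) Dual space char}.

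For (iii)$\Rightarrow$(iv), let $\varphi\in\cal{F}$ correspond under Riesz to a normal probability measure $\mu$ with support $S_\mu$. Since $K$ is Stonean and $\mu$ is normal, $S_\mu$ is clopen. A direct computation from the representation $\varphi(u)=\int u\,d\mu$ shows that the null ideal of $\varphi$ equals $\{u\in\cont(K):u|_{S_\mu}=0\}$, whence
\[
\vlat{C}_\varphi=\{u\in\cont(K):u|_{K\setminus S_\mu}=0\}.
\]
Because $S_\mu$ is clopen, restriction $u\mapsto u|_{S_\mu}$ is an isometric lattice isomorphism from $\vlat{C}_\varphi$ onto $\cont(S_\mu)$, and under it the band projection $P_\varphi u=u\cdot\chi_{S_\mu}$ corresponds to $u|_{S_\mu}$. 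The maps in the two versions of (iv) are therefore intertwined by the identifications, so the conclusion transfers directly from Theorem~\ref{Thm:  C(K) Dual space char}. When $K$ is additionally Stonean, the reverse implication (iv)$\Rightarrow$(i) transfers likewise.

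The main obstacle, though entirely standard, is the carrier identification $\vlat{C}_\varphi\cong\cont(S_\mu)$: it hinges on the clopenness of $S_\mu$ in the hyper-Stonean setting, for without it the pointwise product $u\cdot\chi_{S_\mu}$ would generally fail to be continuous and the band $\vlat{C}_\varphi$ would admit no such transparent description. The uniqueness of the predual, and its identification with $\ordercontn{\cont(K)}$, then follow from the corresponding uniqueness clause of Theorem~\ref{Thm:  C(K) Dual space char} together with $\vlat{N}(K)\cong\ordercontn{\cont(K)}$.
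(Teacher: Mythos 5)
Your proposal is correct and follows exactly the route the paper intends: the paper states that Theorem~\ref{Thm:  C(K) Dual space char order dual version} is obtained from Theorem~\ref{Thm:  C(K) Dual space char} by identifying $\vlat{N}(K)$ with $\ordercontn{\cont(K)}$ via the Riesz representation theorem and identifying each $\cont(S_\mu)$ with the carrier of the corresponding functional, which is precisely the translation you carry out. Your additional observations (Nakano's theorem, clopenness of $S_\mu$ in the Stonean case, and the intertwining of the two decomposition maps) correctly fill in the details the paper leaves implicit.
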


The above problem may be generalised to the class of realcompact spaces.  Recall that a \emph{realcompact} space is a Tychonoff space $X$ which is homeomorphic to a closed subset of some product of $\R$.  Equivalently, $X$ is realcompact if it is a Tychonoff space and for every point $x \in \beta X\setminus X$ (where $\beta X$ denotes the Stone-\v{C}ech compactification of $X$) there exists a real-valued, continuous function $u$ on $X$ which does not extend to a continuous, real-valued function on $X\cup\{x\}$.  For every Tychonoff space $X$ there exists a unique (up to homeomorphism) realcompact space $\upsilon X$ so that $\contX$ and $\cont(\upsilon X)$ are isomorphic vector lattices, see for instance \cite{Hewitt1948}, \cite[Chapter 8]{GillmanJerison1960} and \cite[\S 3.11]{Engelking1989}. The realcompact space $\upsilon X$ is called the \emph{realcompactification} of $X$.\label{RC-ification}

Let $X$ be a realcompact space.  Then $\cont(X)$ is a vector lattice but, in general, not a Banach lattice.  Hence we ask the following question: Does there exist a vector lattice $\vlat{E}$ so that $\orderdual{\vlat{E}}$ is lattice isomorphic to $\cont(X)$?  That is, does $\contX$ have an \emph{order predual}?  Xiong \cite{Xiong1983} obtained the following answer to this question.

\begin{thm}
Let $X$ be a realcompact space.  Denote by $S$ the union of the supports of all compactly supported normal Radon measures\footnote{See Section \ref{Subsection:  Realcompact spaces preliminaries}.} on $X$.  The following statements are equivalent. \begin{itemize}
    \item[(i)] There exists a vector lattice $\vlat{E}$ so that $\orderdual{\vlat{E}}$ is lattice isomorphic to $\cont(X)$.
    \item[(ii)] $\contX$ is lattice isomorphic to $\orderdual{(\ordercontn{\cont(X)})}$.
    \item[(iii)] $X$ is extremally disconnected and $\upsilon S=X$.
\end{itemize}
\end{thm}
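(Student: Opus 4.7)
The implication (ii) $\Rightarrow$ (i) is immediate on taking $\vlat{E} := \ordercontn{\cont(X)}$, so I focus on (i) $\Rightarrow$ (iii) and (iii) $\Rightarrow$ (ii).  The organising principle throughout will be the measure-theoretic identification (valid for realcompact $X$) of $\orderdual{\cont(X)}$ with the compactly supported Radon measures on $X$ and of $\ordercontn{\cont(X)}$ with those which are additionally normal; under this identification $S$ is the union of the supports of positive elements of $\ordercontn{\cont(X)}$.

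For (i) $\Rightarrow$ (iii), assume $\cont(X) \cong \orderdual{\vlat{E}}$.  I would first note that every order dual is Dedekind complete, so $\cont(X)$ is, whence $X$ is extremally disconnected by the classical Stone--Nakano theorem.  The canonical evaluation $\vlat{E} \to \orderdual{(\orderdual{\vlat{E}})}$ factors through $\ordercontn{(\orderdual{\vlat{E}})}$ and its image automatically separates points of $\orderdual{\vlat{E}}$.  Transported across the hypothesised isomorphism, this shows that $\ordercontn{\cont(X)}$ separates points of $\cont(X)$.  In measure-theoretic terms, no non-zero $u \in \cont(X)$ can vanish identically on $S$, so the restriction $\cont(X) \to \cont(S)$ is injective.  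Since $X$ is realcompact, the induced continuous map $X \to \upsilon S$ is the identity on the dense subset $S$, hence a homeomorphism, giving $\upsilon S = X$.

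For (iii) $\Rightarrow$ (ii), I consider the canonical lattice homomorphism
\[
\Phi : \cont(X) \to \orderdual{(\ordercontn{\cont(X)})}, \qquad \Phi(u)(\varphi) := \varphi(u).
\]
Injectivity of $\Phi$ amounts to $\ordercontn{\cont(X)}$ separating points of $\cont(X)$.  Under $\upsilon S = X$, the restriction $\cont(X) \to \cont(S)$ is a lattice isomorphism, so a non-zero $u$ is non-zero at some point of $S$; standard regularity and the definition of $S$ then produce some $\varphi \in \ordercontn{\cont(X)}$ with $\varphi(u) \neq 0$.  For surjectivity, I would assemble, from an arbitrary $\Psi \in \orderdual{(\ordercontn{\cont(X)})}$, a function $u$ on $S$ by reading off $\Psi$ on sufficiently localised normal measures, establish its continuity using extremal disconnectedness, extend it to $\cont(X)$ using $\upsilon S = X$, and finally verify $\Phi(u) = \Psi$.

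The main obstacle I anticipate is the surjectivity step in (iii) $\Rightarrow$ (ii): the functional $\Psi$ is defined on every compactly supported normal measure, not merely on the localised ones from which $u$ is recovered, so the identity $\Phi(u) = \Psi$ requires a density statement saying that such localised measures are order-dense in $\ordercontn{\cont(X)}$ in a sense strong enough that order-bounded functionals are determined by their values there.  Making this precise is where the combined strength of extremal disconnectedness and $\upsilon S = X$ must enter, and it is also where the paper's machinery of inverse limits of carriers of order continuous functionals is likely to play the decisive role.
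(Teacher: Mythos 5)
There is a genuine gap in your implication (i) $\Rightarrow$ (iii), and it sits exactly where the paper warns the naive argument fails. From $\contX\cong\orderdual{\vlat{E}}$ you extract only two consequences: Dedekind completeness (hence extremal disconnectedness of $X$) and the fact that $\ordercontn{\contX}$ separates the points of $\contX$. The latter yields injectivity of the restriction map $\contX\to\cont(S)$ and hence density of $S$ in $X$, but $\upsilon S=X$ requires in addition that $S$ be $\cont$-embedded in $X$, i.e.\ that the restriction map be \emph{surjective} onto $\cont(S)$. Your closing sentence does not supply this: there is no canonical map $X\to\upsilon S$; the canonical map runs $\upsilon S\to X$ (extend the inclusion using realcompactness of $X$), and it is a homeomorphism precisely when $S$ is $\cont$-embedded, which is what has to be proved. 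Moreover, the step cannot be repaired from the two properties you actually use: as recalled in Section \ref{Section:  Introduction}, Maz\'on \cite{Mazon1986} exhibits a realcompact $X$ for which $\contX$ is Dedekind complete with separating order continuous dual but is not the order dual of any vector lattice; since (iii) implies (i), for that $X$ one has $S$ dense yet $\upsilon S\neq X$. The missing ingredient is that every order dual is \emph{perfect} (\cite[Theorem 110.2]{Zaanen1983RSII}), a strictly stronger property than separation, and this is what Xiong's argument exploits. The paper itself does not reprove the result but routes it as (i) $\Rightarrow$ (ii) via perfectness and then cites \cite{Xiong1983} for (ii) $\Rightarrow$ (iii) and (iii) $\Rightarrow$ (i); see Theorem \ref{Thm:  C(X) perfect iff order dual iff x is Xion}.

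Your implication (iii) $\Rightarrow$ (ii) is likewise only a programme: injectivity of $\Phi$ is fine, but the surjectivity of $\Phi$ is the entire substance of the implication, and you explicitly leave open the determination statement needed to verify $\Phi(u)=\Psi$ from the values of $\Psi$ on localised normal measures. As it stands, only (ii) $\Rightarrow$ (i) and the injectivity half of (iii) $\Rightarrow$ (ii) are complete; both remaining implications need either Xiong's original arguments or the perfectness/inverse-limit machinery developed later in the paper.
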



This result differs from the corresponding result for compact spaces in the following respects.  Unlike in the Banach lattice setting, $\cont(X)$ may have more than one order predual, see \cite{Xiong1983}.  Secondly, the condition that $\contX$ is Dedekind complete and has a separating order continuous dual does not imply that $\contX$ has an order predual.  Indeed, in \cite[p. 620]{Mazon1986} an example is provided of a realcompact space $X$ so that $\contX$ is Dedekind complete and has a separating order continuous dual, but is not the order dual of any vector lattice.  Furthermore, we have no counterpart of the decomposition
\[
\cont(K)\ni u \longmapsto (P_\varphi u)_{\varphi\in \cal{F}}\in \bigoplus_{\infty} \vlat{C}_\varphi.
\]
The naive extension of this decomposition to the class of extremally disconnected realcompact spaces does not provide a characterization of those spaces $\contX$ which admit an order predual.  It will be shown in Section \ref{Subsection:  Structure theorems for C(X) as a dual space}, Proposition \ref{Prop:  Partial decomposition result for order dual C(X)}, that if $X$ is an extremally disconnected realcompact space and $\cal{F}$ is a maximal singular family in $\ordercontn{\contX}$ so that
\[
\cont(X)\ni u \longmapsto (P_\varphi u)_{\varphi\in \cal{F}}\in \prod_{\varphi\in\cal{F}} \vlat{C}_\varphi
\]
is a lattice isomorphism, then $\ordercontn{\contX}$ is an order predual for $\contX$.  The converse, however, is false, see Example \ref{Exm:  C(X) decomponsition counterexample}.

In view of the above, we formulate the following problem.  Let $X$ be an extremally disconnected realcompact space.  Can the property `$\contX$ admits an order predual' be characterised in terms of a suitable decomposition of $\contX$ in terms of the carriers of order continuous functionals on $\contX$?  We solve this problem using direct and inverse limits in suitable categories of vector lattices.\footnote{In the literature, direct and inverse limits are also referred to as \emph{inductive} and \emph{projective} limits, respectively.}

Such limits are common in analysis, see for instance \cite{BeattieButzmann2002}, \cite[Chapter IV, \S 5]{Conway1990}, \cite[Chapter 5]{Bochner1955} and \cite{Choksi1958}.  Direct limits of vector lattices were introduced by Filter \cite{Filter1988} and inverse limits of vector lattices have appeared sporadically in the literature, see for instance \cite{Dettweiler1979,Kuller1958}, but no systematic study of this construction has been undertaken in the context of vector lattices.  We therefore take the opportunity to clarify the question of existence of inverse limits in certain categories of vector lattices. We also establish the permanence of a number of vector lattice properties under the inverse limit construction. Our treatment of direct and inverse limits of vector lattices is found in Sections \ref{Section:  Inductive limits} and \ref{Section:  Projective limits}, respectively.  Inspired by results in the theory of convergence spaces \cite{BeattieButzmann2002} we obtain duality results for direct and inverse limits of vector lattices, see Section \ref{Section:  Dual spaces}.  These results are roughly of the following form:  If a vector lattice $\vlat{E}$ can be expressed as the direct (inverse) limit of some system of vector lattices, then the order (continuous) dual of $\vlat{E}$ can be expressed in a natural way as the inverse (direct) limit of a system of order (continuous) duals. In addition to a solution of the mentioned decomposition problem, a number of applications of the general theory of direct and inverse limits of vector lattices are presented in Section \ref{Section:  Applications}.  These include the computations of order (continuous) duals of function spaces and a structural characterisation of order dual spaces in terms of spaces of Radon measures.

In the next section, we state some preliminary definitions and results which are used in the rest of the paper.

\section{Preliminaries}\label{Section:  Preliminaries}

\subsection{Vector lattices}\label{Subsection:  Vector lattice preliminaries}

In order to make the paper reasonably self-contained we recall a few concepts and facts from the theory of vector lattices.  For undeclared terms and notation we refer to the reader to any of the standard texts in the field, for instance \cite{AliprantisBurkinshaw78,AliprantisBurkinshaw2006,LuxemburgZaanen1971RSI,Zaanen1983RSII}.  Let $\vlat{E}$ and $\vlat{F}$ be real vector lattices. For $u,v\in \vlat{E}$ we write $u<v$ if $u\leq v$ and $u\neq v$. In particular, $0 < u$ means $u$ is positive but not zero. We note that if $\vlat{E}$ is a space of real-valued functions on a set $X$, then $0 < v$ does not mean that $0 < v(x)$ for every $x\in X$.

For sets $A,B\subseteq \vlat{E}$ let $A\vee B \defeq \{u\vee v ~:~ u\in A,~ v\in B\}$. The sets $A\wedge B$, $A^+$, $A^-$ and $|A|$ are defined similarly.  Lastly, $A^d \defeq \{u\in \vlat{E} ~:~ |u|\wedge |v|=0 \text{ for all } v\in A\}$.  We write $A\downarrow u$ if $A$ is downward directed and $\inf A=u$. Similarly, we write $B\uparrow u$ if $B$ is upward directed and $\sup B = u$.

Let $T:\vlat{E}\to \vlat{F}$ be a linear operator.  Recall that $T$ is \emph{positive} if $T\left[ \vlat{E}^+ \right] \subseteq \vlat{F}^+$, and \emph{regular} if $T$ is the difference of two positive operators.  $T$ is \emph{order bounded} if $T$ maps order bounded sets in $\vlat{E}$ to order bounded sets in $\vlat{F}$.  If $\vlat{F}$ is Dedekind complete, $T$ is order bounded if and only if $T$ is regular \cite[Theorem 20.2]{Zaanen1997Introduction}.  Further, $T$ is \emph{order continuous} if $\inf |T[A]|=0$ whenever $A\downarrow 0$ in $\vlat{E}$.  Every order continuous operator is necessarily order bounded \cite[Theorem 1.54]{AliprantisBurkinshaw2006}.  $T$ is a \emph{lattice homomorphism} if it preserves suprema and infima of finite sets, and a \emph{normal lattice homomorphism} if it preserves suprema and infima of arbitrary sets; equivalently, if it is an order continuous lattice homomorphism, see \cite[p. 103]{LuxemburgZaanen1971RSI}.  A \emph{lattice isomorphism} is a bijective lattice homomorphism $T:\vlat{E}\to\vlat{F}$.  An operator $T$ is a lattice isomorphism if and only if it is bijective and both $T$ and $T^{-1}$ are positive \cite[Theorem 19.3]{Zaanen1997Introduction}.  We say that $T$ is \emph{interval preserving} if for all $0\leq u\in\vlat{E}$, $T[[0,u]]=[0,T(u)]$.  An interval preserving map need not be a lattice homomorphism, nor is a (normal) lattice homomorphism in general interval preserving, see for instance \cite[p. 95]{AliprantisBurkinshaw2006}.  However, the following holds.  We have not found this result in the literature, and therefore we include the simple proof.

\begin{prop}\label{Prop:  Interval Preserving vs Lattice Homomorphism}
Let $\vlat{E}$ and $\vlat{F}$ be vector lattices and $T:\vlat{E}\to\vlat{F}$ a positive operator.  The following statements are true. \begin{itemize}
    \item[(i)] If $T$ is injective and interval preserving then $T$ is a lattice isomorphism onto an ideal in $\vlat{F}$, hence a normal lattice homomorphism into $\vlat{F}$.
    \item[(ii)] If  $T$ is a lattice homomorphism and $T[\vlat{E}]$ is an ideal in $\vlat{F}$ then $T$ is interval preserving.
\end{itemize}
\end{prop}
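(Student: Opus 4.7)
The plan for (ii) is direct and uses both hypotheses essentially once each. The inclusion $T[[0,u]]\subseteq[0,T(u)]$ is immediate from positivity. For the reverse inclusion, given $v\in[0,T(u)]$, the ideal hypothesis produces some $w\in\vlat{E}$ with $T(w)=v$. The candidate preimage $w^+\wedge u$ lies in $[0,u]$, and applying the lattice homomorphism property twice gives
\[
T(w^+\wedge u)=T(w^+)\wedge T(u)=T(w)^+\wedge T(u)=v^+\wedge T(u)=v\wedge T(u)=v,
\]
using $v\ge 0$ and $v\le T(u)$ in the last two equalities.

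For (i), the plan is to first verify that $T$ is a lattice homomorphism, then note that $T[\vlat{E}]$ is an ideal, and finally upgrade to order continuity. The key step is the lattice homomorphism property. Fix $u\in\vlat{E}$ and set $v:=T(u^+)\wedge T(u^-)$. Since $0\le v\le T(u^+)$, interval preservation supplies $w_1\in[0,u^+]$ with $T(w_1)=v$, and similarly $0\le v\le T(u^-)$ gives $w_2\in[0,u^-]$ with $T(w_2)=v$. Injectivity of $T$ forces $w_1=w_2$, and this common element lies in $[0,u^+]\cap[0,u^-]=\{0\}$, since $u^+\wedge u^-=0$. Hence $v=0$, so $T(u^+)\perp T(u^-)$, which is precisely what is needed to conclude $T(u^+)=T(u)^+$ and $T(u^-)=T(u)^-$.

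The remaining steps are routine. For $v\in\vlat{F}$ with $|v|\le T(u)$ and $u\in\vlat{E}^+$, interval preservation applied to $v^\pm$ yields $w^\pm\in[0,u]$ with $T(w^\pm)=v^\pm$, so $v=T(w^+-w^-)\in T[\vlat{E}]$; hence $T[\vlat{E}]$ is an ideal, and combined with the previous step and injectivity we obtain a lattice isomorphism onto this ideal. For order continuity, take $A\downarrow 0$ in $\vlat{E}$ and suppose $0\le w$ is a lower bound of $T[A]$ in $\vlat{F}$; fix $a_0\in A$ and use interval preservation to write $w=T(b)$ with $b\in[0,a_0]$. For each $a\in A$ with $a\le a_0$, the identity $T((b-a)^+)=(T(b)-T(a))^+=(w-T(a))^+=0$ together with injectivity gives $b\le a$, and cofinality of $\{a\in A:a\le a_0\}$ in $A$ forces $b=0$, hence $w=0$.

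The main obstacle I anticipate is the lattice homomorphism step in (i). A positive interval preserving operator need not be a lattice homomorphism in general, so the disjointness of $T(u^+)$ and $T(u^-)$ cannot come from interval preservation alone; it is precisely the interplay between interval preservation (producing the common preimage) and injectivity (collapsing the two preimages to a single element of $[0,u^+]\cap[0,u^-]$) that delivers the required disjointness. Everything after this step reduces to bookkeeping with already-established properties.
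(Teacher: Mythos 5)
Your proposal is correct and its central mechanism is the same as the paper's: interval preservation plus injectivity produces a common preimage lying in an intersection of order intervals ($[0,u^+]\cap[0,u^-]=\{0\}$ in your version, $[0,u]\cap[0,v]=[0,u\wedge v]$ in the paper's), which yields the lattice homomorphism property, and part (ii) is the paper's argument verbatim with $w'=(w\vee 0)\wedge u$. The only divergences are routine: you prove directly that $T[\vlat{E}]$ is an ideal (the paper cites Kaplan) and verify order continuity by a direct computation with $(b-a)^+$, whereas the paper deduces it from the fact that a set decreasing to $0$ in an ideal also decreases to $0$ in the ambient lattice.
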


\begin{proof}[Proof of (i)]
Assume that $T$ is injective and interval preserving.  $T[\vlat{E}]$ is an ideal in $\vlat{F}$ by \cite[Proposition 14.7]{Kaplan1985}.  Therefore, because $T$ is injective, it suffices to show that $T$ is a lattice homomorphism.  To this end, consider $u,v\in \vlat{E}^+$.  Then $0\leq T(u) \wedge T(v) \leq T(u)$ and $0\leq T(u)\wedge T(v) \leq T(v)$.  Since $T$ is interval preserving and injective there exists $w\in [0,u]\cap[0,v] = [0, u\wedge v]$ so that $T(w)=T(u)\wedge T(v)$.  We have
\[
T(w)\leq T(u\wedge v)\leq T(u) \text{ and } T(w)\leq T(u\wedge v)\leq T(v).
\]
Hence $T(u)\wedge T(v) = T(w) \leq T(u\wedge v) \leq T(u)\wedge T(v)$ so that $T(u\wedge v) = T(w) = T(u)\wedge T(v)$.

To see that $T$ is a normal lattice homomorphism, let $A\downarrow 0$ in $\vlat{E}$.  Then $T[A]\downarrow 0$ in $T[\vlat{E}]$ because $T$ is a lattice isomorphism onto $T[\vlat{E}]$.  But $T[\vlat{E}]$ is and ideal in $\vlat{F}$, so $T[A]\downarrow 0$ in $\vlat{F}$.
\end{proof}

\begin{proof}[Proof of (ii)]
Assume that $T$ is a lattice homomorphism and $T[\vlat{E}]$ is an ideal in $\vlat{F}$.  Let $0\leq u \in \vlat{E}$ and $0\leq v\leq T(u)$.  Because $T[\vlat{E}]$ is an ideal in $\vlat{F}$ there exists $w\in \vlat{E}$ so that $T(w) = v$.  Let $w'= (w\vee 0)\wedge u$.  Then $0\leq w'\leq u$ and $T(w') = (v\vee 0)\wedge T(u) = v$.
\end{proof}

\begin{prop}\label{Prop: Properties of band projections}
Let $\vlat{E}$ be a vector lattice, $\vlat{A}$ and $\vlat{B}$ projection bands in $\vlat{E}$, $P_\vlat{A}$ and $P_{\vlat{B}}$ the band projections of $\vlat{E}$ onto $\vlat{A}$ and $\vlat{B}$, respectively, and $I_{\vlat{E}}$ the identity operator on $\vlat{E}$.  Assume that $\vlat{A}\subseteq \vlat{B}$. The following statements are true.
\begin{enumerate}
	\item[(i)] $P_\vlat{A}$ is an order continuous lattice homomorphism.
	\item[(ii)] $P_\vlat{A} \leq I_{\vlat{E}}$.
	\item[(iii)] $P_\vlat{A} P_\vlat{B} = P_\vlat{B} P_\vlat{A} = P_\vlat{A}$.
	\item[(iv)] $P_\vlat{A}$ is interval preserving.
\end{enumerate}
\end{prop}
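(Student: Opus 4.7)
The plan is to handle each of (i)--(iv) with a short direct argument, relying on the defining decomposition $u = P_\vlat{A} u + (I_\vlat{E} - P_\vlat{A}) u$ with $P_\vlat{A} u \in \vlat{A}$ and $(I_\vlat{E} - P_\vlat{A}) u \in \vlat{A}^d$, together with the fact that $\vlat{A}$ is a solid subspace of $\vlat{E}$. No serious obstacle is anticipated; the main thing is to arrange the steps so that later items can use earlier ones, and so that (iv) can be deduced from Proposition \ref{Prop: Properties of band projections}\,(i) together with Proposition \ref{Prop:  Interval Preserving vs Lattice Homomorphism}\,(ii).

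I would start with (ii), which is immediate: for $u \in \vlat{E}^+$ the decomposition above writes $u$ as the sum of two positive elements, so $0 \leq P_\vlat{A} u \leq u$. For (i), positivity is already clear, so I would first establish the lattice homomorphism property by showing $P_\vlat{A}(u \wedge v) = P_\vlat{A} u \wedge P_\vlat{A} v$ for $u,v \in \vlat{E}^+$. The inequality $P_\vlat{A}(u \wedge v) \leq P_\vlat{A} u \wedge P_\vlat{A} v$ follows from monotonicity, while for the reverse I would put $w \defeq P_\vlat{A} u \wedge P_\vlat{A} v$, note that $0 \leq w \leq P_\vlat{A} u$ so $w \in \vlat{A}$ (since $\vlat{A}$ is solid), and observe $w \leq u \wedge v$; then $w = P_\vlat{A} w \leq P_\vlat{A}(u \wedge v)$. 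Order continuity is then a consequence of (ii): if $A \downarrow 0$ in $\vlat{E}$, then $P_\vlat{A}[A] \downarrow$ by positivity and the squeeze $0 \leq P_\vlat{A} u \leq u$ for $u \in A$ forces $\inf P_\vlat{A}[A] = 0$.

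For (iii), the assumption $\vlat{A} \subseteq \vlat{B}$ gives two things: $P_\vlat{A} u \in \vlat{A} \subseteq \vlat{B}$, so $P_\vlat{B}(P_\vlat{A} u) = P_\vlat{A} u$; and dually $\vlat{B}^d \subseteq \vlat{A}^d$, so writing $u = P_\vlat{B} u + (I_\vlat{E} - P_\vlat{B}) u$ and applying $P_\vlat{A}$, the second summand lies in $\vlat{A}^d$ and is annihilated, leaving $P_\vlat{A} P_\vlat{B} u = P_\vlat{A} u$.

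Finally, for (iv) I would simply invoke Proposition \ref{Prop:  Interval Preserving vs Lattice Homomorphism}\,(ii): by (i), $P_\vlat{A}$ is a (positive) lattice homomorphism, and its range is $\vlat{A}$, which is an ideal (in fact a band) in $\vlat{E}$. A short direct proof is also available: given $0 \leq v \leq P_\vlat{A} u$, since $\vlat{A}$ is solid we have $v \in \vlat{A}$, and then $v$ itself lies in $[0,u]$ (because $P_\vlat{A} u \leq u$ by (ii)) and satisfies $P_\vlat{A} v = v$, which I would mention as a sanity check.
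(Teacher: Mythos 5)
Your proposal is correct and matches the paper's treatment: the paper simply cites Luxemburg--Zaanen for (i)--(iii), and your arguments are exactly the standard proofs behind those citations, while for (iv) you use the identical deduction from Proposition \ref{Prop:  Interval Preserving vs Lattice Homomorphism}\,(ii) together with the fact that $P_{\vlat{A}}[\vlat{E}]=\vlat{A}$ is an ideal. No gaps.
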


\begin{proof}
For (i), see \cite[Theorem 24.6 \& Exercise 24.11]{LuxemburgZaanen1971RSI}. For (ii) and (iii), see \cite[Theorems 24.5 (ii) \& 30.1 (i)]{LuxemburgZaanen1971RSI}. Lastly, (iv) follows from Proposition \ref{Prop:  Interval Preserving vs Lattice Homomorphism} (ii), since $P_{\vlat{A}}[\vlat{E}]=\vlat{A}$ is a band, hence an ideal, in $\vlat{E}$.
\end{proof}


The \emph{order dual} of $\vlat{E}$ is $\orderdual{\vlat{E}}\defeq \{\varphi: \vlat{E}\to\R ~:~ \varphi \text{ is order bounded}\}$, and the order continuous dual of $\vlat{E}$ is $\ordercontn{\vlat{E}} \defeq \{\varphi\in \orderdual{\vlat{E}} ~:~ \varphi \text{ is order continuous}\}$.  If $A\subseteq \vlat{E}$ and $B\subseteq \orderdual{\vlat{E}}$ we set
\[
\ann{A} \defeq \{\varphi\in \orderdual{\vlat{E}} ~:~ \varphi(u)=0,~u\in A\},~~ \preann{B} \defeq \{u\in\vlat{E} ~:~ \varphi(u) = 0,~ \varphi\in B\}.
\]
For $\varphi\in \orderdual{\vlat{E}}$ the \emph{null ideal} (or absolute kernel) of $\varphi$ is
\[
\nullid{\varphi} \defeq \{u\in \vlat{E} ~:~ |\varphi|(|u|)=0\}.
\]
The \emph{carrier} of $\varphi$ is $\carrier{\varphi} \defeq \nullid{\varphi}^d$.  The null ideal $\nullid{\varphi}$ of $\varphi$ is an ideal in $\vlat{E}$ and its carrier $\carrier{\varphi}$ is a band; if $\varphi$ is order continuous then $\nullid{\varphi}$ is also a band in $\vlat{E}$, see for instance \cite[\S 90]{Zaanen1983RSII}.

Define $\sigma:\vlat{E}\ni u\mapsto \Psi_u\in \ordercontnbidual{\vlat{E}}$ by setting $\Psi_u(\varphi)\defeq \varphi(u)$ for all $u\in\vlat{E}$ and $\varphi\in \ordercontn{\vlat{E}}$.  Then $\sigma$ is a lattice homomorphism, and, if $\preann{\ordercontn{\vlat{E}}}=\{0\}$, $\sigma$ is injective, see \cite[p.~404~-~405]{Zaanen1983RSII}.   We call $\vlat{E}$ \emph{perfect} if $\sigma$ is a lattice isomorphism onto $\ordercontnbidual{\vlat{E}}$.

In the following theorem, we briefly recall some basic facts concerning the order adjoint of a positive operator $T:E\to F$ which we make use of in the sequel.


\begin{thm}\label{Thm:  Adjoints of interval preserving vs lattice homomorphisms}
Let $\vlat{E}$ and $\vlat{F}$ be vector lattices and $T:\vlat{E}\to\vlat{F}$ a positive operator. Denote by $T^\sim :\vlat{F}^\sim \to \vlat{E}^\sim$ its order adjoint, $\varphi\mapsto \varphi\circ T$.  The following statements are true. \begin{itemize}
    \item[(i)] $T^\sim$ is positive and order continuous.
    \item[(ii)] If $T$ is order continuous then $T^\sim[\ordercontn{\vlat{F}}]\subseteq \ordercontn{\vlat{E}}$.
    \item[(iii)] If $T$ is interval preserving then $T^\sim$ is a lattice homomorphism.
    \item[(iv)] If $T$ is a lattice homomorphism then $T^\sim$ is interval preserving.  The converse is true if $\preann{\orderdual{\vlat{F}}}=\{0\}$.
\end{itemize}
\end{thm}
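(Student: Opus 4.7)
The plan is to treat each of the four assertions in turn, with (i) and (ii) being routine pointwise verifications and (iii)--(iv) resting on the Riesz--Kantorovich formula together with a canonical embedding argument for the converse of (iv). For (i), positivity of $T^\sim$ is immediate from $T[\vlat{E}^+]\subseteq\vlat{F}^+$, and order continuity is checked pointwise: whenever $A\downarrow 0$ in $\orderdual{\vlat{F}}$ and $u\in\vlat{E}^+$, the values $(T^\sim\varphi)(u) = \varphi(Tu)$ decrease to zero as $\varphi$ runs through $A$. Assertion (ii) follows similarly from the observation that a positive order continuous operator sends a net $A\downarrow 0$ in $\vlat{E}$ to a downward directed subset of $\vlat{F}$ with infimum zero; composing with an order continuous $\varphi\in\orderdual{\vlat{F}}$ then yields $(T^\sim\varphi)[A]\downarrow 0$ in $\R$.

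For (iii), I would fix $\varphi,\psi\in\orderdual{\vlat{F}}$ and $u\in\vlat{E}^+$ and apply the Riesz--Kantorovich formula to express $(\varphi\vee\psi)(Tu)$ as the supremum of $\varphi(v) + \psi(Tu-v)$ over $0\leq v\leq Tu$. Interval preservation of $T$ rewrites the index set as $T[[0,u]]$; substituting $v = Tw$ with $0\leq w\leq u$ and invoking Riesz--Kantorovich a second time in $\orderdual{\vlat{E}}$ identifies the result with $(T^\sim\varphi\vee T^\sim\psi)(u)$.

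For the forward direction of (iv), the key observation is that since $T$ is a lattice homomorphism, $\ker T$ is an ideal, and given $0\leq\psi\leq T^\sim\varphi$ the estimate $\psi(|u|)\leq\varphi(T|u|) = \varphi(|Tu|)$ shows $\psi$ vanishes on $\ker T$. Hence $\psi$ descends to a well-defined positive functional $\chi_0$ on $T[\vlat{E}]$ satisfying $\chi_0(Tv)\leq\varphi((Tv)^+) = \varphi(Tv^+)$, where the last equality uses that $T$ is a lattice homomorphism. A Hahn--Banach extension of $\chi_0$ dominated by the sublinear functional $p(w) = \varphi(w^+)$ then produces the required $\chi\in[0,\varphi]$ with $T^\sim\chi = \psi$. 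For the converse, I would apply (iii) to $T^\sim$ to obtain that $(T^\sim)^\sim$ is a lattice homomorphism on the biduals, and then exploit that the canonical evaluation $\vlat{F}\to (\orderdual{\vlat{F}})^\sim$ is always a lattice homomorphism and is injective under $\preann{\orderdual{\vlat{F}}} = \{0\}$; chasing the standard intertwining identity for second adjoints forces $T$ to preserve finite lattice operations.

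The main potential obstacle lies in the forward direction of (iv), specifically in verifying that $p(w) = \varphi(w^+)$ genuinely dominates $\chi_0$ on $T[\vlat{E}]$ and that the resulting Hahn--Banach extension actually lies in the order interval $[0,\varphi]$. This step relies sensitively on $T$ being a lattice homomorphism rather than merely positive, since both the identity $(Tv)^+ = Tv^+$ and the kernel argument that allows one to define $\chi_0$ on $T[\vlat{E}]$ depend on this stronger hypothesis.
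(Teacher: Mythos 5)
Your proposal is correct, and it supplies in full the standard arguments that the paper simply outsources to the literature (Kaplan for (i) and (iii), Aliprantis--Burkinshaw Theorems 2.16 and 2.20 for (iv)): the Riesz--Kantorovich computation over $T[[0,u]]=[0,Tu]$ for (iii), the Hahn--Banach extension dominated by $w\mapsto\varphi(w^+)$ for the forward half of (iv), and the second-adjoint intertwining with the canonical lattice embedding $\vlat{F}\to\orderdual{(\orderdual{\vlat{F}})}$ for the converse. These are exactly the proofs behind the cited results, so the approach is essentially the same as the paper's.
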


\begin{proof}
For (i), see \cite[14.2 \& 14.5]{Kaplan1985}. The statement in (ii) follows directly from the fact that composition of order continuous operators is order continuous. For (iii), see \cite[14.13]{Kaplan1985}. The first statement in (iv) is proven in \cite[Theorem 2.16 (1)]{AliprantisBurkinshaw2006}. The second statement is proven in \cite[Theorem~2.20]{AliprantisBurkinshaw2006}. We note that although \cite{AliprantisBurkinshaw2006} declares a blanket assumption at the start of the book that all vector lattices under consideration are Archimedean, the proofs of \cite[Theorems 2.16 \& 2.20]{AliprantisBurkinshaw2006} do not make use of this assumption.
\end{proof}

\begin{prop}\label{Prop:  Image of adjoint of lattice homomorphism}
Let $\vlat{E}$ and $\vlat{F}$ be vector lattices and $T:\vlat{E}\to\vlat{F}$ a linear lattice homomorphism onto $\vlat{F}$.  The following statements are true. \begin{itemize}
    \item[(i)] $T^\sim[\vlat{F}^\sim] = \ann{\ker(T)}$.
    \item[(ii)] If $\vlat{E}$ is Archimedean and $T$ is order continuous then $T^\sim[\ordercontn{\vlat{F}}] = \ann{\ker(T)}\cap\ordercontn{\vlat{E}}$.
\end{itemize}
\end{prop}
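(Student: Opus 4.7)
The common tool underlying both reverse inclusions is that the surjective lattice homomorphism $T$ is interval preserving, by Proposition~\ref{Prop:  Interval Preserving vs Lattice Homomorphism}(ii) applied to the trivial ideal $T[\vlat{E}]=\vlat{F}$. For (i), the inclusion $T^\sim[\vlat{F}^\sim]\subseteq\ann{\ker T}$ is immediate from $T^\sim\varphi=\varphi\circ T$. Conversely, given $\psi\in\ann{\ker T}$, I would define $\varphi:\vlat{F}\to\R$ by $\varphi(T(u))\defeq\psi(u)$; well-definedness follows from $\psi|_{\ker T}=0$ and linearity from that of $T$ and $\psi$. To verify $\varphi\in\vlat{F}^\sim$, for $0\leq v\in\vlat{F}$ pick $u\geq 0$ with $T(u)=v$ (the positive part of any preimage works), and observe that by interval preservation $\varphi[[0,v]]\subseteq\psi[[0,u]]$, an order bounded subset of $\R$.

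For (ii), the inclusion $T^\sim[\ordercontn{\vlat{F}}]\subseteq\ann{\ker T}\cap\ordercontn{\vlat{E}}$ is immediate from (i) and Theorem~\ref{Thm:  Adjoints of interval preserving vs lattice homomorphisms}(ii). For the reverse, given $\psi\in\ann{\ker T}\cap\ordercontn{\vlat{E}}$, apply (i) to produce $\varphi\in\vlat{F}^\sim$ with $T^\sim\varphi=\psi$; the task reduces to showing $\varphi\in\ordercontn{\vlat{F}}$. Since $T^\sim$ is a lattice homomorphism by Theorem~\ref{Thm:  Adjoints of interval preserving vs lattice homomorphisms}(iii), we have $\psi^\pm=T^\sim\varphi^\pm$, and both $\psi^\pm$ remain in $\ann{\ker T}\cap\ordercontn{\vlat{E}}$ (the first because $\ker T$ is an ideal on which $\psi$ vanishes, the second because $\ordercontn{\vlat{E}}$ is a band in $\vlat{E}^\sim$). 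I would therefore reduce to the case $\psi,\varphi\geq 0$.

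The heart of the proof is then: given $v_\alpha\downarrow 0$ in $\vlat{F}$, show $\varphi(v_\alpha)\downarrow 0$. Fixing (without loss of generality) $v_0\geq v_\alpha$ for all $\alpha$, an element $u_0\geq 0$ with $T(u_0)=v_0$, and (via interval preservation) $u_\alpha\in[0,u_0]$ with $T(u_\alpha)=v_\alpha$, the obstruction is that the net $(u_\alpha)$ need not be monotone. My plan is to pass to the Dedekind completion $\vlat{E}^\delta$ of $\vlat{E}$ (using the Archimedean hypothesis): $\psi$ extends uniquely to $\psi^\delta\in\ordercontn{\vlat{E}^\delta}$, $T$ extends to an order continuous lattice homomorphism $T^\delta:\vlat{E}^\delta\to\vlat{F}^\delta$, and a short order-density argument shows $\psi^\delta|_{\ker T^\delta}=0$. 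In $\vlat{E}^\delta$, the net $\tilde u_\alpha\defeq\sup_{\beta\geq\alpha}u_\beta$ is decreasing, bounded by $u_0$, satisfies $T^\delta(\tilde u_\alpha)=v_\alpha$ (by order continuity of $T^\delta$ and the monotonicity of $(v_\beta)$), and has infimum in $\ker T^\delta$. Order continuity of $\psi^\delta$ therefore gives $\psi^\delta(\tilde u_\alpha)\downarrow 0$, and the identity $\psi^\delta(\tilde u_\alpha)=\sup_{\beta\geq\alpha}\psi(u_\beta)=\sup_{\beta\geq\alpha}\varphi(v_\beta)=\varphi(v_\alpha)$ (using $\varphi\geq 0$) yields $\varphi(v_\alpha)\downarrow 0$. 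The main obstacle is the Dedekind-completion bookkeeping: extending $T$ compatibly to $T^\delta$ and verifying $\psi^\delta|_{\ker T^\delta}=0$.
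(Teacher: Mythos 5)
Your proof of (i) coincides with the paper's, except that you supply the verification of order boundedness of $\varphi$ (via interval preservation) that the paper leaves implicit; this is fine. For (ii) you take a genuinely different route. The paper stays inside $\vlat{E}$: given $A\downarrow 0$ in $\vlat{F}$ it takes the \emph{entire} positive preimage $B\defeq T^{-1}[A]\cap\vlat{E}^+$ (which is downward directed and satisfies $T[B]=A$), lets $C$ be the set of lower bounds of $B$ in $\vlat{E}^+$ (so $C\subseteq\ker(T)$), and invokes the Archimedean property through \cite[Theorem 22.5]{LuxemburgZaanen1971RSI} to get $B-C\downarrow 0$, whence order continuity of $T^\sim(\varphi)$ finishes the argument. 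You instead select a single preimage net inside an order interval and repair its lack of monotonicity by passing to $\vlat{E}^\delta$. Your route is workable but costs more: you need the order continuous extensions of $\psi$ and of $T$, the fact that the extension $T^\delta$ is again a lattice homomorphism (disjointness preservation passes to suprema by infinite distributivity, and positive disjointness-preserving operators are lattice homomorphisms), and — since $\{u_\beta:\beta\geq\alpha\}$ is not upward directed — the fact that $T^\delta$, being a \emph{normal} lattice homomorphism, preserves arbitrary suprema; plain order continuity of $T^\delta$ does not give $T^\delta(\tilde u_\alpha)=v_\alpha$. The paper's choice of $B$ as the full preimage is precisely what makes the directedness automatic and the completion unnecessary.

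One step of your argument is false as stated: the identity $\psi^\delta(\tilde u_\alpha)=\sup_{\beta\geq\alpha}\psi(u_\beta)$. A positive order continuous functional preserves suprema of upward directed sets only; for a non-directed set such as $\{u_\beta:\beta\geq\alpha\}$ one generally has strict inequality (integrate the two halves of an indicator function). Fortunately you do not need the identity: since $u_\alpha\leq\tilde u_\alpha$ and $\psi^\delta\geq 0$ extends $\psi$, you have $\varphi(v_\alpha)=\psi(u_\alpha)\leq\psi^\delta(\tilde u_\alpha)\downarrow 0$, and as $(\varphi(v_\alpha))$ is decreasing and nonnegative this already yields $\varphi[A]\downarrow 0$. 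With that one-line repair (and the normality of $T^\delta$ made explicit) your proof is correct.
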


\begin{proof}[Proof of (i)]
Let $\varphi\in\vlat{F}^\sim$.  If $u\in\ker(T)$ then $T^\sim(\varphi) (u) = \varphi (T(u)) = \varphi(0) = 0$.  Hence $T^\sim(\varphi)\in\ann{\ker(T)}$.  For the reverse inclusion, let $\psi\in \ann{\ker(T)}$.  Define $\varphi:\vlat{F}\to\R$ by setting $\varphi(v) = \psi(u)$ if $v=T(u)$.  Then $\varphi\in \vlat{F}^\sim$ and $T^\sim(\varphi) = \psi$.
\end{proof}

\begin{proof}[Proof of (ii)]
It follows from (i) and Theorem \ref{Thm:  Adjoints of interval preserving vs lattice homomorphisms} (ii) that $T^\sim[\ordercontn{\vlat{F}}] \subseteq \ann{\ker(T)}\cap\ordercontn{\vlat{E}}$.  We show that if $T^\sim(\varphi) \in \ordercontn{\vlat{E}}$ for some $\varphi\in \vlat{F}^\sim$ then $\varphi\in \ordercontn{\vlat{F}}$.  From this and (i) it follows that $T^\sim[\ordercontn{\vlat{F}}] = \ann{\ker(T)}\cap\ordercontn{\vlat{E}}$.  We observe that it suffices to consider positive $\varphi\in\vlat{F}^\sim$.  Indeed, $T$ is a surjective lattice homomorphism and therefore, by Proposition \ref{Prop:  Interval Preserving vs Lattice Homomorphism} (ii), also interval preserving.  Hence by Theorem \ref{Thm:  Adjoints of interval preserving vs lattice homomorphisms} (iii), $T^\sim$ is a lattice homomorphism.

Suppose that $0\leq \varphi\in\vlat{F}^\sim$ and that $T^\sim(\varphi) \in \ordercontn{\vlat{E}}$.  Let $A\downarrow 0$ in $\vlat{F}$.  Define $B\defeq T^{-1}[A]\cap \vlat{E}^+$.  Then $B$ is downward directed and $T[B]=A$.  In particular, $\varphi[A] = T^\sim(\varphi)[B]$.  Let $C\defeq \{w\in \vlat{E} ~:~ 0\leq w\leq v \text{ for all } v\in B\}$.  If $w\in C$ then $0\leq T(w) \leq u$ for all $u\in A$ so that $T(w) = 0$.  Hence $C\subseteq \ker(T)$.  Since $\vlat{E}$ is Archimedean, we have $B-C\downarrow 0$ in $\vlat{E}$, see \cite[Theorem~$22.5$]{LuxemburgZaanen1971RSI}.  Since $T^\sim(\varphi)$ is order continuous, $T^\sim(\varphi)[B-C]\downarrow 0$; that is, for every $\epsilon>0$ there exists $v\in B$ and $w\in C$ so that $\varphi(T(v)) = \varphi (T(v-w)) = T^\sim(\varphi)(v-w)<\epsilon$.  Hence, for every $\epsilon>0$ there exists $u\in A$ so that $\varphi(u)<\epsilon$.  This shows that $\varphi[A]\downarrow 0$ so that $\varphi\in \ordercontn{\vlat{F}}$ as required.
\end{proof}Let $I$ be a non-empty set and let $\vlat{E}_\alpha$ be a vector lattice for every $\alpha\in I$. Then $\dsprod_{\alpha\in I} \vlat{E}_\alpha$ is a vector lattice with respect to the coordinate-wise operations.  If the index set is clear form the context, we omit it and write $\dsprod \vlat{E}_\alpha$.  For $\beta\in I$ let $\pi_\beta:\dsprod\vlat{E}_\alpha\to \vlat{E}_\beta$ be the coordinate projection onto $\vlat{E}_\beta$ and $\iota_\beta:\vlat{E}_\beta\to\dsprod \vlat{E}_\alpha$ the right inverse of $\pi_\beta$ given by
\[
\pi_\alpha(\iota_\beta(u))= \left\{\begin{array}{lll}
u & \text{if} & \alpha=\beta \smallskip \\
0 & \text{if} & \alpha\neq \beta.\\
\end{array}\right.
\]
We denote by $\displaystyle\bigoplus \vlat{E}_\alpha$ the ideal in $\dsprod \vlat{E}_\alpha$ consisting of $u\in \dsprod \vlat{E}_\alpha$ for which $\pi_\alpha(u)\neq 0$ for only finitely many $\alpha\in I$.  The following properties of $\dsprod \vlat{E}_\alpha$ and $\displaystyle \bigoplus \vlat{E}_\alpha$ are used frequently in the sequel.

\begin{thm}\label{Thm:  Properties of product of vector lattices.}
Let $I$ be a non-empty set and $\vlat{E}_\alpha$ a vector lattice for every $\alpha\in I$.  The following statements are true. \begin{itemize}
    \item[(i)] The coordinate projections $\pi_\beta$ and their right inverses $\iota_\beta$ are normal, interval preserving lattice homomorphisms.\smallskip
    \item[(ii)] $\dsprod \vlat{E}_\alpha$ is Archimedean if and only if each $\vlat{E}_\alpha$ is Archimedean.\smallskip
    \item[(iii)] $\dsprod \vlat{E}_\alpha$ is Dedekind complete if and only if each $\vlat{E}_\alpha$ is Dedekind complete.\smallskip
    \item[(iv)] If $I$ has non-measurable cardinal, then the order dual of $\dsprod \vlat{E}_\alpha$ is $\displaystyle \bigoplus \vlat{E}_\alpha^\sim$.\smallskip
    \item[(v)] The order continuous dual of $\dsprod\vlat{E}_\alpha$ is $\displaystyle \bigoplus \ordercontn{(\vlat{E}_\alpha)}$.\smallskip
    \item[(vi)] The order dual of $\displaystyle \bigoplus \vlat{E}_\alpha$ is $\dsprod \vlat{E}_\alpha^\sim$.\smallskip
    \item[(vii)] The order continuous dual of $\displaystyle \bigoplus \vlat{E}_\alpha$ is $\dsprod \ordercontn{\left( \vlat{E}_\alpha \right)}$.
\end{itemize}
\end{thm}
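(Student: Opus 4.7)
The plan is to prove the seven parts in turn, handling (i)--(iii) and (v)--(vii) by direct coordinate-wise arguments and reducing (iv), the only delicate step, to the classical fact that $\orderdual{(\R^I)} = \bigoplus_I \R$ when $|I|$ is non-measurable.

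Parts (i), (ii), and (iii) follow from the coordinate-wise definition of the lattice operations: suprema and infima in $\dsprod \vlat{E}_\alpha$, when they exist, are computed coordinate-wise, so both $\pi_\beta$ and $\iota_\beta$ preserve them, and the interval-preserving property is witnessed by the sections $w \mapsto \iota_\beta(\pi_\beta(w))$ for $\pi_\beta$ and $w \mapsto \iota_\beta(w)$ for $\iota_\beta$. The Archimedean and Dedekind completeness statements transfer in both directions by the same observation. For (vi) and (vii) the candidate isomorphism $\varphi \mapsto (\varphi \circ \iota_\alpha)_\alpha$ has inverse $(\psi_\alpha) \mapsto \bigl(u \mapsto \sum_{\alpha \in \supp u} \psi_\alpha(\pi_\alpha(u))\bigr)$, whose sums are all finite; bijectivity, positivity, and order boundedness of the inverse are routine, and for (vii) one notes that any net $A \downarrow 0$ in $\bigoplus \vlat{E}_\alpha$ is, below a fixed $a_0 \in A$, supported in the finite set $\supp(a_0)$, so the sum collapses to finitely many order continuous terms.

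Part (v) uses order continuity of $\varphi$ to kill infinite support. After reducing to $\varphi \geq 0$, suppose $\{\alpha : \varphi_\alpha := \varphi \circ \iota_\alpha \neq 0\}$ contains a sequence $(\alpha_n)$ and normalise $u_n \in \vlat{E}_{\alpha_n}^+$ so that $\varphi_{\alpha_n}(u_n) = 1$. The partial sums $v_N := \sum_{n=1}^N \iota_{\alpha_n}(u_n)$ increase coordinate-wise to the element $v \in \dsprod \vlat{E}_\alpha$ with $\pi_{\alpha_n}(v) = u_n$ and all other coordinates zero, whence order continuity yields the contradiction $N = \varphi(v_N) \to \varphi(v) < \infty$. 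The identity $\varphi = \sum_\alpha \varphi_\alpha \circ \pi_\alpha$ then follows by applying order continuity to $u - \sum_{\alpha \in F} \iota_\alpha(\pi_\alpha(u)) \downarrow 0$ over finite $F \subseteq I$.

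The main obstacle is (iv). The truncation bound $\sum_{\alpha \in F} \varphi_\alpha(u_\alpha) \leq \varphi(u)$, applied to a $u$ with $\varphi_\alpha(u_\alpha) = 1$ on the support of $(\varphi_\alpha)$, still forces that support to be finite. What remains is to prove that $\chi := \varphi - \sum_\alpha \varphi_\alpha \circ \pi_\alpha \geq 0$, which vanishes on $\bigoplus \vlat{E}_\alpha$, is identically zero on $\dsprod \vlat{E}_\alpha$; order continuity is no longer available here. I would reduce to the classical $\R^I$ case: for any $v \in (\dsprod \vlat{E}_\alpha)^+$ with every $v_\alpha \neq 0$, the map $T : \R^I \to \dsprod \vlat{E}_\alpha$ defined by $(r_\alpha) \mapsto (r_\alpha v_\alpha)_\alpha$ is a positive lattice homomorphism with $T[\bigoplus_I \R] \subseteq \bigoplus \vlat{E}_\alpha$, so $\chi \circ T$ is a positive order bounded functional on $\R^I$ that vanishes on $\bigoplus_I \R$. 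Invoking the classical theorem (a consequence of Ulam's theorem) that $\orderdual{(\R^I)} = \bigoplus_I \R$ when $|I|$ is non-measurable, one concludes $\chi \circ T = 0$; evaluating at the constant vector $(1)_\alpha$ gives $\chi(v) = 0$, and positivity of $\chi$ propagates this to all of $(\dsprod \vlat{E}_\alpha)^+$.
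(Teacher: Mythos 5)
Your proposal is correct and follows essentially the same route as the paper: parts (i)--(iii), (vi), (vii) are left there as routine coordinate-wise checks, your finite-support and order-continuity arguments for (v) match the paper's, and your reduction of (iv) to $\R^I$ via $(r_\alpha)\mapsto (r_\alpha v_\alpha)$ is precisely the paper's auxiliary functional $\hat\varphi(f)=\varphi(fu)$, with the non-measurable-cardinal hypothesis entering through the same classical description of $\orderdual{(\R^I)}$ (which the paper obtains from realcompactness of discrete $I$ together with the Gould--Mahowald representation). The only cosmetic difference is that the paper proves injectivity of $S(\varphi)=(\iota_\alpha^\sim(\varphi))_\alpha$ and constructs a right inverse separately, whereas you establish the decomposition $\varphi=\sum_\alpha \varphi_\alpha\circ\pi_\alpha$ in one step by showing the positive residual $\chi$ vanishes.
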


We leave the straightforward proofs of (i), (ii), (iii), (vi) and (vii) to the reader.

\begin{proof}[Proof of (iv)]
Assume that $I$ has non-measurable cardinal.  By (i) of this theorem and Theorem \ref{Thm:  Adjoints of interval preserving vs lattice homomorphisms} (iii) and (iv), $\iota_\beta^\sim : \orderdual{\left(\dsprod \vlat{E}_\alpha\right)}\to \orderdual{\vlat{E}}_\beta$ is an interval preserving normal lattice homomorphism for every $\beta\in I$.  Because each $\varphi\in \orderdual{\left(\dsprod \vlat{E}_\alpha\right)}$ is linear and order bounded, the set $I_\varphi \defeq \{\beta \in I ~:~ \iota_\beta^\sim(\varphi)\neq 0 \}$ is finite for every $\varphi\in \orderdual{\left(\dsprod \vlat{E}_\alpha\right)}$.  Define $S:\orderdual{\left(\dsprod \vlat{E}_\alpha\right)} \to \displaystyle \bigoplus \orderdual{\vlat{E}}_\alpha$ by setting
\[
S(\varphi) \defeq (\iota_\alpha^\sim(\varphi))_{\alpha\in I},~~ \varphi\in \orderdual{\left(\dsprod \vlat{E}_\alpha\right)}.
\]Then $S$ is a lattice homomorphism. It remains to verify that $S$ is bijective.

We show that $S$ is injective.  Let $0\neq \varphi \in \orderdual{\left(\dsprod \vlat{E}_\alpha\right)}$.  Fix $0\leq u\in\dsprod\vlat{E}_\alpha$ so that $\varphi(u)\neq 0$.  For $f\in \R^I$ let $fu\in\dsprod \vlat{E}_\alpha$ be defined by $\pi_\alpha (fu) = f(\alpha)\pi_\alpha (u)$, $\alpha\in I$.  Define $\hat\varphi : \R^I\to \R$ by setting
\[
\hat \varphi (f) \defeq \varphi(fu),~~ f\in \R^I.
\]
Then $\hat\varphi$ is a non-zero order bounded linear functional on $\R^I$.  Because $I$ has nonmeasurable cardinal, $I$ equipped with the discrete topology is realcompact, see \cite[\S 12.2]{GillmanJerison1960}.  Therefore there exists a non-zero finitely supported and countably additive measure $\mu$ on the powerset $2^I$ of $I$ so that
\[
\hat\varphi (f) = \int_I f \thinspace d\mu = \sum_{\alpha \in I} f(\alpha)\mu(\alpha),~~ f\in \R^I,
\]
see \cite[Theorem 4.5]{GouldMahowald1962}.  Let $\alpha$ be in the support of $\mu$, and let $g$ be the indicator function of $\{\alpha\}$.  Then $0\neq \mu(\alpha)=\hat \varphi(g) = \varphi(gu) = \iota_\alpha^\sim(\varphi)(\pi_\alpha(u))$.  Therefore $S(\varphi)\neq 0$ so that $S$ is injective.

To see that $S$ is surjective, observe that for every $\beta\in I$, $\pi_\beta^\sim :\orderdual{\vlat{E}}_\beta\to \orderdual{\left(\dsprod \vlat{E}_\alpha\right)}$ is an interval preserving normal lattice homomorphism by (i) of this theorem and Theorem \ref{Thm:  Adjoints of interval preserving vs lattice homomorphisms} (iii) and (iv).  Define $T: \displaystyle \bigoplus \orderdual{\vlat{E}}_\alpha\to \orderdual{\left(\dsprod \vlat{E}_\alpha\right)}$ by setting
\[
T(\psi) \defeq \sum \pi_\alpha^\sim(\psi_\alpha),~~ \psi=(\psi_\alpha)\in \bigoplus \orderdual{\vlat{E}}_\alpha.
\]
Then $T$ is a positive operator.  We claim that $S\circ T$ is the identity on $\displaystyle \bigoplus \orderdual{\vlat{E}}_\alpha$.  Indeed, for any $\psi\in \displaystyle \bigoplus \orderdual{\vlat{E}}_\alpha$ we have
\[
S(T(\psi)) = \sum_{\alpha\in I}(\iota_\beta^\sim(\pi_\alpha^\sim(\psi_\alpha)))_{\beta\in I} = \sum_{\alpha\in I}(\psi_\alpha\circ \pi_\alpha\circ\iota_\beta )_{\beta\in I}.
\]By definition of the $\iota_\beta$ it follows that $S(T(\psi)) = \psi$ which verifies our claim. Therefore $S$ is a lattice isomorphism.
\end{proof}

\begin{proof}[Proof of (v)]
Define $S:\orderdual{\left(\dsprod \vlat{E}_\alpha\right)} \to \displaystyle \bigoplus \orderdual{\vlat{E}}_\alpha$ as in the proof of (iv).  By (i) of this theorem and Theorem \ref{Thm:  Adjoints of interval preserving vs lattice homomorphisms} (ii), $S$ maps $\ordercontn{\left(\dsprod \vlat{E}_\alpha\right)}$ into $\displaystyle \bigoplus \ordercontn{(\vlat{E}_\alpha)}$.  A similar argument to that given in the proof of (iv) shows that $S$ is a surjective lattice homomorphism.  Hence it remains to show that $S$ is injective.

Let $0\leq \varphi \in \ordercontn{\left(\dsprod \vlat{E}_\alpha\right)}$ and suppose that $S(\varphi) = 0$.  Then $\iota_\beta^\sim(\varphi) = 0$ for every $\beta\in I$.  But for any $0\leq u \in \dsprod\vlat{E}_\alpha$,
\[
u = \sup\left\lbrace \sum_{\alpha \in F}\iota_\alpha(u) ~:~F\subseteq I \text{ is finite} \right\rbrace.
\]
Therefore by the order continuity of $\varphi$,
\[
\varphi(u) = \sup \left\lbrace \sum_{\alpha \in F}\iota_\alpha^\sim (\varphi)(u) ~:~ F\subseteq I \text{ is finite}\right\rbrace = 0
\]
for all $0\leq u \in \dsprod\vlat{E}_\alpha$; hence $\varphi=0$.  Because $S$ is a lattice homomorphism it follows that, for all $\varphi\in \ordercontn{\left(\dsprod \vlat{E}_\alpha\right)}$, if $S(\varphi) = 0$ then $\varphi=0$; that is, $S$ is injective.
\end{proof}

\begin{remark}
We note that, in general, the statement in Theorem 2.5 (iv) is not true if $I$ has measurable cardinal:  In this case the map $S$ in the proof of Theorem 2.5 (iv) may fail to be injective.  To see this, suppose that $I$ has measurable cardinal.  Then $I$ equipped with the discrete topology is not realcompact.  We identify $\R^I$ with $\cont (\upsilon I)$. Let $x\in \upsilon I\setminus I$.  Then $\delta_x :\R^I \ni u\mapsto u(x)\in \R$ is a non-zero, positive linear functional on $\R^I$, but $S(\delta_x)=0$.
\end{remark}


We now define the categories which are the setting of this paper.  It is readily verified that these are indeed categories.

\begin{table}[H]

\begin{tabular}{ |l|l|l| }
\hline
${}$ & \quad\textsc{Objects}\quad & \quad\textsc{Morphisms}\quad \\
\hline
$\vlc$ & Vector lattices & Lattice homomorphisms\\
\hline
$\nvlc$ & Vector lattices & Normal lattice homomorphisms\\
\hline
$\vlic$ & Vector lattices & Interval preserving lattice homomorphisms\\
\hline
$\nvlic$ & Vector lattices & Normal, interval preserving lattice homomorphisms\\
\hline
\end{tabular}

\end{table}

We refer to these four categories as \emph{categories of vector lattices}.  If ${\bf C}$ is a category of vector lattices, then a ${\bf C}$-morphism is a morphism within the category ${\bf C}$. Below we depict the subcategory relationships among the categories of vector lattices under consideration.

\begin{figure}[H]

\begin{tikzcd}[row sep = 4pt , column sep = 7pt]
{}   & \nvlc \arrow[symbol=\supseteq]{rd} & {} \\
\vlc \arrow[symbol=\supseteq]{ru} \arrow[symbol=\supseteq]{rd} & {} & \nvlic \\
{}   & \vlic \arrow[symbol=\supseteq]{ru} & {}\\
\end{tikzcd}
\end{figure}

\subsection{Measures on topological spaces}\label{Subsection:  Realcompact spaces preliminaries}

Because the terminology related to measures on topological spaces varies across the literature, we declare our conventions.  Let $X$ be a Hausdorff topological space.  For a function $u:X\to\R$ we denote by $\zeroset{u}$ the \emph{zero set} of $u$ and by $\cozeroset{u}$ its \emph{co-zero set}, that is, the complement of $\zeroset{u}$.  If $A\subseteq X$ then $\onefunction_A$ denotes the indicator function of $A$.

Denote by $\borel{X}$ the Borel $\sigma$-algebra generated by the open sets in $X$. A \emph{(signed) Borel measure} on $X$ is a real-valued and $\sigma$-additive function on $\borel{X}$.  We denote the space of all signed Borel measures on $X$ by $\vlat{M}_\sigma(X)$.  This space is a Dedekind complete vector lattice with respect to the pointwise operations and order \cite[Theorem 27.3]{Zaanen1997Introduction}.  In particular, for $\mu,\nu\in \vlat{M}_\sigma(X)$,
\[
(\mu\vee \nu)(B) = \sup\left\lbrace \mu(A)+\nu(B\setminus A) ~:~ A\subseteq B,~~ A \in \borel{X} \right\rbrace,~~ B \in \borel{X}.
\]For any upward directed set $D\subseteq \vlat{M}_\sigma(X)^+$ with $\sup D=\nu$ in $\vlat{M}_\sigma(X)$,
\begin{eqnarray}
\nu(B) = \sup\{\mu(B) ~:~ \mu\in D\},~~ B \in \borel{X}.\label{EQ:  Sup of set of measures}
\end{eqnarray}
Following Bogachev \cite{Bogachev2007}, we call a Borel measure $\mu$ on $X$ a \emph{Radon measure} if for every $B \in \borel{X}$,
\[
|\mu|(B)=\sup\{|\mu|(K) ~:~ K\subseteq B \text{ is compact}\}.
\]
Equivalently, $\mu$ is Radon if for every $B \in \borel{X}$ and every $\epsilon>0$ there exists a compact set $K\subseteq B$ so that $|\mu|(B\setminus K)<\epsilon$.  Observe that if $\mu$ is Radon, then also
\[
|\mu|(B)=\inf\{ |\mu|(U) ~:~ U\supseteq B \text{ is open}\}.
\]
Denote the space of Radon measures on $X$ by $\vlat{M}(X)$.

Recall that the \emph{support} of a Borel measure $\mu$ on $X$ is defined as
\[
S_\mu \defeq \{x\in X ~:~ |\mu|(U)>0 \text{ for all } U\ni x \text{ open}\}.
\]
A non-zero Borel measure $\mu$ may have empty support, and even if $S_\mu\neq \emptyset$, it may have measure zero \cite[Vol. II, Example 7.1.3]{Bogachev2007}.  However, if $\mu$ is a non-zero Radon measure, then $S_\mu \neq \emptyset$ and $|\mu|(S_\mu)=|\mu|(X)$; in fact, for every $B \in \borel{X}$, $|\mu|(B)=|\mu|(B\cap S_\mu)$.  We list the following useful properties of the support of a measure.  These are well known for measures on locally compact spaces, see for instance \cite[Chapter 4]{DalesDashiellLauStrass2016}, with proofs that also apply in our setting.  The proofs are therefore omitted.

\begin{prop}\label{Prop:  Properties of support of a measure}
Let $\mu$ and $\nu$ be Radon measures on $X$.  The following statements are true. \begin{enumerate}
    \item[(i)] If $|\mu|\leq |\nu|$ then $S_\mu\subseteq S_\nu$.
    \item[(ii)] $S_{\mu+\nu}\subseteq S_{|\mu|+|\nu|}$
    \item[(iii)] $S_{|\mu|+|\nu|} = S_\mu \cup S_\nu$.
\end{enumerate}
\end{prop}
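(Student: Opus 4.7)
All three assertions follow from unpacking the definition of support into its topological content, namely that $x \in S_\mu$ means every open neighbourhood of $x$ has strictly positive $|\mu|$-measure. The plan is to translate each claim into a statement about open neighbourhoods and then invoke elementary lattice inequalities in $\vlat{M}_\sigma(X)$.

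For (i), I would argue contrapositively at the level of neighbourhoods: if $x \in S_\mu$ and $U \ni x$ is open, then $|\mu|(U)>0$, and from $|\mu|\leq |\nu|$ it follows immediately that $|\nu|(U)\geq |\mu|(U)>0$, so $x \in S_\nu$. For (ii), the triangle inequality $|\mu+\nu|\leq |\mu|+|\nu|$ holds in the Dedekind complete vector lattice $\vlat{M}_\sigma(X)$; applying (i) to the two positive measures $|\mu+\nu|$ and $|\mu|+|\nu|$ (the latter of which coincides with its own modulus because it is already positive) gives the desired inclusion.

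For (iii), the inclusion $S_\mu \cup S_\nu \subseteq S_{|\mu|+|\nu|}$ is another direct application of (i), using $|\mu|\leq |\mu|+|\nu|$ and $|\nu|\leq |\mu|+|\nu|$. The reverse inclusion $S_{|\mu|+|\nu|}\subseteq S_\mu\cup S_\nu$ is the one step with any content: given $x\notin S_\mu\cup S_\nu$, I would pick open neighbourhoods $U_1, U_2$ of $x$ with $|\mu|(U_1)=0$ and $|\nu|(U_2)=0$, and then take $U = U_1\cap U_2$. Monotonicity of $|\mu|$ and $|\nu|$ forces $|\mu|(U)=|\nu|(U)=0$, whence $(|\mu|+|\nu|)(U)=0$, so $x\notin S_{|\mu|+|\nu|}$.

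There is no substantive obstacle here; the only point requiring mild attention is the observation that the support of a positive measure $\lambda$ is defined directly via $\lambda(U)>0$ for every open $U\ni x$, since $|\lambda|=\lambda$. This is why the argument for (iii) reduces cleanly to intersecting two neighbourhoods of measure zero. The Radonness hypothesis is not actually used in any of the three parts—the arguments work for arbitrary signed Borel measures—which is consistent with the authors' remark that the proofs are straightforward.
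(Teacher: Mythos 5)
Your proof is correct, and since the paper explicitly omits the proof as "straightforward," your neighbourhood-by-neighbourhood argument is exactly the intended one: (i) is immediate from the definition of support, (ii) follows from the lattice triangle inequality $|\mu+\nu|\leq|\mu|+|\nu|$ together with (i), and the only nontrivial inclusion in (iii) is handled correctly by intersecting two null neighbourhoods. Your observation that the Radon hypothesis plays no role here is also accurate.
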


A Radon measure $\mu$ is called \emph{compactly supported} if $S_\mu$ is compact.  We denote the space of all compactly supported Radon measures on $X$ as $\vlat{M}_c(X)$. Further, a Radon measure $\mu$ on $X$ is called a \emph{normal measure} if $|\mu|(L)=0$ for all closed nowhere dense sets $L$ in $X$.  The space of all normal Radon measures on $X$ is denoted $\vlat{N}(X)$, and the space of compactly supported normal Radon measures by $\vlat{N}_c(X)$.

\begin{thm}
The following statements are true. \begin{enumerate}
    \item[(i)] $\vlat{M}(X)$ is an band in $\vlat{M}_\sigma(X)$
    \item[(ii)] $\vlat{M}_c(X)$ is an ideal in $\vlat{M}(X)$.
    \item[(iii)] $\vlat{N}(X)$ is a band in $\vlat{M}(X)$.
    \item[(iv)] $\vlat{N}_c(X)$ is a band in $\vlat{M}_c(X)$.
\end{enumerate}
\end{thm}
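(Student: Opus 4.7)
The plan is to verify statements (i)--(iv) sequentially. A single observation streamlines all order-closedness arguments: since $\vlat{M}_\sigma(X)$ is Dedekind complete, whenever $\vlat{J}$ is an ideal of $\vlat{M}_\sigma(X)$ and $D\uparrow\nu$ in $\vlat{J}$, then $\nu=\sup D$ also holds in $\vlat{M}_\sigma(X)$ (take the $\vlat{M}_\sigma(X)$-supremum $\nu'$; then $0\leq\nu'\leq\nu\in\vlat{J}$ forces $\nu'\in\vlat{J}$, which then bounds $D$ above in $\vlat{J}$, yielding $\nu'=\nu$). Consequently the formula $\nu(B)=\sup\{\mu(B):\mu\in D\}$ recalled in the preliminaries applies to such a supremum regardless of the ambient ideal.

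For (i), solidness in $\vlat{M}_\sigma(X)$ follows from $|\mu|(B\setminus K)\leq|\nu|(B\setminus K)$: approximating $B$ from the inside by a compact set $K$ via the Radonness of $\nu$ yields the same approximation for $\mu$. Linearity uses $|a\mu+b\nu|\leq|a||\mu|+|b||\nu|$ with the identical trick, so $\vlat{M}(X)$ is an ideal. Order-closedness is then proved by a two-step $\epsilon/2$-argument: given $D\uparrow\nu$ in $\vlat{M}_\sigma(X)$ with $D\subseteq\vlat{M}(X)^+$ and $B\in\borel{X}$, pick $\mu\in D$ with $\mu(B)>\nu(B)-\epsilon/2$ via the sup formula, then a compact $K\subseteq B$ with $\mu(K)>\mu(B)-\epsilon/2$ by Radonness of $\mu$, and conclude $\nu(K)\geq\mu(K)>\nu(B)-\epsilon$.

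For (ii), solidness is immediate from Proposition \ref{Prop:  Properties of support of a measure}(i): $|\mu|\leq|\nu|$ forces $S_\mu\subseteq S_\nu$, closed inside a compact set. Linearity uses parts (ii) and (iii) of the same proposition to give $S_{\mu+\nu}\subseteq S_\mu\cup S_\nu$ compact. For (iii), normality is preserved under sums and moduli because $|\mu+\nu|\leq|\mu|+|\nu|$ and both summands vanish on any nowhere dense closed $L$, and solidness is immediate from $|\mu|(L)\leq|\nu|(L)=0$; order-closedness then follows from the opening observation combined with the sup formula, yielding $\nu(L)=\sup\{\mu(L):\mu\in D\}=0$. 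Part (iv) is obtained by intersecting: $\vlat{N}_c(X)=\vlat{N}(X)\cap\vlat{M}_c(X)$ is automatically an ideal in $\vlat{M}_c(X)$ by (ii) and (iii), and its order-closedness in $\vlat{M}_c(X)$ is proved exactly as in (iii), this time invoking the opening observation for the ideal $\vlat{M}_c(X)$ in $\vlat{M}_\sigma(X)$.

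The only genuinely non-formal step is the two-part approximation in (i), where one must extract enough slack from both the directed supremum and the inner regularity of a single member of $D$; everything else reduces to algebraic manipulation of supports together with the supremum formula in $\vlat{M}_\sigma(X)$.
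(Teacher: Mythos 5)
Your proof is correct and follows essentially the same route as the paper's: solidness plus the two-step $\epsilon/2$ inner-regularity argument for (i), the support containments from Proposition \ref{Prop:  Properties of support of a measure} for (ii), and the supremum formula in $\vlat{M}_\sigma(X)$ for the band properties in (iii) and (iv). The only difference is that you make explicit the routine fact that a supremum of a directed set computed inside an ideal of $\vlat{M}_\sigma(X)$ agrees with the supremum in $\vlat{M}_\sigma(X)$ itself, a step the paper leaves implicit.
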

\begin{proof}
For the proof of (i), let $\mu,\nu\in\vlat{M}(X)$.  Consider a Borel set $B$ and a real number $\epsilon>0$.  There exists a compact set $K\subseteq B$ so that $|\mu|(B\setminus K)<\epsilon/2$ and $|\nu|(B\setminus K)<\epsilon/2$.  We have $|\mu+\nu|(B\setminus K)\leq |\mu|(B\setminus K) + |\nu|(B\setminus K)<\epsilon$.  Therefore $\mu+\nu \in \vlat{M}(X)$.  A similar argument shows that $a\mu\in \vlat{M}(X)$ for all $a\in\R$.  It also follows in this way that for all $\nu\in\vlat{M}_\sigma(X)$ and $\mu\in \vlat{M}(X)$, if $|\nu|\leq |\mu|$ then $\nu\in \vlat{M}(X)$.  By definition of a Radon measure, $|\mu|\in\vlat{M}(X)$ whenever $\mu\in\vlat{M}(X)$.  Therefore $\vlat{M}(X)$ is an ideal in $\vlat{M}_\sigma(X)$.

To see that $\vlat{M}(X)$ is a band in $\vlat{M}_\sigma(X)$, consider an upward directed subset $D$ of $\vlat{M}(X)^+$ so that $\sup D=\nu$ in $\vlat{M}_\sigma(X)$.  Fix a Borel set $B$ and a real number $\epsilon>0$.  There exists $\mu\in D$ so that $\nu(B)-\epsilon/2<\mu(B)$.  But $\mu$ is a Radon measure, so there exists a compact subset $K$ of $B$ so that $\mu(K)>\mu(B)-\epsilon/2$.  Therefore $\nu(K)\geq \mu(K)>\mu(B)-\epsilon/2> \nu(B)-\epsilon$.  Therefore $\nu \in \vlat{M}(X)$ so that $\vlat{M}(X)$ is a band in $\vlat{M}_\sigma(X)$.

The statement in (ii) follows immediately from the definition of the support of a measure and Proposition \ref{Prop:  Properties of support of a measure}. It is clear that $\vlat{N}(X)$ is an ideal in $\vlat{M}(X)$, and that it is a band follows from (\ref{EQ:  Sup of set of measures}).  Hence (iii) is true.  That (iv) is true follows immediately from (ii) and (iii).
\end{proof}

Unsurprisingly, there is a close connection between Radon measures on $X$ and order bounded linear functionals on $\contX$.  Theorem \ref{Thm:  Riesz Representation Theorem for C(X)} to follow is implicit in \cite[Corollary 1 (p. 106) \& Theorems 4.2, 4.5]{GouldMahowald1962}, see also \cite{Hewitt1950} where a treatment is given in terms of Baire measures.  In order to facilitate the discussion of order continuous functionals to follow, we include the proof.

\begin{thm}\label{Thm:  Riesz Representation Theorem for C(X)}
Let $X$ be a realcompact space.  There is a lattice isomorphism $\orderdual{\contX}\ni \varphi \longmapsto \mu_\varphi \in \vlat{M}_c(X)$ so that for every $\varphi\in \orderdual{\contX}$,
\[
\varphi(u) = \int_X u \thinspace d\mu_\varphi,~~ u\in\contX.
\]
\end{thm}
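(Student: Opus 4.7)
The strategy is to reduce to the classical Riesz representation theorem on a compact subset of $X$, by first establishing that every order bounded functional on $\contX$ has compact support in an appropriate sense. Since $\orderdual{\contX}$ is Dedekind complete, any $\varphi \in \orderdual{\contX}$ decomposes as $\varphi = \varphi^+ - \varphi^-$, so I may restrict attention to $0 \leq \varphi \in \orderdual{\contX}$.

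The heart of the proof is the following \emph{compact support lemma}: for every positive $\varphi \in \orderdual{\contX}$, there exists a compact set $K_\varphi \subseteq X$ such that $\varphi(u) = 0$ whenever $u \in \contX$ vanishes on a neighbourhood of $K_\varphi$. This step is the main obstacle and is precisely where realcompactness is essential. I would argue by contradiction: if no such compact set existed, then for an exhausting family of compacta $\{K_\alpha\}$ in $X$, one can choose $0 \leq u_\alpha \in \contX$ with $u_\alpha$ vanishing on a neighbourhood of $K_\alpha$ and $\varphi(u_\alpha) \geq 1$. Using the Hewitt--Nachbin description of realcompact spaces (every point of $\beta X \setminus X$ is separated from $X$ by an unbounded element of $\contX$), one can amalgamate the $u_\alpha$ into a single function $v \in \contX^+$ that dominates unboundedly many $n u_{\alpha_n}$ on disjoint cozero sets, contradicting the fact that $\varphi[[0,v]]$ is bounded.

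Once $K_\varphi$ is fixed, the restriction $\contX \to \cont(K_\varphi)$ is surjective by Tietze's theorem, and the lemma ensures that $\varphi$ descends to a well-defined positive linear functional $\tilde{\varphi}$ on the Banach lattice $\cont(K_\varphi)$. The classical Riesz representation theorem then yields a unique Radon measure $\tilde{\mu}_\varphi$ on $K_\varphi$ with $\tilde{\varphi}(v) = \int_{K_\varphi} v \, d\tilde{\mu}_\varphi$; extending by zero to $X$ gives a compactly supported Radon measure $\mu_\varphi \in \vlat{M}_c(X)$. Verifying $\varphi(u) = \int_X u \, d\mu_\varphi$ for all (not merely bounded) $u \in \contX$ reduces, via the lemma, to showing that both sides are unchanged when $u$ is replaced by $u \cdot \chi$ for a suitable continuous cut-off $\chi$ supported in a compact neighbourhood of $K_\varphi$.

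The final task is to check that the map $\Phi : \varphi \mapsto \mu_\varphi$ is a lattice isomorphism $\orderdual{\contX} \to \vlat{M}_c(X)$. Linearity and positivity of $\Phi$ are immediate from the construction; injectivity follows because $\contX|_{K_\varphi} = \cont(K_\varphi)$ separates points of $\vlat{M}(K_\varphi)$. For surjectivity, given $\mu \in \vlat{M}_c(X)$ with support $K$, the functional $u \mapsto \int_X u \, d\mu$ is order bounded because $|\int u \, d\mu| \leq \|u|_K\|_\infty \cdot |\mu|(X)$ for $u \in [-v,v]$ with $v \in \contX^+$. The lattice homomorphism property then follows by comparing the Riesz--Kantorovich formula for $\varphi \vee \psi$ on positive elements with the known formula for $\mu_\varphi \vee \mu_\psi$ in $\vlat{M}_\sigma(X)$, reducing to a common compact $K \supseteq K_\varphi \cup K_\psi$ where the classical case applies.
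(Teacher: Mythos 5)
Your overall architecture is genuinely different from the paper's: you work \emph{downward} to a compact set $K_\varphi\subseteq X$ and apply the classical Riesz theorem there, whereas the paper works \emph{upward}, identifying $\contbX$ with $\cont(\beta X)$, representing the restricted functional by a Radon measure on $\beta X$, showing via the cited results of Gould--Mahowald that its support already lies in $X$, and then transferring measures from $\beta X$ to $X$. Granting your compact support lemma, the rest of your argument can be made to work: descending to $\cont(K_\varphi)$ needs the standard $\epsilon$-argument upgrading ``$\varphi(u)=0$ whenever $u$ vanishes near $K_\varphi$'' to ``$\varphi(u)=0$ whenever $u$ vanishes on $K_\varphi$'' (the two are not the same, and well-definedness of $\tilde\varphi$ requires the latter), and the lattice isomorphism claim follows most quickly from bipositivity rather than the Riesz--Kantorovich formula; but these are routine repairs.

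The genuine gap is your proof of the compact support lemma. The exhaustion-and-amalgamation argument presupposes structure that a general realcompact space lacks. There need not be a countable cofinal family of compacta, so extracting a sequence $u_{\alpha_1},u_{\alpha_2},\dots$ is already unjustified; more seriously, each $u_\alpha$ is only required to vanish on a neighbourhood of $K_\alpha$, so its cozero set is otherwise uncontrolled and nothing forces the $u_{\alpha_n}$ onto disjoint cozero sets or makes $\sum_n n\,u_{\alpha_n}$ dominated by a single element of $\contX$. The argument does succeed when $X$ is hemicompact (e.g.\ locally compact and $\sigma$-compact), since then all but finitely many $u_{\alpha_n}$ vanish identically on a fixed compact neighbourhood of any given point. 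But take $X$ an uncountable discrete space of non-measurable cardinal: $X$ is realcompact, compacta are finite, and your lemma becomes the assertion that every positive functional on $\R^X$ is finitely supported. A positive $\varphi$ vanishing on all finitely supported functions satisfies your ``no compact support'' hypothesis -- for any fixed $u$ with $\varphi(u)\geq 1$ put $u_F\defeq u(\onefunction_X-\onefunction_F)$ -- yet no amalgamation of the $u_F$ produces a contradiction; excluding such a $\varphi$ is exactly the measure-theoretic content of realcompactness of a discrete space (non-measurability of its cardinal), which is the substance of the paper's Theorem \ref{Thm:  Properties of product of vector lattices.}~(iv) and the reason the paper routes the proof through $\beta X$ and the Hewitt/Gould--Mahowald results. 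Your parenthetical appeal to the Hewitt--Nachbin characterisation does not close this hole, because your argument never produces a point of $\beta X\setminus X$ (or a measure charging one) to which that characterisation could be applied. The lemma itself is true, but at this step you must import the Gould--Mahowald machinery (or equivalently pass through $\beta X$ as the paper does) rather than argue by exhaustion.
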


\begin{proof}
We identify the space $\contbX$ with $\cont\left(\beta X\right)$. Because $\contbX$ is an ideal in $\contX$, the restriction map from $\orderdual{\contX}$ to $\orderdual{\contbX}$ is a lattice homomorphism \cite[Section 1.3, Exercise 1]{AliprantisBurkinshaw2006}.  It follows from \cite[Theorem 1]{Hewitt1950} that this map is injective.  Thus by the Riesz Representation Theorem \cite[Theorem 18.4.1]{Semadeni1971}, for every $\varphi\in\orderdual{\contX}$ there exists a unique Radon measure $\nu_\varphi$ on $\beta X$ so that
\[
\varphi(u) = \int_{\beta X}u \thinspace d\nu_\varphi,~~ u\in\contbX.
\]
Furthermore, the map $\varphi\mapsto \nu_\varphi$ is a lattice isomorphism onto its range.

We claim that the range of this map is $\vlat{M}_0(\beta X)\defeq \{\nu \in \vlat{M}(\beta X) ~:~ S_\nu \subseteq X\}$.  According to \cite[Theorem 4.4]{GouldMahowald1962}, $S_{\nu_\varphi}\subseteq X$ for every $\varphi\in \orderdual{\contX}$.  Hence $\nu_\varphi\in \vlat{M}_0(\beta X)$.  Conversely, let $\nu\in \vlat{M}_0(\beta X)$.  Since $S_\nu \subseteq X$ is compact in $\beta X$, hence also in $X$,
\[
\psi(u) \defeq \int_{S_{\nu}} u \thinspace d\nu,~~ u\in\contX
\]
defines an order bounded functional on $\contX$.  For every $u\in \contbX$ we have
\[
\int_{\beta X} u \thinspace d\nu  = \int_{S_{\nu}} u \thinspace d\nu   = \psi(u) = \int_{\beta X}u \thinspace d\nu_\varphi.
\]
Therefore $\nu = \nu_\psi$ which establishes our claim.

We have shown that $\orderdual{\contX}\ni \varphi\mapsto \nu_\varphi \in \vlat{M}_0(\beta X)$ is a lattice isomorphism.  We now show that $\vlat{M}_0(\beta X)$ is isomorphic to $\vlat{M}_c(X)$.

Let $\nu\in \vlat{M}_0(\beta X)$.  The Borel sets in $X$ are precisely the intersections with $X$ of Borel sets in $\beta X$ \cite[p. 108]{GouldMahowald1962}.  Furthermore, if $B', B'' \in \borel{\beta X}$ so that $B'\cap X = B''\cap X$ then $\nu(B')=\nu(B'\cap S_\nu)=\nu(B''\cap S_\nu)=\nu(B'')$.  For $B \in \borel{X}$ define
\[
\nu^\ast (B) \defeq \nu(B') \text{ with }B'\in \borel{\beta X} \text{ so that }B'\cap X=B.
\]
It follows from the previous observation that $\nu^\ast$ is well-defined.  It follows easily that $\nu^\ast\in \vlat{M}_c(X)$, and that the map $\vlat{M}_0(\beta X)\ni \nu\mapsto \nu^\ast\in \vlat{M}_c(X)$ is injective, linear, and bipositive.  Let $\mu\in \vlat{M}_c(X)$.  For every $B\in \borel{\beta X}$ let $\nu(B) \defeq \mu(B\cap X)$.  Then $\nu\in \vlat{M}_0(\beta X)$ and $\nu^\ast = \mu$.  Therefore $\vlat{M}_0(\beta X)\ni \nu\mapsto \nu^\ast\in \vlat{M}_c(X)$ is a lattice isomorphism.

For $\varphi \in \orderdual{\contX}$ let $\mu_\varphi \defeq (\nu_\varphi)^\ast$.  Then $\orderdual{\contX}\ni \varphi\mapsto \mu_\varphi \in \vlat{M}_c(X)$ is a lattice isomorphism. It remains to show that, for every $\varphi \in \orderdual{\contX}$,
\[
\varphi(u) = \int_{X} u \thinspace d\mu_\varphi,~~ u\in\contX.
\]
Fix $0\leq \varphi \in \orderdual{\contX}$ and $u\in\contX^+$.  A minor modification of the proof of \cite[Theorem 3.1]{GouldMahowald1962} shows that there exists a natural number $N$ so that $\varphi( u )=\varphi( u \wedge n\onefunction_X)$ for every $n\geq N$.  But
\[
\int_X u \thinspace d\mu_\varphi = \sup_{n\in\N} \int_X u \wedge n\onefunction_X \thinspace d\mu_\varphi,
\]
and, for every $n\in\N$,
\[
\int_X u \wedge n\onefunction_X \thinspace d\mu_\varphi = \int_{\beta X} u \wedge n\onefunction_X \thinspace d\nu_\varphi = \varphi(u\wedge n\onefunction_X).
\]
Therefore
\[
\varphi(u) = \int_X u \thinspace d\mu_\varphi,
\]
as desired.
\end{proof}

\begin{thm}\label{Thm:  Order continuous functionals on C(X) are normal measures}
Let $X$ be a realcompact space.  Let $\varphi$ be an order bounded functional on $\contX$.  Then $\varphi$ is order continuous if and only if $\mu_\varphi$ is a normal measure.  The map
\[
\ordercontn{\contX}\ni \varphi \longmapsto \mu_{\varphi} \in \vlat{N}_c(X)
\]is a lattice isomorphism onto $\vlat{N}_c(X)$.
\end{thm}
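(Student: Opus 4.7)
The plan is to build on Theorem 2.7, which already furnishes a lattice isomorphism $\orderdual{\contX} \ni \varphi \mapsto \mu_\varphi \in \vlat{M}_c(X)$. Once the equivalence ``$\varphi$ is order continuous if and only if $\mu_\varphi$ is normal'' is established, this isomorphism automatically restricts and co-restricts to a lattice isomorphism $\ordercontn{\contX} \to \vlat{N}_c(X)$. Because $\varphi \mapsto \mu_\varphi$, $\varphi \mapsto |\varphi|$, and $\mu \mapsto |\mu|$ are all lattice homomorphisms, and because order continuity and normality each pass to the modulus and back, it suffices to prove the equivalence for positive $\varphi$ with positive $\mu_\varphi$.

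For the direction ($\Rightarrow$), I would assume $0 \leq \varphi \in \ordercontn{\contX}$ and, for a closed nowhere dense $L \subseteq X$, consider
\[
A \defeq \left\{ u \in \contX ~:~ 0 \leq u \leq \onefunction_X,~ u \geq \onefunction_L \right\},
\]
which is downward directed. Complete regularity of $X$ together with the fact that $L$ has empty interior yields $\inf A = 0$ in $\contX$: if $v \in \contX$ were a lower bound with $v(x_0) > 0$, continuity of $v$ would produce a neighbourhood $W$ of $x_0$ on which $v > 0$, and emptiness of the interior of $L$ would give $y \in W \setminus L$; a Urysohn function then provides $u \in A$ with $u(y) = 0$, contradicting $v(y) \leq u(y)$. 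Order continuity of $\varphi$ then gives $\inf \varphi[A] = 0$, and since $\varphi(u) = \int_X u \, d\mu_\varphi \geq \mu_\varphi(L)$ for every $u \in A$, this forces $\mu_\varphi(L) = 0$.

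The main work lies in ($\Leftarrow$). Suppose $\mu_\varphi$ is normal and $A \downarrow 0$ in $\contX$; we may assume all elements of $A$ lie below a fixed $u_0 \in A$. The function $f(x) \defeq \inf_{u \in A} u(x)$ is upper semicontinuous and non-negative, and for each $n \in \N$ the closed set $K_n \defeq \{x \in X ~:~ f(x) \geq 1/n\}$ has empty interior: if $K_n$ contained a non-empty open set $U$, a Urysohn function would provide a non-zero $v \in \contX$ supported in $U$ with $0 \leq v \leq (1/n)\onefunction_X$, so that $v \leq u$ for every $u \in A$, contradicting $\inf A = 0$. Normality of $\mu_\varphi$ therefore gives $\mu_\varphi(K_n) = 0$ for every $n$, so $\{f > 0\} = \bigcup_n K_n$ is $\mu_\varphi$-null; that is, $f = 0$ holds $\mu_\varphi$-almost everywhere on the compact support $K \defeq S_{\mu_\varphi}$.

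To finish, fix $\epsilon > 0$ and set $M_0 \defeq \sup_{x \in K} u_0(x)$, which is finite by compactness of $K$; the case $M_0 = 0$ is trivial, so assume $M_0 > 0$. Choose $\delta > 0$ with $\delta \cdot \mu_\varphi(K) < \epsilon/2$, and use outer regularity of the Radon measure $\mu_\varphi$ to select an open $U \supseteq \{f \geq \delta\}$ with $\mu_\varphi(U) < \epsilon/(2 M_0)$. Then $F \defeq K \setminus U$ is compact, and for each $x \in F$ there exist $u_x \in A$ and an open neighbourhood $V_x$ of $x$ on which $u_x < \delta$. A finite subcover $V_{x_1}, \ldots, V_{x_k}$ together with downward direction of $A$ yields $u^* \in A$ with $u^* \leq u_{x_1} \wedge \cdots \wedge u_{x_k}$, so $u^* < \delta$ on $F$. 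Splitting
\[
\varphi(u^*) = \int_F u^* \, d\mu_\varphi + \int_{K \cap U} u^* \, d\mu_\varphi \leq \delta \cdot \mu_\varphi(K) + M_0 \cdot \mu_\varphi(U) < \epsilon
\]
gives $\inf \varphi[A] = 0$, as required. The hardest step is this last one, where one must marry the upper semicontinuity of $f$, compactness of $K$, and regularity of $\mu_\varphi$ to convert a $\mu_\varphi$-almost-everywhere pointwise statement into uniform smallness on a single $u^* \in A$.
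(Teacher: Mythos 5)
Your proof is correct, but it takes a genuinely different route from the paper's. The paper transfers everything to the Stone--\v{C}ech compactification: it identifies $\contbX$ with $\cont(\beta X)$, notes that the restriction of an order continuous $\varphi$ to the ideal $\contbX$ is again order continuous, and then invokes the known correspondence between order continuous functionals on $\cont(K)$, $K$ compact, and normal measures; for the converse it uses that the support of a normal measure is regular-closed together with a cited result that restriction of $\cont(X)$ to a regular-closed subset preserves downward directed infima. You instead argue directly on $X$ from the integral representation of Theorem \ref{Thm:  Riesz Representation Theorem for C(X)}: the forward implication via the downward directed family $\{u\in\contX : 0\le u\le \onefunction_X,\ u\ge \onefunction_L\}$ with infimum $0$, and the converse via upper semicontinuity of $f=\inf_{u\in A}u$, the observation that each $\{f\ge 1/n\}$ is closed and nowhere dense (hence $\mu_\varphi$-null by normality), and a compactness plus outer-regularity estimate on the support $S_{\mu_\varphi}$ to manufacture a single $u^\ast\in A$ with $\varphi(u^\ast)<\epsilon$. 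Your version is self-contained and elementary --- it is in effect a direct proof of the Dixmier-type fact about normal measures that the paper imports from the literature --- at the cost of the longer $\epsilon$-argument at the end; the paper's version is shorter but rests on several external results about normal measures on compact spaces. The initial reduction to positive $\varphi$ is the same in both.
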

\begin{proof}
We make use of the notation introduced in the proof of Theorem \ref{Thm:  Riesz Representation Theorem for C(X)}.  It suffices to show that for any $0\leq \varphi\in \orderdual{\contX}$, $\varphi$ is order continuous if and only if $\mu_\varphi$ is normal.  Let $0\leq \varphi \in \ordercontn{\contX}$.  Because $\contbX$ is an ideal in $\contX$ the restriction of $\varphi$ to $\contb(X)$ is order continuous.  Hence the measure $\nu_\varphi\in\vlat{M}_0(\beta X)$ so that
\[
\varphi(u)=\int_{\beta X}u \thinspace d\nu_\varphi,~~ u\in\contb(X)
\]
is a normal measure on $\beta X$, see for instance \cite[Definition 4.7.1, Theorem 4.7.4]{DalesDashiellLauStrass2016}.  It therefore follows that the measure $\mu_\varphi = (\nu_\varphi)^\ast \in\vlat{M}_c(X)$ is a normal measure on $X$.

Conversely, let $0\leq \varphi \in \orderdual{\contX}$ be such that $\mu_\varphi$ is a normal measure on $X$.  Then the Borel measure $\nu$ on $\beta X$ given by
\[
\nu(B) = \mu_\varphi(B\cap X),~~ B\in \borel{\beta X}
\]is a normal measure on $\beta X$.  Hence $S_{\nu}$ is regular-closed in $\beta X$, see \cite[Proposition 4.7.9]{DalesDashiellLauStrass2016}.  But $S_\nu = S_{\mu_\varphi}\subseteq X$ so that $S_{\mu_\varphi}$ is regular-closed in $X$.  Therefore, if $D\downarrow 0$ in $\contX$ then $\left.D\right|_{S_{\mu_\varphi}} = \{  \left. u \right|_{S_{\mu_\varphi}} ~:~ u\in D \} \downarrow 0$ in $\cont(S_{\mu_\varphi})$, see \cite[Theorem 3.4]{KandicVavpeticPositivity2019}.  Also, $\mu_\varphi$ restricted to the Borel sets in $S_{\mu_\varphi}$ is a normal measure on $S_{\mu_\varphi}$.  Hence
\[
\inf_{u\in D} \varphi(u) = \inf_{u\in D}\int_{S_{\mu_\varphi}}u \thinspace d\mu_\varphi = 0.
\]
Therefore $\varphi$ is order continuous.
\end{proof}

\section{Direct limits}\label{Section:  Inductive limits}

We recall the definitions of a direct system in a category of vector lattices, and of the direct limit of such a system.  These definitions are specializations of the corresponding definitions in general categories, see for instance \cite[Chapter 5]{Awodey2010} and \cite[Chapter III]{MacLane1998} where direct limits are referred to as \emph{colimits}. We summarise some existence results and list vector lattice properties that have permanence under the direct limit construction.  Additional results are found in \cite{Filter1988}.  Lastly, we give a number of examples of direct limits which we will make use of later.

\begin{defn}\label{Defn:  Inductive system}
Let ${\bf C}$ be a category of vector lattices, $I$ a directed set, $\vlat{E}_\alpha$ a vector lattice for each $\alpha\in I$, and $e_{\alpha, \beta}: \vlat{E}_\alpha\to \vlat{E}_\beta$ a ${\bf C}$-morphism for all $\alpha \precc \beta$ in $I$.  The ordered pair $\cal{D} \defeq \left( (\vlat{E}_\alpha)_{\alpha\in I}, (e_{\alpha, \beta})_{\alpha\precc\beta}\right)$ is called a \emph{direct system} in ${\bf C}$ if, for all $\alpha \precc \beta \precc \gamma$ in $I$, the diagram
\[
\begin{tikzcd}[cramped]
\vlat{E}_\alpha \arrow[rd, "e_{\alpha, \beta}"'] \arrow[rr, "e_{\alpha, \gamma}"] & & \vlat{E}_\gamma\\
& \vlat{E}_\beta\arrow[ru, "e_{\beta, \gamma}"']
\end{tikzcd}
\]
commutes in ${\bf C}$.
\end{defn}

\begin{defn}\label{Defn:  Compatible system for inductive limits}
Let ${\bf C}$ be a category of vector lattices and $\cal{D} \defeq \left( (\vlat{E}_\alpha)_{\alpha\in I}, (e_{\alpha, \beta})_{\alpha\precc\beta}\right)$ a direct system in ${\bf C}$. Let $\vlat{E}$ be a vector lattice and for every $\alpha\in I$, let $e_\alpha: \vlat{E}_\alpha \to \vlat{E}$ be a ${\bf C}$-morphism.  The ordered pair $\cal{S} \defeq (\vlat{E}, (e_\alpha)_{\alpha\in I})$ is a  \emph{compatible system} of $\cal{D}$ in ${\bf C}$ if, for all $\alpha\precc\beta$ in $I$, the diagram
\[
\begin{tikzcd}[cramped]
\vlat{E}_\alpha \arrow[rd, "e_{\alpha, \beta}"'] \arrow[rr, "e_{\alpha}"] & & \vlat{E}\\
& \vlat{E}_\beta\arrow[ru, "e_{\beta}"']
\end{tikzcd}
\]
commutes in ${\bf C}$.
\end{defn}

\begin{defn}\label{Defn:  Inductive limit}
Let ${\bf C}$ be a category of vector lattices and $\cal{D} \defeq \left( (\vlat{E}_\alpha)_{\alpha\in I}, (e_{\alpha, \beta})_{\alpha\precc\beta}\right)$ a direct system in ${\bf C}$.  The \emph{direct limit} of $\cal{D}$ in ${\bf C}$ is a compatible system $\cal{S}\defeq(\vlat{E}, (e_\alpha)_{\alpha\in I})$ of $\cal{D}$ in ${\bf C}$ so that for any compatible system $\tilde{S} \defeq (\tilde{\vlat{E}}, (\tilde{e}_\alpha)_{\alpha\in I})$ of $\cal{D}$ in ${\bf C}$ there exists a unique ${\bf C}$-morphism $r: \vlat{E}\to \tilde{\vlat{E}}$ so that, for every $\alpha\in I$, the diagram
\[
\begin{tikzcd}[cramped]
\vlat{E} \arrow[rr, "r"] & & \tilde{\vlat{E}}\\
& \vlat{E}_\alpha \arrow[lu, "e_{\alpha}"] \arrow[ru, "\tilde{e}_{\alpha}"']
\end{tikzcd}
\]
commutes in ${\bf C}$.  We denote the direct limit of a direct system $\cal{D}$ by $\ind{\cal{D}}$ or $\ind{\vlat{E}_\alpha}$.
\end{defn}

Since the direct limit of a direct system is in fact an initial object in a certain derived category, it follows that the direct limit, when it exists, is unique up to a unique isomorphism, see for instance \cite[p. 54]{BucurDeleanu1968}.

\subsection{Existence and permanence properties of direct limits}\label{Subsection:  Existence and permanence properties of inductive limits}

Filter \cite{Filter1988} shows that any direct system $\cal{D} \defeq ((\vlat{E}_\alpha)_{\alpha\in I},(e_{\alpha , \beta})_{\alpha \precc\beta})$ in $\vlc$ has a direct limit in $\vlc$.\footnote{The results in \cite{Filter1988} are not formulated in these terms.}  In particular, the set-theoretic direct limit \cite[Chapter III, $\S7.5$]{Bourbakie_Theory_of_sets} of $\cal{D}$ equipped with suitable vector space and order structures is also the direct limit of $\cal{D}$ in $\vlc$.  We briefly recall the details.

For $u$ in the disjoint union $\biguplus \vlat{E}_\alpha$ of the collection $\left( \vlat{E}_\alpha \right)_{\alpha\in I}$, denote by $\alpha(u)$ that element of $I$ so that $u\in \vlat{E}_{\alpha(u)}$.  Define an equivalence relation on $\biguplus \vlat{E}_\alpha$ by setting $u\sim v$ if and only if there exists $\beta \scc \alpha (u),\alpha (v)$ in $I$ so that $e_{\alpha(u) , \beta}(u) = e_{\alpha(v) , \beta}(v)$.  Let $\vlat{E} \defeq  \biguplus \vlat{E}_\alpha / \sim$ and denote the equivalence class generated by $u\in \biguplus\vlat{E}_\alpha$ by $\dot u$.

Let $\dot u,\dot v\in \vlat{E}$.  We set $\dot u\leq \dot v$ if and only if there exists $\beta \scc \alpha (u),\alpha(v)$ in $I$ so that $e_{\alpha(u) , \beta}(u) \leq e_{\alpha(v) , \beta}(v)$.  Further, for $a,b\in\R$ define
\[
a\dot u + b\dot v \defeq \dot{\overbrace{a e_{\alpha(u) , \beta}(u) + b e_{\alpha(v) , \beta}(v)}},
\]
where $\beta \scc \alpha(u),\alpha(v)$ in $I$ is arbitrary.  With addition, scalar multiplication and the partial order so defined, $\vlat{E}$ is a vector lattice.  The lattice operations are given by
\[
	\dot u\wedge\dot v\ = \ \dot{\overbrace{e_{\alpha(u) , \beta}(u) \wedge e_{\alpha(v) , \beta}(v)}}
\]
and
\[
	\dot u\vee\dot v\ = \ \dot{\overbrace{e_{\alpha(u) , \beta}(u) \vee e_{\alpha(v) , \beta}(v)}},
\]with $\beta \scc \alpha(u),\alpha(v)$ in $I$ arbitrary.

For each $\alpha\in I$ define $e_\alpha : \vlat{E}_\alpha \to \vlat{E}$ by setting $e_\alpha (u) \defeq \dot u$ for $u\in \vlat{E}_\alpha$. Each $e_\alpha$ is a lattice homomorphism and the diagram
\[
\begin{tikzcd}[cramped]
\vlat{E}_\alpha \arrow[rd, "e_{\alpha, \beta}"'] \arrow[rr, "e_{\alpha}"] & & \vlat{E}\\
& \vlat{E}_\beta\arrow[ru, "e_{\beta}"']
\end{tikzcd}
\]
commutes in $\vlc$ for all $\alpha \precc\beta$ in $I$ so that $\cal{S}\defeq (\vlat{E},(e_\alpha)_{\alpha\in I})$ is a compatible system of $\cal{D}$ in $\vlc$.  Further, if $\tilde{\cal{S}} = (\tilde{\vlat{E}},(\tilde{e}_\alpha)_{\alpha\in I})$ is another compatible system of $\cal{D}$ in $\vlc$ then
\[
r: \vlat{E}\ni \dot u \longmapsto \tilde{e}_{\alpha(u)}(u)\in \tilde{\vlat{E}}
\]
is the unique lattice homomorphism so that the diagram
\[
\begin{tikzcd}[cramped]
\vlat{E} \arrow[rr, "r"] & & \tilde{\vlat{E}}\\
& \vlat{E}_\alpha \arrow[lu, "e_{\alpha}"] \arrow[ru, "\tilde{e}_{\alpha}"']
\end{tikzcd}
\]
commutes for every $\alpha\in I$.  Hence $\cal{S}$ is indeed the direct limit of $\cal{D}$ in $\vlc$.

We give two further existence results for direct limits of direct systems in other categories of vector lattices.

\begin{thm}\label{Thm:  Existence of Inductive Limits in VLI}
Let $\cal{D} \defeq \left( (\vlat{E}_\alpha)_{\alpha\in I}, (e_{\alpha, \beta})_{\alpha\precc\beta}\right)$ be a direct system in $\vlic$, and let $\cal{S} \defeq (\vlat{E},(e_\alpha)_{\alpha \in I})$ be the direct limit of $\cal{D}$ in $\vlc$. Then $\cal{S}$ is the direct limit of $\cal{D}$ in $\vlic$.
\end{thm}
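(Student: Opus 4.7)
The plan is to verify two things: first that the compatible system $\cal{S}$ is a compatible system in the finer category $\vlic$ (i.e.\ that every $e_\alpha$ is interval preserving), and second that, given any compatible system $\tilde{\cal{S}}=(\tilde{\vlat{E}},(\tilde{e}_\alpha)_{\alpha\in I})$ of $\cal{D}$ in $\vlic$, the unique $\vlc$-morphism $r:\vlat{E}\to\tilde{\vlat{E}}$ produced by the $\vlc$-universal property is automatically interval preserving. Both verifications will reduce, via the explicit representative-level description of $\vlat{E}$ recalled just before the theorem, to the interval preservation of the connecting maps $e_{\alpha,\beta}$.

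For the first step I would fix $0\leq u\in\vlat{E}_\alpha$. One inclusion $e_\alpha[[0,u]]\subseteq[0,e_\alpha(u)]$ is immediate from positivity. For the other, take $\dot v\in[0,e_\alpha(u)]$ with representative $v\in\vlat{E}_{\alpha(v)}$. Using the description of the order on $\vlat{E}$ and the directedness of $I$, I choose a single $\beta\scc\alpha,\alpha(v)$ such that $0\leq e_{\alpha(v),\beta}(v)\leq e_{\alpha,\beta}(u)$ in $\vlat{E}_\beta$. Since $e_{\alpha,\beta}$ is interval preserving, there is $w\in[0,u]\subseteq\vlat{E}_\alpha$ with $e_{\alpha,\beta}(w)=e_{\alpha(v),\beta}(v)$; this equality is exactly $w\sim v$, so $e_\alpha(w)=\dot w=\dot v$, giving $\dot v\in e_\alpha[[0,u]]$.

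For the second step the argument is essentially the same and is actually shorter, because $r$ has the very simple formula $r(\dot u)=\tilde{e}_{\alpha(u)}(u)$. Given $0\leq \dot u\in\vlat{E}$ and $0\leq \tilde v\leq r(\dot u)=\tilde{e}_{\alpha(u)}(u)$ in $\tilde{\vlat{E}}$, the interval preservation of $\tilde{e}_{\alpha(u)}\in\vlic$ yields some $w\in[0,u]\subseteq\vlat{E}_{\alpha(u)}$ with $\tilde{e}_{\alpha(u)}(w)=\tilde v$. Then $\dot w\in[0,\dot u]$ in $\vlat{E}$ and $r(\dot w)=\tilde{e}_{\alpha(u)}(w)=\tilde v$, so $r[[0,\dot u]]=[0,r(\dot u)]$. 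The uniqueness of $r$ among $\vlic$-morphisms is inherited from its uniqueness as a $\vlc$-morphism, because $\vlic\subseteq\vlc$.

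I do not expect any real obstacle here; the only subtlety is bookkeeping with representatives to ensure that the two inequalities involved in $0\leq\dot v\leq\dot u$ can be realized in one common $\vlat{E}_\beta$ before invoking the interval preservation of $e_{\alpha,\beta}$, and this is handled by the directedness of $I$ together with the fact that the $e_{\alpha,\beta}$ are positive. No Archimedean or Dedekind-completeness hypotheses are needed, and the argument does not use normality of the connecting morphisms, so the same proof scheme will clearly adapt to $\nvlic$ once normality has been checked.
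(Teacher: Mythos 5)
Your proposal is correct and follows essentially the same route as the paper's own proof: both steps (interval preservation of the $e_\alpha$ via a common $\beta\scc\alpha,\alpha(v)$ and interval preservation of the canonical map $r$ via the formula $r(\dot u)=\tilde{e}_{\alpha(u)}(u)$) match the published argument. The representative bookkeeping you flag is handled in the same way there, so nothing further is needed.
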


\begin{proof}
We show that each $e_\alpha$ is interval preserving.  To this end, fix $\alpha\in I$ and $0\leq u\in \vlat{E}_\alpha$.  Suppose that $\dot 0 \leq \dot v \leq e_\alpha (u)=\dot u$.  Then there exists a $\beta \scc \alpha , \alpha(v)$ in $I$ so that $0\leq e_{\alpha(v) , \beta}(v) \leq e_{\alpha , \beta} (u)$.  But $e_{\alpha , \beta}$ is interval preserving, so there exists $0\leq w\leq u$ in  $\vlat{E}_\alpha$ so that $e_{\alpha , \beta}(w)=e_{\alpha(v) , \beta}(v)$.  Therefore $e_\alpha(w) = \dot w = \dot v$.  Hence $e_\alpha$ is interval preserving. Therefore $\cal{S}$ is a compatible system of $\cal{D}$ in $\vlic$.

Let $\tilde{\cal{S}}\defeq (\tilde{\vlat{E}},(\tilde{e}_\alpha)_{\alpha\in I})$ be a compatible system of $\cal{D}$ in $\vlic$, thus also in $\vlc$.  We show that the canonical lattice homomorphism $r:\vlat{E}\to \tilde{\vlat{E}}$ is interval preserving.  Consider $\dot u \in \vlat{E}^+$.  Let $0\leq v \leq r(\dot u)$ in $\tilde{\vlat{E}}$, that is, $0\leq v\leq \tilde{e}_{\alpha(u)}(u)$.  But $\tilde{e}_{\alpha(u)}$ is interval preserving so there exists $0\leq w\leq u$ in $\vlat{E}_{\alpha(u)}$ so that $v=\tilde{e}_{\alpha(u)}(w)$.  Thus $\dot 0 \leq \dot w \leq \dot u$ in $\vlat{E}$ and $r(\dot w) = v$.  Therefore $r$ is interval preserving.
\end{proof}

\begin{thm}\label{Thm:  Existence of Inductive Limits in NVLI}
Let $\cal{D} \defeq \left( (\vlat{E}_\alpha)_{\alpha\in I}, (e_{\alpha, \beta})_{\alpha\precc\beta}\right)$ be a direct system in $\nvlic$, and let $\cal{S} \defeq (\vlat{E},(e_\alpha)_{\alpha \in I})$ be the direct limit of $\cal{D}$ in $\vlc$.  Assume that $e_{\alpha , \beta}$ is injective for all $\alpha \precc \beta$ in $I$.  Then $\cal{S}$ is the direct limit of $\cal{D}$ in $\nvlic$.
\end{thm}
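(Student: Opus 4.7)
The plan is to leverage Theorem \ref{Thm:  Existence of Inductive Limits in VLI} to dispatch everything except the order continuity. Since $\nvlic$ is a subcategory of $\vlic$, that theorem already gives that $\cal{S}$ is the direct limit of $\cal{D}$ in $\vlic$; in particular each $e_\alpha$ is an interval preserving lattice homomorphism, and for any compatible system $\tilde{\cal{S}} = (\tilde{\vlat{E}}, (\tilde{e}_\alpha)_{\alpha\in I})$ of $\cal{D}$ in $\nvlic \subseteq \vlic$ the unique $\vlic$-morphism $r : \vlat{E} \to \tilde{\vlat{E}}$ is interval preserving. What remains is to upgrade ``lattice homomorphism'' to ``normal lattice homomorphism'' for both the $e_\alpha$ and the morphism $r$, which is where the injectivity of the bonding maps $e_{\alpha, \beta}$ enters.

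For the $e_\alpha$ themselves the step is short. From the definition of the equivalence relation $\sim$, $e_\alpha(u) = \dot{0}$ forces $e_{\alpha, \beta}(u) = 0$ for some $\beta \scc \alpha$, whence $u = 0$ by hypothesis, so each $e_\alpha$ is injective. Being both injective and interval preserving, Proposition \ref{Prop:  Interval Preserving vs Lattice Homomorphism} (i) then gives that $e_\alpha$ is a lattice isomorphism onto the ideal $e_\alpha[\vlat{E}_\alpha]$ of $\vlat{E}$, and in particular a normal lattice homomorphism. Hence $\cal{S}$ is a compatible system of $\cal{D}$ in $\nvlic$.

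For the universal property I must show $r$ is order continuous, and this is where the real work sits. Given $A \downarrow 0$ in $\vlat{E}$, the idea is to localize to a single level of the system. Fix any $\dot{u}_0 \in A$, set $\alpha_0 \defeq \alpha(u_0)$, and pass to $A_0 \defeq \{\dot{u}\in A : \dot{u} \leq \dot{u}_0\}$; since every element of $A$ dominates some member of $A_0$, one has $A_0 \downarrow 0$, and, using the order preservation of $r$, also $r[A] \downarrow 0$ if and only if $r[A_0] \downarrow 0$. Every element of $A_0$ lies in $[0, \dot{u}_0] = e_{\alpha_0}([0, u_0])$, so it lifts uniquely through the lattice isomorphism $e_{\alpha_0} : \vlat{E}_{\alpha_0} \to e_{\alpha_0}[\vlat{E}_{\alpha_0}]$ to a subset $A_0^\sharp \subseteq [0, u_0]$ of $\vlat{E}_{\alpha_0}$. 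Because $e_{\alpha_0}[\vlat{E}_{\alpha_0}]$ is an ideal of $\vlat{E}$, the lattice isomorphism $e_{\alpha_0}$ transports $A_0 \downarrow 0$ to $A_0^\sharp \downarrow 0$ in $\vlat{E}_{\alpha_0}$. Normality of $\tilde{e}_{\alpha_0}$ then yields $r[A_0] = \tilde{e}_{\alpha_0}[A_0^\sharp] \downarrow 0$, completing the argument. The genuine obstacle is this localization step: without the injectivity hypothesis, neither the injectivity of $e_{\alpha_0}$ nor the identification of $e_{\alpha_0}[\vlat{E}_{\alpha_0}]$ as an ideal would be available, and the passage back to $\vlat{E}_{\alpha_0}$ would collapse.
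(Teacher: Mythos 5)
Your proposal is correct and follows essentially the same route as the paper: injectivity of the $e_\alpha$ from injectivity of the bonding maps, Proposition \ref{Prop:  Interval Preserving vs Lattice Homomorphism} (i) to get normality of the $e_\alpha$, and then, for normality of $r$, localization of a set $A\downarrow 0$ below a single $\dot u_0=e_{\alpha_0}(u_0)$ followed by a unique lift into $[0,u_0]\subseteq\vlat{E}_{\alpha_0}$ and an appeal to the normality of $\tilde e_{\alpha_0}$. The only cosmetic difference is that you transport $\inf A_0=0$ through the lattice isomorphism onto the ideal $e_{\alpha_0}[\vlat{E}_{\alpha_0}]$, whereas the paper verifies directly that any positive lower bound of $e_{\alpha_0}^{-1}[A]$ maps to a lower bound of $A$ and is therefore zero; these are the same argument.
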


\begin{proof}
We start by proving that $e_\alpha :\vlat{E}_\alpha\to \vlat{E}$ is injective for every $\alpha\in I$.  Fix $\alpha \in I$ and $u\in \vlat{E}_\alpha$ so that $e_\alpha (u ) = \dot 0$ in $\vlat{E}$.  Then there exists $\beta \scc \alpha$ in $I$ so that $e_{\alpha , \beta} (u)=0$.  But $e_{\alpha , \beta}$ is injective, so $u=0$.  Hence $e_\alpha$ is injective.

By Theorem \ref{Thm:  Existence of Inductive Limits in VLI}, $e_\alpha :\vlat{E}_\alpha\to \vlat{E}$ is an injective interval preserving lattice homomorphism for every $\alpha\in I$. It follows from Proposition \ref{Prop:  Interval Preserving vs Lattice Homomorphism} (i) that $e_\alpha$ is a $\nvlic$-morphism for every $\alpha\in I$. Therefore $\cal{S}$ is a compatible system of $\cal{D}$ in $\nvlic$.

Let $\tilde{S} \defeq (\tilde{\vlat{E}},(\tilde{e}_\alpha)_{\alpha\in I})$ be a compatible system of $\cal{D}$ in $\nvlic$.  By Theorem \ref{Thm:  Existence of Inductive Limits in VLI} the canonical map $r:\vlat{E}\to \tilde{\vlat{E}}$ is an interval preserving lattice homomorphism.  We claim that $r$ is a normal lattice homomorphism.  To this end, let $A\downarrow \dot 0$ in $\vlat{E}$.  Without loss of generality we may suppose that $A$ is bounded from above in $\vlat{E}$, say by $\dot u_0$.  There exists $\alpha \in I$ and $u_0 \in \vlat{E}_\alpha$ so that $\dot u_0 = e_\alpha (u_0)$.  Because $e_\alpha$ is injective and interval preserving, there exists for every $\dot u\in A$ a unique $u\in [0,u_0]\subseteq \vlat{E}_\alpha$ so that $e_\alpha (u) = \dot u$.  In particular, $e_\alpha^{-1}[A]\subseteq [0,u_0]$.  We claim that $\inf e_{\alpha}^{-1}[A] = 0$ in $\vlat{E}_\alpha$.  Let $0\leq v\in \vlat{E}_\alpha $ be a lower bound for $e_{\alpha}^{-1}[A]$.  Then $e_{\alpha}(v)\geq0$ is a lower bound for $A$ in $\vlat{E}$, hence $e_{\alpha}(v)=0$.  But $e_\alpha$ is injective, so $v=0$.  This verifies our claim.  By definition, $r[A]=\tilde{e}_\alpha[e_{\alpha}^{-1}[A]]$.  Because $\tilde{e}_{\alpha}$ is a normal lattice homomorphism it follows that $\inf r[A] = 0$ in $\tilde{\vlat{E}}$.
\end{proof}


We recall the following result on permanence of vector lattice properties under the direct limit construction from \cite{Filter1988}.

\begin{thm}\label{Thm:  Inductive Limit Permanence}
Let $\cal{D} \defeq \left( (\vlat{E}_\alpha)_{\alpha\in I}, (e_{\alpha, \beta})_{\alpha\precc\beta}\right)$ be a direct system in a category ${\bf C}$ of vector lattices.  Assume that $e_{\alpha , \beta}$ is injective for all $\alpha \precc \beta$ in $I$.  Let $\cal{S} \defeq (\vlat{E},(e_\alpha)_{\alpha \in I})$ be the direct limit of $\cal{D}$ in $\vlc$.  Then the following statements are true. \begin{itemize}
    \item[(i)] $\vlat{E}$ is Archimedean if and only if $\vlat{E}_\alpha$ is Archimedean for all $\alpha\in I$.
    \item[(ii)] If ${\bf C}$ is $\vlic$ then $\vlat{E}$ is order separable if and only if $\vlat{E}_\alpha$ is order separable for every $\alpha\in I$.
    \item[(iii)] If ${\bf C}$ is $\vlic$ then $\vlat{E}$ has the (principal) projection property if and only if $\vlat{E}_\alpha$ has the (principal) projection property for every $\alpha\in I$.
    \item[(iv)] If ${\bf C}$ is $\vlic$ then $\vlat{E}$ is ($\sigma$-)Dedekind complete if and only if $\vlat{E}_\alpha$ is \linebreak ($\sigma$-)Dedekind complete for every $\alpha\in I$.
    \item[(v)] If ${\bf C}$ is $\vlic$ then $\vlat{E}$ is relatively uniformly complete if and only if $\vlat{E}_\alpha$ is relatively uniformly complete for every $\alpha\in I$.
\end{itemize}
\end{thm}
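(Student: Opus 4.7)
The unifying approach is to leverage the explicit construction of $\vlat{E}$ as $\bigcup_\alpha e_\alpha[\vlat{E}_\alpha]$. Because each $e_{\alpha,\beta}$ is injective, each $e_\alpha$ is injective (see the proof of Theorem \ref{Thm:  Existence of Inductive Limits in NVLI}); and in the $\vlic$ setting Theorem \ref{Thm:  Existence of Inductive Limits in VLI} together with Proposition \ref{Prop:  Interval Preserving vs Lattice Homomorphism} (i) show that each $e_\alpha$ is a lattice isomorphism from $\vlat{E}_\alpha$ onto the ideal $e_\alpha[\vlat{E}_\alpha]$ of $\vlat{E}$, and is moreover a normal lattice homomorphism. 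Since $I$ is directed, every finite subset of $\vlat{E}$ lies inside a common $e_\gamma[\vlat{E}_\gamma]$. The plan is, for each listed property, to pass from a question about $\vlat{E}$ to a question inside some $\vlat{E}_\gamma$, apply the hypothesis there, and transport the answer back through $e_\gamma$.

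Necessity in each of (i)--(v) follows by restriction: the image $e_\alpha[\vlat{E}_\alpha]$ is a sub-vector-lattice of $\vlat{E}$ lattice-isomorphic to $\vlat{E}_\alpha$, and in cases (ii)--(v) it is in fact an ideal. Each of the listed properties descends to sub-vector-lattices in case (i) and to ideals in the remaining cases by standard results. For sufficiency in (i), given $\dot u, \dot v \in \vlat{E}^+$ with $n\dot u \leq \dot v$ for all $n \in \N$, take representatives $u, v \in \vlat{E}_\gamma$ for a common $\gamma$; since $e_\gamma$ is an injective lattice homomorphism, $e_\gamma((nu-v)^+) = 0$ yields $nu \leq v$ in $\vlat{E}_\gamma$ for every $n$, and the Archimedean property of $\vlat{E}_\gamma$ forces $u = 0$, whence $\dot u = 0$. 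For sufficiency in (ii), (iv) and (v) the scheme is the same: given the relevant object in $\vlat{E}$ (a set admitting a supremum; a bounded upward directed set; or a $v$-uniformly Cauchy sequence), subtract a fixed element and, where appropriate, clip so that everything lies in some order interval $[0, w]$ of $\vlat{E}$. Using directedness of $I$ together with the fact that each $e_\gamma[\vlat{E}_\gamma]$ is an ideal, choose $\gamma$ so that $w$ (and $v$, in case (v)) belongs to $e_\gamma[\vlat{E}_\gamma]$; then the whole interval, and hence the entire object in question, lies in $e_\gamma[\vlat{E}_\gamma]$. Pull back via $e_\gamma^{-1}$, apply the assumed property in $\vlat{E}_\gamma$, and push the resulting supremum, infimum or uniform limit forward through $e_\gamma$, invoking its normality to preserve suprema and hence the remaining limit notions as well.

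The main obstacle is (iii), because an arbitrary band $\vlat{B}$ in $\vlat{E}$ need not be the image of a band of any single $\vlat{E}_\gamma$. The remedy is to project one element at a time. Given $\dot u \in \vlat{E}^+$, choose $\gamma$ with $\dot u = e_\gamma(u)$ and set $\vlat{B}_\gamma \defeq e_\gamma^{-1}[\vlat{B}]$; this is a band in $\vlat{E}_\gamma$ because $e_\gamma$ is a normal lattice homomorphism. Using the projection property of $\vlat{E}_\gamma$, write $u = u_1 + u_2$ with $u_1 \in \vlat{B}_\gamma$ and $u_2 \in \vlat{B}_\gamma^d$. Then $e_\gamma(u_1) \in \vlat{B}$ directly; to see that $e_\gamma(u_2) \in \vlat{B}^d$, pick any $0 \leq \dot b \in \vlat{B}$ and observe that $e_\gamma(u_2) \wedge \dot b$ lies in both $\vlat{B}$ and the ideal $e_\gamma[\vlat{E}_\gamma]$, hence equals $e_\gamma(c)$ for some $c \in \vlat{B}_\gamma$ with $0 \leq c \leq u_2$; as $u_2 \in \vlat{B}_\gamma^d$, this forces $c = 0$ and hence $e_\gamma(u_2) \wedge \dot b = 0$. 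For the principal case, apply the same argument to $\vlat{B} = \{\dot v\}^{dd}$ after enlarging $\gamma$ so that $\dot u$ and $\dot v$ both lie in $e_\gamma[\vlat{E}_\gamma]$ and replacing $\vlat{B}_\gamma$ by the principal band generated by $e_\gamma^{-1}(\dot v)$ in $\vlat{E}_\gamma$.
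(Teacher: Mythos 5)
The paper does not prove this theorem at all: it is recalled from Filter's paper \cite{Filter1988} with no argument given, so there is nothing internal to compare you against. Judged on its own terms, your proof is correct, and the strategy is the natural (and surely Filter's) one: under injectivity of the $e_{\alpha,\beta}$ and interval preservation, Proposition \ref{Prop:  Interval Preserving vs Lattice Homomorphism}~(i) makes each $e_\alpha$ a lattice isomorphism onto an ideal of $\vlat{E}$, and $\vlat{E}$ is the directed union of these ideals, so every order-bounded configuration localises to a single $\vlat{E}_\gamma$. The delicate points are all handled: necessity via descent of the properties to ideals (to sublattices for the Archimedean case); sufficiency in (ii), (iv), (v) by trapping the relevant data in an order interval inside one $e_\gamma[\vlat{E}_\gamma]$ and transporting suprema back and forth through the normal lattice isomorphism onto an ideal; and in (iii) the correct observation that a band of $\vlat{E}$ meets the ideal $e_\gamma[\vlat{E}_\gamma]$ in a band of that ideal, together with the element-by-element verification that the local disjoint complement lands in the global one. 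Two cosmetic remarks: in (v) normality of $e_\gamma$ is not actually needed, since relative uniform convergence is expressed by inequalities that transfer through any lattice isomorphism onto an ideal; and in the principal case of (iii) it is worth noting explicitly, as you implicitly do, that the principal band generated by $e_\gamma^{-1}(\dot v)$ in $\vlat{E}_\gamma$ is contained in $e_\gamma^{-1}[\{\dot v\}^{dd}]$ even though equality need not hold --- containment is all the argument requires.
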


Before we proceed to discuss examples of direct limits we make some clarifying remarks about the structure of the direct limit of vector lattices.

\begin{remark}\label{Remark:  Inductive limit notation}
Let $\cal{D} \defeq ((\vlat{E}_\alpha)_{\alpha\in I},(e_{\alpha , \beta})_{\alpha \precc\beta})$ be a direct system in $\vlc$ and let $\cal{S} \defeq (\vlat{E},(e_\alpha)_{\alpha \in I})$ be the direct limit of $\cal{D}$ in $\vlc$.  \begin{itemize}
    \item[(i)] Unless clarity demands it, we henceforth cease to explicitly express elements of $\vlat{E}$ as equivalence classes; that is, we write $u\in\vlat{E}$ instead of $\dot u\in\vlat{E}$.
    \item[(ii)] For every $u\in \vlat{E}$ there exists at least one $\alpha\in I$ and $u_\alpha \in\vlat{E}_\alpha$ so that $u = e_\alpha (u_\alpha)$.  If $u = e_\beta (u_\beta)$ for some other $\beta\in I$ and $u_\beta\in \vlat{E}_\beta$ then there exists $\gamma \scc \alpha,\beta$ in $I$ so that $e_{\alpha , \gamma}(u_\alpha) = e_{\beta , \gamma}(u_\beta)$, and hence
        \[
        e_\gamma ( e_{\alpha , \gamma}(u_\alpha)) = u = e_\gamma ( e_{\beta , \gamma}(u_\beta)).
        \]
    \item[(iii)] It is proven in Theorem \ref{Thm:  Existence of Inductive Limits in NVLI} that if $e_{\alpha , \beta}$ is injective for all $\alpha \precc \beta$ in $I$ then $e_\alpha$ is injective for all $\alpha\in I$. In this case we identify $\vlat{E}_\alpha$ with the sublattice $e_\alpha[\vlat{E}_\alpha]$ of $\vlat{E}$.
    \item[(iv)] An element $u\in \vlat{E}$ is positive if and only if there exist $\alpha \precc \beta$ in $I$ and $u_\alpha\in\vlat{E}_\alpha$ so that $e_\alpha(u_\alpha) = u$ and $e_{\alpha , \beta}(u_\alpha)\geq 0$ in $\vlat{E}_\beta$.  Combining this observation with (ii) we see that $u\geq 0$ if and only if there exist $\alpha\in I$ and $0\leq u_\alpha \in\vlat{E}_\alpha$ so that $u=e_\alpha(u_\alpha)$.
\end{itemize}
\end{remark}

\subsection{Examples of direct limits}\label{Subsection:  Examples of inductive limits}

In \cite{Filter1988} a number of examples are presented of naturally occurring vector lattices which can be expressed as direct limits in categories of vector lattices.  We provide further examples which will be used in Section \ref{Section:  Applications}.

\begin{example}\label{Exm:  Inductive limit main example}
Let $\vlat{E}$ be a vector lattice.  Let $(\vlat{E}_\alpha)_{\alpha\in I}$ be an upward directed collection of ideals in $\vlat{E}$ such that $\vlat{E}_\alpha \subseteq \vlat{E}_\beta$ if and only if $\alpha \precc \beta$.  Assume that $\displaystyle\bigcup \vlat{E}_\alpha=\vlat{E}$.  For all $\alpha\precc \beta$ in $I$, let $e_{\alpha , \beta}:\vlat{E}_\alpha\to \vlat{E}_\beta$ and $e_\alpha:\vlat{E}_\alpha \to \vlat{E}$ be the inclusion mappings.  Then $\cal{D} \defeq ((\vlat{E}_{\alpha})_{\alpha\in I},(e_{\alpha , \beta})_{\alpha\precc \beta})$ is a direct system in $\nvlic$ and $\cal{S}\defeq (\vlat{E},(e_{\alpha})_{\alpha\in I})$ is the direct limit of $\cal{D}$ in $\nvlic$.
\end{example}

\begin{proof}
It is clear that $\cal{D}$ is a direct system in $\nvlic$ and that $\cal{S}$ is a compatible system of $\cal{D}$ in $\nvlic$.  Let $\tilde{\cal{S}} = (\tilde{\vlat{E}},(\tilde{e}_{\alpha})_{\alpha\in I})$ be any compatible system of $\cal{D}$ in $\nvlic$.  We show that there exists a unique $\nvlic$-morphism $r:\vlat{E}\to\tilde{\vlat{E}}$ so that for all $\alpha\in I$, the diagram
\[
\begin{tikzcd}[cramped]
\vlat{E} \arrow[rr, "r"] & & \tilde{\vlat{E}}\\
& \vlat{E}_\alpha \arrow[lu, "e_{\alpha}"] \arrow[ru, "\tilde{e}_{\alpha}"']
\end{tikzcd}
\]
commutes.

If $u\in\vlat{E}$ and $\alpha,\beta\in I$ are such that $u\in \vlat{E}_\alpha,\vlat{E}_\beta$, then $\tilde{e}_\alpha (u) = \tilde{e}_\beta(u)$.  Indeed, for any $\gamma \scc \alpha,\beta$ in $I$
\[
\tilde{e}_{\gamma}(u) = \tilde{e}_{\gamma}(e_{\alpha, \gamma}(u)) = \tilde{e}_{\alpha}(u)
\]
and
\[
\tilde{e}_{\gamma}(u) = \tilde{e}_{\gamma}(e_{\beta, \gamma}(u)) = \tilde{e}_{\beta}(u).
\]
Therefore the map $r:\vlat{E}\to \tilde{\vlat{E}}$ given by
\[
r(u) = \tilde{e}_{\alpha}(u) ~\text{if}~ u \in \vlat{E}_\alpha
\]
is well-defined.  It is clear that this map makes the diagram above commute.  Further, if $u,v\in \vlat{E}$ then there exists $\alpha\in I$ so that $u,v \in \vlat{E}_\alpha$.  Then for all $a,b\in\R$ we have $au+bv,u\vee v \in\vlat{E}_\alpha$ so that
\[
r(au+bv) = \tilde{e}_{\alpha}(au+bv) = a \tilde{e}_{\alpha}(u) + b \tilde{e}_{\alpha}(v) = a\thinspace r(u)+b\thinspace r(v)
\]
and
\[
r(u\vee v) = \tilde{e}_{\alpha}(u\vee v) = \tilde{e}_{\alpha}(u)\vee \tilde{e}_{\alpha}(v) = r(u)\vee r(v).
\]
Hence $r$ is a lattice homomorphism.  A similar argument shows that $r$ is interval preserving.  To see that $r$ is a normal lattice homomorphism, let $A\downarrow 0$ in $\vlat{E}$.  Without loss of generality, assume that there exists $0\leq u_0 \in \vlat{E}$ so that $u\leq u_0$ for all $u\in A$.  Then $A\subseteq \vlat{E}_\alpha$ for some $\alpha\in I$ so that $r[A]=\tilde{e}_\alpha [A]$. Hence, because $\tilde{e}_\alpha$ is a normal lattice homomorphism,  $ \inf r[A]  = 0$.  Therefore $r$ is a $\nvlic$-morphism.

It remains to show that $r$ is the unique $\nvlic$-morphism making the diagram above commute.  Suppose that $\tilde{r}$ is any such morphism.  Let $u\in \vlat{E}$.  There exists $\alpha\in I$ so that $u\in \vlat{E}_\alpha$.  We have $\tilde{r}(u) = \tilde{r}(e_\alpha(u)) = \tilde{e}_\alpha(u) = r(u)$, which completes the proof.
\end{proof}

The remaining examples in this section may readily been seen to be special cases of Example \ref{Exm:  Inductive limit main example}.  Therefore we omit the proofs.

\begin{example}\label{Exm:  Inductive limit of principle ideals}
Let $\vlat{E}$ be a vector lattice.  For every $0<u\in \vlat{E}$ let $\vlat{E}_u$ be the ideal generated by $u$ in $\vlat{E}$.  For all $0<u\leq v$ let $e_{u , v}:\vlat{E}_u\to \vlat{E}_v$ and $e_u:\vlat{E}_u\to \vlat{E}$ be the inclusion mappings.  Let $I$ be an upward directed subset of $\vlat{E}^+\mysetminus\{0\}$ so that $\vlat{E}=\displaystyle \bigcup \vlat{E}_u$.  Then $\cal{D} \defeq ((\vlat{E}_{u})_{u\in I},(e_{u , v})_{u\leq v})$ is a direct system in $\nvlic$ and $\cal{S}\defeq (\vlat{E},(e_{u})_{u\in I})$ is the direct limit of $\cal{D}$ in $\nvlic$.
\end{example}

\begin{example}\label{Exm:  Locally supported p-summable functions as inductive limit}
Let $(X,\Sigma,\mu)$ be a complete $\sigma$-finite measure space.  Let $\Xi \defeq (X_n)$ be an increasing sequence (w.r.t. inclusion) of measurable sets with positive measure so that $X =\displaystyle \bigcup X_n$. For $n\leq m$ in $\N$ let $e_{n,m}:\vlat{L}^p(X_n)\to \vlat{L}^p(X_m)$ be defined (a.e.) by setting
\[
	e_{n,m}(u)(t)\defeq \left\{ \begin{array}{lll}
u(t) & \text{ if } & t\in X_n \smallskip \\
0 & \text{ if } & t\in X_m\!\setminus X_n \\
\end{array}\right.
\]
for each $u\in\vlat{L}^p(X_n)$.  Further, define
\[
\vlat{L}^p_{\Xi-c}(X) \defeq \left\lbrace u\in\vlat{L}^p(X) ~:~ u=0 \text{ a.e. on } X\setminus X_n \text{ for some } n\in\N \right\rbrace.
\]
For $n\in\N$ let $e_n:\vlat{L}^p(X_n)\to \vlat{L}^p_{\Xi-c}(X)$ be given by
\[
	e_{n}(u)(t)\defeq \left\{ \begin{array}{lll}
u(t) & \text{ if } & t\in X_n \smallskip \\
0 & \text{ if } & t\in X\setminus X_n \\
\end{array}\right.
\]
for all $u \in \vlat{L}^p(X_n)$.  The following statements are true. \begin{itemize}
    \item[(i)] $\cal{D}^p_{\Xi-c}\defeq \left((\vlat{L}^p(X_n))_{n\in\N}, (e_{n , m})_{n\leq m}\right)$ is a direct system in $\nvlic$, and $e_{n,m}$ is injective for all $n\leq m$ in $\N$.\smallskip
    \item[(ii)] $\cal{S}^p_{\Xi-c}\defeq \left(\vlat{L}^p_{\Xi-c}(X),(e_n)_{n\in\N}\right)$ is the direct limit of $\cal{D}^p_{\Xi-c}$ in $\nvlic$.
\end{itemize}
\end{example}

\begin{example}\label{Exm:  Compactly supported measures as inductive limit}
Let $X$ be a locally compact Hausdorff space.  Let $\Gamma \defeq (X_\alpha)_{\alpha\in I}$ be an upward directed (with respect to inclusion) collection of non-empty open precompact subsets of $X$ so that $\displaystyle\bigcup X_\alpha = X$.  For each $\alpha\in I$, let $\vlat{M}(\bar X_\alpha)$ be the space of Radon measures on $\bar X_\alpha$ and $\vlat{M}_c(X)$ the space of compactly supported Radon measures on $X$.  For all $\alpha \precc \beta$ in $I$, let $e_{\alpha , \beta}:\vlat{M}(\bar X_\alpha)\to \vlat{M}(\bar X_\beta)$ be defined by setting
\[
	e_{\alpha , \beta}(\mu)(B) \defeq \mu(B\cap \bar X_\alpha) \text{ for all } \mu\in \vlat{M}(\bar X_\alpha) \text{ and } B\in\borel{\bar X_{\beta}}.
\]
Likewise, for $\alpha\in I$, define $e_\alpha:\vlat{M}(\bar X_\alpha)\to\vlat{M}_c(X)$ by setting
\[
e_{\alpha}(\mu)(B) \defeq \mu(B\cap \bar X_\alpha) \text{ for all } \mu\in \vlat{M}(X_\alpha) \text{ and } B\in\borel{X}.
\]
The following statements are true. \begin{itemize}
    \item[(i)] $\cal{D}_{\Gamma}\defeq \left((\vlat{M}(\bar X_\alpha)_{\alpha\in I},(e_{\alpha , \beta})_{\alpha\precc \beta}\right)$ is a direct system in $\nvlic$ and $e_{\alpha , \beta}$ is injective for all $\alpha\precc \beta$ in $I$.
        \smallskip
    \item[(ii)] $\cal{S}_{\Gamma}\defeq \left(\vlat{M}_c(X),(e_\alpha)_{\alpha \in I}\right)$ is the direct limit of $\cal{D}_{\Gamma}$ in $\nvlic$.
\end{itemize}
\end{example}


\begin{example}\label{Exm:  Compactly supported NORMAL measures as inductive limit}
Let $X$ be a locally compact Hausdorff space.  Let $\Gamma \defeq (X_\alpha)_{\alpha\in I}$ be an upward directed (with respect to inclusion) collection of open precompact subsets of $X$ so that $\displaystyle\bigcup X_\alpha  =X$.  For each $\alpha\in I$, let $\vlat{N}(\bar X_\alpha)$ be the space of normal Radon measures on $\bar X_\alpha$ and $\vlat{N}_c(X)$ the space of compactly supported normal Radon measures on $X$.  For all $\alpha\precc \beta$ in $I$, let $e_{\alpha , \beta}: \vlat{N}(\bar X_\alpha)\to \vlat{N}(\bar X_\beta)$ be defined by setting
\[
	e_{\alpha , \beta}(\mu)(B) \defeq \mu(B\cap \bar X_\alpha) \text{ for all } \mu\in \vlat{N}(\bar X_\alpha) \text{ and } B\in\borel{\bar X_\beta}.
\]Likewise, for $\alpha\in I$, define $e_\alpha:\vlat{N}(\bar X_\alpha)\to\vlat{N}_c(X)$ by setting
\[
	e_{\alpha}(\mu)(B) \defeq \mu(B\cap \bar X_\alpha) \text{ for all } \mu\in \vlat{N}(X_\alpha) \text{ and } B\in\borel{X}.
\]
The following statements are true. \begin{itemize}
    \item[(i)] $\cal{E}_{\Gamma} \defeq \left((\vlat{N}(\bar X_\alpha)_{\alpha\in I},(e_{\alpha , \beta})_{\alpha\precc \beta}\right)$ is a direct system in $\nvlic$ and $e_{\alpha , \beta}$ is injective for all $\alpha\precc \beta$ in $I$.
    \smallskip
    \item[(ii)] $\cal{T}_{\Gamma}\defeq \left(\vlat{N}_c(X),(e_\alpha)_{\alpha \in I}\right)$ is the direct limit of $\cal{E}_{\Gamma}$ in $\nvlic$.
\end{itemize}
\end{example}

\section{Inverse limits}\label{Section:  Projective limits}

In this section we discuss inverse systems and inverse limits in categories of vector lattices, which are the categorical dual concepts of direct systems and direct limits.  Below we present the definitions of inverse systems and inverse limits in these categories.  As is the case in the previous section, these definitions are specializations of the corresponding definitions in general categories, see for instance \cite[Chapter 5]{Awodey2010} or \cite[Chapter III]{MacLane1998}.

\begin{defn}\label{Defn:  Projective system}
Let ${\bf C}$ be a category of vector lattices, $I$ a directed set, $\vlat{E}_\alpha$ a vector lattice for each $\alpha\in I$, and $p_{\beta , \alpha}:\vlat{E}_\beta\to \vlat{E}_\alpha$ a ${\bf C}$-morphism for all $\beta \scc \alpha$ in $I$.  The ordered pair $\cal{I} \defeq \left((\vlat{E}_\alpha)_{\alpha \in I},(p_{\beta , \alpha})_{\beta \scc \alpha}\right)$ is an \emph{inverse system} in ${\bf C}$ if, for all $\alpha \precc\beta\precc\gamma$ in $I$, the diagram
\[
\begin{tikzcd}[cramped]
\vlat{E}_\gamma \arrow[rd, "p_{\gamma, \beta}"'] \arrow[rr, "p_{\gamma, \alpha}"] & & \vlat{E}_\alpha\\
& \vlat{E}_\beta\arrow[ru, "p_{\beta, \alpha}"']
\end{tikzcd}
\]
commutes in ${\bf C}$.
\end{defn}

\begin{defn}\label{Defn:  Compatible system for projective limit}
Let ${\bf C}$ be a category of vector lattices and $\cal{I} \defeq \left((\vlat{E}_\alpha)_{\alpha \in I},(p_{\beta , \alpha})_{\beta \scc \alpha}\right)$ an inverse system in ${\bf C}$.  Let $\vlat{E}$ be a vector lattice and for every $\alpha\in I$, let $p_\alpha: \vlat{E} \to \vlat{E}_\alpha$ be a ${\bf C}$-morphism.  The ordered pair $\cal{S} \defeq (\vlat{E}, (p_\alpha)_{\alpha\in I})$ is a \emph{compatible system} of $\cal{I}$ in ${\bf C}$ if, for all $\alpha\precc\beta$ in $I$, the diagram
\[
\begin{tikzcd}[cramped]
\vlat{E} \arrow[rd, "p_\beta"'] \arrow[rr, "p_{\alpha}"] & & \vlat{E}_\alpha\\
& \vlat{E}_\beta\arrow[ru, "p_{\beta, \alpha}"']
\end{tikzcd}
\]
commutes in ${\bf C}$.
\end{defn}

\begin{defn}\label{Defn:  Projective limit}
Let ${\bf C}$ be a category of vector lattices and $\cal{I} \defeq \left((\vlat{E}_\alpha)_{\alpha \in I},(p_{\beta , \alpha})_{\beta \scc \alpha}\right)$ an inverse system in ${\bf C}$.  The \emph{inverse limit} of $\cal{I}$ in ${\bf C}$ is a compatible system $\cal{S}\defeq (\vlat{E}, (p_\alpha)_{\alpha\in I})$ so that for any compatible system $\tilde{\cal{S}}\defeq (\tilde{\vlat{E}}, (\tilde{p}_\alpha)_{\alpha\in I})$ in ${\bf C}$ there exists a unique ${\bf C}$-morphism $s: \tilde{\vlat{E}}\to \vlat{E}$ so that, for all $\alpha\in I$, the diagram
\[
\begin{tikzcd}[cramped]
\tilde{\vlat{E}} \arrow[rd, "\tilde{p}_{\alpha}"']\arrow[rr, "s"] & & \vlat{E}\arrow[ld, "p_{\alpha}"]\\
& \vlat{E}_\alpha
\end{tikzcd}
\]
commutes in ${\bf C}$.  The inverse limit of $\cal{I}$ is denoted by $\proj{\cal{I}}$ or $\proj \vlat{E}_\alpha$.
\end{defn}

Since inverse limits are terminal objects in a certain derived category, they are unique up to a unique isomorphism when they exist, see for instance \cite[Corollary 3.2]{BucurDeleanu1968}

\subsection{Existence of inverse limits}\label{Subsection:  Existence of projective limits}

Our first task is to establish the existence of inverse limits in various categories of vector lattices.  The basic result, akin to Filter's result for direct systems, is the following.

\begin{thm}\label{Thm:  Existence of Projective Limit}
Let $\cal{I} \defeq \left( (\vlat{E}_\alpha)_{\alpha\in I}, (p_{\beta, \alpha})_{\beta \scc\alpha}\right)$ be an inverse system in $\vlc$. Define the set
\[
\vlat{E} \defeq \left\lbrace u \in \dsprod \vlat{E}_\alpha ~:~ \pi_\alpha(u) = p_{\beta,\alpha}(\pi_\beta(u)) \text{ for all } \alpha\precc \beta \text{ in } I \right\rbrace. 
\]
For every $\alpha\in I$ define $p_\alpha \defeq \left.\pi_\alpha\right|_{\vlat{E}}$. The following statements are true. \begin{enumerate}
    \item[(i)] $\vlat{E}$ is a vector sublattice of $\dsprod \vlat{E}_\alpha$.
    \item[(ii)] The pair $\cal{S}\defeq ( \vlat{E} , (p_\alpha)_{\alpha\in I})$ is the inverse limit of $\cal{I}$ in $\vlc$.
\end{enumerate}
\end{thm}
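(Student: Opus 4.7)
The plan is to realize $\vlat{E}$ as a concrete subspace of the product $\prod \vlat{E}_\alpha$ and exploit the product structure, together with the defining compatibility relations, to get everything for free. All four categorical ingredients (closure under operations, morphism property of the $p_\alpha$, existence and uniqueness of the canonical map $s$) reduce to computations done coordinate-wise in the product and then intersected with the constraint $\pi_\alpha(u) = p_{\beta,\alpha}(\pi_\beta(u))$.

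For (i), I would verify that if $u,v \in \vlat{E}$ and $a,b\in\R$, then $au+bv$ and $u\vee v$ also satisfy the compatibility constraint. Since each $p_{\beta,\alpha}$ is a $\vlc$-morphism (hence linear and preserving $\vee$), and since each $\pi_\alpha$ is a lattice homomorphism on the product by Theorem \ref{Thm:  Properties of product of vector lattices.} (i), the equalities
\[
\pi_\alpha(au+bv) = a\pi_\alpha(u)+b\pi_\alpha(v) = a\, p_{\beta,\alpha}(\pi_\beta(u)) + b\, p_{\beta,\alpha}(\pi_\beta(v)) = p_{\beta,\alpha}(\pi_\beta(au+bv))
\]
and the analogous one for $u\vee v$ follow immediately. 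This shows $\vlat{E}$ is a vector sublattice of $\prod \vlat{E}_\alpha$.

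For (ii), first I would check that $\cal{S}$ is a compatible system of $\cal{I}$ in $\vlc$. Each $p_\alpha = \pi_\alpha|_{\vlat{E}}$ is a lattice homomorphism (restriction of $\pi_\alpha$ to a sublattice), and the commutativity $p_{\beta,\alpha}\circ p_\beta = p_\alpha$ holds by the very definition of $\vlat{E}$. Then, given another compatible system $\tilde{\cal{S}} = (\tilde{\vlat{E}},(\tilde{p}_\alpha)_{\alpha\in I})$ in $\vlc$, I would define
\[
s:\tilde{\vlat{E}}\to\dsprod \vlat{E}_\alpha,\qquad s(v) \defeq (\tilde{p}_\alpha(v))_{\alpha\in I},
\]
and verify in turn: (a) $s(v)\in\vlat{E}$, which follows from $\pi_\alpha(s(v))=\tilde{p}_\alpha(v)=p_{\beta,\alpha}(\tilde{p}_\beta(v))=p_{\beta,\alpha}(\pi_\beta(s(v)))$ using compatibility of $\tilde{\cal{S}}$; (b) $s$ is a lattice homomorphism, because it is coordinate-wise given by lattice homomorphisms, and by Theorem \ref{Thm:  Properties of product of vector lattices.} (i) the product has coordinate-wise operations; (c) $p_\alpha\circ s = \tilde{p}_\alpha$ by construction.

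Uniqueness of $s$ is the only step that might look delicate, but it is also forced: if $\tilde s :\tilde{\vlat{E}}\to\vlat{E}$ is any $\vlc$-morphism with $p_\alpha\circ \tilde s = \tilde{p}_\alpha$ for every $\alpha$, then $\pi_\alpha(\tilde s(v)) = p_\alpha(\tilde s(v)) = \tilde{p}_\alpha(v) = \pi_\alpha(s(v))$ for all $\alpha\in I$, so $\tilde s(v) = s(v)$ in $\prod\vlat{E}_\alpha$. I do not anticipate a genuine obstacle; the only mild subtlety is being careful that the $p_\alpha$ are lattice homomorphisms into $\vlat{E}_\alpha$ (not merely into the product), which is immediate once (i) is in hand, and that the verifications use only the $\vlc$-morphism property of the $p_{\beta,\alpha}$ and the $\tilde p_\alpha$, so nothing beyond the hypotheses is invoked.
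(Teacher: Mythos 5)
Your proposal is correct and follows essentially the same route as the paper: realise $\vlat{E}$ as the compatible subset of the product, check closure under the operations coordinate-wise, and define the canonical map $s(v)=(\tilde{p}_\alpha(v))_{\alpha\in I}$, with uniqueness forced by the coordinate projections. No gaps.
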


\begin{proof}[Proof of (i).]
We verify that $\vlat{E}$ is a sublattice of $\dsprod \vlat{E}_\alpha$; that it is a linear subspace follows by a similar argument, as the reader may readily verify.  Consider $u$ and $ v$ in $\vlat{E}$.  Then $\pi_\alpha(u \vee v) = \pi_\alpha(u) \vee \pi_\alpha(v)$ for all $\alpha\in I$.  Fix any $\alpha,\beta \in I$ so that $\beta \scc \alpha$.  Then
\[
p_{\beta , \alpha}(\pi_\beta(u \vee v)) = p_{\beta , \alpha}(\pi_\beta(u))\vee p_{\beta , \alpha}(\pi_\beta(u)) = \pi_\alpha(u) \vee \pi_\alpha(v)=\pi_\alpha(u\vee v).
\]
Therefore $u \vee v \in \vlat{E}$.  Similarly, $u\wedge v \in \vlat{E}$ so that $\vlat{E}$ is a sublattice of $\dsprod \vlat{E}_\alpha$.
\end{proof}

\begin{proof}[Proof of (ii).]
From the definitions of $\vlat{E}$ and the $p_\alpha$ it is clear that $\cal{S}$ is a compatible system of $\cal{I}$ in $\vlc$.  Let $\tilde{\cal{S}} \defeq (\tilde{\vlat{E}},(\tilde{p}_\alpha)_{\alpha\in I})$ be any compatible system of $\cal{I}$ in $\vlc$.  Define $s:\tilde{\vlat{E}}\to \vlat{E}$ by setting $s(u) \defeq (\tilde{p}_{\alpha}(u))_{\alpha\in I}$.  Let $\beta \scc \alpha$ in $I$.  Because $\tilde{\cal{S}}$ is a compatible system
\[
p_{\beta , \alpha} (\tilde{p}_\beta (u)) = \tilde{p}_\alpha (u), ~ u\in \tilde{\vlat{E}}.
\]
Therefore $s(u)\in \vlat{E}$ for all $u\in \tilde{\vlat{E}}$.  Because each $\tilde{p}_\alpha$ is a lattice homomorphism, so is $s$.  By the definitions of $s$ and the $p_\alpha$, respectively, it follows that $p_\alpha \circ s = \tilde{p}_\alpha$ for every $\alpha \in I$.  We show that $s$ is the unique lattice homomorphism with this property.  To this end, let $\tilde{s}:\tilde{\vlat{E}}\to \vlat{E}$ be a lattice homomorphism so that $p_\alpha \circ \tilde{s}=\tilde{p}_\alpha$ for every $\alpha\in I$.  Fix $u\in \tilde{\vlat{E}}$.  Then for every $\alpha\in I$,
\[
\pi_\alpha (\tilde{s}(u)) = p_\alpha (\tilde{s}(u)) = \tilde{p}_\alpha (u) = \pi_\alpha (s(u)).
\]
Hence $s=\tilde{s}$ and therefore $\proj{\cal{I}} = (\vlat{E},(p_\alpha)_{\alpha \in I})$ in $\vlc$.
\end{proof}

\begin{thm}\label{Thm:  Existence of Projective Limit NVL}
Let $\cal{I} \defeq \left( (\vlat{E}_\alpha)_{\alpha\in I}, (p_{\beta, \alpha})_{\beta\scc\alpha}\right)$ be an inverse system in $\nvlc$ and $\cal{S} \defeq ( \vlat{E}, (p_\alpha)_{\alpha\in I} )$ its inverse limit in $\vlc$. The following statements are true.
\begin{enumerate}
    \item[(i)] Let $A\subseteq \vlat{E}$ and assume that $\inf A = u$ or $\sup A = u$ in $\dsprod \vlat{E}_\alpha$.  Then $u\in \vlat{E}$.
    \item[(ii)] If $\vlat{E}_\alpha$ is Dedekind complete for every $\alpha\in I$ then $\cal{S}$ is the inverse limit of $\cal{I}$ in $\nvlc$.
\end{enumerate}
\end{thm}

\begin{proof}[Proof of (i)]
It is sufficient to consider infima of downward directed subsets of $\vlat{E}$. Let $A\subseteq \vlat{E}$ and assume that $A\downarrow u$ in $\dsprod\vlat{E}_\alpha$. By Theorem~\ref{Thm:  Properties of product of vector lattices.}~(i), for every $\alpha\in I$, $p_\alpha [A]=\pi_\alpha[A]\downarrow \pi_\alpha(u)$ in $\vlat{E}_\alpha$.  For $\beta \scc \alpha$ in $I$,
\[
\pi_\alpha(u) = \inf p_\alpha [A] = \inf p_{\beta , \alpha}[p_\beta[A]] = p_{\beta , \alpha}(\inf p_\beta[A]) = p_{\beta , \alpha}(\pi_\beta(u));
\]
the second to last identity follows from the fact that $p_{\beta , \alpha}$ is a normal lattice homomorphism.  Therefore $u\in \vlat{E}$.
\end{proof}

\begin{proof}[Proof of (ii)]
First, we prove that the $p_\alpha$ are normal lattice homomorphisms.  Let $A\downarrow 0$ in $\vlat{E}$.  Since $\vlat{E}_\alpha$ is Dedekind complete for every $\alpha\in I$, so is $\dsprod \vlat{E}_\alpha$.  Therefore $A\downarrow u$ in $\dsprod \vlat{E}_\alpha$ for some $u\in \dsprod \vlat{E}_\alpha$.  Then $u\in \vlat{E}$ so that $A\downarrow u$ in $\vlat{E}$.  But $A\downarrow  0$ in $\vlat{E}$, hence $u =  0$.  Therefore $\inf p_\alpha [A] = \pi_\alpha(u) = 0$ for every $\alpha \in I$.

From the above it follows that $\cal{S}$ is a compatible system in $\nvlc$.  It remains to show that $\cal{S}$ satisfies Definition \ref{Defn:  Projective limit} in $\nvlc$.  Let $\tilde{\cal{S}} = (\tilde{\vlat{E}},(\tilde{p}_\alpha)_{\alpha\in I})$ be a compatible system in $\nvlc$.  Based on Theorem \ref{Thm:  Existence of Projective Limit} we need only show that $s:\tilde{\vlat{E}}\to \vlat{E}$ defined by setting $s(u) \defeq (\tilde{p}_{\alpha}(u))_{\alpha\in I}$ for every $u \in \tilde{\vlat{E}}$ is a normal lattice homomorphism.

Let $A\downarrow 0$ in $\tilde{\vlat{E}}$.  Then, since each $\tilde{p}_\alpha$ is a normal lattice homomorphism, $\pi_\alpha[s[A]] = p_{\alpha}[s[A]] = \tilde{p}_\alpha [A]\downarrow 0$ in $\vlat{E}_\alpha$ for every $\alpha \in I$.  Hence $s[A]\downarrow  0$ in $\dsprod \vlat{E}_\alpha$, therefore also in $\vlat{E}$.  Therefore $s$ is a normal lattice homomorphism, hence a $\nvlc$-morphism, so that $\proj{\cal{I}}=(\vlat{E},(p_\alpha)_{\alpha \in I})$ in $\nvlc$.
\end{proof}


\begin{remark}
Let $\cal{I} \defeq \left( (\vlat{E}_\alpha)_{\alpha\in I}, (p_{\beta, \alpha})_{\beta\scc \alpha}\right)$ be an inverse system in a category of vector lattices, and $\cal{S} \defeq \left(\vlat{E},(p_\alpha)_{\alpha\in I}\right)$ its inverse limit in $\vlc$.  We occasionally suppress the projections $p_\alpha$ and simply write $\vlat{E}=\proj{\cal{I}}$ or `$\vlat{E}$ is the inverse limit of $\cal{I}$'.
\end{remark}

\subsection{Permanence properties}\label{Subsection:  Permanence properties}

In this section we establish some permanence properties for inverse limits, along the same vein as those for direct limits given in Theorem \ref{Thm:  Inductive Limit Permanence}.  These follow easily from the construction of inverse limits given in Theorem \ref{Thm:  Existence of Projective Limit} and the properties of products of vector lattices given in Theorem \ref{Thm:  Properties of product of vector lattices.}.

\begin{thm}\label{Thm:  Proj Limits Archimedean and RU Complete}
Let $\cal{I} \defeq \left( (\vlat{E}_\alpha)_{\alpha\in I}, (p_{\beta, \alpha})_{\beta \scc \alpha}\right)$ be an inverse system in $\vlc$ and $\cal{S} \defeq \left(\vlat{E},(p_\alpha)_{\alpha\in I}\right)$ its inverse limit in $\vlc$.  The following statements are true.
\begin{enumerate}
    \item[(i)] If $\vlat{E}_\alpha$ is Archimedean for every $\alpha\in I$ then so is $\vlat{E}$.
    \item[(ii)] If $\vlat{E}_\alpha$ is Archimedean and relatively uniformly complete for every $\alpha\in I$ then $\vlat{E}$ is relatively uniformly complete.
\end{enumerate}
\end{thm}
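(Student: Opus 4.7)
The plan is to exploit the concrete realisation of $\vlat{E}$ as the sublattice of $\dsprod \vlat{E}_\alpha$ cut out by the compatibility conditions $\pi_\alpha(u) = p_{\beta,\alpha}(\pi_\beta(u))$, combined with the fact that lattice homomorphisms respect the algebra needed to manipulate relatively uniform (r.u.) convergence.

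For (i), the argument is essentially a one-liner. Since each $\vlat{E}_\alpha$ is Archimedean, Theorem~\ref{Thm:  Properties of product of vector lattices.}~(ii) gives that $\dsprod \vlat{E}_\alpha$ is Archimedean, and an Archimedean property passes trivially to any vector sublattice. Hence $\vlat{E}$ is Archimedean.

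For (ii), I will first check that $\dsprod \vlat{E}_\alpha$ is itself r.u. complete. Given an r.u.\ Cauchy sequence $(u_n)$ in $\dsprod \vlat{E}_\alpha$ with regulator $w \ge 0$, each coordinate sequence $(\pi_\alpha(u_n))$ is r.u.\ Cauchy in $\vlat{E}_\alpha$ with regulator $\pi_\alpha(w)$, so by hypothesis converges r.u.\ to some $v_\alpha \in \vlat{E}_\alpha$. Setting $v \defeq (v_\alpha)$, I will verify that $u_n \to v$ r.u.\ in $\dsprod \vlat{E}_\alpha$ with the same regulator $w$: fixing $\epsilon > 0$ and $N$ with $|u_n - u_m| \le \epsilon w$ for $n,m \ge N$, pointwise passage to the limit in $m$ combined with the Archimedean property of $\vlat{E}_\alpha$ yields $|\pi_\alpha(u_n) - v_\alpha| \le \epsilon \pi_\alpha(w)$ for every $\alpha$ and every $n \ge N$.

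Next, take an r.u.\ Cauchy sequence $(u_n)$ in $\vlat{E}$ with regulator $w \in \vlat{E}^+$; it is r.u.\ Cauchy in $\dsprod \vlat{E}_\alpha$ with the same regulator, so by the preceding paragraph it has an r.u.\ limit $v \in \dsprod \vlat{E}_\alpha$. The crux is to show $v \in \vlat{E}$, i.e.\ that $\pi_\alpha(v) = p_{\beta,\alpha}(\pi_\beta(v))$ for all $\alpha \precc \beta$. I will use that each $p_{\beta,\alpha}$ is a lattice homomorphism, so it preserves r.u.\ convergence via
\[
|p_{\beta,\alpha}(\pi_\beta(u_n)) - p_{\beta,\alpha}(\pi_\beta(v))| = p_{\beta,\alpha}(|\pi_\beta(u_n) - \pi_\beta(v)|) \le \epsilon_n \, p_{\beta,\alpha}(\pi_\beta(w)).
\]
Thus $p_{\beta,\alpha}(\pi_\beta(u_n)) \to p_{\beta,\alpha}(\pi_\beta(v))$ r.u.\ in $\vlat{E}_\alpha$. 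On the other hand, because $u_n \in \vlat{E}$, we have $p_{\beta,\alpha}(\pi_\beta(u_n)) = \pi_\alpha(u_n) \to \pi_\alpha(v)$ r.u. Uniqueness of r.u.\ limits in the Archimedean space $\vlat{E}_\alpha$ forces $p_{\beta,\alpha}(\pi_\beta(v)) = \pi_\alpha(v)$, hence $v \in \vlat{E}$. Since the regulator $w$ and the limit $v$ both lie in $\vlat{E}$, and absolute values in the sublattice $\vlat{E}$ agree with those in $\dsprod \vlat{E}_\alpha$, the convergence $u_n \to v$ is r.u.\ in $\vlat{E}$.

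The main subtlety, though not a genuine obstacle, is the observation that although the $p_{\beta,\alpha}$ need not be order continuous (we are in $\vlc$, not $\nvlc$), they automatically preserve r.u.\ convergence because lattice homomorphisms commute with the modulus and with scalar multiplication. This is what lets us bypass the lack of Theorem~\ref{Thm:  Existence of Projective Limit NVL}~(i) in this setting and still close $\vlat{E}$ under r.u.\ limits.
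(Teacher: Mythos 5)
Your proof is correct, and part (ii) takes a genuinely different route from the paper's. For (i) the two arguments coincide. For (ii) the paper first reduces, via the Archimedean property and \cite[Theorems 39.4, 59.3]{LuxemburgZaanen1971RSI}, to \emph{increasing} relatively uniformly Cauchy sequences, identifies the coordinatewise limits $u_\alpha$ as suprema, and then invokes the closure of $\vlat{E}$ under suprema formed in $\dsprod\vlat{E}_\alpha$ (Theorem~\ref{Thm:  Existence of Projective Limit NVL}~(i)) to conclude that the limit lies in $\vlat{E}$. You instead handle arbitrary r.u.\ Cauchy sequences: you first verify r.u.\ completeness of the full product coordinatewise, and then close $\vlat{E}$ under r.u.\ limits by noting that the bonding maps $p_{\beta,\alpha}$, being lattice homomorphisms, satisfy $|p_{\beta,\alpha}(x)-p_{\beta,\alpha}(y)|=p_{\beta,\alpha}(|x-y|)$ and hence carry r.u.\ convergence to r.u.\ convergence, after which uniqueness of r.u.\ limits in the Archimedean spaces $\vlat{E}_\alpha$ forces the compatibility relations for the limit. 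The paper's route is shorter given the cited Luxemburg--Zaanen machinery; yours is more self-contained and, notably, never needs the bonding maps to be normal. This is a real advantage here: Theorem~\ref{Thm:  Existence of Projective Limit NVL}~(i) is stated (and proved) for inverse systems in $\nvlc$, while the present theorem concerns systems in $\vlc$, so the paper's appeal to it is strictly justified only because, for an increasing r.u.\ convergent sequence, the supremum coincides with the r.u.\ limit and is therefore preserved by any lattice homomorphism --- which is exactly the observation your argument makes explicit. Your final remark that the regulator and limit lie in $\vlat{E}$, so that convergence holds in the sublattice itself, correctly closes the argument.
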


\begin{proof}
We note that (i) follows immediately from Theorems \ref{Thm:  Properties of product of vector lattices.} (ii) and the construction of an inverse limit in $\vlc$.

For (ii), assume that $\vlat{E}_\alpha$ is Archimedean and relatively uniformly complete for every $\alpha \in I$.  We show that every relatively uniformly Cauchy sequence in $\vlat{E}$ is relatively uniformly convergent.  Because $\vlat{E}$ is Archimedean by (i), it follows from \cite[Theorem 39.4]{LuxemburgZaanen1971RSI} that it suffices to consider increasing sequences.  Let $(u_n)$ be an increasing, relatively uniformly Cauchy sequence in $\vlat{E}$.  Then for every $\alpha\in I$, $(p_\alpha(u_n))$ is an increasing sequence in $\vlat{E}_\alpha$.  According to \cite[Theorem 59.3]{LuxemburgZaanen1971RSI}, $(p_\alpha(u_n))$ is relatively uniformly Cauchy in $\vlat{E}_\alpha$.  Because each $\vlat{E}_\alpha$ is relatively uniformly complete, there exists $u_\alpha\in \vlat{E}_\alpha$ so that $(p_\alpha(u_n))$ converges relatively uniformly to $u_\alpha$.  In fact, because $(p_\alpha (u_n))$ is increasing, $u_\alpha = \sup\{p_\alpha(u_n) ~:~n\in\N\}$.  Therefore $u\defeq (u_\alpha)=\sup \{u_n ~:~n\in\N\}$ in $\dsprod \vlat{E}_\alpha$.  By Theorem \ref{Thm:  Existence of Projective Limit NVL}~(i), $u\in \vlat{E}$ so that $u=\displaystyle\sup\{ u_n ~:~ n\in \N\}$ in $\vlat{E}$.  Therefore $(u_n)$ converges relatively uniformly to $u$  by \cite[Lemma 39.2]{LuxemburgZaanen1971RSI}.  We conclude that $\vlat{E}$ is relatively uniformly complete.
\end{proof}

\begin{thm}\label{Thm:  Projective limit permanence properties}
Let $\cal{I} \defeq \left( (\vlat{E}_\alpha)_{\alpha\in I}, (p_{\beta, \alpha})_{\beta\scc\alpha}\right)$ be an inverse system in $\nvlc$ and $\cal{S} \defeq \left(\vlat{E},(p_\alpha)_{\alpha\in I}\right)$ its inverse limit in $\vlc$.  The following statements are true. \begin{enumerate}
    \item[(i)] If $\vlat{E}_\alpha$ is $\sigma$-Dedekind complete for every $\alpha\in I$ then so is $\vlat{E}$.
    \item[(ii)] If $\vlat{E}_\alpha$ is Dedekind complete for every $\alpha\in I$ then so is $\vlat{E}$.
    \item[(iii)] If $\vlat{E}_\alpha$ is laterally complete for every $\alpha\in I$ then so is $\vlat{E}$.
    \item[(iv)] If $\vlat{E}_\alpha$ is universally complete for every $\alpha\in I$ then so is $\vlat{E}$.
\end{enumerate}
\end{thm}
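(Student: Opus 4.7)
The plan is to reduce each statement to the corresponding completeness property of the ambient product $\dsprod \vlat{E}_\alpha$, and then invoke Theorem~\ref{Thm:  Existence of Projective Limit NVL}~(i), which guarantees that any supremum or infimum taken in the product of a subset of $\vlat{E}$ already lies in $\vlat{E}$.

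First, for each of the four completeness notions I would observe that the product vector lattice $\dsprod \vlat{E}_\alpha$ inherits the property coordinatewise. For $\sigma$-Dedekind completeness and Dedekind completeness this is either Theorem~\ref{Thm:  Properties of product of vector lattices.}~(iii) or a routine coordinatewise argument: if $A\subseteq \dsprod\vlat{E}_\alpha$ is bounded above (countable in the $\sigma$-case), then $(\pi_\alpha[A])$ is bounded above in $\vlat{E}_\alpha$ for each $\alpha$, hence has a supremum $w_\alpha$, and $(w_\alpha)$ is the supremum of $A$ in the product. For lateral completeness one notes that a disjoint family in the product has disjoint projections onto each coordinate, so coordinatewise suprema assemble into a supremum in the product. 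Universal completeness is, by definition, the conjunction of Dedekind completeness and lateral completeness, so (iv) will follow from (ii) and (iii).

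Next, for (i), (ii), (iii) I would argue as follows. Suppose $A\subseteq \vlat{E}$ is a set to which the relevant completeness notion is meant to apply (a bounded countable chain, a bounded set, or a disjoint system). By the previous paragraph, $A$ has a supremum $u$ in $\dsprod \vlat{E}_\alpha$. Theorem~\ref{Thm:  Existence of Projective Limit NVL}~(i) then yields $u\in \vlat{E}$. Finally, any upper bound of $A$ in $\vlat{E}$ is also an upper bound in the product and therefore dominates $u$, so $u=\sup A$ in $\vlat{E}$ as well. For (i) the argument is the same but restricted to countable bounded chains; for (iii) we restrict to disjoint families in $\vlat{E}^+$. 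Then (iv) is immediate from (ii) and (iii).

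I do not expect any real obstacle here: the content of the theorem is essentially that completeness properties pass to the product and that the ``completeness'' of $\vlat{E}$ inside $\dsprod\vlat{E}_\alpha$ that was already secured in Theorem~\ref{Thm:  Existence of Projective Limit NVL}~(i) turns these into completeness properties of $\vlat{E}$ itself. The only mild point of care is checking, in the case of $\sigma$-Dedekind completeness, that the product argument genuinely needs only countable suprema, and in the lateral case that a disjoint family in $\vlat{E}$ remains disjoint when viewed in the product (which is immediate from the coordinatewise definition of the lattice operations on $\dsprod\vlat{E}_\alpha$).
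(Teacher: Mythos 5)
Your proposal is correct and follows essentially the same route as the paper: take suprema coordinatewise in $\dsprod \vlat{E}_\alpha$, use Theorem \ref{Thm:  Existence of Projective Limit NVL}~(i) to conclude the supremum lies in $\vlat{E}$, note it remains the supremum there, and deduce (iv) from (ii) and (iii). No further comment is needed.
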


\begin{proof}
We prove (ii). The statements in (i) and (iii) follow by almost identical arguments, and (iv) follows immediately from (ii) and (iii).

Let $D\subseteq \vlat{E}$ be an upwards directed set bounded above by $u \in \vlat{E}$.  For every $\alpha\in I$ the set $D_\alpha \defeq p_\alpha\left[ D \right]$ is bounded above in $\vlat{E}_\alpha$ by $\pi_\alpha(u) \in \vlat{E}_\alpha$.  Since $\vlat{E}_\alpha$ is Dedekind complete for every $\alpha\in I$, $v_\alpha \defeq \sup D_\alpha$ exists in $\vlat{E}_\alpha$ for all $\alpha\in I$.  We have that $\sup D = \left( v_\alpha \right)$ in $\dsprod \vlat{E}_\alpha$.  By Theorem \ref{Thm:  Existence of Projective Limit NVL}~(i), $v\defeq \left( v_\alpha \right) \in \vlat{E}$.  Because $\vlat{E}$ is a sublattice of $\dsprod \vlat{E}_\alpha$ it follows that $v=\sup D$ in $\vlat{E}$.
\end{proof}

\subsection{Examples of inverse limits}\label{Subsection:  Examples of projective limits}

In this section we present a number of examples of inverse systems and their limits in categories of vector lattices.  These will be used in Section \ref{Section:  Applications}.  Our first example is related to Example \ref{Exm:  Locally supported p-summable functions as inductive limit}.

\begin{example}\label{Exm:  Lploc projective limit}
Let $(X,\Sigma,\mu)$ be a complete $\sigma$-finite measure space.  Let $\Xi \defeq (X_n)$ be an increasing sequence (w.r.t. inclusion) of measurable sets with positive measure so that $X = \displaystyle \bigcup X_n$.  For $1 \leq p \leq \infty$ let $\vlat{L}^p_{\Xi-\loc}(X)$ denote the set of (equivalence classes of) measurable functions $u:X\to\R$ so that $u\onefunction_{X_n}\in \vlat{L}^p(X_n)$ for every $n\in\N$.  For $m \geq n$ in $\N$ let $r_{m , n}:\vlat{L}^p(X_m)\to \vlat{L}^p(X_n)$ and $r_n:\vlat{L}^p_{\Xi-\loc}(X)\to \vlat{L}^p(X_n)$ be the restriction maps.  The following statements are true.\begin{enumerate}
    \item[(i)] $\cal{I}^p_{\Xi-\loc} \defeq ((\vlat{L}^p(X_n))_{n\in\N},(r_{m , n})_{m \geq n})$ is an inverse system in $\nvlic$, and $r_{m,n}$ is surjective for all $m\geq n$ in $\N$.
    \item[(ii)] $\cal{S}^p_{\Xi-\loc}\defeq (\vlat{L}^p_{\Xi-\loc}(X),(r_n)_{n\in \N})$ is a compatible system of $\cal{I}^p_{\Xi-\loc}$ in $\nvlic$.
    \item[(iii)] $\cal{S}^p_{\Xi-\loc}$ is the inverse limit of $\cal{I}^p_{\Xi-\loc}$ in $\nvlc$.
\end{enumerate}
\end{example}


\begin{proof}
That (i) and (ii) are true is clear.  We prove (iii).

Because $\vlat{L}^p(X_n)$ is Dedekind complete for every $n\in\N$, $\proj{\cal{I}^p_{\Xi-\loc}}\defeq (\vlat{F},(p_n)_{n\in\N})$ exists in $\nvlc$ by Theorem \ref{Thm:  Existence of Projective Limit NVL}~(ii).  Since $\cal{S}^p_{\Xi-\loc}$ is a compatible system of $\cal{I}^p_{\Xi-\loc}$ in $\nvlc$ there exists a unique normal lattice homomorphism $s:\vlat{L}^p_{\Xi-\loc}(X)\to \vlat{F}$ so that the diagram
\[
\begin{tikzcd}[cramped]
\vlat{L}^p_{\Xi-\loc}(X) \arrow[rd, "r_{n}"']\arrow[rr, "s"] & & \vlat{F}\arrow[ld, "p_{n}"]\\
& \vlat{L}^p(X_n)
\end{tikzcd}
\]
commutes for every $n\in\N$.  We show that $s$ is bijective.  To see that $s$ is injective, suppose that $s(u)=0$ for some $u\in \vlat{L}^p_{\Xi-\loc}(X)$.  Then $r_n(u)=0$ for every $n\in\N$; that is, the restriction of $u$ to each set $X_n$ is $0$.  Since $\displaystyle\bigcup X_n = X$ it follows that $u=0$.  To see that $s$ is surjective, consider $u\in\vlat{F}$.  If $m \geq n$ then $p_n(u)=r_{m,n}(p_m(u))$; that is, $p_n(u)=p_m(u)$ a.e. on $X_n$.  Therefore $v:X\to\R$ given by
\[
v(x) \defeq p_n(u)(x) \text{ if } x\in X_n
\]
is a.e. well-defined on $X=\displaystyle \bigcup X_n$.  For $n\in\N$, $v$ restricted to $X_n$ is $p_n(u)\in \vlat{L}^p(X_n)$.  Therefore $v\in\vlat{L}^p_{\Xi-\loc}(X)$.  Furthermore, $p_n(s(v)) = r_n(v) = p_n(u)$ for all $n\in\N$ so that $s(v)=u$. We conclude that $s$ is a lattice isomorphism.
\end{proof}

Our second example is a companion result for Examples \ref{Exm:  Compactly supported measures as inductive limit} and \ref{Exm:  Compactly supported NORMAL measures as inductive limit}.

\begin{example}\label{Exm:  Continuous functions projective limit}
Let $X$ be a topological space and $\cal{O}\defeq \{O_\alpha ~:~ \alpha\in I\}$ collection of non-empty open subsets of $X$ which is upward directed with respect to inclusion; that is, $\alpha \precc \beta$ if and only if $O_\alpha \subseteq O_\beta$.  Assume that $\bigcup O_\alpha$ is dense and $\cont$-embedded in $X$.  For $\beta\scc \alpha$, denote by $r_{\beta , \alpha}:\cont(\bar O_\beta)\to \cont(\bar O_\alpha)$ and $r_\alpha:\cont(X)\to\cont(\bar O_\alpha)$ the restriction maps.  The following statements are true. \begin{enumerate}
    \item[(i)] $\cal{I}_\cal{O} \defeq ((\cont(\bar O_\alpha))_{\alpha\in I},(r_{\beta , \alpha})_{\beta \scc \alpha})$ is an inverse system in $\vlc$.
    \item[(ii)] $\cal{S}_\cal{O} \defeq (\cont(X),(r_\alpha)_{\alpha\in I})$ is a compatible system of $\cal{I}_{\cal{O}}$ in $\vlc$.
    \item[(iii)] $\cal{S}_\cal{O}$ is the inverse limit of $\cal{I}_{\cal{O}}$ in $\vlc$.
    \item[(iv)] If $X$ is a Tychonoff space and $O_\alpha$ is precompact for every $\alpha\in I$ then $\cal{I}_\cal{O}$ is an inverse system in $\nvlic$, $\cal{S}_\cal{O}$ is a compatible system of $\cal{I}_{\cal{O}}$ in $\nvlic$, and $r_{\beta,\alpha}$ is surjective for all $\beta \scc \alpha$ in $I$.
\end{enumerate}
\end{example}

\begin{proof}
That (i), (ii) and (iii) are true follows from arguments similar to those used in the proof of Example \ref{Exm:  Lploc projective limit}.  We therefore omit the proofs of these statements.  We only note that for (iii), we use the fact that every $u\in\cont(\displaystyle\bigcup O_\alpha)$ has a unique continuous and real-valued extension to $X$; that is, restriction from $X$ to $\displaystyle\bigcup O_\alpha$ defines a lattice isomorphism from $\cont(\displaystyle\bigcup O_\alpha)$ onto $\contX$.

To verify the first two statements in (iv) it is sufficient to show that the $r_\alpha$ and $r_{\alpha,\beta}$ are order continuous and interval preserving.  That these maps are order continuous follows from \cite[Theorem 3.4]{KandicVavpeticPositivity2019}.  That they are interval preserving follows from the fact that every compact subset of a Tychonoff space is $\cont^\ast$-embedded.  We show that the $r_\alpha$ are interval preserving, the proof for $r_{\alpha,\beta}$ being identical.  Consider an $\alpha\in I$, $u\in \cont(X)^+$ and $v\in\cont(\bar O_\alpha)$ so that $0\leq v\leq r_\alpha(u)$.  Because $\bar O_\alpha$ is $\cont^\ast$-embedded in $X$ there exists a continuous function $v'\in \cont (X)$ so that $r_\alpha (v')=v$.  Let $w\defeq (0\vee v')\wedge u$.  Then $0\leq w\leq u$ and, because $r_\alpha$ is a lattice homomorphism, $r_\alpha(w)=v$.  Therefore $[0,r_\alpha (u)] = r_\alpha [[0,u]]$.

For every $\beta \scc \alpha$ in $I$, $\bar O_\alpha$ is $\cont^\ast$-embedded in $\bar O_\beta$ so that $r_{\beta,\alpha}$ is surjective.
\end{proof}


Our next example is of a more general nature.  It is an essential ingredient in our solution of the decomposition problem for $\cont(X)$ mentioned in Section \ref{Section:  Introduction}.

\begin{example}\label{Exm:  Projective limits of bands}
Let $\vlat{E}$ be an Archimedean vector lattice.  Denote by $\bands{\vlat{E}}$ the Boolean algebra of projection bands in $\vlat{E}$.\footnote{$\bands{\vlat{E}}$ is ordered by inclusion.}  Let $\mathrm{M}$ be a non-trivial ideal in $\bands{\vlat{E}}$; that is, $\mathrm{M}\subset \bands{\vlat{E}}$ is downward closed, upward directed and does not consist of the trivial band $\{0\}$ only.  For notational convenience we express $\mathrm{M}$ as indexed by a directed set $I$, $\mathrm{M} = \{\vlat{B}_\alpha ~:~ \alpha \in I\}$, so that $\alpha \precc \beta$ if and only if $\vlat{B}_{\alpha}\subseteq \vlat{B}_{\beta}$.

For $\vlat{B}_\alpha \subseteq \vlat{B}_\beta$ in $\mathrm{M}$, denote by $P_\alpha$ the band projection of $\vlat{E}$ onto $\vlat{B}_\alpha$ and by $P_{\beta , \alpha}$ the band projection of $\vlat{B}_\beta$ onto $\vlat{B}_\alpha$; that is, $P_{\beta , \alpha} = \left.P_{\alpha}\right|_{\vlat{B}_\beta}$.  The following statements are true.
\begin{enumerate}
    \item[(i)] $\cal{I}_{\mathrm{M}} \defeq (\mathrm{M} , (P_{\beta,\alpha})_{\beta\scc \alpha})$ is an inverse system in $\nvlic$ and $\tilde{\cal{S}} \defeq (\vlat{E},(P_\alpha)_{\alpha\in I})$ is a compatible system of $\cal{I}_\vlat{M}$ in $\nvlic$.
    \item[(ii)] $\proj{\cal{I}_{\mathrm{M}}}\defeq \left(\vlat{F},(p_\alpha)_{\alpha \in I}\right)$ exists in $\vlc$.  If $\vlat{E}$ is Dedekind complete then $\left(\vlat{F},(p_\alpha)_{\alpha \in I}\right)$ is the inverse limit of $\cal{I}_{\mathrm{M}}$ in $\nvlc$.
    \item[(iii)]  $P_{\mathrm{M}}:\vlat{E}\ni u \mapsto (P_\alpha(u))_{\alpha\in I}\in \vlat{F}$ is the unique lattice homomorphism so that the diagram
        \[
        \begin{tikzcd}[cramped]
        \vlat{E} \arrow[rd, "P_{\alpha}"']\arrow[rr, "P_{\mathrm{M}}"] & & \vlat{F}\arrow[ld, "p_{\alpha}"]\\
        & \vlat{B}_\alpha
        \end{tikzcd}
        \]
        commutes for every $\alpha\in I$.  Furthermore, $P_{\mathrm{M}}[\vlat{E}]$ an order dense sublattice of $\vlat{F}$.  If $\vlat{E}$ is Dedekind complete then $P_\vlat{M}[\vlat{E}]$ is an ideal in $\vlat{F}$.
    \item[(iv)] $P_{\mathrm{M}}$ is injective if and only if $\{P_\alpha : \alpha\in I\}$ separates the points of $\vlat{E}$.  In this case, $P_{\mathrm{M}}$ is a lattice isomorphism onto an order dense sublattice of $\vlat{F}$.
\end{enumerate}
\end{example}


\begin{proof}
Since band projections are both interval preserving and order continuous, (i) follows immediately from Proposition \ref{Prop: Properties of band projections}.  The statement in (ii) follows immediately from (i) and Theorems \ref{Thm:  Existence of Projective Limit} and \ref{Thm:  Existence of Projective Limit NVL}~(ii).  That (iv) is true is a direct consequence of the definition of $P_{\mathrm{M}}$.

We proceed to prove (iii).  Since $P_\alpha$ is a lattice homomorphism for every $\alpha\in I$, $P_\vlat{M}$ is a lattice homomorphism into $\dsprod \vlat{B}_\alpha$.  If $u\in\vlat{E}$ and $\alpha \precc \beta$ then $P_{\beta,\alpha}(P_\beta ( u)) = P_\alpha (u)$ by Proposition \ref{Prop: Properties of band projections} (iii).  Hence $P_\vlat{M}[\vlat{E}]$ is a sublattice of $\vlat{F}$.  It follows from the construction of $\vlat{F}$ as a sublattice of $\dsprod \vlat{B}_\alpha$ given in Theorem \ref{Thm:  Existence of Projective Limit} that $p_\alpha \circ P_{\vlat{M}}=P_\alpha$ for all $\alpha\in I$.

Let $0 < u=(u_\alpha)\in \vlat{F}$.  There exists $\alpha_0\in I$ so that $u_{\alpha_0}>0$ in $\vlat{B}_{\alpha_0}\subseteq \vlat{E}$.  Then $0<P_\vlat{M}(u_{\alpha_0}) \leq u$ in $\vlat{F}$.  Hence $P_\vlat{M}[\vlat{E}]$ is order dense in $\vlat{F}$.

Assume that $\vlat{E}$ is Dedekind complete.  We show that $P_\vlat{M}[\vlat{E}]$ is an ideal in $\vlat{F}$.  Consider $v\in \vlat{E}^+$ and $u=(u_\alpha)\in \vlat{F}^+$ so that $0\leq u \leq P_\vlat{M}(v)$.  Then $u_\alpha \leq P_\alpha(v) \leq v$ for all $\alpha\in I$.  Let $w=\sup \{u_\alpha ~:~ \alpha \in I\}$ in $\vlat{E}$.  We claim that $P_\vlat{M}(w)=u$.  Because $u_\alpha \leq w$ for all $\alpha \in I$, $u_\alpha = P_\alpha(u_\alpha) \leq P_\alpha(w)$.  Therefore $u\leq P_\vlat{M}(w)$.  For the reverse inequality we note that for all $\beta\in I$,
\[
P_\beta(w) = \sup\{ P_\beta (u_\alpha) ~:~ \alpha \in I\}.
\]
We claim that $P_{\beta}(u_\alpha) \leq u_\beta$ for all $\alpha,\beta\in I$.  It follows from this claim that $P_\beta(w) \leq u_\beta$ so that $P_\vlat{M}(w) \leq u$.  Thus we need only verify that, indeed, $P_{\beta}(u_\alpha) \leq u_\beta$ for all $\alpha,\beta \in I$.  To this end, fix $\alpha,\beta\in I$.  Let $\gamma\in I$ be a mutual upper bound for $\alpha$ and $\beta$.  Because $u=(u_\alpha) \in\vlat{F}$, $\tilde{\cal{S}}$ is compatible with $\cal{I}_\vlat{M}$ and $u_\gamma,u_\alpha\in \vlat{E}$ we have
\[
P_\beta(u_\alpha) = P_\beta (P_{\gamma, \alpha}(u_\gamma)) \leq P_\beta(u_\gamma) = P_{\gamma , \beta} (P_\gamma(u_\gamma)) = P_{\gamma , \beta}(u_\gamma) = u_\beta.
\]
This completes the proof.
\end{proof}

\begin{remark}\label{Remark:  Vector lattice as projective limit of bands}
Let $\vlat{E}$, $\bands{\vlat{E}}$, $\mathrm{M}$, $\cal{I}_{\mathrm{M}}$, $P_{\mathrm M}$ and $\tilde{\cal{S}}$ be as in Example \ref{Exm:  Projective limits of bands}.  Assume that $\{P_\alpha ~:~ \alpha\in I\}$ separates the points of $\vlat{E}$.  It may happen that $P_{\mathrm M}$ maps $\vlat{E}$ onto $\proj{\cal{I}_\vlat{M}}$, but this is not always the case.  If this is the case, then $\proj{\cal{I}_\vlat{M}}=\tilde{\cal{S}}$ in $\vlc$, or, if $\vlat{E}$ is Dedekind complete, in $\nvlc$.  A sufficient, but not necessary, condition for $P_\mathrm{M}$ to map $\vlat{E}$ onto $\vlat{F}$ is that $\vlat{E}\in\mathrm{M}$.
\begin{enumerate}
    \item[(i)] Consider the vector lattice $\R^\omega$ of all functions from $\N$ to $\R$. For $F\subseteq \N$ let
    \[
	\vlat{B}_F \defeq \{u\in \R^\omega ~:~ \supp (u) \subseteq F\}.
    \]
    Then $\mathrm{M} \defeq \{\vlat{B}_F ~:~ \emptyset\neq F\subseteq \N \text{ is finite} \}$ is a proper, non-trivial ideal in $\rm{ {\bf B}}_{\R^\omega}$ and $\{P_F : \emptyset\neq F\subseteq \N \text{ finite} \}$ separates the points of $\R^\omega$.  It is easy to see that $P_{\mathrm{M}}$ maps $\R^\omega$ onto $\proj{\cal{I}_{\mathrm{M}}}$.
    \item[(ii)] Consider the vector lattice $\ell^1$.  As in (i), for $F\subseteq \N$ define
    \[
	\vlat{B}_F \defeq \{u\in \ell^1 ~:~ \supp (u) \subseteq F\}
    \]
    Then $\mathrm{M} \defeq  \{\vlat{B}_F ~:~ \emptyset\neq F\subseteq \N \text{ is finite} \}$ is a proper, non-trivial ideal in $\rm{ {\bf B}}_{\ell^1}$ and $\proj{\cal{I}_\mathrm{M}}$ is $\R^\omega$.  In this case, $P_{\mathrm{M}}[\ell^1]$ is a proper subspace of $\proj{\cal{I}_\mathrm{M}}$.
\end{enumerate}
\end{remark}

Based on Remark \ref{Remark:  Vector lattice as projective limit of bands} we ask the following question:  Given a Dedekind complete vector lattice $\vlat{E}$, does there exist a proper ideal $\mathrm{M}$ in $\bands{\vlat{E}}$ so that $P_\mathrm{M}:\vlat{E}\to \proj{\cal{I}_\mathrm{M}}$ is an isomorphism onto $\proj{\cal{I}_\mathrm{M}}$?  We do not pursue this question any further here, except to note the following example.

\begin{example}
Let $X$ be an extremally disconnected Tychonoff space.  Let $\cal{O} \defeq \{O_\alpha : \alpha \in I\}$ be a proper, non-trivial ideal in the Boolean algebra ${\bf R}_X$ of clopen subsets of $X$.  Assume that $\bigcup O_\alpha$ is dense and $\cont$-embedded in $X$.  Then $\vlat{M}\defeq \{\cont (O_\alpha) ~:~ \alpha\in I\}$ is a proper, non-trivial ideal in $\bands{\contX}$ and $P_\vlat{M}:\contX \to \proj{\cal{I}_\vlat{M}}$ is a lattice isomorphism onto $\proj{\cal{I}_\vlat{M}}$.
\end{example}

\begin{proof}
The Boolean algebras ${\bf R}_X$ and $\bands{\contX}$ are isomorphic.  In particular, the isomorphism is given by
\[
{\bf R}_X\ni O \longmapsto \vlat{B}_O=\{u\in \contX ~:~ \supp(u)\subseteq O\},
\]
see  \cite[Theorem 12.9]{deJongevanRooijRieszSpaces1977}.  We note that for $O\in{\bf R}_X$ the band $\vlat{B}_O$ may be identified with $\cont(O)$, and the band projection onto $\vlat{B}_O$ is given by restriction to $O$.    Therefore $\vlat{M}$ is a proper, non-trivial ideal in $\bands{\contX}$.  It follows from Example \ref{Exm:  Continuous functions projective limit} that $\proj{\cal{I}_\vlat{M}}=\contX$, i.e. $\cal{I}_\vlat{M}:\contX \to \proj{\cal{I}_\vlat{M}}$ is a lattice isomorphism onto $\proj{\cal{I}_\vlat{M}}$.
\end{proof}

\section{Dual spaces}\label{Section:  Dual spaces}

The results presented in this section form the technical heart of the paper.  Roughly speaking, we will show, under fairly general assumptions, that the order (continuous) dual of a direct limit is an inverse limit.  On the other hand, more restrictive conditions are needed to show that the order (continuous) dual of an inverse limit is a direct limit.  These results form the basis of the applications given in Section \ref{Section:  Applications}.

\subsection{Duals of direct limits}\label{Subsection:  Duals of inductive limits}

\begin{defn}\label{Defn:  Dual system of inductive system}
Let $\cal{D}\defeq \left( (\vlat{E}_\alpha)_{\alpha\in I}, (e_{\alpha, \beta})_{\alpha\precc\beta}\right)$ be a direct system in $\vlic$.  The \emph{dual system} of $\cal{D}$ is the pair $\cal{D}^\sim \defeq \left( (\vlat{E}^\sim_\alpha)_{\alpha\in I}, (e^\sim_{\alpha, \beta})_{\alpha\precc\beta}\right)$.

If $\cal{D}$ is a direct system in $\nvlic$, define the \emph{order continuous dual system} of $\cal{D}$ as the pair $\ordercontn{\cal{D}} \defeq \left( (\ordercontn{(\vlat{E}_\alpha)})_{\alpha\in I}, (e^\sim_{\alpha, \beta})_{\alpha\precc\beta}\right)$ with $e^\sim_{\alpha, \beta}: \ordercontn{(E_\beta)} \to \ordercontn{(\vlat{E}_\alpha)}$.
\end{defn}

\begin{prop}\label{Prop:  Dual system of inductive system is projective system}
Let $\cal{D}\defeq \left( (\vlat{E}_\alpha)_{\alpha\in I}, (e_{\alpha, \beta})_{\alpha\precc\beta}\right)$ be a direct system in $\vlic$.  The following statements are true. \begin{enumerate}
    \item[(i)] The dual system $\cal{D}^\sim$ is an inverse system in $\nvlic$.
    \item[(ii)] If $\cal{D}$ is a direct system in $\nvlic$ then the order continuous dual system $\ordercontn{\cal{D}}$ is an inverse system in $\nvlic$.
\end{enumerate}
\end{prop}

\begin{proof}
We present the proof of (i).  That (ii) is true follows by a similar argument, so we omit the proof.

The maps $e_{\alpha, \beta}: \vlat{E}_\alpha\to \vlat{E}_\beta$ are interval preserving lattice homomorphisms for all $\alpha \precc \beta$. By Theorem \ref{Thm:  Adjoints of interval preserving vs lattice homomorphisms} the adjoint maps $e^\sim_{\alpha, \beta}:\vlat{E}_\beta^\sim \to \vlat{E}_\alpha^\sim$ are normal interval preserving lattice homomorphisms.  Fix $\alpha, \beta, \gamma\in I$ such that $\alpha \precc \beta \precc \gamma$.  Since $\cal{D}$ is a direct system in $\vlic$, $e_{\alpha , \gamma} = e_{\beta , \gamma } \circ e_{\alpha , \beta}$ so that $e_{\alpha , \gamma}^\sim = e_{\alpha , \beta}^\sim \circ e_{\beta , \gamma }^\sim$. Thus the dual system $\cal{D}^\sim = \left( (\vlat{E}^\sim_\alpha)_{\alpha\in I}, (e^\sim_{\alpha, \beta})_{\alpha\precc\beta}\right)$ is an inverse system in $\nvlic$.
\end{proof}

\begin{prop}\label{Prop:  Dual compactible systems inductive limit}
Let $\cal{D} \defeq \left( (\vlat{E}_\alpha)_{\alpha\in I}, (e_{\alpha, \beta}) \right)$ be a direct system in $\vlic$ and  $\cal{S}\defeq \left( \vlat{E}, (e_\alpha)_{\alpha\in I} \right)$ a compatible system of $\cal{D}$ in $\vlic$. The following statements are true. \begin{enumerate}
    \item[(i)] $\cal{S}^\sim \defeq \left( \vlat{E}^\sim, (e^\sim_\alpha)_{\alpha\in I} \right)$ is a compatible system for the inverse system $\cal{D}^\sim$ in $\nvlic$.
    \item[(ii)] If $\cal{D}$ is a direct system in $\nvlic$ and $\cal{S}$ is a compatible system in $\nvlic$, then $\ordercontn{\cal{S}} \defeq \left( \ordercontn{\vlat{E}}, (e^\sim_\alpha)_{\alpha\in I} \right)$ is a compatible system for the inverse system $\ordercontn{\cal{D}}$ in $\nvlic$.
\end{enumerate}
\end{prop}

\begin{proof}
Again, we only prove (i) as the proof of (ii) is similar.  By Theorem \ref{Thm:  Adjoints of interval preserving vs lattice homomorphisms}, $e_{\alpha}^\sim: \vlat{E}^\sim \to \vlat{E}^\sim_\alpha$ is a normal interval preserving lattice homomorphism for every $\alpha\in I$.  Furthermore, if $\alpha \precc \beta $ then $e_\alpha = e_\beta \circ e_{\alpha , \beta}$ so that $e_\alpha^\sim = e_{\alpha , \beta}^\sim \circ e_\beta^\sim$.  Therefore $\cal{S}^\sim$ is a compatible system of $\cal{D}^\sim$ in $\nvlic$.
\end{proof}

The main results of this section are the following.

\begin{thm}\label{Thm:  Dual of ind sys in VLIC is proj of duals in NVL}
Let $\cal{D}\defeq \left( (\vlat{E}_\alpha)_{\alpha\in I}, (e_{\alpha, \beta})_{\alpha\precc\beta}\right)$ be a direct system in $\vlic$, and let $\cal{S} \defeq \left( \vlat{E},(e_\alpha)_{\alpha\in I}\right)$ be the direct limit of $\cal{D}$ in $\vlic$.  The following statements are true.
\begin{itemize}
\item[(i)] $\proj{\cal{D}^\sim}\defeq \left(\vlat{F},(p_\alpha)_{\alpha\in I}\right)$ exists in $\nvlc$.
\item[(ii)] $\left(\ind{\cal{D}}\right)^\sim \cong \proj{\cal{D}^\sim}$ in $\nvlc$; that is, there exists a lattice isomorphism $T: \vlat{E}^\sim \to \vlat{F}$ such that the following diagram commutes for all $\alpha\in I$.
\end{itemize}
\begin{eqnarray}\label{EQ:  Thm-Dual of ind sys in VLIC is proj of duals in NVL diagram}
\begin{tikzcd}[cramped]
\vlat{E}^\sim \arrow[rd, "e^\sim_\alpha"'] \arrow[rr, "T"] & & \vlat{F} \arrow[dl, "p_\alpha"]\\
& \orderdual{\vlat{E}}_\alpha
\end{tikzcd}
\end{eqnarray}
\end{thm}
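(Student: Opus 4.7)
For (i), recall that the order dual of any vector lattice is Dedekind complete. Combined with Proposition \ref{Prop:  Dual system of inductive system is projective system}~(i), which asserts that $\cal{D}^\sim$ is an inverse system in $\nvlic$ and hence in $\nvlc$, an application of Theorem \ref{Thm:  Existence of Projective Limit NVL}~(ii) yields the existence of $\proj{\cal{D}^\sim}$ in $\nvlc$.

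For (ii), Proposition \ref{Prop:  Dual compactible systems inductive limit}~(i) shows that $\cal{S}^\sim \defeq (\vlat{E}^\sim, (e_\alpha^\sim)_{\alpha \in I})$ is a compatible system of $\cal{D}^\sim$ in $\nvlc$. The universal property of $\proj{\cal{D}^\sim}$ therefore produces a unique normal lattice homomorphism $T : \vlat{E}^\sim \to \vlat{F}$ making diagram~(\ref{EQ:  Thm-Dual of ind sys in VLIC is proj of duals in NVL diagram}) commute; from the realisation of $\vlat{F}$ as a sublattice of $\dsprod \vlat{E}_\alpha^\sim$ given in Theorem~\ref{Thm:  Existence of Projective Limit}, together with $p_\alpha \circ T = e_\alpha^\sim$, we have the explicit formula $T(\varphi) = (\varphi \circ e_\alpha)_{\alpha \in I}$. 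Injectivity of $T$ is immediate: since $\vlat{E} = \bigcup_{\alpha \in I} e_\alpha[\vlat{E}_\alpha]$ by Remark~\ref{Remark:  Inductive limit notation}~(ii), the vanishing of $\varphi \circ e_\alpha$ for every $\alpha$ forces $\varphi = 0$.

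The key technical point is the surjectivity of $T$, which I plan to establish by constructing an explicit inverse. Given $(\varphi_\alpha)_{\alpha \in I} \in \vlat{F}$, define $\varphi : \vlat{E} \to \R$ by setting $\varphi(u) \defeq \varphi_\alpha(u_\alpha)$ whenever $u = e_\alpha(u_\alpha)$ in $\vlat{E}$. Well-definedness follows from the compatibility relation $\varphi_\alpha = e_{\alpha, \gamma}^\sim(\varphi_\gamma) = \varphi_\gamma \circ e_{\alpha, \gamma}$, combined with the equivalence relation defining the set-theoretic direct limit; linearity then follows by routinely selecting a common upper bound $\gamma \scc \alpha, \beta$ for any two representatives. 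The principal obstacle is verifying that $\varphi$ is order bounded, and here we exploit the hypothesis that $\cal{D}$ lies in $\vlic$: by Theorem~\ref{Thm:  Existence of Inductive Limits in VLI}, the maps $e_\alpha$ are themselves interval preserving. Given $0 \le u \in \vlat{E}$, Remark~\ref{Remark:  Inductive limit notation}~(iv) supplies $\alpha \in I$ and $0 \le u_\alpha \in \vlat{E}_\alpha$ with $e_\alpha(u_\alpha) = u$; for any $v \in [0,u]$, interval preservation of $e_\alpha$ yields $v_\alpha \in [0, u_\alpha]$ with $e_\alpha(v_\alpha) = v$, so that $\varphi(v) = \varphi_\alpha(v_\alpha)$ lies in the bounded set $\varphi_\alpha\bigl[[0, u_\alpha]\bigr] \subseteq \R$. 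Hence $\varphi \in \vlat{E}^\sim$ and $T(\varphi) = (\varphi_\alpha)_{\alpha \in I}$ by construction. Since $T$ is thereby a bijective lattice homomorphism, it is automatically a lattice isomorphism, giving $\vlat{E}^\sim \cong \vlat{F}$ in $\nvlc$.
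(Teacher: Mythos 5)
Your proposal is correct and follows essentially the same route as the paper: obtain $T$ from the universal property of $\proj{\cal{D}^\sim}$ applied to the compatible system $\cal{S}^\sim$, prove injectivity by evaluating on $u=e_\alpha(u_\alpha)$, and prove surjectivity by defining $\varphi(u)\defeq\varphi_\alpha(u_\alpha)$ on representatives and checking well-definedness and linearity via a common upper bound $\gamma\scc\alpha,\beta$. The only (harmless) divergence is in verifying $\varphi\in\orderdual{\vlat{E}}$: the paper restricts to $(\varphi_\alpha)\in\vlat{F}^+$, shows $\varphi$ is positive using Remark \ref{Remark:  Inductive limit notation}~(iv), and recovers the general case by linearity, whereas you treat an arbitrary $(\varphi_\alpha)$ and establish order boundedness directly from the interval-preserving property of the $e_\alpha$ — both arguments are valid.
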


\begin{proof}
That (i) is true follows from Proposition \ref{Prop:  Dual system of inductive system is projective system} and Theorem \ref{Thm:  Existence of Projective Limit NVL}~(ii) because $\orderdual{\vlat{E}}_\alpha$ is Dedekind complete for every $\alpha\in I$.

We prove (ii).  By Proposition \ref{Prop:  Dual compactible systems inductive limit}, $\cal{S}^\sim \defeq \left( \orderdual{\vlat{E}},(e^\sim_\alpha)_{\alpha\in I}\right)$ is a compatible system for $\cal{D}^\sim$ in $\nvlic$, hence also in $\nvlc$.  Therefore there exists a unique normal lattice homomorphism $T:\orderdual{\vlat{E}}\to \vlat{F}$ so that the diagram (\ref{EQ:  Thm-Dual of ind sys in VLIC is proj of duals in NVL diagram}) commutes.  We show that $T$ is bijective.

To see that $T$ is injective, let $\psi\in \vlat{E}^\sim$ and suppose that $T(\psi)=0$.  Consider any $u\in \vlat{E}$.  There exist $\alpha\in I$ and $u_\alpha\in \vlat{E}_\alpha$ so that $u=e_\alpha(u_\alpha)$, see Remark \ref{Remark:  Inductive limit notation}.  Then $\psi(u) = \psi(e_\alpha(u_\alpha)) = e^\sim_\alpha(\psi)(u_\alpha) = p_\alpha(T(\psi))(u) = 0$.  This holds for all $u\in\vlat{E}$ so that $\psi=0$.  Therefore $T$ is injective.

It remains to show that $T$ maps $\vlat{E}^\sim$ onto $\vlat{F}$.  To this end, consider $(\varphi_\alpha) \in \vlat{F}^+$.  We construct a functional $0 \leq \varphi\in \vlat{E}^\sim$ so that $T(\varphi) = (\varphi_\alpha)$.

Let $u\in\vlat{E}$.  Consider any $\alpha,\beta\in I$, $u_\alpha\in\vlat{E}_\alpha$ and $u_\beta\in\vlat{E}_\beta$ so that $e_\alpha(u_\alpha)=u=e_\beta(u_\beta)$, see Remark \ref{Remark:  Inductive limit notation}.   We claim that $\varphi_\alpha(u_\alpha) = \varphi_\beta(u_\beta)$.  Indeed, there exists $\gamma \scc \alpha,\beta$ in $I$ so that $e_{\alpha , \gamma}(u_\alpha) = e_{\beta , \gamma}(u_\beta)$.  Furthermore, $e_{\gamma}(e_{\alpha , \gamma}(u_\alpha))=u=e_{\gamma}(e_{\beta , \gamma}(u_\beta))$.  Because $(\varphi_\alpha) \in\vlat{F}$ we have $\varphi_\alpha = e_{\alpha , \gamma}^\sim (\varphi_\gamma)$ and $\varphi_\beta = e_{\beta , \gamma}^\sim (\varphi_\gamma)$.  Hence
\[
\varphi_\alpha (u_\alpha) = \varphi_\gamma (e_{\alpha , \gamma}(u_\alpha)) = \varphi_\gamma (e_{\beta , \gamma}(u_\beta)) = \varphi_\beta (u_\beta).
\]
Thus our claim is verified.

For $u\in\vlat{E}$ define $\varphi(u) \defeq \varphi_\alpha(u_\alpha)$ if $u=e_\alpha (u_\alpha)$.  By our above claim, $\varphi$ is a well-defined map from $\vlat{E}$ into $\R$. To see that $\varphi$ is linear, consider $u,v\in \vlat{E}$ and $a,b\in\R$.  Let $u=e_\alpha (u_\alpha)$ and $v=e_\beta (v_\beta)$ where $\alpha,\beta\in I$, $u_\alpha\in\vlat{E}_\alpha$ and $v_\beta\in \vlat{E}_\beta$.  There exists $\gamma \scc \alpha,\beta$ in $I$ so that
\[
au+bv = e_{\gamma}(a e_{\alpha , \gamma}(u_\alpha) + b e_{\beta , \gamma}(v_\beta)).
\]
Then
\[
\varphi(au+bv) = \varphi_{\gamma}(a e_{\alpha , \gamma}(u_\alpha) + b e_{\beta , \gamma}(v_\beta)) = a\varphi_\gamma(e_{\alpha , \gamma}(u_\alpha)) + b \varphi_\gamma(e_{\beta , \gamma}(v_\beta)).
\]
But $e_\gamma (e_{\alpha , \gamma}(u_\alpha)) = e_\alpha(u_\alpha)=u$ and $e_\gamma (e_{\beta , \gamma}(v_\beta)) = e_\beta(v_\beta)=v$.  Hence $\varphi_\gamma(e_{\alpha , \gamma}(u_\alpha))= \varphi(u)$ and $\varphi_\gamma(e_{\beta , \gamma}(v_\beta)) = \varphi(v)$.  Therefore $\varphi(au+bv) = a\varphi(u) + b\varphi(v)$.

We show that $\varphi$ is positive.  If $0\leq u\in \vlat{E}$ then there exist $\alpha\in I$ and $0\leq u_\alpha \in\vlat{E}_\alpha$ so that $u=e_\alpha(u_\alpha)$, see Remark \ref{Remark:  Inductive limit notation}.  Then $\varphi(u) = \varphi_\alpha( u_\alpha )\geq 0$, the final inequality following from the fact that $(\varphi_\alpha)\in\vlat{F}^+$.

It follows from the definition of $\varphi$ and the commutativity of the diagram (\ref{EQ:  Thm-Dual of ind sys in VLIC is proj of duals in NVL diagram}) that $p_{\alpha}(T(\varphi))= e_\alpha^\sim(\varphi)=\varphi_\alpha$ for every $\alpha\in I$.  Hence $T(\varphi) = (\varphi_\alpha)$ so that $T$ is surjective.
\end{proof}

\begin{thm}\label{Thm:  Order continuous dual of ind sys in NRIP is proj of duals in NRiesz}
Let $\cal{D}\defeq \left( (\vlat{E}_\alpha)_{\alpha\in I}, (e_{\alpha, \beta})_{\alpha\precc\beta}\right)$ be a direct system in $\nvlic$, and let $\cal{S} \defeq \left( \vlat{E},(e_\alpha)_{\alpha\in I}\right)$ be the direct limit of $\cal{D}$ in $\vlic$.  The following statements are true.
\begin{itemize}
\item[(i)] $\proj{\ordercontn{\cal{D}}}\defeq \left(\vlat{G},(p_\alpha)_{\alpha\in I}\right)$ exists in $\nvlc$.
\item[(ii)] If $e_{\alpha , \beta}$ is injective for all $\alpha \precc \beta$ in $I$ then $\ordercontn{\left(\ind{\cal{D}}\right)} \cong \proj{\ordercontn{\cal{D}}}$ in $\nvlc$; that is, there exists a lattice isomorphism $S: \ordercontn{\vlat{E}} \to \vlat{G}$ such that the following diagram commutes for all $\alpha\in I$.
\end{itemize}
\begin{eqnarray}\label{EQ:  Thm-Order continuous dual of ind sys in NRIP is proj of duals in NRiesz diagram}
\begin{tikzcd}[cramped]
\ordercontn{\vlat{E}} \arrow[rd, "e^\sim_\alpha"'] \arrow[rr, "S"] & & \vlat{G} \arrow[dl, "p_\alpha"]\\
& \ordercontn{(\vlat{E}_\alpha)}
\end{tikzcd}
\end{eqnarray}
\end{thm}

\begin{proof}
The proof proceeds in a similar fashion to that of Theorem \ref{Thm:  Dual of ind sys in VLIC is proj of duals in NVL}. That (i) is true follows from Proposition \ref{Prop:  Dual system of inductive system is projective system} and Theorem \ref{Thm:  Existence of Projective Limit NVL}~(ii).

For the proof of (ii), assume that $e_{\alpha , \beta}$ is injective for all $\alpha \precc \beta$ in $I$.  Then $\cal{S}$ is the direct limit of $\cal{D}$ in $\nvlic$ by Theorem \ref{Thm:  Existence of Inductive Limits in NVLI}.  Hence, by Proposition \ref{Prop:  Dual compactible systems inductive limit} (ii), $\ordercontn{\cal{S}}$ is a compatible system of $\ordercontn{\cal{D}}$ in $\nvlic$, hence in $\nvlc$.  Therefore there exits a unique normal lattice homomorphism $S:\ordercontn{\vlat{E}}\to \vlat{G}$ so that the diagram (\ref{EQ:  Thm-Order continuous dual of ind sys in NRIP is proj of duals in NRiesz diagram}) commutes.

It follows by exactly the same reasoning as employed in the proof of Theorem \ref{Thm:  Dual of ind sys in VLIC is proj of duals in NVL} that $S$ is injective.  It remains to verify that $S$ maps $\ordercontn{\vlat{E}}$ onto $\vlat{G}$.  Let $(\varphi_\alpha)\in\vlat{G}^+$.  As in the proof of Theorem \ref{Thm:  Dual of ind sys in VLIC is proj of duals in NVL} we define a positive functional $\varphi \in \vlat{E}^\sim$ by setting, for each $u\in \vlat{E}$,
\[
\varphi(u)\defeq \varphi_\alpha (u_\alpha) \text{ if } u = e_\alpha (u_\alpha).
\]
We claim that $\varphi$ is order continuous.  To see that this is so, let $A\downarrow 0$ in $\vlat{E}$.  Without loss of generality, we may assume that $A$ is bounded above by some $0\leq w\in \vlat{E}$.  By Remark \ref{Remark:  Inductive limit notation} (ii) there exist $\alpha\in I$ and $0\leq w_\alpha\in \vlat{E}_\alpha$ so that $e_\alpha(w_\alpha)=w$, and, by Remark \ref{Remark:  Inductive limit notation} (iii), $e_\alpha$ is injective.  Because $e_\alpha$ is also interval preserving, there exists for every $u\in A$ a unique $0\leq u_\alpha \leq w_\alpha$ in $\vlat{E}_\alpha$ so that $e_\alpha (u_\alpha)=u$.  Let $A_\alpha \defeq \{u_\alpha ~:~ u\in A\}$.  Then $A_\alpha \downarrow 0$ in $\vlat{E}_\alpha$. Indeed, let $0\leq v\in\vlat{E}_\alpha$ be a lower bound for $A_\alpha$.  Then $0\leq e_\alpha(v) \leq e_\alpha(u_\alpha)=u$ for all $u\in A$.  Because $A\downarrow 0$ in $\vlat{E}$ it follows that $e_\alpha (v)=0$, hence $v=0$.  By definition of $\varphi$ and the order continuity of $\varphi_\alpha$ we now have $\varphi[A]=\varphi_\alpha[A_\alpha]\downarrow 0$.  Hence $\varphi\in \ordercontn{\vlat{E}}$.

By definition of $\varphi$ and the commutativity of the diagram (\ref{EQ:  Thm-Order continuous dual of ind sys in NRIP is proj of duals in NRiesz diagram}) it follows that $S(\varphi)=(\varphi_\alpha)$.  Therefore $S$ is surjective.
\end{proof}

\begin{remark}\label{Remark:  Direct limit with trivial dual}
Let $\cal{D}\defeq \left( (\vlat{E}_\alpha)_{\alpha\in I}, (e_{\alpha, \beta})_{\alpha\precc\beta}\right)$ be a direct system in $\vlic$, and let $\cal{S} \defeq \left( \vlat{E},(e_\alpha)_{\alpha\in I}\right)$ be the direct limit of $\cal{D}$ in $\vlic$.  In general, it does not follow from $\preann{\orderdual{(\vlat{E}_\alpha)}}=\{0\}$ for all $\alpha\in I$ that $\preann{\orderdual{\vlat{E}}}=\{0\}$, even if all the $\vlat{E}_\alpha$ are non-trivial and the $e_\alpha$ injective.  Indeed, it is well known that $\vlat{L}^0[0,1]$, the space of Lebesgue measurable functions on the unit interval $[0,1]$, has trivial order dual, see for instance \cite[Example 85.1]{Zaanen1983RSII}.  However, by Example \ref{Exm:  Inductive limit of principle ideals}, $\vlat{L}^0[0,1]$ can be expressed as the direct limit of its principal ideals, each of which has a separating order dual.
\end{remark}

In view of the above remark, the following proposition is of interest.

\begin{prop}\label{Prop:  Separating order dual of direct limit}
Let $\cal{D}\defeq \left( (\vlat{E}_\alpha)_{\alpha\in I}, (e_{\alpha, \beta})_{\alpha\precc\beta}\right)$ be a direct system in $\vlic$, and let $\cal{S} \defeq \left( \vlat{E},(e_\alpha)_{\alpha\in I}\right)$ be the direct limit of $\cal{D}$ in $\vlic$.  Assume that for every $\alpha\in I$, $e_\alpha$ is injective and $e_\alpha[\vlat{E}_\alpha]$ is a projection band in $\vlat{E}$.  The following statements are true. \begin{enumerate}
    \item[(i)] If $\preann{\orderdual{(\vlat{E}_\alpha)}}=\{0\}$ for every $\alpha\in I$ then $\preann{\orderdual{\vlat{E}}}=\{0\}$.
    \item[(ii)] If $\preann{\ordercontn{(\vlat{E}_\alpha)}}=\{0\}$ for every $\alpha\in I$ then $\preann{\ordercontn{\vlat{E}}}=\{0\}$.
\end{enumerate}
\end{prop}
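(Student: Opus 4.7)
The plan is to exploit the hypotheses to build, for each $\alpha\in I$, a left inverse $Q_\alpha\colon\vlat{E}\to\vlat{E}_\alpha$ of $e_\alpha$ which is itself a normal lattice homomorphism; dualising $Q_\alpha$ then transports separating functionals from $\vlat{E}_\alpha^\sim$ (respectively $\ordercontn{(\vlat{E}_\alpha)}$) to $\vlat{E}^\sim$ (respectively $\ordercontn{\vlat{E}}$).

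For each $\alpha\in I$, let $P_\alpha$ denote the band projection of $\vlat{E}$ onto the projection band $e_\alpha[\vlat{E}_\alpha]$. Since $\cal{D}$ is a direct system in $\vlic$, so is $\cal{S}$ by Theorem \ref{Thm:  Existence of Inductive Limits in VLI}; thus $e_\alpha$ is an injective, interval preserving lattice homomorphism, hence by Proposition \ref{Prop:  Interval Preserving vs Lattice Homomorphism} (i) a lattice isomorphism from $\vlat{E}_\alpha$ onto the ideal $e_\alpha[\vlat{E}_\alpha]$. In particular, $e_\alpha^{-1}\colon e_\alpha[\vlat{E}_\alpha]\to\vlat{E}_\alpha$ is a normal lattice homomorphism, and by Proposition \ref{Prop: Properties of band projections} so is $P_\alpha$. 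Moreover, $P_\alpha$ is interval preserving, and $e_\alpha^{-1}$ is trivially so, because it is a lattice isomorphism onto $\vlat{E}_\alpha$. Setting $Q_\alpha \defeq e_\alpha^{-1}\circ P_\alpha\colon\vlat{E}\to\vlat{E}_\alpha$ therefore yields a normal, interval preserving lattice homomorphism satisfying $Q_\alpha\circ e_\alpha = \mathrm{id}_{\vlat{E}_\alpha}$.

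For statement (i), fix $0<u\in\vlat{E}$. By Remark \ref{Remark:  Inductive limit notation} (iv) there exist $\alpha\in I$ and $0\leq u_\alpha\in\vlat{E}_\alpha$ with $e_\alpha(u_\alpha)=u$, and injectivity of $e_\alpha$ forces $u_\alpha>0$. By hypothesis there is a $\varphi_\alpha\in\orderdual{(\vlat{E}_\alpha)}$ with $\varphi_\alpha(u_\alpha)\neq 0$. The adjoint $Q_\alpha^\sim\colon\orderdual{(\vlat{E}_\alpha)}\to\orderdual{\vlat{E}}$ is well-defined by Theorem \ref{Thm:  Adjoints of interval preserving vs lattice homomorphisms} (i), and
\[
Q_\alpha^\sim(\varphi_\alpha)(u) = \varphi_\alpha(Q_\alpha(e_\alpha(u_\alpha))) = \varphi_\alpha(u_\alpha) \neq 0,
\]
showing $u\notin\preann{\orderdual{\vlat{E}}}$. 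Statement (ii) follows by the same argument, now invoking Theorem \ref{Thm:  Adjoints of interval preserving vs lattice homomorphisms} (ii) together with the order continuity of $Q_\alpha$ to conclude that $Q_\alpha^\sim(\varphi_\alpha)\in\ordercontn{\vlat{E}}$ whenever $\varphi_\alpha\in\ordercontn{(\vlat{E}_\alpha)}$.

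The only non-routine step is recognising the retraction: the combination of $e_\alpha$ being injective and interval preserving with $e_\alpha[\vlat{E}_\alpha]$ being a projection band is precisely what is needed to invert $e_\alpha$ on its image while retaining normality and interval preservation, and everything else then follows mechanically from Proposition \ref{Prop:  Interval Preserving vs Lattice Homomorphism}, Proposition \ref{Prop: Properties of band projections}, and Theorem \ref{Thm:  Adjoints of interval preserving vs lattice homomorphisms}.
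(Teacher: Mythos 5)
Your proof is correct and is essentially identical to the paper's: the paper likewise defines $\varphi \defeq (e_\alpha^{-1}\circ P_\alpha)^\sim(\varphi_\alpha)$, which is exactly your $Q_\alpha^\sim(\varphi_\alpha)$, and evaluates it at $u=e_\alpha(u_\alpha)$. The only cosmetic difference is that you restrict to $0<u$ (which then requires knowing $\preann{\orderdual{\vlat{E}}}$ is an ideal, or simply dropping the positivity), whereas the paper runs the same computation for an arbitrary non-zero $u=e_\alpha(u_\alpha)$ with $u_\alpha\neq 0$, which needs no extra remark.
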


\begin{proof}
The proofs of (i) and (ii) are identical, except that for (ii) we note that for all $\alpha\in I$, $e_\alpha$ and $e_\alpha^{-1}$ are order continuous by Proposition \ref{Prop:  Interval Preserving vs Lattice Homomorphism} (i).  We therefore omit the proof of (ii).

 Assume that $\preann{\orderdual{(\vlat{E}_\alpha)}}=\{0\}$ for every $\alpha\in I$.  Let $u\in\vlat{E}$ be non-zero.  Then there exist $\alpha\in I$ and a non-zero $u_\alpha\in \vlat{E}_\alpha$ so that $e_\alpha(u_\alpha)=u$, see Remark \ref{Remark:  Inductive limit notation}.  By assumption there exists $\varphi_\alpha \in\orderdual{\vlat{E}_\alpha}$ so that $\varphi_\alpha(u_\alpha)\neq 0$.  Denote by $P_\alpha:\vlat{E}\to e_\alpha[\vlat{E}_\alpha]$ the projection onto $e_\alpha[\vlat{E}_\alpha]$.  We note that $e_\alpha$ is a lattice isomorphism onto $e_\alpha[\vlat{E}_\alpha]$.  Let $\varphi \defeq (e_\alpha^{-1} \circ P_\alpha)^\sim(\varphi_\alpha)$.  Then $\varphi\in\orderdual{\vlat{E}}$ and, because $u \in e_\alpha[\vlat{E}_\alpha]$, $\varphi(u) = \varphi_\alpha(e_\alpha^{-1}(P_\alpha(u)))=\varphi_\alpha(u_\alpha)\neq 0$.  Hence $\preann{\orderdual{\vlat{E}}}=\{0\}$.
\end{proof}

\subsection{Duals of inverse limits}\label{Subsection:  Duals of projective limits}

We now turn to duals of inverse limits. For inverse systems over $\N$, we prove results analogous to Theorems \ref{Thm:  Dual of ind sys in VLIC is proj of duals in NVL} and \ref{Thm:  Order continuous dual of ind sys in NRIP is proj of duals in NRiesz}. We identify the main obstacle to more general results for inverse systems over arbitrary index sets: Positive (order continuous) functionals defined on a proper sublattice of a vector lattice $\vlat{E}$ do not necessarily extend to $\vlat{E}$.


\begin{defn}\label{Defn:  Dual system of projective system}
Let $\cal{I}\defeq \left( (\vlat{E}_\alpha)_{\alpha\in I}, (p_{\beta, \alpha})_{\beta\scc \alpha}\right)$ be an inverse system in $\vlic$. The \emph{dual system} of $\cal{I}$ is the pair $\cal{I}^\sim \defeq \left( (\vlat{E}^\sim_\alpha)_{\alpha\in I}, (p^\sim_{\beta , \alpha})_{\beta\scc \alpha}\right)$.

If $\cal{I}$ is an inverse system in $\nvlic$, define the \emph{order continuous dual system} of $\cal{I}$ as the pair $\ordercontn{\cal{I}} \defeq \left( (\ordercontn{(\vlat{E}_\alpha)})_{\alpha\in I}, (p^\sim_{\beta , \alpha})_{\beta\scc \alpha}\right)$ with $p^\sim_{\beta , \alpha}: \ordercontn{(E_\alpha)} \to \ordercontn{(\vlat{E}_\beta)}$.
\end{defn}

The following preliminary results, analogous to Propositions \ref{Prop:  Dual system of inductive system is projective system} and \ref{Prop:  Dual compactible systems inductive limit}, are proven in the same way as the corresponding results for direct limits.  As such, we omit the proofs.

\begin{prop}\label{Prop:  Dual system of projective system is inductive system}
Let $\cal{I}\defeq \left( (\vlat{E}_\alpha)_{\alpha\in I}, (p_{\beta, \alpha})_{\beta\scc \alpha}\right)$ be an inverse system in $\vlic$.  The following statements are true. \begin{enumerate}
    \item[(i)] The dual system $\cal{I}^\sim$ is a direct system in $\nvlic$.
    \item[(ii)] If $\cal{I}$ is an inverse system in $\nvlic$ then the order continuous dual system $\ordercontn{\cal{I}}$ is a direct system in $\nvlic$.
\end{enumerate}
\end{prop}

\begin{prop}\label{Prop:  Dual compactible systems projective limit}
Let $\cal{I}\defeq \left( (\vlat{E}_\alpha)_{\alpha\in I}, (p_{\beta, \alpha})_{\beta\scc \alpha}\right)$ be an inverse system in $\vlic$ and  $\cal{S}\defeq \left( \vlat{E}, (p_\alpha)_{\alpha\in I} \right)$ a compatible system of $\cal{I}$ in $\vlic$. The following statements are true. \begin{enumerate}
    \item[(i)] $\cal{S}^\sim \defeq \left( \vlat{E}^\sim, (p^\sim_\alpha)_{\alpha\in I} \right)$ is a compatible system for the direct system $\cal{I}^\sim$ in $\nvlic$.
    \item[(ii)] If $\cal{I}$ is an inverse system in $\nvlic$ and $\cal{S}$ is a compatible system in $\nvlic$, then $\ordercontn{\cal{S}} \defeq \left( \ordercontn{\vlat{E}}, (p^\sim_\alpha)_{\alpha\in I} \right)$ is a compatible system for the direct system $\ordercontn{\cal{I}}$ in $\nvlic$.
\end{enumerate}
\end{prop}

\begin{lem}\label{Lem:  Countable projective system with surjective projections}
Let $\cal{I}\defeq \left( (\vlat{E}_n)_{n\in \N}, (p_{m, n})_{m\geq n}\right)$ be an inverse system in $\vlic$ and let $\cal{S} \defeq \left( \vlat{E}, (p_n)_{n\in \N} \right)$ be the inverse limit of $\cal{I}$ in $\vlc$.  Assume that $p_{m , n}$ is a surjection for all $m \geq n$ in $\N$. Then $p_n$ is surjective and interval preserving for every $n\in\N$. In particular, $\cal{S}$ is a compatible system of $\cal{I}$ in $\vlic$.
\end{lem}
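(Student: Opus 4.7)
The plan is to exploit the countability of the index set $\N$ to run a two-step induction, first to construct preimages under $p_n$ (surjectivity) and then to lift elements of a given order interval in $\vlat{E}_n$ back to an order interval in $\vlat{E}$ (interval preservation). Throughout, I will use the explicit description of $\vlat{E}$ from Theorem \ref{Thm:  Existence of Projective Limit}: an element is a thread $u=(u_k)\in\dsprod \vlat{E}_k$ satisfying $p_{m,n}(u_m)=u_n$ for all $m\geq n$, and $p_n$ is the restriction of the coordinate projection $\pi_n$.

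For surjectivity, fix $n\in\N$ and $v\in \vlat{E}_n$. I would set $u_k\defeq p_{n,k}(v)$ for $k\leq n$, and for $k>n$ define $u_k$ recursively: having constructed $u_k$ with $p_{k,n}(u_k)=v$, I choose $u_{k+1}\in \vlat{E}_{k+1}$ with $p_{k+1,k}(u_{k+1})=u_k$, which is possible because $p_{k+1,k}$ is assumed surjective. The factorisation $p_{k+1,n}=p_{k,n}\circ p_{k+1,k}$ ensures $p_{k+1,n}(u_{k+1})=v$, and more generally $p_{m,j}(u_m)=u_j$ for all $m\geq j$ follows at once by iterating the one-step identity. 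Hence $u=(u_k)$ belongs to $\vlat{E}$ and $p_n(u)=v$.

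For interval preservation, fix $n\in\N$, $u=(u_k)\in \vlat{E}^+$ and $v\in \vlat{E}_n$ with $0\leq v\leq u_n=p_n(u)$. I would define a thread $w=(w_k)$ with $0\leq w\leq u$ and $w_n=v$ by induction in both directions. For $k\leq n$, positivity of $p_{n,k}$ gives $0\leq w_k\defeq p_{n,k}(v)\leq p_{n,k}(u_n)=u_k$. For $k>n$, suppose $w_k\in[0,u_k]$ with $p_{k,n}(w_k)=v$ has been constructed. Since $p_{k+1,k}$ is interval preserving, $p_{k+1,k}\bigl[[0,u_{k+1}]\bigr]=[0,u_k]$, so there exists $w_{k+1}\in[0,u_{k+1}]$ with $p_{k+1,k}(w_{k+1})=w_k$; as before, iterating one-step identities yields $p_{m,j}(w_m)=w_j$ for all $m\geq j$, so $w\in \vlat{E}$, $0\leq w\leq u$, and $p_n(w)=v$. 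Therefore $p_n\bigl[[0,u]\bigr]=[0,p_n(u)]$. The final assertion about $\cal{S}$ being a compatible system in $\vlic$ is immediate: each $p_n$ is already a lattice homomorphism by virtue of $\cal{S}$ being the inverse limit in $\vlc$, and the commutativity of the defining triangle holds in $\vlc$, hence in $\vlic$.

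The only real obstacle is hidden in the recursive step in both constructions, where one must pick preimages infinitely often and show they can be chosen compatibly with the full cofinal tower of projections. The fact that this succeeds here, but is not expected to succeed for arbitrary directed index sets, rests entirely on the countability of $\N$ (together with the one-step surjectivity, respectively interval preservation, of the connecting maps), which allows the preimages chosen at stage $k+1$ to automatically satisfy all higher-level compatibility constraints via the associativity identity $p_{m,j}=p_{k,j}\circ p_{m,k}$.
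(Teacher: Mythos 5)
Your proof is correct, and the surjectivity argument is exactly the paper's: construct the thread below $n$ by pushing $v$ down with the connecting maps and above $n$ by a one-step-at-a-time choice of preimages, then observe that the composite identity $p_{m,j}=p_{j+1,j}\circ\cdots\circ p_{m,m-1}$ makes all higher compatibility constraints automatic. Where you diverge is in the interval-preservation step: you run a second induction, lifting $v\in[0,p_n(u)]$ to a thread $w\in[0,u]$ by using the interval preservation of each connecting map $p_{k+1,k}$. This works, but the paper short-circuits it: once $p_n$ is known to be a surjective lattice homomorphism, its image is all of $\vlat{E}_n$, hence trivially an ideal, so Proposition \ref{Prop:  Interval Preserving vs Lattice Homomorphism}~(ii) gives interval preservation immediately, with no further use of the hypothesis that the $p_{m,n}$ are interval preserving. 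Your version costs an extra induction but has the mild virtue of exhibiting the lifted interval element explicitly as a thread dominated by $u$; the paper's version is shorter and makes clear that, for this lemma, the interval-preservation hypothesis on the connecting maps is not actually needed beyond what surjectivity already delivers.
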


\begin{proof}
Fix $n_0\in \N$. Consider any $u_{n_0}\in \vlat{E}_{n_0}$.  For $n<n_0$ let $u_n = p_{n_0 , n}(u_{n_0})$.  Because $p_{n_0+1 , n_0}$ is a surjection, there exists $u_{n_0+1}\in \vlat{E}_{n_0+1}$ so that $p_{n_0+1 , n_0}(u_{n_0+1}) = u_{n_0}$.  Inductively, for each $n>n_0$ there exists $u_n\in\vlat{E}_n$ so that $p_{n , n-1}(u_n) = u_{n-1}$.

We show that $(u_n)\in\vlat{E}$.  Let $n<m$ in $\N$.  By the definition of an inverse system it follows that $p_{m , n} = p_{n+1 , n} \circ p_{n+2 , n+1}\circ  \cdots \circ p_{m-1 , m-2}\circ p_{m,m-1}$.  It thus follows that $p_{m , n}(u_m) = u_n$ so that $(u_n)\in \vlat{E}$.  We have $p_{n_0}((u_n))=u_{n_0}$ so that $p_{n_0}$ is a surjection.  It follows from Proposition \ref{Prop:  Interval Preserving vs Lattice Homomorphism} (ii) that $p_{n_0}$ is interval preserving. Since $\cal{S}$ is a compatible system of $\cal{I}$ in $\vlc$ and the $p_n$ are interval preserving, we conclude that $\cal{S}$ is a compatible system of $\cal{I}$ in $\vlic$.
\end{proof}

\begin{thm}\label{Thm:  Dual of proj sys in VLIC is ind of duals in NVLIC}
Let $\cal{I}\defeq \left( (\vlat{E}_n)_{n\in \N}, (p_{m, n})_{m\geq n}\right)$ be an inverse system in $\vlic$, and let $\cal{S} \defeq \left( \vlat{E}, (p_n)_{n\in \N} \right)$ be the inverse limit of $\cal{I}$ in $\vlc$. Assume that $p_{m , n}$ is a surjection for all $m \geq n$ in $\N$.  Then the following statements are true. \begin{enumerate}
    \item[(i)] $\ind{\cal{I}^\sim}\defeq \left(\vlat{F},(e_n)_{n\in \N}\right)$ exists in $\nvlic$.
    \item[(ii)] $\left(\proj{\cal{I}}\right)^\sim \cong \ind{\cal{I}^\sim}$ in $\nvlic$; that is, there exists a lattice isomorphism $T: \vlat{F} \to \vlat{E}^\sim$ such that the following diagram commutes for all $n\in \N$.
\end{enumerate}
\[
\begin{tikzcd}[cramped]
\vlat{F} \arrow[rr, "T"] & & \vlat{E}^\sim\\
& \left(\vlat{E}_n\right)^\sim \arrow[lu, "e_{n}"] \arrow[ru, "p_n^\sim"']
\end{tikzcd}
\]
\end{thm}

\begin{proof}
By Proposition \ref{Prop:  Dual system of projective system is inductive system}, $\cal{I}^\sim$ is a direct system in $\nvlic$.   Because the $p_{m , n}$ are surjections their adjoints are injective.  Thus by Theorem \ref{Thm:  Existence of Inductive Limits in NVLI}, $\ind{\cal{I}^\sim}$ exists in $\nvlic$.

We proceed to prove (ii).  Because the maps $p_{m , n}^\sim: \left(\vlat{E}_n\right)^\sim \to \left(\vlat{E}_m\right)^\sim$ are injective, so are the maps $e_n:\left(\vlat{E}_n\right)^\sim \to \vlat{F}$, see Remark \ref{Remark:  Inductive limit notation}.  By Lemma \ref{Lem:  Countable projective system with surjective projections}, each $p_n:\vlat{E}\to\vlat{E}_n$ is surjective and interval preserving, and $\cal{S}$ is a compatible system of $\cal{I}$ in $\vlic$. Therefore $p_n^\sim:\left(\vlat{E}_n\right)^\sim \to\vlat{E}^\sim$ is an injection for every $n$ in $\N$.

By Proposition \ref{Prop:  Dual compactible systems projective limit}, $\cal{S}^\sim = (\vlat{E}^\sim , (p_n^\sim)_{n\in\N})$ is a compatible system of $\cal{I}^\sim$ in $\nvlic$.  Therefore there exists a unique interval preserving normal lattice homomorphism $T : \vlat{F}\to \vlat{E}^\sim$ so that the diagram
\[
\begin{tikzcd}[cramped]
\vlat{F} \arrow[rr, "T"] & & \vlat{E}^\sim\\
& \left(\vlat{E}_n \right) \arrow[lu, "e_{n}"] \arrow[ru, "p_n^\sim"']
\end{tikzcd}
\]
commutes for all $n\in\N$.  We show that $T$ is a lattice isomorphism.

Our first goal is to establish that $T$ is injective.  Consider $\varphi\in \vlat{F}$ so that $T(\varphi)=0$.  There exist an $n\in\N$ and a unique $\varphi_{n}\in \orderdual{\left(\vlat{E}_n\right)}$ so that $e_{n}(\varphi_n) = \varphi$.  Then $p_n^\sim (\varphi_n) = T (e_n(\varphi_n)) = T(\varphi)=0$.  But $p_n^\sim$ is injective so that $\varphi_n=0$, hence $\varphi=e_n(\varphi_n)=0$.

It remains to show that $T$ maps $\vlat{F}$ onto $\vlat{E}^\sim$.  This follows from
\[
	\vlat{E}^\sim = \displaystyle\bigcup p_n^\sim\left[ \left(\vlat{E}_n\right)^\sim \right],
\]
a fact which we now establish.  Suppose that $\vlat{E}^\sim \neq \displaystyle\bigcup p_n^\sim[\left(\vlat{E}_n\right)^\sim]$.  Because $p_n^\sim$ is an interval preserving lattice homomorphism for every $n\in\N$, each $p_n^\sim[\left(\vlat{E}_n\right)^\sim]$, and hence $\displaystyle\bigcup p_n^\sim[\left(\vlat{E}_n\right)^\sim]$, is a solid subset of $\vlat{E}^\sim$.  Therefore, because $\vlat{E}^\sim \neq \displaystyle\bigcup p_n^\sim[\left(\vlat{E}_n\right)^\sim]$, there exists $0\leq \psi\in \vlat{E}^\sim \setminus \displaystyle\bigcup p_n^\sim[\left(\vlat{E}_n\right)^\sim]$.  By Proposition \ref{Prop:  Image of adjoint of lattice homomorphism} (i), $p_n^\sim[\left(\vlat{E}_n\right)^\sim]=\ann{\ker(p_n)}$ for every $n\in \N$ so that $\psi\notin \ann{\ker(p_n)}$ for $n\in\N$.  Hence, for every $n\in\N$, there exists $0\leq u^{(n)}\in\ker(p_n)$ so that $\psi(u^{(n)})=1$.  We claim that there exists $w\in\vlat{E}$ so that $w \geq u^{(1)}+\cdots + u^{(n)}$ for all $n\in\N$.  This claim leads to $\psi(w) \geq \psi(u^{(1)}+\cdots + u^{(n)}) = n$ for every $n\in\N$, which is impossible, so that $\vlat{E}^\sim = \displaystyle\bigcup p_n^\sim\left[ \left(\vlat{E}_n\right)^\sim \right]$.

For each $n\in\N$, write $u^{(n)} = (u_m^{(n)})\in\vlat{E}^+ \subseteq \dsprod\vlat{E}_m$.  Let $w_m \defeq u_m^{(1)} + \cdots + u_m^{(m)}$ for every $m\in\N$, and $w\defeq (w_m)$.  If $n>m$ then $u_m^{(n)} = p_{n , m}(p_n(u^{(n)})) = 0$ because $u^{(n)}\in\ker(p_n)$.  Since $u_{m}^{(n)}\geq 0$ for all $m,n\in\N$ we therefore have $w_m \geq u^{(1)}_m+\ldots +u^{(n)}_m$ for all $m,n\in\N$ so that $w\geq u^{(1)}+ \cdots + u^{(n)}$ for every $n\in\N$.  To see that $w\in \vlat{E}$ consider $m_1\geq m_0$ in $\N$.  Then
\[
p_{m_1 , m_0}(w_{m_1}) = p_{m_1 , m_0}(u_{m_1}^{(1)}) + \cdots + p_{m_1 , m_0}(u_{m_1}^{(m_1)}).
\]
But $u^{(n)}=(u_{m}^{(n)})\in\vlat{E}$ for all $n\in\N$, so
\[
p_{m_1 , m_0}(w_{m_1}) = u_{m_0}^{(1)} + \cdots + u_{m_0}^{(m_1)}.
\]
Finally, because $u_m^{(n)}=0$ for all $n>m$ in $\N$ we have
\[
p_{m_1 , m_0}(w_{m_1}) = u_{m_0}^{(1)} + \cdots + u_{m_0}^{(m_0)} = w_{m_0}.
\]
Hence $w\in \vlat{E}$, which verifies our claim.  This completes the proof.
\end{proof}

\begin{thm}\label{Thm:  Order continuous dual of proj sys in VLIC is ind of duals in NVLIC}
Let $\cal{I}\defeq \left( (\vlat{E}_n)_{n\in \N}, (p_{m, n})_{m\geq n}\right)$ be an inverse system  in $\nvlic$, and let $\cal{S} \defeq \left( \vlat{E}, (p_n)_{n\in \N} \right)$ be the inverse limit of $\cal{I}$ in $\vlc$.  Assume that $p_n$ is order continuous and $\vlat{E}_n$ is Archimedean for each $n\in \N$, and that $p_{m , n}$ is a surjection for all $m \geq n$ in $\N$.  The following statements are true.
\begin{enumerate}
    \item[(i)] $\ind{\ordercontn{\cal{I}}}\defeq \left(\vlat{G},(e_n)_{n\in \N}\right)$ exists in $\nvlic$.
    \item[(ii)] $\ordercontn{\left(\proj{\cal{I}}\right)} \cong \ind{\ordercontn{\cal{I}}}$ in $\nvlic$; that is, there exists a lattice isomorphism $S: \vlat{G} \to \ordercontn{\vlat{E}}$ such that the following diagram commutes for all $n\in \N$.
\end{enumerate}
\[
\begin{tikzcd}[cramped]
\vlat{G} \arrow[rr, "S"] & & \ordercontn{\vlat{E}}\\
& \ordercontn{(\vlat{E}_n)} \arrow[lu, "e_{n}"] \arrow[ru, "p_n^\sim"']
\end{tikzcd}
\]
\end{thm}


\begin{proof}
The existence of $\ind{\ordercontn{\cal{I}}}$ in $\nvlic$ follows by the same reasoning as given in Theorem \ref{Thm:  Dual of proj sys in VLIC is ind of duals in NVLIC}.

For (ii), as in the proof of Theorem \ref{Thm:  Dual of proj sys in VLIC is ind of duals in NVLIC}, we see that $e_n:\ordercontn{(\vlat{E}_n)} \to \vlat{G}$ and $p_n^\sim:\ordercontn{(\vlat{E}_n)}\to\ordercontn{\vlat{E}}$ are injective interval preserving normal lattice homomorphisms for all $n\in \N$.  In addition, $\cal{S}$ is a compatible system for $\cal{I}$ in $\nvlic$.

By Proposition \ref{Prop:  Dual compactible systems projective limit} (ii), $\ordercontn{\cal{S}} = (\ordercontn{\vlat{E}} , (p_n^\sim)_{n\in\N})$ is a compatible system of $\ordercontn{\cal{I}}$ in $\nvlic$. Therefore there exists a unique interval preserving normal lattice homomorphism $S : \vlat{G}\to \ordercontn{\vlat{E}}$ so that the diagram
\[
\begin{tikzcd}[cramped]
\vlat{G} \arrow[rr, "S"] & & \ordercontn{\vlat{E}}\\
& \ordercontn{(\vlat{E}_n)} \arrow[lu, "e_{n}"] \arrow[ru, "p_n^\sim"']
\end{tikzcd}
\]
commutes for all $n\in\N$. Exactly the same argument as used in the proof of Theorem \ref{Thm:  Dual of proj sys in VLIC is ind of duals in NVLIC} shows that $S$ is a lattice isomorphism, this time making use of Proposition \ref{Prop:  Image of adjoint of lattice homomorphism} (ii).
\end{proof}

Theorems \ref{Thm:  Dual of proj sys in VLIC is ind of duals in NVLIC} and \ref{Thm:  Order continuous dual of proj sys in VLIC is ind of duals in NVLIC} cannot be generalised to systems over an arbitrary directed set $I$.  Indeed, the assumption that the inverse system $\cal{I}$ is indexed by the natural numbers is used in essential ways to show that the lattice homomorphisms $T$ and $S$ in Theorems \ref{Thm:  Dual of proj sys in VLIC is ind of duals in NVLIC} and \ref{Thm:  Order continuous dual of proj sys in VLIC is ind of duals in NVLIC}, respectively, are both injective and surjective.  The injectivity of $S$ and $T$ follows from the surjectivity of the maps $p_n$, which in turn follows from Lemma \ref{Lem:  Countable projective system with surjective projections} where the total ordering of $\N$ is used explicitly.  We are not aware of any conditions on an inverse system $\cal{I}$ in $\vlc$, indexed over an arbitrary directed set, which implies that the projections from $\proj{\cal{I}}$ into the component spaces are surjective.  Furthermore, the method of proof for surjectivity of $S$ and $T$ cannot be generalised to systems over arbitrary directed sets.  As we show next, this issue is related to the extension of order bounded linear functionals.

\begin{thm}\label{Thm:  Characterisation dual of projective limit is inductive limit of duals}
Let $\cal{I}\defeq \left( (\vlat{E}_\alpha)_{\alpha\in I}, (p_{\beta, \alpha})_{\beta \scc \alpha}\right)$ be an inverse system in $\vlic$ and $\cal{S} \defeq \left( \vlat{E}, (p_\alpha)_{\alpha\in I} \right)$ its inverse limit in $\vlc$.   Assume that $p_{\beta , \alpha}$ and $p_\alpha$ are surjections for all $\beta\scc \alpha$ in $I$.  Then the following statements are true.
\begin{enumerate}
    \item[(i)] $\ind{\cal{I}^\sim}\defeq \left(\vlat{F},(e_\alpha)_{\alpha\in I}\right)$ exists in $\nvlic$.
    \item[(ii)] There exists an injective interval preserving normal lattice homomorphism $T:\vlat{F}\to \vlat{E}^\sim$ so that the diagram
        \[
        \begin{tikzcd}[cramped]
        \vlat{F} \arrow[rr, "T"] & & \vlat{E}^\sim\\
        & \vlat{E}_\alpha^\sim \arrow[lu, "e_{\alpha}"] \arrow[ru, "p_\alpha^\sim"']
        \end{tikzcd}
        \]
        commutes for every $\alpha \in I$.
    \item[(iii)] If $T$ is a bijection, hence a lattice isomorphism, then every order bounded linear functional on $\vlat{E}$ has an order bounded linear extension to $\dsprod \vlat{E}_\alpha$.  The converse is true if $I$ has non-measurable cardinal.
\end{enumerate}
\end{thm}

\begin{proof}
That (i) and (ii) are true follow as in the proof of Theorem \ref{Thm:  Dual of proj sys in VLIC is ind of duals in NVLIC}.  We verify (iii).

Let $\iota:\vlat{E}\to\dsprod \vlat{E}_\alpha$ be the inclusion map.  The diagram
\[
\begin{tikzcd}[cramped]
\vlat{E} \arrow[rd, "p_\alpha"'] \arrow[rr, "\iota"] & & \dsprod\vlat{E}_\alpha \arrow[dl, "\pi_\alpha"]\\
& \vlat{E}_\alpha
\end{tikzcd}
\]
commutes for every $\alpha\in I$, and therefore the diagram
\[
\begin{tikzcd}[cramped]
\left(\dsprod\vlat{E}_\alpha\right)^\sim  \arrow[rr, "\iota^\sim"] & & \vlat{E}^\sim  \\
& \vlat{E}_\alpha^\sim \arrow[lu, "\pi_{\alpha}^\sim"] \arrow[ru, "p_\alpha^\sim"']
\end{tikzcd}
\]
also commutes for each $\alpha\in I$.  Hence, for all $\alpha\in I$, the diagram
\[
\begin{tikzcd}[cramped]
\left(\dsprod\vlat{E}_\alpha\right)^\sim  \arrow[rr, "\iota^\sim"] & &  \vlat{E}^\sim &\\
& \vlat{E}^\sim_\alpha \arrow[ul, "\pi_\alpha^\sim"] \arrow[ur, "p_\alpha^\sim"'] \arrow[rr, "e_\alpha"'] & & \vlat{F} \arrow[ul, "T"']
\end{tikzcd}
\]
commutes.

Assume that $T$ is a lattice isomorphism, and therefore a surjection.  Let $\varphi\in \vlat{E}^\sim$.  There exists a $\psi\in \vlat{F}$ so that $T(\psi)=\varphi$.  By Remark \ref{Remark:  Inductive limit notation}, there exist $\alpha\in I$ and $\psi_\alpha\in\vlat{E}^\sim_\alpha$ so that $e_\alpha(\psi_\alpha)=\psi$.  Then
\[
\iota^\sim (\pi_\alpha^\sim(\psi_\alpha)) = p_\alpha^\sim (\psi_\alpha) = T(e_\alpha(\psi_\alpha)) = \varphi.
\]
Therefore $\iota^\sim$ is a surjection; that is, every $\varphi\in \vlat{E}^\sim$ has an order bounded linear extension to $\dsprod \vlat{E}_\alpha$.

Assume that $I$ has non-measurable cardinal, and every order bounded linear functional on $\vlat{E}$ extends to an order bounded linear functional on $\dsprod \vlat{E}_\alpha$.  Then $\iota^\sim$, which acts as restriction of functionals on $\dsprod \vlat{E}_\alpha$ to $\vlat{E}$, is a surjection.  Fix $\varphi\in \vlat{E}^\sim$.  There exists $\psi \in \left(\dsprod\vlat{E}_\alpha\right)^\sim$ so that $\varphi = \iota^\sim (\psi)$.  By Theorem \ref{Thm:  Properties of product of vector lattices.} (iv) there exist $\alpha_1,\ldots,\alpha_n\in I$ and $\psi_1\in \vlat{E}_{\alpha_1}^\sim,\ldots,\psi_n\in \vlat{E}_{\alpha_n}^\sim$ so that $\psi = \pi_{\alpha_1}^\sim (\psi_{\alpha_1}) + \ldots + \pi_{\alpha_n}^\sim (\psi_{\alpha_n})$.  Then
\[
\varphi =\iota^\sim \left( \sum_{i=1}^n \pi_{\alpha_i}^\sim(\psi_i)\right) =  \sum_{i=1}^n \iota^\sim(\pi_{\alpha_i}^\sim(\psi_i)) = \sum_{i=1}^n p_{\alpha_i}^\sim(\psi_i) = \sum_{i=1}^n T(e_{\alpha_i}(\psi_i)) = T\left( \sum_{i=1}^n e_{\alpha_i}(\psi_i)\right).
\]
Therefore $T$ is surjective, and hence a lattice isomorphism.
\end{proof}

A similar result holds for the order continuous dual of an inverse limit.  We omit the proof of the next theorem, which is virtually identical to that of Theorem \ref{Thm:  Characterisation dual of projective limit is inductive limit of duals}.  Note, however, that unlike in Theorem \ref{Thm:  Characterisation dual of projective limit is inductive limit of duals}, we make no assumption on the cardinality of $I$.

\begin{thm}\label{Thm:  Characterisation order continuous dual of projective limit is inductive limit of duals}
Let $\cal{I}\defeq \left( (\vlat{E}_\alpha)_{\alpha\in I}, (p_{\beta, \alpha})_{\beta \scc \alpha}\right)$ be an inverse system in $\nvlic$ and $\cal{S} \defeq \left( \vlat{E}, (p_\alpha)_{\alpha\in I} \right)$ its inverse limit in $\vlc$.   Assume that $p_{\beta , \alpha}$ and $p_\alpha$ are surjections for all $\beta \scc \alpha$ in $I$, and that each $p_\alpha$ is order continuous.  Then the following statements are true.
\begin{enumerate}
    \item[(i)] $\ind{\ordercontn{\cal{I}}}\defeq \left( \vlat{G}, (e_\alpha)_{\alpha \in I} \right)$ exists in $\nvlic$.
    \item[(ii)] There exists an injective and interval preserving normal lattice homomorphism $S:\vlat{G}\to \ordercontn{\vlat{E}}$ so that the diagram
        \[
        \begin{tikzcd}[cramped]
        \vlat{G} \arrow[rr, "S"] & & \ordercontn{\vlat{E}}\\
        & \ordercontn{(\vlat{E}_\alpha)} \arrow[lu, "e_{\alpha}"] \arrow[ru, "p_\alpha^\sim"']
        \end{tikzcd}
        \]
        commutes for every $\alpha \in I$.
    \item[(iii)] $S$ is a lattice isomorphism if and only if every order continuous linear functional on $\vlat{E}$ has an order continuous linear extension to $\dsprod \vlat{E}_\alpha$.
\end{enumerate}
\end{thm}

The following two results are immediate consequences of Theorems \ref{Thm:  Characterisation dual of projective limit is inductive limit of duals} and \ref{Thm:  Characterisation order continuous dual of projective limit is inductive limit of duals}, respectively.

\begin{cor}\label{Cor:  Dual of projectime limit is inductive limit}
Let $\cal{I}\defeq \left( (\vlat{E}_\alpha)_{\alpha\in I}, (p_{\beta, \alpha})_{\alpha\precc \beta}\right)$ be an inverse system in $\vlic$, $\cal{S} \defeq \left( \vlat{E}, (p_\alpha)_{\alpha\in I} \right)$ its inverse limit in $\vlc$ and $(\vlat{F},(e_\alpha)_{\alpha\in I})$ the direct limit of $\cal{I}^\sim$ in $\nvlic$.   Assume that $p_{\beta , \alpha}$ and $p_\alpha$ are surjections for all $\beta\scc\alpha$ in $I$.  If $\vlat{E}$ is majorising in $\dsprod \vlat{E}_\alpha$ then $\left(\proj{\cal{I}}\right)^\sim \cong \ind{\cal{I}^\sim}$ in $\nvlic$; that is, there exists a lattice isomorphism $T: \vlat{F} \to \vlat{E}^\sim$ such that the diagram
\[
\begin{tikzcd}[cramped]
\vlat{F} \arrow[rr, "T"] & & \vlat{E}^\sim\\
& \vlat{E}_\alpha^\sim \arrow[lu, "e_{\alpha}"] \arrow[ru, "p_\alpha^\sim"']
\end{tikzcd}
\]
commutes for all $\alpha\in I$.
\end{cor}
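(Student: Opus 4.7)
The plan is to obtain the corollary as a direct application of Theorem \ref{Thm:  Characterisation dual of projective limit is inductive limit of duals}. Parts (i) and (ii) of that theorem already supply the existence of $\ind{\cal{I}^\sim}$ in $\nvlic$ and an injective, interval preserving normal lattice homomorphism $T:\vlat{F}\to\vlat{E}^\sim$ for which the triangle in the statement commutes. So only the surjectivity of $T$ remains, and by part (iii) of the same theorem, surjectivity will follow once every order bounded linear functional on $\vlat{E}$ is shown to extend to a positive linear functional on $\dsprod\vlat{E}_\alpha$.

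To produce such an extension for a given $\varphi\in\vlat{E}^\sim$, I would first invoke the Riesz-Kantorovich theorem to write $\varphi=\varphi^+-\varphi^-$ with $\varphi^+,\varphi^-\geq 0$. The hypothesis that $\vlat{E}$ is majorising in $\dsprod\vlat{E}_\alpha$ then puts us in a position to apply the Kantorovich extension theorem, which produces positive linear extensions of $\varphi^+$ and $\varphi^-$ to all of $\dsprod\vlat{E}_\alpha$; their difference is the required order bounded extension of $\varphi$. Having verified this extension hypothesis, Theorem \ref{Thm:  Characterisation dual of projective limit is inductive limit of duals} (iii) yields that $T$ is a bijection, and hence a lattice isomorphism, while commutativity of the diagram is already furnished by part (ii).

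The main technical delicacy lies in the converse direction of Theorem \ref{Thm:  Characterisation dual of projective limit is inductive limit of duals} (iii): its proof passes through Theorem \ref{Thm:  Properties of product of vector lattices.} (iv) to decompose an order bounded functional on $\dsprod\vlat{E}_\alpha$ as a finite sum $\sum \pi_{\alpha_i}^\sim(\psi_{\alpha_i})$, and this representation in turn requires $I$ to have non-measurable cardinal. The argument sketched above therefore operates implicitly under this cardinality assumption; circumventing it would require constructing a finitely-supported extension of $\varphi$ directly from the majorising hypothesis, which I do not see how to do in full generality without the cardinality restriction.
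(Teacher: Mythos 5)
Your argument is exactly the paper's: the published proof is a one-line citation of the Kantorovich extension theorem \cite[Theorem~1.32]{AliprantisBurkinshaw2006} together with Theorem~\ref{Thm:  Characterisation dual of projective limit is inductive limit of duals}, which is precisely your route of splitting $\varphi=\varphi^+-\varphi^-$ and extending the positive parts off the majorising sublattice $\vlat{E}$. The cardinality caveat you flag is well spotted but is not a defect of your write-up alone: the only proven implication from the extension property to surjectivity of $T$ in Theorem~\ref{Thm:  Characterisation dual of projective limit is inductive limit of duals}~(iii) passes through Theorem~\ref{Thm:  Properties of product of vector lattices.}~(iv) and therefore needs $I$ to have non-measurable cardinal, a hypothesis the corollary's statement omits, so the paper's own proof carries the same implicit restriction.
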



\begin{proof}
According to \cite[Theorem 1.32]{AliprantisBurkinshaw2006}, every order bounded functional on $\vlat{E}$ extends to an order bounded functional on $\dsprod \vlat{E}_\alpha$.  Therefore the result follows directly from Theorem \ref{Thm:  Characterisation dual of projective limit is inductive limit of duals}.
\end{proof}

\begin{cor}\label{Cor:  Dual of projectime limit is inductive limit}
Let $\cal{I}\defeq \left( (\vlat{E}_\alpha)_{\alpha\in I}, (p_{\beta, \alpha})_{\alpha\precc \beta}\right)$ be an inverse system in $\nvlic$, $\cal{S} \defeq \left( \vlat{E}, (p_\alpha)_{\alpha\in I} \right)$ its inverse limit in $\vlc$ and $(\vlat{F},(e_\alpha)_{\alpha\in I})$ the direct limit of $\ordercontn{\cal{I}}$ in $\nvlic$.   Assume that $p_{\beta , \alpha}$ and $p_\alpha$ are surjections for all $\beta\scc\alpha$ in $I$, and that each $p_\alpha$ is order continuous.  If $\vlat{E}$ is majorising and order dense in $\dsprod \vlat{E}_\alpha$ then $\ordercontn{\left(\proj{\cal{I}}\right)} \cong \ind{\ordercontn{\cal{I}}}$ in $\nvlic$; that is, there exists a lattice isomorphism $S: \vlat{F} \to \ordercontn{\vlat{E}}$ such that the diagram
\[
\begin{tikzcd}[cramped]
\vlat{F} \arrow[rr, "S"] & & \ordercontn{\vlat{E}}\\
& \ordercontn{\left(\vlat{E}_\alpha\right)} \arrow[lu, "e_{\alpha}"] \arrow[ru, "p_\alpha^\sim"']
\end{tikzcd}
\]
commutes for all $\alpha\in I$.
\end{cor}


\begin{proof}
According to \cite[Theorem 1.65]{AliprantisBurkinshaw2006}, every order continuous functional on $\vlat{E}$ extends to an order continuous functional on $\dsprod \vlat{E}_\alpha$.  Therefore the result follows directly from Theorem \ref{Thm:  Characterisation order continuous dual of projective limit is inductive limit of duals}.
\end{proof}

In contradistinction with direct limits, the inverse limit construction always preserves the property of having a separating order (continuous) dual.

\begin{prop}\label{Prop:  Separating order dual of inverse limit}
Let $\cal{I}\defeq \left( (\vlat{E}_\alpha)_{\alpha\in I}, (p_{\beta, \alpha})_{\beta \scc \alpha}\right)$ be an inverse system in $\vlc$ and $\cal{S} \defeq \left( \vlat{E}, (p_\alpha)_{\alpha\in I} \right)$ its inverse limit in $\vlc$.  Then the following statements are true.  \begin{enumerate}
    \item[(i)] If $\preann{\orderdual{(\vlat{E}_\alpha)}}=\{0\}$ for every $\alpha \in I$ then $\preann{\orderdual{\vlat{E}}}=\{0\}$.
    \item[(ii)] If $\preann{\ordercontn{(\vlat{E}_\alpha)}}=\{0\}$ and $p_\alpha$ is order continuous for every $\alpha \in I$ then $\preann{\ordercontn{\vlat{E}}}=\{0\}$.
\end{enumerate}
\end{prop}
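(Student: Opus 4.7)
The plan is to exploit the explicit realisation of the inverse limit from Theorem \ref{Thm:  Existence of Projective Limit}: $\vlat{E}$ sits as a sublattice of $\dsprod \vlat{E}_\alpha$ and the projections $p_\alpha$ are precisely the restrictions of the coordinate projections $\pi_\alpha$. In particular, the family $\{p_\alpha : \alpha \in I\}$ jointly separates the points of $\vlat{E}$: if $0 \neq u \in \vlat{E}$, then $u$ is non-zero as an element of the product, hence some coordinate $p_\alpha(u) = \pi_\alpha(u)$ is non-zero. This observation is the only structural input needed; the rest is just pulling functionals back along $p_\alpha$.

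For (i), I would fix $0 \neq u \in \vlat{E}$, choose $\alpha \in I$ with $p_\alpha(u) \neq 0$ as above, and invoke the hypothesis $\preann{\orderdual{(\vlat{E}_\alpha)}} = \{0\}$ to obtain $\varphi_\alpha \in \orderdual{(\vlat{E}_\alpha)}$ with $\varphi_\alpha(p_\alpha(u)) \neq 0$. Setting $\varphi \defeq p_\alpha^\sim(\varphi_\alpha) = \varphi_\alpha \circ p_\alpha$, Theorem \ref{Thm:  Adjoints of interval preserving vs lattice homomorphisms} (i) (applied to the positive operator $p_\alpha$) gives $\varphi \in \orderdual{\vlat{E}}$, while by construction $\varphi(u) = \varphi_\alpha(p_\alpha(u)) \neq 0$. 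Hence $u \notin \preann{\orderdual{\vlat{E}}}$, completing (i).

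For (ii), I would argue identically, picking instead $\varphi_\alpha \in \ordercontn{(\vlat{E}_\alpha)}$ with $\varphi_\alpha(p_\alpha(u)) \neq 0$, and pulling back to $\varphi \defeq \varphi_\alpha \circ p_\alpha$. The point where the extra hypothesis is used is in verifying order continuity of $\varphi$: this is where we need $p_\alpha$ itself to be order continuous (which is the additional assumption in (ii)), so that $\varphi$ is a composition of two order continuous maps, hence in $\ordercontn{\vlat{E}}$. Equivalently one can cite Theorem \ref{Thm:  Adjoints of interval preserving vs lattice homomorphisms} (ii).

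No real obstacle is anticipated; the proposition is essentially a bookkeeping consequence of the construction of $\proj \vlat{E}_\alpha$ as a sublattice of $\dsprod \vlat{E}_\alpha$, together with the standard fact that order bounded (respectively order continuous) functionals pull back along positive (respectively order continuous) lattice homomorphisms. The only mild asymmetry between (i) and (ii) — namely the extra order continuity assumption on the $p_\alpha$ in (ii) — is exactly what is needed to guarantee that $\varphi_\alpha \circ p_\alpha$ stays in the order continuous dual rather than just the order dual.
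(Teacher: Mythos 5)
Your proposal is correct and follows essentially the same route as the paper's proof: locate a coordinate $\alpha$ with $p_\alpha(u)\neq 0$ (using the realisation of $\vlat{E}$ as a sublattice of $\dsprod\vlat{E}_\alpha$), separate $p_\alpha(u)$ by a functional on $\vlat{E}_\alpha$, and pull back along $p_\alpha^\sim$, with the order continuity of $p_\alpha$ supplying order continuity of the pullback in part (ii). No gaps.
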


\begin{proof}
The proofs of (i) and (ii) are identical.  Hence we omit the proof of (ii).

Assume that $\preann{\orderdual{(\vlat{E}_\alpha)}}=\{0\}$ for every $\alpha \in I$.  Let $u\in\vlat{E}$ be non-zero.  Then there exists $\alpha \in I$ so that $p_\alpha(u)\neq 0$.  Since $\preann{\orderdual{(\vlat{E}_\alpha)}}=\{0\}$, there exists $\varphi\in \orderdual{(\vlat{E}_\alpha)}$ so that $\varphi(p_\alpha (u))\neq 0$; that is, $p_\alpha^\sim(\varphi)(u)\neq 0$.  Hence $\preann{\orderdual{\vlat{E}}}=\{0\}$.
\end{proof}

\section{Applications}\label{Section:  Applications}

In this section we apply the duality results for direct and inverse limits obtained in Section \ref{Section:  Dual spaces}.  In particular, we consider order (continuous) duals of some of the function spaces which are expressed as direct and inverse limits in Sections \ref{Subsection:  Examples of inductive limits} and \ref{Subsection:  Examples of projective limits}, respectively.  This is followed by an investigation of perfect spaces. We show that, under certain conditions, the direct and inverse limits of perfect spaces are perfect.  We then specialise these results to the case of $\cont(X)$ and obtain a solution to the decomposition problem mentioned in the introduction.  Finally, we show that an  Archimedean vector lattice has a relatively uniformly complete order predual if and only if it can be expressed, in a suitable way, as an inverse limit of spaces of Radon measures on compact Hausdorff spaces.

The following two simple propositions are used repeatedly.  These results are proved in \cite[p. 193, p.~205]{Bourbakie_Theory_of_sets} in the context of direct and inverse systems of sets.  The arguments in \cite{Bourbakie_Theory_of_sets} suffice to verify the results in the vector lattice context, so we do not repeat them here.

\begin{prop}\label{Prop:  Morphisms between inductive limits}
Let $\cal{D}\defeq \left( (\vlat{E}_\alpha)_{\alpha\in I}, (e_{\alpha , \beta})_{\alpha\precc \beta}\right)$ and $\cal{D}'\defeq \left( (\vlat{E}_\alpha')_{\alpha\in I}, (e_{\alpha , \beta}')_{\alpha\precc \beta}\right)$ be direct systems in $\vlc$ with direct limits $\cal{S} \defeq \left( \vlat{E}, (e_\alpha)_{\alpha\in I} \right)$ and $\cal{S}' \defeq \left( \vlat{E}', (e_\alpha')_{\alpha\in I} \right)$ in $\vlc$.  Assume that for every $\alpha\in I$ there exists a lattice homomorphism ${T_\alpha :\vlat{E}_\alpha \to \vlat{E}_\alpha'}$ so that the diagram
\begin{eqnarray}
\begin{tikzcd}[cramped]
\vlat{E}_\alpha \arrow[rr, "T_\alpha"] \arrow[dd, "e_{\alpha , \beta}"'] & & \vlat{E}_\alpha' \arrow[dd, "e_{\alpha , \beta}' "]\\
 & & \\
\vlat{E}_\beta \arrow[rr, "T_\beta"']  & & \vlat{E}_\beta'
\end{tikzcd}\label{EQ:  Prop-Morphisms between inductive limits 1}
\end{eqnarray}
commutes for all $\alpha\precc \beta$ in $I$.  The following statements are true. \begin{enumerate}
    \item[(i)] There exists a unique lattice homomorphism $T:\vlat{E}\to\vlat{E}'$ so that the diagram
    \begin{eqnarray}
    \begin{tikzcd}[cramped]
    \vlat{E}_\alpha \arrow[rr, "T_\alpha"] \arrow[dd, "e_{\alpha}"'] & & \vlat{E}_\alpha' \arrow[dd, "e_{\alpha}' "]\\
     & & \\
    \vlat{E} \arrow[rr, "T"']  & & \vlat{E}'
    \end{tikzcd}\label{EQ:  Prop-Morphisms between inductive limits 2}
    \end{eqnarray}
    commutes for every $\alpha\in I$.
    \item[(ii)] If $T_\alpha$ is a lattice isomorphism for every $\alpha\in I$, then so is $T$.
\end{enumerate}
\end{prop}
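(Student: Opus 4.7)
The proof is a routine exercise in diagram chasing using the universal property of the direct limit, and I would treat (i) and (ii) in turn.

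For (i), the plan is to construct $T$ via the universal property of $\cal{S}$. First I would form the composite maps $e_\alpha' \circ T_\alpha : \vlat{E}_\alpha \to \vlat{E}'$ for each $\alpha\in I$. Each composite is a lattice homomorphism, since both factors are. I would then verify that the pair $(\vlat{E}', (e_\alpha' \circ T_\alpha)_{\alpha\in I})$ is a compatible system of $\cal{D}$ in $\vlc$ in the sense of Definition \ref{Defn:  Compatible system for inductive limits}. For $\alpha \precc \beta$ this amounts to the identity
\[
(e_\beta' \circ T_\beta) \circ e_{\alpha, \beta} \;=\; e_\alpha' \circ T_\alpha,
\]
which follows by first applying the hypothesised commutativity of (\ref{EQ:  Prop-Morphisms between inductive limits 1}) to rewrite $T_\beta \circ e_{\alpha,\beta}$ as $e_{\alpha, \beta}' \circ T_\alpha$, and then using the compatibility of $\cal{S}'$ with $\cal{D}'$ to collapse $e_\beta' \circ e_{\alpha,\beta}'$ to $e_\alpha'$. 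Once compatibility is established, the universal property of the direct limit $\cal{S}$ in $\vlc$ delivers a unique lattice homomorphism $T:\vlat{E}\to\vlat{E}'$ with $T\circ e_\alpha = e_\alpha' \circ T_\alpha$ for every $\alpha\in I$, which is exactly the commutativity of (\ref{EQ:  Prop-Morphisms between inductive limits 2}).

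For (ii), assume that each $T_\alpha$ is a lattice isomorphism. Then the inverse maps $T_\alpha^{-1}:\vlat{E}_\alpha'\to \vlat{E}_\alpha$ are lattice homomorphisms. I would check that the analogue of diagram (\ref{EQ:  Prop-Morphisms between inductive limits 1}) with $T_\alpha$ replaced by $T_\alpha^{-1}$ and the two systems interchanged also commutes: apply $T_\beta^{-1}$ on the left and $T_\alpha^{-1}$ on the right to the identity $T_\beta \circ e_{\alpha,\beta} = e_{\alpha,\beta}' \circ T_\alpha$. Part (i), applied to the families $(T_\alpha^{-1})_{\alpha\in I}$ in the reverse direction, then yields a unique lattice homomorphism $S:\vlat{E}'\to \vlat{E}$ satisfying $S\circ e_\alpha' = e_\alpha \circ T_\alpha^{-1}$ for every $\alpha\in I$. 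It remains to show that $S$ is a two-sided inverse of $T$. The composite $S\circ T$ is a lattice homomorphism $\vlat{E}\to \vlat{E}$ satisfying $(S\circ T)\circ e_\alpha = S\circ e_\alpha' \circ T_\alpha = e_\alpha \circ T_\alpha^{-1} \circ T_\alpha = e_\alpha$ for every $\alpha\in I$. But the identity $I_{\vlat{E}}$ also satisfies this equation, so the uniqueness clause in the universal property of $\cal{S}$ forces $S\circ T = I_{\vlat{E}}$. By the symmetric argument $T\circ S = I_{\vlat{E}'}$, and hence $T$ is a lattice isomorphism.

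There is no serious obstacle here; the only point that requires attention is keeping the diagrams straight when invoking the universal property twice in part (ii). Everything else is formal manipulation of the defining identities of direct systems and compatible systems, and the proof does not rely on any feature of vector lattices beyond the fact that composition of lattice homomorphisms is a lattice homomorphism.
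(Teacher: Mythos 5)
Your proof is correct. The paper itself does not spell out an argument: it defers to Bourbaki's treatment of direct systems of sets, which (like the authors' own unpublished draft of this proof) constructs $T$ explicitly on representatives --- given $u=e_\alpha(u_\alpha)$ one sets $T(u)\defeq e_\alpha'(T_\alpha(u_\alpha))$ and then checks well-definedness using diagram (\ref{EQ:  Prop-Morphisms between inductive limits 1}) together with the description of the limit as a quotient of the disjoint union. Your route is purely formal: you observe that $(\vlat{E}',(e_\alpha'\circ T_\alpha)_{\alpha\in I})$ is a compatible system of $\cal{D}$ and let the universal property of $\cal{S}$ produce $T$, then in (ii) obtain the inverse $S$ the same way and use the uniqueness clause (applied to the compatible system $(\vlat{E},(e_\alpha)_{\alpha\in I})$ itself) to force $S\circ T=I_{\vlat{E}}$ and $T\circ S=I_{\vlat{E}'}$. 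What your approach buys is that it never touches the concrete construction of the direct limit, so no well-definedness check is needed and the argument is valid verbatim in any category; it also supplies the uniqueness argument for $S\circ T=I_{\vlat{E}}$ that the authors' draft merely asserts. What the explicit construction buys is a concrete formula for $T$ on elements, which can be convenient when the map is used later. Both arguments are complete and there is no gap in yours.
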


\begin{prop}\label{Prop:  Morphisms between projective limits}
Let $\cal{I}\defeq \left( (\vlat{E}_\alpha)_{\alpha\in I}, (p_{\beta, \alpha})_{\beta \scc \alpha}\right)$ and $\cal{I}'\defeq \left( (\vlat{E}_\alpha')_{\alpha\in I}, (p_{\beta, \alpha}')_{\beta \scc \alpha}\right)$ be inverse systems in $\vlc$ with inverse limits $\cal{S} \defeq \left( \vlat{E}, (p_\alpha)_{\alpha\in I} \right)$ and $\cal{S}' \defeq \left( \vlat{E}', (p_\alpha')_{\alpha\in I} \right)$ in $\vlc$.  Assume that for every $\alpha\in I$ there exists a lattice homomorphism $T_\alpha :\vlat{E}_\alpha \to \vlat{E}_\alpha'$ so that the diagram
\begin{eqnarray}
\begin{tikzcd}[cramped]
\vlat{E}_\beta \arrow[rr, "T_\beta"] \arrow[dd, "p_{\beta, \alpha}"'] & & \vlat{E}_\beta' \arrow[dd, "p_{\beta,\alpha}' "]\\
 & & \\
\vlat{E}_\alpha \arrow[rr, "T_\alpha"']  & & \vlat{E}_\alpha'
\end{tikzcd}\label{EQ:  Prop-Morphisms between projective limits 1}
\end{eqnarray}
commutes for all $\alpha\precc \beta$ in $I$.  The following statements are true. \begin{enumerate}
    \item[(i)] There exists a unique lattice homomorphism $T:\vlat{E}\to\vlat{E}'$ so that the diagram
    \begin{eqnarray}
    \begin{tikzcd}[cramped]
    \vlat{E} \arrow[rr, "T"] \arrow[dd, "p_{\alpha}"'] & & \vlat{E}' \arrow[dd, "p_{\alpha}' "]\\
     & & \\
    \vlat{E}_\alpha \arrow[rr, "T_\alpha"']  & & \vlat{E}_\alpha'
    \end{tikzcd}\label{EQ:  Prop-Morphisms between projective limits 2}
    \end{eqnarray}
    commutes for every $\alpha\in I$.
    \item[(ii)] If $T_\alpha$ is a lattice isomorphism for every $\alpha\in I$, then so is $T$.
\end{enumerate}
\end{prop}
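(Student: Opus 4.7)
The plan is to derive both parts directly from the universal property of the inverse limit $\cal{S}'$ in $\vlc$, as a categorical dual of Proposition~\ref{Prop:  Morphisms between inductive limits}.

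For part (i), my first step would be to show that the family of compositions $(T_\alpha \circ p_\alpha)_{\alpha\in I}$ makes $(\vlat{E},(T_\alpha\circ p_\alpha)_{\alpha\in I})$ into a compatible system of $\cal{I}'$ in $\vlc$. Indeed, for any $\alpha \precc \beta$ in $I$,
\[
p_{\beta,\alpha}'\circ(T_\beta\circ p_\beta) \;=\; (p_{\beta,\alpha}'\circ T_\beta)\circ p_\beta \;=\; (T_\alpha\circ p_{\beta,\alpha})\circ p_\beta \;=\; T_\alpha\circ p_\alpha,
\]
where the second equality uses the commutativity of (\ref{EQ:  Prop-Morphisms between projective limits 1}) and the third uses that $\cal{S}$ is a compatible system of $\cal{I}$. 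The universal property of $\cal{S}'=\proj{\cal{I}'}$ then yields a unique lattice homomorphism $T:\vlat{E}\to\vlat{E}'$ with $p_\alpha'\circ T = T_\alpha\circ p_\alpha$ for every $\alpha\in I$, which is exactly the commutativity of (\ref{EQ:  Prop-Morphisms between projective limits 2}).

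For part (ii), assuming each $T_\alpha$ is a lattice isomorphism, I would apply part (i) to the inverse family $(T_\alpha^{-1})_{\alpha\in I}$. Composing (\ref{EQ:  Prop-Morphisms between projective limits 1}) with $T_\alpha^{-1}$ on the left and $T_\beta^{-1}$ on the right shows that the $T_\alpha^{-1}$ satisfy the analogous compatibility condition between $\cal{I}'$ and $\cal{I}$, so part (i) produces a lattice homomorphism $S:\vlat{E}'\to\vlat{E}$ with $p_\alpha\circ S = T_\alpha^{-1}\circ p_\alpha'$ for every $\alpha\in I$. Both $S\circ T:\vlat{E}\to\vlat{E}$ and the identity $I_{\vlat{E}}$ satisfy $p_\alpha\circ(\,\cdot\,)=p_\alpha$, so the uniqueness clause in the universal property of $\cal{S}$ forces $S\circ T = I_{\vlat{E}}$; symmetrically, $T\circ S = I_{\vlat{E}'}$. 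Hence $T$ is a bijective lattice homomorphism whose inverse $S$ is also a lattice homomorphism, so $T$ is a lattice isomorphism.

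I do not expect any genuine obstacle: the proposition is a formal categorical dual of Proposition~\ref{Prop:  Morphisms between inductive limits}, and the argument reduces to routine checks of the compatibility conditions followed by two invocations of the universal property. The only points requiring mild care are verifying the compatibility of $(T_\alpha\circ p_\alpha)_{\alpha\in I}$ and, in part (ii), using uniqueness in the universal property rather than a direct pointwise computation to identify $S\circ T$ and $T\circ S$ with the identities.
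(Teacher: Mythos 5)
Your proof is correct and takes essentially the same route as the paper's (which defers to the standard Bourbaki-style argument): check that $(T_\alpha\circ p_\alpha)_{\alpha\in I}$ makes $\vlat{E}$ a compatible system of $\cal{I}'$ and invoke the universal property of $\cal{S}'$ for existence and uniqueness, then for (ii) build $S$ from the family $(T_\alpha^{-1})$ and identify $S\circ T$ and $T\circ S$ with the identities. The only (harmless) stylistic difference is that the paper's argument writes $T(u)=(T_\alpha(p_\alpha(u)))_{\alpha\in I}$ explicitly and verifies the inverse relations pointwise, whereas you obtain the same conclusions purely from the uniqueness clause of the universal property.
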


\subsection{Duals of function spaces}\label{Subsection:  Duals of function spaces}

In this section we apply the duality results in Section \ref{Section:  Dual spaces} to the examples in Sections {\ref{Subsection:  Examples of inductive limits} and \ref{Subsection:  Examples of projective limits} to obtain characterizations of the order and order continuous duals of some function spaces.  These results follow immediately from the corresponding examples and the appropriate duality result.

\begin{thm}
Let $(X,\Sigma,\mu)$ be a complete $\sigma$-finite measure space.  Let $\Xi \defeq (X_n)$ be an increasing sequence (w.r.t. inclusion) of measurable sets with positive measure so that $X=\displaystyle \bigcup X_n$.  Let $1\leq p <\infty$ and $1\leq q\leq \infty$ satisfy $\frac{1}{p}+\frac{1}{q}=1$.  For $n\in\N$ let $e_n$ and $r_n$ be as in Examples \ref{Exm:  Locally supported p-summable functions as inductive limit} and \ref{Exm:  Lploc projective limit}, respectively.

For every $n\in\N$, let $T_n:\vlat{L}^q(X_n)\to \vlat{L}^p(X_n)^\sim$ be the usual (isometric) lattice isomorphism,
\[
T_n(u)(v) = \int_{X_n} uv\thinspace d\mu ,~~ u\in \vlat{L}^q(X_n),~ v\in \vlat{L}^p(X_n).
\]
There exists a unique lattice isomorphism $T:\vlat{L}^q_{\Xi-\loc}(X)\to \vlat{L}^p_{\Xi-c}(X)^\sim$ so that the diagram
\[
\begin{tikzcd}[cramped]
\vlat{L}^q_{\Xi-\loc}(X) \arrow[rr, "T"] \arrow[dd, "r_n"'] & & \vlat{L}^p_{\Xi-c}(X)^\sim \arrow[dd, "e_n^\sim"]\\
 & & \\
\vlat{L}^q(X_n) \arrow[rr, "T_n"']  & & \vlat{L}^p(X_n)^\sim
\end{tikzcd}
\]
commutes for every $n\in\N$.
\end{thm}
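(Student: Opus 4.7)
The plan is to assemble the result from three ingredients already in the paper: the realisation of $\vlat{L}^p_{\Xi-c}(X)$ as a direct limit (Example \ref{Exm:  Locally supported p-summable functions as inductive limit}), the realisation of $\vlat{L}^q_{\Xi-\loc}(X)$ as an inverse limit (Example \ref{Exm:  Lploc projective limit}), and the duality between these constructions (Theorem \ref{Thm:  Dual of ind sys in VLIC is proj of duals in NVL}). The isomorphism $T$ will appear as the composition of a naturality isomorphism between two inverse systems with the canonical identification of the dual of a direct limit with the inverse limit of the duals.

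First I would apply Theorem \ref{Thm:  Dual of ind sys in VLIC is proj of duals in NVL} to the direct system $\cal{D}^p_{\Xi-c}=((\vlat{L}^p(X_n))_{n\in\N},(e_{n,m})_{n\leq m})$ of Example \ref{Exm:  Locally supported p-summable functions as inductive limit} (which lives in $\nvlic\subseteq \vlic$ with direct limit $\vlat{L}^p_{\Xi-c}(X)$). This yields a lattice isomorphism $U:\vlat{L}^p_{\Xi-c}(X)^\sim \to \proj{(\cal{D}^p_{\Xi-c})^\sim}$ in $\nvlc$ making $p_n \circ U = e_n^\sim$ for every $n\in\N$, where $(\cal{D}^p_{\Xi-c})^\sim=((\vlat{L}^p(X_n)^\sim)_{n\in\N},(e_{n,m}^\sim)_{n\leq m})$ and $p_n$ denotes the projection from $\proj{(\cal{D}^p_{\Xi-c})^\sim}$ onto $\vlat{L}^p(X_n)^\sim$.

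Next I would verify that the classical isomorphisms $T_n:\vlat{L}^q(X_n)\to\vlat{L}^p(X_n)^\sim$ constitute a morphism of inverse systems from $\cal{I}^q_{\Xi-\loc}=((\vlat{L}^q(X_n))_{n\in\N},(r_{m,n})_{m\geq n})$ to $(\cal{D}^p_{\Xi-c})^\sim$. Concretely, for $n\leq m$ in $\N$, $u\in \vlat{L}^q(X_m)$ and $v\in \vlat{L}^p(X_n)$ one computes
\[
(e_{n,m}^\sim \circ T_m)(u)(v)=\int_{X_m} u\cdot e_{n,m}(v)\thinspace d\mu =\int_{X_n}u|_{X_n}\cdot v\thinspace d\mu =(T_n\circ r_{m,n})(u)(v),
\]
using that $e_{n,m}(v)$ vanishes off $X_n$. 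Hence the square
\[
\begin{tikzcd}[cramped]
\vlat{L}^q(X_m) \arrow[rr, "T_m"] \arrow[dd, "r_{m,n}"'] & & \vlat{L}^p(X_m)^\sim \arrow[dd, "e_{n,m}^\sim"]\\
 & & \\
\vlat{L}^q(X_n) \arrow[rr, "T_n"']  & & \vlat{L}^p(X_n)^\sim
\end{tikzcd}
\]
commutes for all $n\leq m$, and each $T_n$ is a lattice isomorphism.

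Proposition \ref{Prop:  Morphisms between projective limits} then lifts the family $(T_n)$ to a unique lattice isomorphism $V:\vlat{L}^q_{\Xi-\loc}(X)\to \proj{(\cal{D}^p_{\Xi-c})^\sim}$ satisfying $p_n\circ V=T_n\circ r_n$ for every $n\in\N$; here Example \ref{Exm:  Lploc projective limit} supplies the identification of $\proj{\cal{I}^q_{\Xi-\loc}}$ with $(\vlat{L}^q_{\Xi-\loc}(X),(r_n)_{n\in\N})$. Setting $T\defeq U^{-1}\circ V$ yields a lattice isomorphism $\vlat{L}^q_{\Xi-\loc}(X)\to \vlat{L}^p_{\Xi-c}(X)^\sim$ with
\[
e_n^\sim\circ T = p_n\circ U\circ T = p_n\circ V = T_n\circ r_n
\]
for every $n\in\N$, i.e.\ the required diagram commutes. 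Uniqueness follows from uniqueness in Propositions \ref{Prop:  Morphisms between projective limits} and Theorem \ref{Thm:  Dual of ind sys in VLIC is proj of duals in NVL}. The only genuine calculation is the commutativity check above, which is a one-line application of Fubini/restriction; all other steps are direct appeals to the machinery already developed, so no essential obstacle is anticipated.
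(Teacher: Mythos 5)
Your proposal is correct and follows exactly the route the paper takes: the paper's proof is precisely the combination of Examples \ref{Exm:  Locally supported p-summable functions as inductive limit} and \ref{Exm:  Lploc projective limit}, Theorem \ref{Thm:  Dual of ind sys in VLIC is proj of duals in NVL}, and Proposition \ref{Prop:  Morphisms between projective limits}, with your naturality computation for the squares involving $T_n$, $r_{m,n}$ and $e_{n,m}^\sim$ being the only detail the paper leaves implicit.
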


\begin{proof}
The result follows immediately from Examples \ref{Exm:  Locally supported p-summable functions as inductive limit} and \ref{Exm:  Lploc projective limit}, Theorem \ref{Thm:  Dual of ind sys in VLIC is proj of duals in NVL} and Proposition \ref{Prop:  Morphisms between projective limits}.
\end{proof}

\begin{thm}
Let $(X,\Sigma,\mu)$ be a complete $\sigma$-finite measure space.  Let $\Xi \defeq (X_n)$ be an increasing sequence (w.r.t. inclusion) of measurable sets with positive measure so that $X = \displaystyle \bigcup X_n$.  Let $1\leq p\leq \infty$ and $1\leq q\leq \infty$ satisfy $\frac{1}{p}+\frac{1}{q}=1$.  For $n\in\N$ let $e_n$ and $r_n$ be as in Examples \ref{Exm:  Locally supported p-summable functions as inductive limit} and \ref{Exm:  Lploc projective limit}, respectively.

For every $n\in\N$, let $S_n:\vlat{L}^q(X_n)\to \ordercontn{\vlat{L}^p(X_n)}$ be the usual (isometric) lattice isomorphism,
\[
S_n(u)(v) = \int_{X_n}uv\thinspace d\mu,~~ u\in \vlat{L}^q(X_n),~ v\in \vlat{L}^p(X_n).
\]
There exists a unique lattice isomorphism $S:\vlat{L}^q_{\Xi-\loc}(X)\to \ordercontn{\vlat{L}^p_{\Xi-c}(X)}$ so that the diagram
\[
\begin{tikzcd}[cramped]
\vlat{L}^q_{\Xi-\loc}(X) \arrow[rr, "S"] \arrow[dd, "r_n"'] & & \ordercontn{\vlat{L}^p_{\Xi-c}(X)} \arrow[dd, "e_n^\sim"]\\
 & & \\
\vlat{L}^q(X_n) \arrow[rr, "S_n"']  & & \ordercontn{\vlat{L}^p(X_n)}
\end{tikzcd}
\]
commutes for every $n\in\N$.
\end{thm}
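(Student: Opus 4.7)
The plan is to mirror the proof of the preceding theorem, with the role of Theorem \ref{Thm:  Dual of ind sys in VLIC is proj of duals in NVL} replaced by Theorem \ref{Thm:  Order continuous dual of ind sys in NRIP is proj of duals in NRiesz}.  The extra ingredient required is the injectivity of the connecting maps $e_{n,m}$ in the direct system $\cal{D}^p_{\Xi-c}$, which is immediate from their definition as extension-by-zero in Example \ref{Exm:  Locally supported p-summable functions as inductive limit}.

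First, by Example \ref{Exm:  Locally supported p-summable functions as inductive limit}, the pair $\cal{S}^p_{\Xi-c}=(\vlat{L}^p_{\Xi-c}(X),(e_n)_{n\in\N})$ is the direct limit of $\cal{D}^p_{\Xi-c}$ in $\nvlic$, and every $e_{n,m}$ is injective.  Hence Theorem \ref{Thm:  Order continuous dual of ind sys in NRIP is proj of duals in NRiesz} applies and yields a lattice isomorphism $\tilde S:\ordercontn{\vlat{L}^p_{\Xi-c}(X)}\to \vlat{G}$, where $(\vlat{G},(\tilde p_n)_{n\in\N})=\proj{\ordercontn{\cal{D}^p_{\Xi-c}}}$ in $\nvlc$, with $\tilde p_n\circ \tilde S = e_n^\sim$ for all $n\in\N$.

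Second, I would check that, for all $n\leq m$, the square
\[
\begin{tikzcd}[cramped]
\vlat{L}^q(X_m)\arrow[rr,"S_m"]\arrow[dd,"r_{m,n}"'] & & \ordercontn{\vlat{L}^p(X_m)}\arrow[dd,"e_{n,m}^\sim"]\\
 & & \\
\vlat{L}^q(X_n)\arrow[rr,"S_n"'] & & \ordercontn{\vlat{L}^p(X_n)}
\end{tikzcd}
\]
commutes.  This is a direct calculation: for $u\in \vlat{L}^q(X_m)$ and $v\in\vlat{L}^p(X_n)$, $e_{n,m}(v)$ vanishes on $X_m\setminus X_n$, so
\[
(e_{n,m}^\sim\circ S_m)(u)(v)=\int_{X_m} u\thinspace e_{n,m}(v)\thinspace d\mu=\int_{X_n} r_{m,n}(u)\thinspace v\thinspace d\mu=(S_n\circ r_{m,n})(u)(v).
\]
Thus the family $(S_n)_{n\in\N}$ is a morphism of inverse systems from $\cal{I}^q_{\Xi-\loc}$ to $\ordercontn{\cal{D}^p_{\Xi-c}}$ consisting of lattice isomorphisms.

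Third, by Example \ref{Exm:  Lploc projective limit}, $\cal{S}^q_{\Xi-\loc}=(\vlat{L}^q_{\Xi-\loc}(X),(r_n)_{n\in\N})$ is the inverse limit of $\cal{I}^q_{\Xi-\loc}$ in $\nvlc$.  Applying Proposition \ref{Prop:  Morphisms between projective limits} to the isomorphisms $(S_n)$ yields a unique lattice isomorphism $\hat S:\vlat{L}^q_{\Xi-\loc}(X)\to \vlat{G}$ with $\tilde p_n\circ \hat S = S_n\circ r_n$ for every $n\in\N$.  Setting $S\defeq \tilde S^{-1}\circ \hat S$ then gives a lattice isomorphism $\vlat{L}^q_{\Xi-\loc}(X)\to \ordercontn{\vlat{L}^p_{\Xi-c}(X)}$ with
\[
e_n^\sim\circ S=\tilde p_n\circ \tilde S\circ S=\tilde p_n\circ \hat S=S_n\circ r_n,
\]
which is exactly the commutativity of the required diagram.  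Uniqueness of $S$ is forced by the surjectivity of the $r_n$ (which shows that the $e_n^\sim$ jointly separate the points of $\ordercontn{\vlat{L}^p_{\Xi-c}(X)}$, via Proposition \ref{Prop:  Image of adjoint of lattice homomorphism} (ii)).

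The only non-formal step is the commutativity check in the second paragraph; everything else is an assembly of Theorem \ref{Thm:  Order continuous dual of ind sys in NRIP is proj of duals in NRiesz} and Proposition \ref{Prop:  Morphisms between projective limits} applied to Examples \ref{Exm:  Locally supported p-summable functions as inductive limit} and \ref{Exm:  Lploc projective limit}.  Since the $e_{n,m}$ are injective, there is no subtlety arising from the passage from order duals to order continuous duals: the hypothesis of Theorem \ref{Thm:  Order continuous dual of ind sys in NRIP is proj of duals in NRiesz} is met without further assumptions, which was the one potential obstacle.
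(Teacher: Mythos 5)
Your proof is correct and follows the same route as the paper: the paper's own argument is precisely to note that the $e_{n,m}$ of Example \ref{Exm:  Locally supported p-summable functions as inductive limit} are injective and then to combine Examples \ref{Exm:  Locally supported p-summable functions as inductive limit} and \ref{Exm:  Lploc projective limit} with Theorem \ref{Thm:  Order continuous dual of ind sys in NRIP is proj of duals in NRiesz} and Proposition \ref{Prop:  Morphisms between projective limits}. You have merely written out the assembly (including the commutativity of the square intertwining $S_m$, $r_{m,n}$ and $e_{n,m}^\sim$) that the paper leaves implicit.
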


\begin{proof}
We recall that the mappings $e_{n,m}$ in Example \ref{Exm:  Locally supported p-summable functions as inductive limit} are injective for all $n\leq m$ in $\N$.  Therefore the result follows immediately from Examples \ref{Exm:  Locally supported p-summable functions as inductive limit} and \ref{Exm:  Lploc projective limit}, Theorem \ref{Thm:  Order continuous dual of ind sys in NRIP is proj of duals in NRiesz} and Proposition \ref{Prop:  Morphisms between projective limits}.
\end{proof}

\begin{thm}
Let $(X,\Sigma,\mu)$ be a complete $\sigma$-finite measure space.  Let $\Xi \defeq (X_n)$ be an increasing sequence (w.r.t. inclusion) of measurable sets with positive measure so that $X=\displaystyle \bigcup X_n$.  Let $1\leq p<\infty$ and $1\leq q\leq \infty$ satisfy $\frac{1}{p}+\frac{1}{q}=1$.  For $n\in\N$ let $e_n$ and $r_n$ be as in Examples \ref{Exm:  Locally supported p-summable functions as inductive limit} and \ref{Exm:  Lploc projective limit}, respectively.

For every $n\in\N$, let $T_n:\vlat{L}^q(X_n)\to \vlat{L}^p(X_n)^\sim$ be the usual (isometric) lattice isomorphism,
\[
T_n(u)(v) = \int_{X_n}uv\thinspace d\mu, ~~ u\in \vlat{L}^q(X_n),~ v\in \vlat{L}^p(X_n).
\]
There exists a unique lattice isomorphism $R:\vlat{L}_{\Xi-c}^q(X)\to \vlat{L}^p_{\Xi-\loc}(X)^\sim$ so that the diagram
\[
\begin{tikzcd}[cramped]
\vlat{L}^q(X_n) \arrow[rr, "T_n"] \arrow[dd, "e_n"'] & & \vlat{L}^p(X_n)^\sim \arrow[dd, "r_n^\sim"]\\
 & & \\
\vlat{L}^q_{\Xi-c}(X) \arrow[rr, "R"']  & & \vlat{L}^p_{\Xi-\loc}(X)^\sim
\end{tikzcd}
\]
commutes for every $n\in\N$.
\end{thm}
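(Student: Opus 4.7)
The plan is to combine Example~\ref{Exm:  Lploc projective limit}, Theorem~\ref{Thm:  Dual of proj sys in VLIC is ind of duals in NVLIC}, Example~\ref{Exm:  Locally supported p-summable functions as inductive limit}, and Proposition~\ref{Prop:  Morphisms between inductive limits}.  First, I would observe that the restriction maps $r_{m,n}:\vlat{L}^p(X_m)\to\vlat{L}^p(X_n)$ from Example~\ref{Exm:  Lploc projective limit} are surjective: given $u\in\vlat{L}^p(X_n)$, its extension by zero to $X_m$ lies in $\vlat{L}^p(X_m)$ and restricts back to $u$.  A nearly identical argument shows that these maps are interval preserving, so that $\cal{I}^p_{\Xi-\loc}$ is in fact an inverse system in $\nvlic$.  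In particular, Theorem~\ref{Thm:  Dual of proj sys in VLIC is ind of duals in NVLIC} applies: the direct limit $\ind{(\cal{I}^p_{\Xi-\loc})^\sim}\defeq(\vlat{F},(\epsilon_n)_{n\in\N})$ exists in $\nvlic$, and there is a lattice isomorphism $\tilde{T}:\vlat{F}\to \vlat{L}^p_{\Xi-\loc}(X)^\sim$ with $\tilde{T}\circ \epsilon_n = r_n^\sim$ for every $n\in\N$.

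Next, I would invoke Example~\ref{Exm:  Locally supported p-summable functions as inductive limit}, applied with exponent $q$ in place of $p$: the pair $\cal{S}^q_{\Xi-c}=(\vlat{L}^q_{\Xi-c}(X),(e_n)_{n\in\N})$ is the direct limit in $\nvlic$ of the system $\cal{D}^q_{\Xi-c}=((\vlat{L}^q(X_n))_{n\in\N},(e^q_{n,m})_{n\leq m})$, where $e^q_{n,m}$ is extension by zero.  To move between these two direct systems via the family $(T_n)$, I would verify that the square
\[
\begin{tikzcd}[cramped]
\vlat{L}^q(X_n) \arrow[rr, "T_n"] \arrow[dd, "e^q_{n,m}"'] & & \vlat{L}^p(X_n)^\sim \arrow[dd, "r_{m,n}^\sim"]\\
 & & \\
\vlat{L}^q(X_m) \arrow[rr, "T_m"']  & & \vlat{L}^p(X_m)^\sim
\end{tikzcd}
\]
commutes for all $n\leq m$.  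This is a direct computation: for $u\in \vlat{L}^q(X_n)$ and $v\in \vlat{L}^p(X_m)$, both $T_m(e^q_{n,m}(u))(v)$ and $r_{m,n}^\sim(T_n(u))(v)$ reduce to $\int_{X_n}uv\,d\mu$, using that $e^q_{n,m}(u)$ vanishes outside $X_n$.

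With this commutative square in hand, Proposition~\ref{Prop:  Morphisms between inductive limits}(ii) yields a (unique) lattice isomorphism $R_0:\vlat{L}^q_{\Xi-c}(X)\to \vlat{F}$ satisfying $R_0\circ e_n = \epsilon_n\circ T_n$ for every $n\in\N$.  Setting $R\defeq \tilde{T}\circ R_0$ then gives a lattice isomorphism $\vlat{L}^q_{\Xi-c}(X)\to \vlat{L}^p_{\Xi-\loc}(X)^\sim$ with
\[
R\circ e_n \;=\; \tilde{T}\circ \epsilon_n\circ T_n \;=\; r_n^\sim\circ T_n,\qquad n\in\N,
\]
which is precisely the commutativity required in the diagram.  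Uniqueness of $R$ follows from the universal property of the direct limit $\cal{S}^q_{\Xi-c}$: any lattice homomorphism $R'$ making the diagram commute satisfies $R'\circ e_n = r_n^\sim\circ T_n = R\circ e_n$ for all $n$, so $R' = R$.

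I do not anticipate a significant obstacle: the hypotheses $1\leq p<\infty$ (needed so that each $T_n$ is an isomorphism via the classical Riesz representation) and $\sigma$-finiteness of $(X,\Sigma,\mu)$ ensure that the isometric lattice isomorphisms $T_n$ and the surjectivity of the $r_{m,n}$ are automatic, and the categorical machinery of Sections~\ref{Section:  Inductive limits}--\ref{Section:  Dual spaces} then does the rest.  The only point requiring genuine verification is the commutativity of the transition square, which is an immediate consequence of Fubini together with the definitions of $T_n$, $e^q_{n,m}$, and $r_{m,n}$.
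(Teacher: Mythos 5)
Your proposal is correct and follows essentially the same route as the paper, which proves the theorem by noting the surjectivity of the $r_{m,n}$ and then citing Examples \ref{Exm:  Locally supported p-summable functions as inductive limit} and \ref{Exm:  Lploc projective limit}, Theorem \ref{Thm:  Dual of proj sys in VLIC is ind of duals in NVLIC} and Proposition \ref{Prop:  Morphisms between inductive limits}. You have simply made explicit the details (interval preservation of the restriction maps, commutativity of the transition square, and the composition $R=\tilde{T}\circ R_0$) that the paper leaves to the reader.
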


\begin{proof}
We recall that the mappings $p_{m,n}$ in Example \ref{Exm:  Lploc projective limit} are surjective for all $m\geq n$ in $\N$.  Therefore the result follows immediately from Examples \ref{Exm:  Locally supported p-summable functions as inductive limit} and \ref{Exm:  Lploc projective limit}, Theorem \ref{Thm:  Dual of proj sys in VLIC is ind of duals in NVLIC} and Proposition \ref{Prop:  Morphisms between inductive limits}.
\end{proof}

\begin{thm}
Let $(X,\Sigma,\mu)$ be a complete $\sigma$-finite measure space.  Let $\Xi \defeq (X_n)$ be an increasing sequence (w.r.t. inclusion) of measurable sets with positive measure so that $X=\displaystyle \bigcup X_n$.  Let $1\leq p\leq \infty$ and $1\leq q\leq \infty$ satisfy $\frac{1}{p}+\frac{1}{q}=1$.    For $n\in\N$ let $e_n$ and $r_n$ be as in Examples \ref{Exm:  Locally supported p-summable functions as inductive limit} and \ref{Exm:  Lploc projective limit}, respectively.

For every $n\in\N$, let $S_n:\vlat{L}^q(X_n)\to \ordercontn{\vlat{L}^p(X_n)}$ be the usual (isometric) lattice isomorphism,
\[
S_n(u)(v) = \int_{X_n}uv\thinspace d\mu, ~~ u\in \vlat{L}^q(X_n),~ v\in \vlat{L}^p(X_n).
\]
There exists a unique lattice isomorphism $Q:\vlat{L}^p_{\Xi-c}(X) \to \ordercontn{\vlat{L}^q_{\Xi-\loc}(X)}$ so that the diagram
\[
\begin{tikzcd}[cramped]
\vlat{L}^q(X_n) \arrow[rr, "S_n"] \arrow[dd, "e_n"'] & & \ordercontn{\vlat{L}^p(X_n)} \arrow[dd, "r_n^\sim"]\\
 & & \\
\vlat{L}^q_{\Xi-c}(X) \arrow[rr, "Q"']  & & \ordercontn{\vlat{L}^p_{\Xi-\loc}(X)}
\end{tikzcd}
\]
commutes for every $n\in\N$.
\end{thm}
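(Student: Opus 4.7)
The plan is to apply Theorem \ref{Thm:  Order continuous dual of proj sys in VLIC is ind of duals in NVLIC} to the inverse system $\cal{I}^p_{\Xi-\loc}$ of Example \ref{Exm:  Lploc projective limit}, and then to transport the resulting direct-limit representation through the classical $L^p$--$L^q$ dualities $S_n$ onto the direct system $\cal{D}^q_{\Xi-c}$ of Example \ref{Exm:  Locally supported p-summable functions as inductive limit}. All the substantive work is already done by the duality machinery of Section \ref{Section:  Dual spaces}; what remains is bookkeeping and one calculation.

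First I would verify the hypotheses of Theorem \ref{Thm:  Order continuous dual of proj sys in VLIC is ind of duals in NVLIC}: each $\vlat{L}^p(X_n)$ is Archimedean, and for $m\geq n$ the restriction map $r_{m,n}:\vlat{L}^p(X_m)\to \vlat{L}^p(X_n)$ is surjective, since any $u\in\vlat{L}^p(X_n)$ is the restriction of its zero-extension to $X_m$. As surjective lattice homomorphisms, the $r_{m,n}$ are moreover interval preserving by Proposition \ref{Prop:  Interval Preserving vs Lattice Homomorphism}~(ii), so $\cal{I}^p_{\Xi-\loc}$ is in fact an inverse system in $\nvlic$. Theorem \ref{Thm:  Order continuous dual of proj sys in VLIC is ind of duals in NVLIC} then yields a direct limit $(\vlat{G},(\tilde e_n)_{n\in\N})$ in $\nvlic$ of the order continuous dual system $\ordercontn{\cal{I}^p_{\Xi-\loc}}=((\ordercontn{\vlat{L}^p(X_n)})_{n\in\N},(r_{m,n}^\sim)_{m\geq n})$, together with a lattice isomorphism $S:\vlat{G}\to\ordercontn{\vlat{L}^p_{\Xi-\loc}(X)}$ satisfying $S\circ\tilde e_n=r_n^\sim$ for every $n\in\N$.

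Next I would transport $\ordercontn{\cal{I}^p_{\Xi-\loc}}$ onto $\cal{D}^q_{\Xi-c}$ via the lattice isomorphisms $S_n$. The key identity to verify, and the only real computation in the proof, is
\[
r_{m,n}^\sim\circ S_n \;=\; S_m\circ e_{n,m}\qquad(n\leq m),
\]
which follows directly from the fact that $e_{n,m}(u)$ vanishes off $X_n$: for $u\in\vlat{L}^q(X_n)$ and $v\in\vlat{L}^p(X_m)$,
\[
(r_{m,n}^\sim(S_n(u)))(v)=\int_{X_n}u\cdot r_{m,n}(v)\thinspace d\mu=\int_{X_m}e_{n,m}(u)\cdot v\thinspace d\mu=S_m(e_{n,m}(u))(v).
\]
Proposition \ref{Prop:  Morphisms between inductive limits} applied to the direct systems $\cal{D}^q_{\Xi-c}$ and $\ordercontn{\cal{I}^p_{\Xi-\loc}}$ with the above connecting isomorphisms $S_n$ then produces a unique lattice isomorphism $\Phi:\vlat{L}^q_{\Xi-c}(X)\to \vlat{G}$ with $\Phi\circ e_n=\tilde e_n\circ S_n$ for every $n\in\N$.

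Setting $Q\defeq S\circ\Phi$ gives a lattice isomorphism $\vlat{L}^q_{\Xi-c}(X)\to\ordercontn{\vlat{L}^p_{\Xi-\loc}(X)}$, and commutativity of the diagram in the theorem is immediate:
\[
Q\circ e_n=S\circ\Phi\circ e_n=S\circ\tilde e_n\circ S_n=r_n^\sim\circ S_n.
\]
Uniqueness of $Q$ is inherited from the uniqueness clause of Proposition \ref{Prop:  Morphisms between inductive limits}, ultimately a consequence of the universal property of the direct limit $\vlat{L}^q_{\Xi-c}(X)$. The only genuine obstacle, as indicated, is the intertwining identity $r_{m,n}^\sim\circ S_n=S_m\circ e_{n,m}$; everything else follows formally from the general results of Section \ref{Section:  Dual spaces} together with Propositions \ref{Prop:  Morphisms between inductive limits} and \ref{Prop:  Morphisms between projective limits}.
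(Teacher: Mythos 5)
Your proposal is correct and follows exactly the route the paper takes: its proof is a one-line citation of Examples \ref{Exm:  Locally supported p-summable functions as inductive limit} and \ref{Exm:  Lploc projective limit}, Theorem \ref{Thm:  Order continuous dual of proj sys in VLIC is ind of duals in NVLIC} (applicable because the restrictions $r_{m,n}$ are surjective) and Proposition \ref{Prop:  Morphisms between inductive limits}, which is precisely the chain you assemble. Your verification of the intertwining identity $r_{m,n}^\sim\circ S_n = S_m\circ e_{n,m}$ and of the interval-preserving property of the surjective $r_{m,n}$ simply makes explicit the details the paper leaves to the reader.
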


\begin{proof}
Because the mappings $p_{m,n}$ in Example \ref{Exm:  Lploc projective limit} are surjective for all $m\geq n$ in $\N$, the result follows immediately from Examples \ref{Exm:  Locally supported p-summable functions as inductive limit} and \ref{Exm:  Lploc projective limit}, Theorem \ref{Thm:  Order continuous dual of proj sys in VLIC is ind of duals in NVLIC} and Proposition \ref{Prop:  Morphisms between inductive limits}.
\end{proof}

The next two results are special cases of Theorems \ref{Thm:  Riesz Representation Theorem for C(X)} and \ref{Thm:  Order continuous functionals on C(X) are normal measures}, respectively.

\begin{thm}
Let $X$ be a locally compact and $\sigma$-compact Hausdorff space.  Let $\Gamma \defeq (X_n)$ be an increasing sequence (with respect to inclusion) of open precompact sets in $X$ so that $X = \displaystyle\bigcup X_n$.  For $n\in\N$ let $e_n$ and $r_n$ be as in Examples \ref{Exm:  Compactly supported measures as inductive limit} and \ref{Exm:  Continuous functions projective limit}, respectively.

For every $n\in \N$, let $T_n:\vlat{M}(\bar X_n)\to \cont(\bar X_n)^\sim$ denote the usual (isometric) lattice isomorphism,
\[
T_n(\mu)(u) = \int_{X_n} u\thinspace d\mu, ~~ \mu\in\vlat{M}(\bar X_n),~ u\in \cont(\bar X_n).
\]
There exists a unique lattice isomorphism $T:\vlat{M}_c(X)\to \cont(X)^\sim$ so that the diagram
\[
\begin{tikzcd}[cramped]
\vlat{M}(\bar X_n) \arrow[rr, "T_n"] \arrow[dd, "e_n"'] & & \cont(\bar X_n)^\sim \arrow[dd, "r_n^\sim"]\\
 & & \\
\vlat{M}_c(X) \arrow[rr, "T"']  & & \vlat{C}(X)^\sim
\end{tikzcd}
\]
commutes for every $n\in\N$.
\end{thm}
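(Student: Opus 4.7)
The plan is to derive this statement as a direct consequence of the duality result in Theorem \ref{Thm:  Dual of proj sys in VLIC is ind of duals in NVLIC}, combined with the inductive and projective representations of $\vlat{M}_c(X)$ and $\contX$ already recorded in Examples \ref{Exm:  Compactly supported measures as inductive limit} and \ref{Exm:  Continuous functions projective limit}.

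First, I would set $\cal{O} \defeq \{X_n : n\in \N\}$.  Since $X = \bigcup X_n$ is trivially dense and $\cont$-embedded in $X$, and since $X$ is locally compact Hausdorff, hence Tychonoff, with each $X_n$ precompact, parts (iii) and (iv) of Example \ref{Exm:  Continuous functions projective limit} show that $\cal{S}_\cal{O} = (\contX, (r_n))$ is the inverse limit in $\vlc$ of the $\nvlic$-system $\cal{I}_\cal{O} = ((\cont(\bar X_n))_{n\in\N}, (r_{m,n})_{m\geq n})$.  To apply Theorem \ref{Thm:  Dual of proj sys in VLIC is ind of duals in NVLIC} I still need the restrictions $r_{m,n}$ to be surjective for $m \geq n$; this is the one substantive topological input, and it follows from the Tietze extension theorem, since $\bar X_n$ is a closed subset of the compact Hausdorff space $\bar X_m$.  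Theorem \ref{Thm:  Dual of proj sys in VLIC is ind of duals in NVLIC} now produces a lattice isomorphism $\tilde T: \vlat{F} \to \orderdual{\contX}$, where $(\vlat{F}, (\varepsilon_n)_{n\in\N}) \defeq \ind \cal{I}_\cal{O}^\sim$ in $\nvlic$, and $\tilde T \circ \varepsilon_n = r_n^\sim$ for every $n\in\N$.

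Next, by Example \ref{Exm:  Compactly supported measures as inductive limit}, $(\vlat{M}_c(X), (e_n))$ is the direct limit in $\nvlic$ (equivalently, in $\vlc$, since the $e_{n,m}$ are injective) of the system $\cal{D}_\Gamma = ((\vlat{M}(\bar X_n))_{n\in\N}, (e_{n,m})_{n \leq m})$.  The family $(T_n)$ intertwines $\cal{D}_\Gamma$ with $\cal{I}_\cal{O}^\sim$: for $\mu \in \vlat{M}(\bar X_n)$ and $u \in \cont(\bar X_m)$, the definition of $e_{n,m}(\mu)$ combined with the change-of-variables formula for the inclusion $\bar X_n \hookrightarrow \bar X_m$ yields
\[
T_m(e_{n,m}(\mu))(u) = \int_{\bar X_m} u\thinspace d(e_{n,m}(\mu)) = \int_{\bar X_n} u|_{\bar X_n}\thinspace d\mu = r_{m,n}^\sim(T_n(\mu))(u).
\]
Since each $T_n$ is a lattice isomorphism, Proposition \ref{Prop:  Morphisms between inductive limits}~(ii) supplies a unique lattice isomorphism $T': \vlat{M}_c(X) \to \vlat{F}$ with $T' \circ e_n = \varepsilon_n \circ T_n$ for every $n\in\N$.

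Finally, I set $T \defeq \tilde T \circ T'$.  Chasing the two commuting squares gives $T \circ e_n = r_n^\sim \circ T_n$, and $T$ is a lattice isomorphism as a composition of lattice isomorphisms.  Uniqueness follows from the observation that every element of $\vlat{M}_c(X)$ lies in some $e_n[\vlat{M}(\bar X_n)]$ (any compactly supported Radon measure on $X$ has support inside some $\bar X_n$), so any two lattice homomorphisms agreeing with $r_n^\sim \circ T_n$ on every $e_n[\vlat{M}(\bar X_n)]$ must coincide on $\vlat{M}_c(X)$.  The main obstacle I anticipate is simply the Tietze-based verification that the $r_{m,n}$ are surjective, which is what licenses the application of Theorem \ref{Thm:  Dual of proj sys in VLIC is ind of duals in NVLIC}; the remaining steps are routine diagram chasing via the propositions assembled in this section.
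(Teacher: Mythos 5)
Your proposal is correct and follows essentially the same route as the paper, which derives the result by combining Examples \ref{Exm:  Compactly supported measures as inductive limit} and \ref{Exm:  Continuous functions projective limit} with Theorem \ref{Thm:  Dual of proj sys in VLIC is ind of duals in NVLIC} and Proposition \ref{Prop:  Morphisms between inductive limits}. The details you supply (Tietze for the surjectivity of the $r_{m,n}$, the intertwining computation, and the uniqueness argument) are exactly the steps the paper leaves implicit.
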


\begin{proof}
Recall that the $r_n$ are surjective.  The result follows immediately from Examples \ref{Exm:  Compactly supported measures as inductive limit} and \ref{Exm:  Continuous functions projective limit}, Theorem \ref{Thm:  Dual of proj sys in VLIC is ind of duals in NVLIC} and Proposition \ref{Prop:  Morphisms between inductive limits}.
\end{proof}

\begin{thm}
Let $X$ be a locally compact and $\sigma$-compact Hausdorff space.  Let $\Gamma \defeq (X_n)$ be an increasing sequence (with respect to inclusion) of open precompact sets in $X$ so that $X = \displaystyle\bigcup X_n$.  For $n\in\N$ let $e_n$ and $r_n$ be as in Examples \ref{Exm:  Compactly supported NORMAL measures as inductive limit} and \ref{Exm:  Continuous functions projective limit}, respectively.

For every $n\in \N$, let $S_n:\vlat{N}(\bar X_n)\to \ordercontn{\cont(\bar X_n)}$ denote the (isometric) lattice isomorphism,
\[
	S_n(\mu)(u) = \int_{X_n} u\thinspace d\mu, ~~\mu\in\vlat{N}(\bar X_n),~ u\in \cont(\bar X_n).
\]
There exists a unique lattice isomorphism $S:\vlat{N}_c(X)\to \ordercontn{\cont(X)}$ so that the diagram
\[
\begin{tikzcd}[cramped]
\vlat{N}(\bar X_n) \arrow[rr, "S_n"] \arrow[dd, "e_n"'] & & \ordercontn{\cont(\bar X_n)} \arrow[dd, "r_n^\sim"]\\
 & & \\
\vlat{N}_c(X) \arrow[rr, "S"']  & & \ordercontn{\cont(X)}
\end{tikzcd}
\]
commutes for every $n\in\N$.
\end{thm}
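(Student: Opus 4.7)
The plan is to replicate the pattern of the preceding theorems in this subsection, applying the appropriate duality result to the examples of Section \ref{Section:  Projective limits} and invoking Proposition \ref{Prop:  Morphisms between inductive limits} to transport the individual isomorphisms $S_n$ to an isomorphism at the limit.

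First I would note that, by Example \ref{Exm:  Continuous functions projective limit} (iv), $\cal{I}_\Gamma \defeq ((\cont(\bar X_n))_{n\in\N},(r_{m,n})_{m\geq n})$ is an inverse system in $\nvlic$ whose inverse limit in $\vlc$ is $(\cont(X),(r_n)_{n\in\N})$, and each $\cont(\bar X_n)$ is Archimedean (and even Dedekind complete). The crucial observation is that the restriction maps $r_{m,n}$ and $r_n$ are surjective: $\bar X_n$ is compact and contained in the compact Hausdorff space $\bar X_m$ (respectively, in the Tychonoff space $X$), hence $\cont$-embedded by Tietze's extension theorem (respectively, $\cont^\ast$-embedded, which suffices for the surjection onto $\cont(\bar X_n)$ since every $u\in\cont(\bar X_n)$ is bounded). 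With these surjectivity hypotheses in hand, Theorem \ref{Thm:  Order continuous dual of proj sys in VLIC is ind of duals in NVLIC} applies and yields a lattice isomorphism between $\ordercontn{\cont(X)}$ and the direct limit, in $\nvlic$, of the order continuous dual system $\ordercontn{\cal{I}_\Gamma} = ((\ordercontn{\cont(\bar X_n)})_{n\in\N},(r_{m,n}^\sim)_{m\geq n})$, with the $r_n^\sim$ as structure morphisms into the limit.

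Next I would identify this direct limit with $\vlat{N}_c(X)$. By Example \ref{Exm:  Compactly supported NORMAL measures as inductive limit}, $(\vlat{N}_c(X),(e_n)_{n\in\N})$ is the direct limit in $\nvlic$ of $\cal{E}_\Gamma = ((\vlat{N}(\bar X_n))_{n\in\N},(e_{n,m})_{n\leq m})$. Each $S_n:\vlat{N}(\bar X_n)\to\ordercontn{\cont(\bar X_n)}$ is a lattice isomorphism, so it remains only to verify that, for $n\leq m$, the diagram
\[
\begin{tikzcd}[cramped]
\vlat{N}(\bar X_n) \arrow[rr, "S_n"] \arrow[dd, "e_{n,m}"'] & & \ordercontn{\cont(\bar X_n)} \arrow[dd, "r_{m,n}^\sim"]\\
& & \\
\vlat{N}(\bar X_m) \arrow[rr, "S_m"']  & & \ordercontn{\cont(\bar X_m)}
\end{tikzcd}
\]
commutes. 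This is an immediate computation: for $\mu\in\vlat{N}(\bar X_n)$ and $u\in\cont(\bar X_m)$, both sides evaluate to $\int_{\bar X_n} u|_{\bar X_n}\,d\mu$ by the definition of $e_{n,m}$ as extension by zero.

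With the intertwining verified, Proposition \ref{Prop:  Morphisms between inductive limits} (ii) provides a unique lattice isomorphism $S:\vlat{N}_c(X)\to \ind{\ordercontn{\cal{I}_\Gamma}}$ compatible with the $S_n$. Composing with the isomorphism from Theorem \ref{Thm:  Order continuous dual of proj sys in VLIC is ind of duals in NVLIC} yields the desired lattice isomorphism $S:\vlat{N}_c(X)\to\ordercontn{\cont(X)}$ fitting into the stated diagram, and uniqueness follows from the universal property of the direct limit. The only non-routine verification is the surjectivity of the restriction maps; once this is in place, everything else is formal.
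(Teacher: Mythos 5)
Your proof is correct and follows essentially the same route as the paper: the paper's own argument simply cites Examples \ref{Exm:  Compactly supported NORMAL measures as inductive limit} and \ref{Exm:  Continuous functions projective limit}, Theorem \ref{Thm:  Order continuous dual of proj sys in VLIC is ind of duals in NVLIC} and Proposition \ref{Prop:  Morphisms between inductive limits}, and you supply exactly the verifications (surjectivity of the restriction maps via Tietze, and commutativity of the intertwining squares) that make that citation work.
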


\begin{proof}
The result follows immediately from Examples \ref{Exm:  Compactly supported NORMAL measures as inductive limit} and \ref{Exm:  Continuous functions projective limit}, Theorem \ref{Thm:  Order continuous dual of proj sys in VLIC is ind of duals in NVLIC} and Proposition \ref{Prop:  Morphisms between inductive limits}.
\end{proof}

\subsection{Perfect spaces}\label{Subsection:  Perfect spaces}

Recall that a vector lattice $\vlat{E}$ is \emph{perfect} if the canonical embedding $\vlat{E}\ni u\longmapsto \Psi_u \in \ordercontnbidual{\vlat{E}}$ is a lattice isomorphism \cite[p. 409]{Zaanen1983RSII}.  We say that a vector lattice $\vlat{E}$ is \emph{an order continuous dual}, or has an \emph{order continuous predual} if there exists a vector lattice $\vlat{F}$ so that $\vlat{E}$ and $\ordercontn{\vlat{F}}$ are isomorphic vector lattices. From the definition it is clear that every perfect vector lattice has an order continuous predual.  On the other hand, see \cite[Theorem 110.3]{Zaanen1983RSII}, $\ordercontn{\vlat{F}}$ is perfect for any vector lattice $\vlat{F}$.  Therefore, if $\vlat{E}$ has an order continuous predual then $\vlat{E}$ is perfect; that is, $\vlat{E}$ is perfect if and only if it has an order continuous predual.

This section is mainly concerned with obtaining a decomposition theorem for perfect vector lattices, i.e. for vector lattices with an order continuous predual, akin to Theorem \ref{Thm:  C(K) Dual space char order dual version}.  This result follows as an application of Example \ref{Exm:  Projective limits of bands} and the duality results in Section \ref{Section:  Dual spaces}.

\begin{lem}\label{Lem: Disjointification of order cts functionals}
Let $\vlat{E}$ be a vector lattice and $0 \leq \varphi, \psi \in \ordercontn{\vlat{E}}$. The following statements are true.
\begin{enumerate}
    \item[(i)] There exist functionals $0\leq \varphi_1,\psi_1 \in\ordercontn{\vlat{E}}$ so that $\varphi_1\wedge \psi_1=0$, $\varphi_1\leq \varphi$, $\psi_1\leq \psi$ and $\varphi\vee \psi = \varphi_1\vee \psi_1$.
    \item[(ii)] If $\vlat{E}$ has the principal projection property and $\varphi$ is strictly positive, then for all $u\in\vlat{E}$, if $\eta(u) = 0$ for all functionals $0 \leq \eta \leq \varphi$ then $u = 0$.
\end{enumerate}
\end{lem}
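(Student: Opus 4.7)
The plan for (i) is to disjointify via a band projection in the dual, chosen so that the supremum is preserved. Note the naive disjointification $\varphi_1=\varphi-\varphi\wedge\psi$, $\psi_1=\psi-\varphi\wedge\psi$ does give $\varphi_1\wedge\psi_1=0$, but then $\varphi_1\vee\psi_1=\varphi\vee\psi-\varphi\wedge\psi$, so the common part $\varphi\wedge\psi$ is lost. To avoid this, I would work inside $\ordercontn{\vlat{E}}$, which is Dedekind complete and therefore has the projection property. Let $\vlat{B}$ be the principal band in $\ordercontn{\vlat{E}}$ generated by $(\varphi-\psi)^+$, and let $P$ be the band projection onto $\vlat{B}$. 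Define
\[
\varphi_1 \defeq P\varphi, \qquad \psi_1 \defeq (I-P)\psi.
\]
Since band projections send $\ordercontn{\vlat{E}}$ into itself and satisfy $0\leq P\leq I$ by Proposition \ref{Prop: Properties of band projections} (ii), we get $0\leq\varphi_1\leq\varphi$ and $0\leq\psi_1\leq\psi$ in $\ordercontn{\vlat{E}}$, and $\varphi_1\wedge\psi_1=0$ because $\varphi_1\in\vlat{B}$ and $\psi_1\in\vlat{B}^d$.

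The main work is verifying $\varphi\vee\psi=\varphi_1\vee\psi_1$. By Proposition \ref{Prop: Properties of band projections} (i) the projections $P$ and $I-P$ are lattice homomorphisms, so
\[
\varphi\vee\psi \;=\; P(\varphi\vee\psi) + (I-P)(\varphi\vee\psi) \;=\; (P\varphi\vee P\psi) + ((I-P)\varphi\vee(I-P)\psi).
\]
Since $(\varphi-\psi)^-\in\vlat{B}^d$ we have $P(\varphi-\psi)=P(\varphi-\psi)^+\geq 0$, i.e.\ $P\varphi\geq P\psi$; dually $(I-P)(\varphi-\psi)=-(\varphi-\psi)^-\leq 0$, i.e.\ $(I-P)\psi\geq(I-P)\varphi$. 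Substituting these into the display gives $\varphi\vee\psi=P\varphi+(I-P)\psi=\varphi_1+\psi_1=\varphi_1\vee\psi_1$, the last equality using disjointness. The main obstacle is purely conceptual: identifying the correct band (the one generated by $(\varphi-\psi)^+$) so that the supremum is preserved rather than decreased by $\varphi\wedge\psi$.

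For (ii), I would argue contrapositively: assume $u\neq 0$ and produce $0\leq\eta\leq\varphi$ with $\eta(u)\neq 0$. Since $u\neq 0$ we have $u^+\neq 0$ or $u^-\neq 0$; without loss of generality suppose $u^+\neq 0$ (otherwise replace $u$ by $-u$ throughout, which is consistent because the hypothesis is symmetric in sign of $u$). By the principal projection property of $\vlat{E}$, let $Q$ be the band projection of $\vlat{E}$ onto the principal band generated by $u^+$. Then $Qu^+=u^+$, and $Qu^-=0$ because $u^-$ lies in the disjoint complement of $\{u^+\}$, hence $Qu=u^+$. Set $\eta\defeq\varphi\circ Q$. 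By Proposition \ref{Prop: Properties of band projections}, $Q$ is a positive operator with $Q\leq I_{\vlat{E}}$, so $\eta$ is a positive linear functional with $\eta\leq\varphi$; moreover $\eta$ is order continuous as a composition of order continuous maps. Finally,
\[
\eta(u) \;=\; \varphi(Qu) \;=\; \varphi(u^+) \;>\; 0
\]
by strict positivity of $\varphi$, giving the desired contradiction. No serious obstacle arises here; the proof hinges on exploiting the band projection property of $\vlat{E}$ to pull the positive part of $u$ out while killing the negative part.
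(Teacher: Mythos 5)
Your proof is correct. Part (ii) is essentially identical to the paper's argument: contrapositive, band generated by $u^{+}$ (a projection band by the principal projection property), $\eta \defeq \varphi\circ Q$, and strict positivity to conclude $\eta(u)=\varphi(u^{+})>0$. For part (i) the paper gives no argument at all, simply citing Meyer--Nieberg (Lemma 1.28(ii) and Exercise 1.2.E1); your disjointification via the band projection $P$ onto the band generated by $(\varphi-\psi)^{+}$ in the Dedekind complete lattice $\ordercontn{\vlat{E}}$ is the standard proof of that cited result, and every step checks out: $(\varphi-\psi)^{-}$ lies in the disjoint complement, so $P\varphi\geq P\psi$ and $(I-P)\psi\geq(I-P)\varphi$, whence $\varphi\vee\psi=P\varphi+(I-P)\psi=\varphi_1\vee\psi_1$ using disjointness of $\varphi_1$ and $\psi_1$. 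The only thing you leave implicit is why band projections exist in $\ordercontn{\vlat{E}}$, namely that $\ordercontn{\vlat{E}}$ is a band in the Dedekind complete lattice $\orderdual{\vlat{E}}$ and is therefore itself Dedekind complete; this is standard and does not constitute a gap.
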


\begin{proof}
The statement in (i) follows from \cite[Lemma 1.28 (ii) \& Exercise 1.2.E1]{Meyer-Nieberg1991}.

We prove the contrapositive of (ii).  Let $u\neq 0$ in $\vlat{E}$.  Without loss of generality assume that $u^+\neq 0$.  Denote by $\vlat{B}$ the band generated by $u^+$ in $\vlat{E}$.  Define $\eta \defeq \varphi \circ P_{\vlat{B}}$.  Then $\eta$ is order continuous, $0 \leq \eta \leq \varphi$ and $\eta(u) = \varphi(u^+) \neq 0$.
\end{proof}

\begin{thm}\label{Thm:  Projective limit of perfect spaces is perfect}
Let $\cal{I}\defeq \left( (\vlat{E}_\alpha)_{\alpha\in I}, (p_{\beta, \alpha})_{\beta \scc \alpha}\right)$ be an inverse system in $\nvlic$, and let $\cal{S}\defeq \left( \vlat{E}, (p_\alpha)_{\alpha\in I} \right)$ be its inverse limit in $\vlc$.  Assume that $p_{\beta , \alpha}$ is surjective for all $\beta \scc \alpha$ in $I$.  If $\vlat{E}_\alpha$ is perfect for every $\alpha\in I$ then so is $\vlat{E}$.
\end{thm}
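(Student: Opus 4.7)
The plan is to verify the three conditions in the classical Nakano characterisation of perfect vector lattices (see, for instance, \cite[Theorem 110.1]{Zaanen1983RSII}): namely, that an Archimedean vector lattice is perfect if and only if (a) it is Dedekind complete, (b) its order continuous dual separates its points, and (c) every subset of its positive cone that is bounded above pointwise on the positive cone of its order continuous dual is order bounded. Conditions (a) and (b) will be immediate permanence statements from earlier sections; condition (c) is the main point, and I will establish it by transferring the hypothesis on a given subset of $\vlat{E}^+$ through each order continuous projection $p_\alpha$ into $\vlat{E}_\alpha$, applying Nakano in each coordinate, and assembling the resulting coordinatewise suprema into a compatible family that lies in $\vlat{E}$.

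Since every perfect vector lattice is Dedekind complete, each $\vlat{E}_\alpha$ is Dedekind complete. By Theorem~\ref{Thm:  Projective limit permanence properties}~(ii), $\vlat{E}$ is then Dedekind complete, and by Theorem~\ref{Thm:  Existence of Projective Limit NVL}~(ii), $\cal{S}$ is in fact the inverse limit of $\cal{I}$ in $\nvlc$; in particular, every $p_\alpha$ is an order continuous lattice homomorphism. Since each $\vlat{E}_\alpha$ has separating order continuous dual by perfectness, Proposition~\ref{Prop:  Separating order dual of inverse limit}~(ii) then yields $\preann{\ordercontn{\vlat{E}}} = \{0\}$, so that (a) and (b) hold.

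For (c), let $D \subseteq \vlat{E}^+$ satisfy $\sup_{u\in D}\varphi(u) < \infty$ for every $0\leq \varphi \in \ordercontn{\vlat{E}}$, and fix $\alpha \in I$. For every $0 \leq \psi \in \ordercontn{(\vlat{E}_\alpha)}$, Theorem~\ref{Thm:  Adjoints of interval preserving vs lattice homomorphisms}~(i)--(ii) gives $p_\alpha^\sim(\psi) \in \ordercontn{\vlat{E}}^+$, and hence
\[
\sup_{u\in D} \psi(p_\alpha(u)) = \sup_{u\in D} p_\alpha^\sim(\psi)(u) < \infty.
\]
Applying Nakano's criterion in the perfect lattice $\vlat{E}_\alpha$ shows that $p_\alpha[D]$ is order bounded in $\vlat{E}_\alpha$, and by Dedekind completeness $w_\alpha \defeq \sup p_\alpha[D]$ exists in $\vlat{E}_\alpha$. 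For $\beta \scc \alpha$, the normality of $p_{\beta,\alpha}$ yields
\[
p_{\beta, \alpha}(w_\beta) = \sup_{u\in D} p_{\beta,\alpha}(p_\beta(u)) = \sup_{u\in D} p_\alpha(u) = w_\alpha,
\]
so $w \defeq (w_\alpha)_{\alpha \in I}$ lies in $\vlat{E}$. Since $p_\alpha(u) \leq w_\alpha$ for every $u \in D$ and every $\alpha \in I$, we conclude $u \leq w$ in $\vlat{E}$, which establishes (c) and completes the verification.

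The only non-routine ingredient is Nakano's characterisation of perfect vector lattices, which is a classical result. I do not foresee a genuine obstacle. It is worth noting that this argument does not appear to invoke the surjectivity of the $p_{\beta,\alpha}$; this suggests that the authors may prefer a different route based on the duality results of Section~\ref{Section:  Dual spaces} (in particular Theorem~\ref{Thm:  Characterisation order continuous dual of projective limit is inductive limit of duals}), in which the surjectivity hypothesis plays an essential role in identifying $\ordercontn{\vlat{E}}$ with $\ind{\ordercontn{\cal{I}}}$.
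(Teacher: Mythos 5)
Your route is genuinely different from the paper's. The paper never verifies a perfectness criterion directly: it forms the direct limit $\vlat{F}\defeq\ind{\ordercontn{\cal{I}}}$ in $\nvlic$ (this is exactly where the surjectivity of the $p_{\beta,\alpha}$ enters, via the injectivity of their adjoints and Theorem \ref{Thm:  Existence of Inductive Limits in NVLI}), identifies $\ordercontn{\vlat{F}}$ with the inverse limit of the bidual system by Theorem \ref{Thm:  Order continuous dual of ind sys in NRIP is proj of duals in NRiesz}, and then uses Proposition \ref{Prop:  Morphisms between projective limits} together with the coordinate isomorphisms $\sigma_\alpha:\vlat{E}_\alpha\to\ordercontnbidual{(\vlat{E}_\alpha)}$ to produce a lattice isomorphism $\vlat{E}\cong\ordercontn{\vlat{F}}$; perfectness follows because every order continuous dual is perfect. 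Your coordinatewise verification of Nakano's criterion is more elementary, and, as you observe, it nowhere uses surjectivity of the $p_{\beta,\alpha}$, so once repaired it in fact proves a slightly stronger statement than the one in the paper. Your verification of conditions (a) and (b) is correct as written.

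There is, however, a concrete error in your formulation of the Nakano criterion, and it infects the one step where you apply it. Condition (c) as you state it --- every subset of $\vlat{E}^+$ that is pointwise bounded on $(\ordercontn{\vlat{E}})^+$ is order bounded --- is \emph{not} a consequence of perfectness. In the perfect space $\vlat{L}^1[0,1]$ the set $D=\{n\onefunction_{[0,1/n]} : n\in\N\}$ satisfies $\int n\onefunction_{[0,1/n]}\,g\,dx\leq\|g\|_\infty<\infty$ for every $0\leq g\in\vlat{L}^\infty[0,1]$, yet any upper bound of $D$ would have to dominate a function behaving like $1/x$ near $0$ and so cannot lie in $\vlat{L}^1[0,1]$. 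The criterion in \cite[Theorem~110.1]{Zaanen1983RSII} concerns \emph{increasing nets} (equivalently, upward directed sets), and its conclusion is the existence of the supremum. Consequently the step ``applying Nakano's criterion in the perfect lattice $\vlat{E}_\alpha$ shows that $p_\alpha[D]$ is order bounded'' is unjustified for an arbitrary $D$. The repair is immediate: to prove $\vlat{E}$ perfect you only need the criterion for upward directed $D\subseteq\vlat{E}^+$, and then each $p_\alpha[D]$ is again upward directed because $p_\alpha$ is a lattice homomorphism, so Nakano applies in $\vlat{E}_\alpha$ and yields $w_\alpha=\sup p_\alpha[D]$. The rest of your argument --- compatibility of the $w_\alpha$ via the normality of the $p_{\beta,\alpha}$, membership of $w=(w_\alpha)$ in $\vlat{E}$ by Theorem \ref{Thm:  Existence of Projective Limit NVL}~(i), and $\sup D=w$ in $\vlat{E}$ --- then goes through verbatim.
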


\begin{proof}
By Proposition \ref{Prop:  Dual system of projective system is inductive system} the pair $\ordercontn{\cal{I}}\defeq \left( (\ordercontn{(\vlat{E}_\alpha)})_{\alpha\in I}, (p_{\beta, \alpha}^\sim)_{\alpha\precc \beta}\right)$ is a direct system in $\nvlic$.  Because every $p_{\beta,\alpha}$ is surjective, each $p_{\beta,\alpha}^\sim$ is injective.  Hence, by Theorem \ref{Thm:  Existence of Inductive Limits in NVLI}, the direct limit of $\ordercontn{\cal{I}}$ exists in $\nvlic$.  Let $\cal{T}\defeq \left( \vlat{F},(e_\alpha)_{\alpha \in I}\right)$ be the direct limit of $\ordercontn{\cal{I}}$ in $\nvlic$.

By Proposition \ref{Prop:  Dual system of inductive system is projective system} the pair $\ordercontnbidual{\cal{I}}\defeq \left( (\ordercontnbidual{(\vlat{E}_\alpha)})_{\alpha\in I}, (p_{\beta, \alpha}^{\sim \sim})_{\alpha\precc \beta}\right)$ is an inverse system in $\nvlic$, and $\ordercontn{\cal{T}}\defeq \left( \ordercontn{\vlat{F}},(e_\alpha^\sim)_{\alpha \in I}\right)$ is the inverse limit of $\ordercontnbidual{\cal{I}}$ in $\nvlc$ by Theorem \ref{Thm:  Order continuous dual of ind sys in NRIP is proj of duals in NRiesz}.  For every $\alpha\in I$, let $\sigma_\alpha: \vlat{E}_\alpha \to \ordercontnbidual{(\vlat{E}_\alpha)}$ denote the canonical lattice isomorphism.  We observe that the diagram
\[
\begin{tikzcd}[cramped]
\vlat{E}_\beta \arrow[rr, "\sigma_\beta"] \arrow[dd, "p_{\beta, \alpha}"'] & & \ordercontnbidual{(\vlat{E}_\beta)} \arrow[dd, "p^{\sim\sim}_{\beta, \alpha}"]\\
 & & \\
\vlat{E}_\alpha \arrow[rr, "\sigma_\alpha"']  & & \ordercontnbidual{(\vlat{E}_\alpha)}
\end{tikzcd}
\]
commutes for all $\beta\scc\alpha$ in $I$.  By Proposition \ref{Prop:  Morphisms between projective limits}, there exists a unique lattice isomorphism $\Sigma:\vlat{E}\to \ordercontn{\vlat{F}}$ so that the diagram
\[
\begin{tikzcd}[cramped]
\vlat{E} \arrow[rr, "\Sigma"] \arrow[dd, "p_{\alpha}"'] & & \ordercontn{\vlat{F}} \arrow[dd, "e^{\sim}_{\alpha}"]\\
 & & \\
\vlat{E}_\alpha \arrow[rr, "\sigma_\alpha"']  & & \ordercontnbidual{(\vlat{E}_\alpha)}
\end{tikzcd}
\]
commutes for every $\alpha \in I$.  Since $\ordercontn{\vlat{F}}$ is perfect, we conclude that $\vlat{E}$ is also perfect.
\end{proof}

We now come to the main results of this section, namely, decomposition theorems for perfect vector lattices.  Recall the terminology and notation introduced in Example \ref{Exm:  Projective limits of bands}.

\begin{thm}\label{Thm:  Perfect spaces as projective limits of carriers}
Let $\vlat{E}$ be a Dedekind complete vector lattice.  Let $\vlat{M}_{\mathrm n}\subseteq \bands{\vlat{E}}$ consist of the carriers of all positive, order continuous functionals on $\vlat{E}$; that is,
\[
\vlat{M}_{\mathrm n} \defeq \{\carrier{\varphi} ~:~ 0\leq \varphi\in\ordercontn{\vlat{E}}\}.
\]
For $\carrier{\varphi} \subseteq \carrier{\psi}$ in $\vlat{M}_{\mathrm n}$, denote by $P_{\varphi}$ the band projection of $\vlat{E}$ onto $\carrier{\varphi}$ and by $P_{\psi , \varphi}$ the band projection of $\carrier{\psi}$ onto $\carrier{\varphi}$.  The following statements are true.
\begin{enumerate}
    \item[(i)] $\vlat{M}_{\mathrm n}$ is an ideal in $\bands{\vlat{E}}$.
    \item[(ii)] $\vlat{M}_{\mathrm n}$ is a non-trivial ideal in $\bands{\vlat{E}}$ if and only if $\vlat{E}$ admits a non-zero order continuous functional.
    \item[(iii)] $\vlat{M}_{\mathrm n}$ is a proper ideal in $\bands{\vlat{E}}$ if and only if $\vlat{E}$ does not admit a strictly positive order continuous functional.
    \item[(iv)] $P_{\vlat{M}_{\mathrm n}}$ is injective if and only if $\preann{\ordercontn{\vlat{E}}}=\{0\}$.
    \item[(v)] If $\vlat{E}$ is perfect then $P_{\vlat{M}_{\mathrm n}}$ is a lattice isomorphism.
\end{enumerate}
\end{thm}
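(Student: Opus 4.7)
For (i), I plan to establish that $\vlat{M}_{\mathrm n}$ is both downward closed and closed under pairwise joins in $\bands{\vlat{E}}$. Given a projection band $\vlat{B}\subseteq \carrier{\varphi}$ with $0\leq \varphi \in \ordercontn{\vlat{E}}$, I will show that $\psi\defeq \varphi\circ P_{\vlat{B}}$ (positive and order continuous by Proposition \ref{Prop: Properties of band projections}) satisfies $\carrier{\psi}=\vlat{B}$: the inclusion $\carrier{\psi}\subseteq \vlat{B}$ follows because $\psi$ annihilates $\vlat{B}^d$, while $\vlat{B}\subseteq \carrier{\psi}$ follows by observing that any $v\in \nullid{\psi}$ has $P_{\vlat{B}}|v|\in \vlat{B}\cap \nullid{\varphi}\subseteq \carrier{\varphi}\cap \nullid{\varphi}=\{0\}$, so $|v|\in \vlat{B}^d$. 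For closure under joins, I will compute $\nullid{\varphi+\psi}=\nullid{\varphi}\cap \nullid{\psi}$ for positive $\varphi,\psi$ and deduce $\carrier{\varphi+\psi}=\carrier{\varphi}\vee \carrier{\psi}$ via the identity $(A\cap B)^d=A^d\vee B^d$ for bands $A,B$. Statements (ii) and (iii) then follow from the basic observations that $\carrier{\varphi}=\{0\}$ iff $\varphi=0$, and $\carrier{\varphi}=\vlat{E}$ iff $\nullid{\varphi}=\{0\}$, that is, $\varphi$ is strictly positive.

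For (iv), by Example \ref{Exm:  Projective limits of bands} (iv) it suffices to show that the family $\{P_\varphi : 0\leq \varphi \in \ordercontn{\vlat{E}}\}$ separates points of $\vlat{E}$ iff $\preann{\ordercontn{\vlat{E}}}=\{0\}$. One direction is immediate: for $u\neq 0$, any $\psi \in \ordercontn{\vlat{E}}^+$ with $\psi(u)\neq 0$ gives $P_\psi u\neq 0$ since $\psi(u)=\psi(P_\psi u)$. For the converse, suppose $P_\varphi u\neq 0$ for some $\varphi$. The band $\carrier{\varphi}$ is Dedekind complete, hence has the principal projection property, and $\varphi|_{\carrier{\varphi}}$ is strictly positive on it. Lemma \ref{Lem: Disjointification of order cts functionals} (ii) applied in $\carrier{\varphi}$ then produces an order continuous $0\leq \eta\leq \varphi|_{\carrier{\varphi}}$ with $\eta(P_\varphi u)\neq 0$, and $\tilde\eta\defeq \eta\circ P_\varphi \in \ordercontn{\vlat{E}}$ satisfies $\tilde\eta(u)\neq 0$.

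For (v), perfectness of $\vlat{E}$ forces $\preann{\ordercontn{\vlat{E}}}=\{0\}$ (via injectivity of the canonical embedding), so by (iv) together with Example \ref{Exm:  Projective limits of bands} (iii) the map $P_{\vlat{M}_{\mathrm n}}$ is a lattice isomorphism of $\vlat{E}$ onto an order dense ideal in $\proj{\cal{I}_{\vlat{M}_{\mathrm n}}}$. The remaining task---and the main obstacle---is surjectivity. Given $(u_\varphi)\in \proj{\cal{I}_{\vlat{M}_{\mathrm n}}}^+$, I plan to define $\hat u$ on $\ordercontn{\vlat{E}}^+$ by $\hat u(\varphi)\defeq \varphi(u_\varphi)$ and extend linearly; additivity on the positive cone follows from $\carrier{\varphi+\psi}=\carrier{\varphi}\vee \carrier{\psi}$ established in (i), combined with the identity $\varphi(u_{\varphi+\psi})=\varphi(P_\varphi u_{\varphi+\psi})=\varphi(u_\varphi)$ coming from the compatibility of the tuple, and its analogue for $\psi$. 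For order continuity, given $\varphi_\alpha\downarrow 0$ in $\ordercontn{\vlat{E}}^+$, I will fix some $\alpha_0$ and exploit that $\varphi_\alpha\leq \varphi_{\alpha_0}$ for $\alpha\geq \alpha_0$ implies $\carrier{\varphi_\alpha}\subseteq \carrier{\varphi_{\alpha_0}}$, whence $0\leq u_{\varphi_\alpha}=P_{\varphi_\alpha}u_{\varphi_{\alpha_0}}\leq u_{\varphi_{\alpha_0}}$; this dominates $\hat u(\varphi_\alpha)\leq \varphi_\alpha(u_{\varphi_{\alpha_0}})\downarrow 0$, so $\hat u\in \ordercontnbidual{\vlat{E}}$. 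Perfectness now produces a unique $u\in \vlat{E}$ with $\varphi(u)=\varphi(u_\varphi)$ for all $0\leq \varphi \in \ordercontn{\vlat{E}}$. Finally, to conclude $P_\psi u = u_\psi$ I will verify, for every $0\leq \eta \in \ordercontn{\vlat{E}}$ with $\carrier{\eta}\subseteq \carrier{\psi}$, the chain $\eta(P_\psi u)=\eta(u)=\eta(u_\eta)=\eta(u_\psi)$, and observe that such $\eta$ correspond precisely to order continuous positive functionals on the Dedekind complete lattice $\carrier{\psi}$, which separate points of $\carrier{\psi}$ by Lemma \ref{Lem: Disjointification of order cts functionals} (ii) applied to the strictly positive $\psi|_{\carrier{\psi}}$; this forces $P_\psi u=u_\psi$ and completes the argument.
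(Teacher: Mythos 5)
Your proposal is correct and follows essentially the same route as the paper's proof: the same $\varphi\circ P_{\vlat{B}}$ construction for downward closedness in (i), the same reduction of (iv) to point separation via Example \ref{Exm:  Projective limits of bands}, and for (v) the same Kantorovich extension of $\varphi\mapsto\varphi(u_\varphi)$, the same domination argument $\hat u(\varphi_\alpha)\leq\varphi_\alpha(u_{\varphi_{\alpha_0}})$ for order continuity, and the same appeal to Lemma \ref{Lem: Disjointification of order cts functionals}~(ii) on $\carrier{\psi}$ to conclude $P_\psi u=u_\psi$. The only (immaterial) deviations are your use of $\carrier{\varphi+\psi}$ in place of the paper's $\carrier{\varphi\vee\psi}$ for upward directedness, and your use of the disjointification lemma rather than strict positivity of $\varphi$ on its carrier in the converse direction of (iv).
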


\begin{proof}[Proof of (i)]
For $0\leq \psi,\varphi\in\ordercontn{\vlat{E}}$, we have $\carrier{\psi},\carrier{\varphi}\subseteq \carrier{\varphi\vee \psi}\in \vlat{M}_{\mathrm n}$ and therefore $\vlat{M}_{\mathrm n}$ is upwards directed.

Let $\vlat{B}\in\bands{\vlat{E}}$ and $0\leq \varphi\in\ordercontn{\vlat{E}}$ such that $\vlat{B}\subseteq \carrier{\varphi}$.  Define $\psi \defeq \varphi \circ P_{\vlat{B}}$.  Then $\psi \geq 0$ and by the order continuity of band projections, $\psi\in \ordercontn{\vlat{E}}$.  We show that $\nullid{\psi} = \vlat{B}^d$.  For $u\in \vlat{B}^d$, $P_\vlat{B}(|u|) = 0$ so that $\psi(|u|) = \varphi\left( P_{\vlat{B}}(|u|) \right) = 0$.  Therefore $\vlat{B}^d\subseteq \nullid{\psi}$.  For the reverse inclusion, let $v\in \nullid{\psi}$.  Then $\varphi\left( P_{\vlat{B}}(|v|) \right) = 0$ so that $P_{\vlat{B}}(|v|) \in \nullid{\varphi} \subseteq \vlat{B}^d$.  Hence $P_{\vlat{B}}(|v|)=0$ so that $v\in \vlat{B}^d$.  We conclude that $\vlat{B}=\carrier{\psi}$.  Therefore $\vlat{B}\in\vlat{M}_{\mathrm n}$ so that $\vlat{M}_{\mathrm n}$ is downward closed, hence an ideal in $\bands{\vlat{E}}$.
\end{proof}

\begin{proof}[Proof of (ii)]
This is clear.
\end{proof}

\begin{proof}[Proof of (iii)]
A functional $0\leq \varphi\in\ordercontn{\vlat{E}}$ is strictly positive if and only if $\nullid{\varphi}=\{0\}$, if and only if $\carrier{\varphi} = \vlat{E}$; hence the result follows.
\end{proof}

\begin{proof}[Proof of (iv)]
According to Example \ref{Exm:  Projective limits of bands} (iii),  $P_{\vlat{M}_{\mathrm n}}$ is injective if and only if $\{P_\varphi ~:~ 0\leq \varphi\in \ordercontn{\vlat{E}}\}$ separates the points of $\vlat{E}$.  It therefore suffices to prove that $\preann{\ordercontn{\vlat{E}}}=\{0\}$ if and only if $\{P_{\varphi} : 0\leq \varphi\in\ordercontn{\vlat{E}}\}$ separates the points of $\vlat{E}$.

Assume that $\preann{\ordercontn{\vlat{E}}}=\{0\}$.  Fix $u\in \vlat{E}$ with $u\neq 0$.  Then there exists $\varphi\in \ordercontn{\vlat{E}}$ such that $\varphi(u) \neq 0$.  Therefore $0 < |\varphi(u)| \leq |\varphi|(|u|)$.  Hence $u\not\in \nullid{|\varphi|}$ and thus $P_{|\varphi|}(u) \neq 0$.

Conversely, assume that $\{P_{\varphi} : 0\leq \varphi\in\ordercontn{\vlat{E}}\}$ separates the points of $\vlat{E}$.   Let $0<v\in \vlat{E}^+$.  There exists $0 \leq \varphi \in \ordercontn{\vlat{E}}$ such that $P_{\varphi}(v) > 0$.  Since every positive functional is strictly positive on its carrier, it follows that $\varphi(v) \geq \varphi\left( P_{\varphi}(v) \right) > 0$.  Now consider any non-zero $w\in \vlat{E}$.  There exists $0 \leq \varphi \in \ordercontn{\vlat{E}}$ such that $\varphi(w^+) \neq 0$.  Let $B$ denote the band generated by $w^+$ in $\vlat{E}$ and define the functional $\psi \defeq \varphi\circ P_{B}$.  Then $0\leq \psi\in \ordercontn{\vlat{E}}$ and  $\psi(w) = \varphi(w^+) \neq 0$.
\end{proof}

\begin{proof}[Proof of (v)]
It follows from Example \ref{Exm:  Projective limits of bands} (ii) that $P_{\vlat{M}_{\mathrm n}}$ is a lattice homomorphism.  Since $\vlat{E}$ is perfect, $\preann{\ordercontn{\vlat{E}}}=\{0\}$ by \cite[Theorem 110.1]{Zaanen1983RSII} and so by (iv), $P_{\vlat{M}_{\mathrm n}}$ is injective.  We show that $P_{\vlat{M}_{\mathrm n}}$ is surjective.

Let $0\leq u = \left( u_\varphi \right) \in \proj{\cal{I}_{\vlat{M}_{\mathrm n}}}$.  Define the map $\Upsilon: (\ordercontn{\vlat{E}})^+ \to \R$ by setting $\Upsilon\left(\varphi\right) \defeq \varphi\left( u_{\varphi} \right)$ for every $\varphi \in (\ordercontn{\vlat{E}})^+$.  We claim that $\Upsilon$ is additive.  Let $0 \leq \varphi, \psi \in \ordercontn{\vlat{E}}$.  Then
\[
\begin{array}{lll}
\Upsilon\left( \varphi + \psi \right) & = & \left(\varphi + \psi\right)\left( u_{\varphi + \psi} \right) \smallskip\\
& = & \varphi\left( u_{\varphi + \psi} \right) + \psi\left( u_{\varphi + \psi} \right)\smallskip\\
& = &\varphi\circ P_{\varphi}\left(u_{\varphi + \psi} \right) + \psi\circ P_{\psi}\left( u_{\varphi + \psi} \right).
\end{array}
\]
Because $(u_{\varphi})\in \proj{\cal{I}_{\vlat{M}_{\mathrm n}}}$, $u_{\varphi+\psi}\in \carrier{\varphi+\psi}$ so that $P_{\varphi}\left(u_{\varphi + \psi} \right)= P_{\varphi+\psi , \varphi}\left(u_{\varphi + \psi} \right) = u_\varphi$ and $P_{\psi}\left(u_{\varphi + \psi} \right) = P_{\varphi+\psi , \psi}\left(u_{\varphi + \psi} \right) = u_\psi$.  Hence
\[
\Upsilon\left( \varphi + \psi \right) = \varphi\left( u_\varphi \right) + \psi\left( u_\psi \right) = \Upsilon\left( \varphi\right) + \Upsilon\left( \psi \right).
\]
By \cite[Theorem 1.10]{AliprantisBorder2006} $\Upsilon$ extends to a positive linear functional on $\ordercontn{\vlat{E}}$, which we denote by $\Upsilon$ as well.

We claim that $\Upsilon$ is order continuous.  To see this, consider any $D\downarrow 0$ in $\ordercontn{\vlat{E}}$.  Fix $\epsilon > 0$ and $\varphi\in D$.  By \cite[Theorem 1.18]{AliprantisBurkinshaw2006} there exists $\psi_0\leq \varphi$ in $D$ so that $0\leq \psi(u_{\varphi})<\epsilon$ for all $\psi\leq \psi_0$ in $D$.  Consider $\psi\leq \psi_0$.  Since $u\in \proj{\cal{I}_{\vlat{M}_{\mathrm n}}}$ we have $u_\psi = P_{\varphi,\psi} (u_\varphi)\leq u_\varphi$ so that $0\leq \psi(u_\psi) \leq \psi(u_{\varphi})<\epsilon$; that is, $0\leq \Upsilon(\psi)<\epsilon$ for all $\psi\leq \psi_0$.  Therefore $\Upsilon[D]\downarrow 0$ in $\R$ so that $\Upsilon$ is order continuous, as claimed.

Since $\vlat{E}$ is perfect, there exists $v\in \vlat{E}^+$ so that $\Upsilon\left( \varphi \right) = \varphi\left( v \right)$ for all $\varphi\in \ordercontn{\vlat{E}}$.  We claim that $P_{\vlat{M}_{\mathrm n}}(v)=u$; that is, $P_{\varphi}(v)=u_\varphi$ for every $0\leq \varphi\in \ordercontn{\vlat{E}}$.  For each $0 \leq \varphi\in \ordercontn{\vlat{E}}$ we have $\varphi(u_\varphi) =  \Upsilon\left(\varphi\right) = \varphi(v) = \varphi\left( P_{\varphi}(v) \right)$.  Let $0 \leq \eta \leq \varphi$ in $\ordercontn{\vlat{E}}$.  Then
\[
	\eta\left( u_\varphi \right) = \eta\left( P_{\eta}(u_\varphi) \right) = \eta(P_{\varphi,\eta}(u_\varphi)) = \eta\left( u_\eta \right) = \Upsilon(\eta) = \eta(v),
\]
and,
\[
\eta\left( P_{\varphi} (v) \right) = \eta\left( P_{\eta}P_{\varphi} (v) \right) = \eta\left( P_{\eta}(v) \right) = \eta(v).
\]
Thus $\eta\left( u_\varphi- P_{\varphi } (v) \right)=0$.  By Lemma \ref{Lem: Disjointification of order cts functionals} (ii), applied on $C_\varphi$, we conclude that $P_{\varphi}(v) = u_\varphi$.  This verifies our claim.  Therefore $P_{\vlat{M}_{\mathrm n}}$ maps $\vlat{E}^+$ onto $\left(\proj{\cal{I}_{\vlat{M}_{\mathrm n}}}\right)^+$ which shows that $P_{\vlat{M}_{\mathrm n}}$ is surjective.
\end{proof}

\begin{remark}\label{Remark:  Inverse limit of carriers not always perfect}
We observe that the converse of Theorem \ref{Thm:  Perfect spaces as projective limits of carriers} (v) is false.  Indeed, $\ordercontnbidual{({\mathrm c}_0)}=\ell^\infty$ so that ${\mathrm c}_0$ is not perfect.  However, there exists a strictly positive functional $\varphi\in \ordercontn{({\mathrm c}_0)}$.  Therefore ${\mathrm c}_0 = \carrier{\varphi}\in \vlat{M}_{\mathrm n}$ so that $P_{\vlat{M}_{\mathrm n}}$ maps ${\mathrm c}_0$ lattice isomorphically onto $\proj{\cal{I}_{\vlat{M}_{\mathrm n}}}$, see Remark \ref{Remark:  Vector lattice as projective limit of bands}.
\end{remark}

\begin{cor}\label{Cor:  Perfect spaces as inverse limit of perfect carriers}
Let $\vlat{E}$ be a Dedekind complete vector lattice.  Let $\vlat{M}_{p}\subseteq \bands{\vlat{E}}$ consist of the carriers of all positive, order continuous functionals on $\vlat{E}$ which are perfect; that is,
\[
\vlat{M}_{p} \defeq \{\carrier{\varphi} ~:~ 0\leq \varphi\in\ordercontn{\vlat{E}} \text{ and } \carrier{\varphi} \text{ is perfect}\}.
\]
The following statements are true. \begin{enumerate}
    \item[(i)] $\vlat{M}_{p}$ is an ideal in $\bands{\vlat{E}}$.
    \item[(ii)] $P_{\vlat{M}_p}$ is a lattice isomorphism if and only if $\vlat{E}$ is perfect.
\end{enumerate}
\end{cor}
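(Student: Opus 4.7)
The plan is to reduce this corollary to the preceding Theorem \ref{Thm:  Perfect spaces as projective limits of carriers} and Theorem \ref{Thm:  Projective limit of perfect spaces is perfect} by exploiting the standard stability results for perfect spaces, namely that every band in a perfect vector lattice is perfect and that a finite direct sum of perfect vector lattices is perfect (the latter being immediate from Theorem \ref{Thm:  Properties of product of vector lattices.} (v) and the definition of perfectness).

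For (i), I will observe that $\vlat{M}_p\subseteq \vlat{M}_{\mathrm n}$ and exploit that $\vlat{M}_{\mathrm n}$ is already known to be an ideal in $\bands{\vlat{E}}$ by Theorem \ref{Thm:  Perfect spaces as projective limits of carriers} (i). For downward closure, given $\carrier{\varphi}\in \vlat{M}_p$ and a projection band $\vlat{B}\subseteq \carrier{\varphi}$, the proof of Theorem \ref{Thm:  Perfect spaces as projective limits of carriers} (i) produces a $\psi\in\ordercontn{\vlat{E}}^+$ with $\carrier{\psi}=\vlat{B}$; since $\vlat{B}$ is a band in the perfect space $\carrier{\varphi}$, it is itself perfect, hence $\vlat{B}\in \vlat{M}_p$. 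For upward directedness, given $\carrier{\varphi},\carrier{\psi}\in\vlat{M}_p$, I take $\chi\defeq \varphi+\psi$, so that $\nullid{\chi}=\nullid{\varphi}\cap\nullid{\psi}$ and consequently $\carrier{\chi}=\carrier{\varphi}\vee\carrier{\psi}$. Using the band decomposition $\carrier{\chi}=\carrier{\varphi}\oplus (\carrier{\psi}\cap \carrier{\varphi}^d)$, which yields a lattice isomorphism $\carrier{\chi}\cong \carrier{\varphi}\times(\carrier{\psi}\cap \carrier{\varphi}^d)$, I conclude that $\carrier{\chi}$ is a product of two perfect spaces (the second factor being a band in the perfect space $\carrier{\psi}$) and therefore perfect.

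For (ii), the ($\Leftarrow$) direction is the easier one: if $\vlat{E}$ is perfect, then every carrier $\carrier{\varphi}$ of a positive order continuous functional is a band in $\vlat{E}$ and hence perfect, so $\vlat{M}_p = \vlat{M}_{\mathrm n}$ and the two inverse systems $\cal{I}_{\vlat{M}_p}$ and $\cal{I}_{\vlat{M}_{\mathrm n}}$ coincide; the conclusion then follows verbatim from Theorem \ref{Thm:  Perfect spaces as projective limits of carriers} (v). For ($\Rightarrow$), I will apply Theorem \ref{Thm:  Projective limit of perfect spaces is perfect} to $\cal{I}_{\vlat{M}_p}$: by construction each component of the system is perfect, and the connecting morphisms $P_{\psi,\varphi}$ are band projections, hence normal interval preserving lattice homomorphisms by Proposition \ref{Prop: Properties of band projections}, and surjective because $P_{\psi,\varphi}$ restricts to the identity on $\carrier{\varphi}\subseteq \carrier{\psi}$. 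Consequently $\proj{\cal{I}_{\vlat{M}_p}}$ is perfect, and if $P_{\vlat{M}_p}$ is a lattice isomorphism onto this inverse limit then $\vlat{E}$ is perfect as well.

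The only non-routine step is the upward directedness in (i), and even there the difficulty is entirely bookkeeping: one must recognise the band decomposition of $\carrier{\varphi}\vee\carrier{\psi}$ as a product of perfect spaces. I expect no serious obstacle, since every ingredient (perfectness of bands in perfect spaces, perfectness of finite products, the characterisation of $\carrier{\varphi+\psi}$, and the two cited theorems) is already either established earlier in the paper or is a well-known fact about perfect vector lattices recorded in \cite{Zaanen1983RSII}.
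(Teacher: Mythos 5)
Your proposal is correct and follows essentially the same route as the paper: downward closure of $\vlat{M}_p$ from the perfectness of bands in perfect spaces together with Theorem \ref{Thm:  Perfect spaces as projective limits of carriers} (i), upward directedness by exhibiting $\carrier{\varphi\vee\psi}$ as an order direct sum of two disjoint perfect bands and invoking Theorem \ref{Thm:  Properties of product of vector lattices.} (v) and (vii), and part (ii) exactly as you describe via Theorem \ref{Thm:  Perfect spaces as projective limits of carriers} (v) and Theorem \ref{Thm:  Projective limit of perfect spaces is perfect}. The only (cosmetic) difference is in the directedness step: the paper first disjointifies the functionals using Lemma \ref{Lem: Disjointification of order cts functionals} (i) to get $\varphi_1\perp\psi_1$ with $\carrier{\varphi_1\vee\psi_1}=\carrier{\varphi\vee\psi}$, whereas you disjointify the bands directly via $\carrier{\varphi+\psi}=\carrier{\varphi}\oplus\left(\carrier{\psi}\cap\carrier{\varphi}^d\right)$, an identity that does hold (any element of $(\carrier{\varphi}+\carrier{\psi})^{dd}$ disjoint from both $\carrier{\varphi}$ and $\carrier{\psi}$ is zero), so either version works.
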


\begin{proof}[Proof of (i)]
It follows from Theorem \ref{Thm:  Perfect spaces as projective limits of carriers} (i) and the fact that bands in a perfect vector lattice are themselves perfect that $\vlat{M}_{p}$ is downwards closed in $\bands{\vlat{E}}$.  To see that $\vlat{M}_{p}$ is upwards directed, fix $C_\varphi,C_\psi\in\vlat{M}_p$.  By Lemma \ref{Lem: Disjointification of order cts functionals} (i) there exist functionals $0 \leq \varphi_1\leq \varphi$ and $0\leq \psi_1 \leq \psi$ in $\ordercontn{\vlat{E}}$ such that $\varphi_1 \wedge \psi_1=0$ and $\varphi_1 \vee \psi_1 = \varphi \vee \psi$.  Because $0\leq \varphi_1\leq \varphi$ and $0\leq \psi_1\leq \psi$ it follows that $\carrier{\varphi_1}\subseteq \carrier{\varphi}$ and $\carrier{\psi_1}\subseteq \carrier{\psi}$.  Therefore $\carrier{\varphi_1}$ and $\carrier{\psi_1}$ are perfect.  By \cite[Theorem 90.7]{Zaanen1983RSII} we have
\[
\carrier{\varphi_1 \vee \psi_1} = \left( \carrier{\varphi_1} + \carrier{\psi_1} \right)^{dd} = \carrier{\varphi_1} + \carrier{\psi_1}.
\]
By \cite[Theorem 90.6]{Zaanen1983RSII}, since $\varphi_1 \wedge \psi_1 = 0$, we have $\carrier{\varphi_1} \perp \carrier{\psi_1}$.  Thus $\carrier{\varphi_1} \cap \carrier{\psi_1} =\{ 0 \}$ which implies $\carrier{\varphi_1 \vee \psi_1} = \carrier{\varphi_1} \oplus \carrier{\psi_1}$.  Hence it follows from Theorem \ref{Thm:  Properties of product of vector lattices.} (v) and (vii) that  $\ordercontnbidual{\left(\carrier{\varphi_1 \vee \psi_1}\right)} \cong \carrier{\varphi_1 \vee \psi_1}$; that is, $\carrier{\varphi \vee \psi}=\carrier{\varphi_1 \vee \psi_1}$ is perfect.  Since $\carrier{\varphi},\carrier{\psi}\subseteq \carrier{\varphi \vee \psi}$ it follows that $\vlat{M}_p$ is upward directed, hence an ideal in $\bands{\vlat{E}}$.
\end{proof}

\begin{proof}[Proof of (ii)]
If $\vlat{E}$ is perfect then $\vlat{M}_p=\vlat{M}_{\mathrm n}$, and so the result follows from Theorem \ref{Thm:  Perfect spaces as projective limits of carriers} (v).  Conversely, if $P_{\vlat{M}_p}$ is an isomorphism then Theorem \ref{Thm:  Projective limit of perfect spaces is perfect} implies that $\vlat{E}$ is perfect.
\end{proof}

We now consider direct limits of perfect spaces.  Due to the inherent limitations of the duality theorems for inverse limits, the results we obtain are less general than the corresponding results for inverse limits.

\begin{thm}\label{Thm: Direct limit of perfect spaces is perfect.}
Let $\cal{D} \defeq \left( (\vlat{E}_n)_{n \in \N}, (e_{n, m})_{n \leq m} \right)$ be a direct system in $\nvlic$, and let $\cal{S}\defeq \left( \vlat{E}, (e_n)_{n \in \N} \right)$ be the direct limit of $\cal{D}$ in $\vlic$.  Assume that $e^\sim_{n , m}$ is surjective for all $n \leq m$ in $\N$.  If $\vlat{E}_n$ is perfect for every $n \in \N$ then so is $\vlat{E}$.
\end{thm}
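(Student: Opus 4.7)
The plan is to exhibit $\vlat{E}$ as an order continuous dual, since every such dual is perfect by \cite[Theorem 110.3]{Zaanen1983RSII}; the argument mirrors that of Theorem \ref{Thm:  Projective limit of perfect spaces is perfect}, with the roles of direct and inverse limits exchanged. First I would form the order continuous dual system $\ordercontn{\cal{D}}$, which is an inverse system in $\nvlic$ by Proposition \ref{Prop:  Dual system of inductive system is projective system}(ii). Each component $\ordercontn{(\vlat{E}_n)}$ is Dedekind complete, and by hypothesis the connecting maps $e^\sim_{n,m}$ are surjective, so Theorem \ref{Thm:  Existence of Projective Limit NVL}(ii) yields an inverse limit $\vlat{F}\defeq \proj{\ordercontn{\cal{D}}}$ in $\nvlc$.

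Applying Theorem \ref{Thm:  Order continuous dual of proj sys in VLIC is ind of duals in NVLIC} to $\ordercontn{\cal{D}}$ then produces a lattice isomorphism $S: \vlat{G} \to \ordercontn{\vlat{F}}$, where $\vlat{G}\defeq \ind{\ordercontnbidual{\cal{D}}}$ is the direct limit in $\nvlic$ of the bidual system. Since each $\vlat{E}_n$ is perfect, every canonical map $\sigma_n: \vlat{E}_n \to \ordercontnbidual{(\vlat{E}_n)}$ is a lattice isomorphism, and pairing both sides with an arbitrary $\varphi\in \ordercontn{(\vlat{E}_m)}$ verifies the naturality identity
\[
\sigma_m \circ e_{n,m} = e^{\sim\sim}_{n,m} \circ \sigma_n, \qquad n \leq m.
\]
Proposition \ref{Prop:  Morphisms between inductive limits}(ii) then assembles the $\sigma_n$ into a lattice isomorphism $\Sigma: \vlat{E} \to \vlat{G}$. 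Composing with $S$ gives $\vlat{E} \cong \ordercontn{\vlat{F}}$, whence $\vlat{E}$ is perfect.

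The main subtlety is a compatibility between categories: Proposition \ref{Prop:  Morphisms between inductive limits} is formulated in $\vlc$, so $\vlat{G}$---constructed as a direct limit in $\nvlic$---must also be the direct limit of $\ordercontnbidual{\cal{D}}$ in $\vlc$. By Theorem \ref{Thm:  Existence of Inductive Limits in NVLI} this is the case provided each transition map $e^{\sim\sim}_{n,m}$ is injective, which holds because $e^\sim_{n,m}$ is surjective. Likewise $\vlat{E}$, viewed as the $\vlic$-limit of $\cal{D}$, is a $\vlc$-limit by Theorem \ref{Thm:  Existence of Inductive Limits in VLI}. Once these identifications are made, the argument runs as above; no additional obstacle arises.
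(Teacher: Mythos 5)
Your proposal is correct and follows essentially the same route as the paper's proof: dualise $\cal{D}$ to the inverse system $\ordercontn{\cal{D}}$, take its inverse limit $\vlat{F}$, use the surjectivity of the $e^\sim_{n,m}$ to identify $\ordercontn{\vlat{F}}$ with the direct limit of the bidual system, and assemble the canonical isomorphisms $\sigma_n$ via the naturality squares into an isomorphism $\vlat{E}\cong\ordercontn{\vlat{F}}$. The extra care you take about the $\nvlic$-versus-$\vlc$ identification of the limits is implicit in the paper's argument and is handled exactly as you indicate.
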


\begin{proof}
By Proposition \ref{Prop:  Dual system of inductive system is projective system}, the pair $\ordercontn{\cal{D}} \defeq \left( ( \ordercontn{( \vlat{E}_n )} )_{n \in \N}, (e^\sim_{n, m})_{n \leq m} \right)$ is an inverse system in $\nvlic$, and by Theorem \ref{Thm:  Existence of Projective Limit NVL}~(ii) the inverse limit of $\ordercontn{\cal{D}}$ exists in $\nvlc$.  Denote $\proj{\ordercontn{\cal{D}}}$ by $\cal{S}_0 \defeq \left( \vlat{F},(p_n)_{n \in \N}\right)$.

By Proposition \ref{Prop:  Dual system of projective system is inductive system}, the pair $\ordercontnbidual{\cal{D}} \defeq \left( \left( \ordercontnbidual{( \vlat{E}_n )} \right)_{n \in \N}, (e^{\sim\sim}_{n, m})_{n \leq m} \right)$ is a direct system in $\nvlic$.  Since we assumed that the $e^\sim_{n , m}$ are surjective, it follows by Theorem \ref{Thm:  Order continuous dual of proj sys in VLIC is ind of duals in NVLIC} that $\ordercontn{\left( \cal{S}_0 \right)}$ is the direct limit of $\ordercontnbidual{\cal{D}}$ in $\nvlic$. For every $n \in \N$, let $\sigma_n: \vlat{E}_n \to \ordercontnbidual{(\vlat{E}_n)}$ denote the canonical lattice isomorphism.  The diagram
\[
\begin{tikzcd}[cramped]
\vlat{E}_n \arrow[rr, "\sigma_n"] \arrow[dd, "e_{n,m}"'] & & \ordercontnbidual{( \vlat{E}_n )} \arrow[dd, "e^{\sim\sim}_{n,m} "]\\
 & & \\
\vlat{E}_m \arrow[rr, "\sigma_m"']  & & \ordercontnbidual{( \vlat{E}_m )}
\end{tikzcd}
\]
commutes for all $n\leq m$ in $\N$.  By Proposition \ref{Prop:  Morphisms between inductive limits} there exists a unique lattice isomorphism $\Sigma: \vlat{E}\to \ordercontn{\vlat{F}}$ so that the diagram
\[
	\begin{tikzcd}[cramped]
    \vlat{E}_n \arrow[rr, "\sigma_n"] \arrow[dd, "e_n"'] & & \ordercontnbidual{( \vlat{E}_n )} \arrow[dd, "e^{\sim\sim}_{n,m} "]\\
     & & \\
    \vlat{E} \arrow[rr, "\Sigma"']  & & \ordercontn{\vlat{F}}
    \end{tikzcd}
\]
commutes for every $n \in \N$.  Since $\ordercontn{\vlat{F}}$ is perfect, we conclude that $\vlat{E}$ is also perfect.
\end{proof}

\begin{cor}\label{Cor:  Inductive limit of sequence of perfects is perfect}
Let $\cal{D}\defeq \left((\vlat{E}_{n})_{n\in\N},(e_{n,m})_{n\leq m}\right)$ be a direct system in $\nvlic$, and let $\cal{S} \defeq \left( \vlat{E}, (e_n)_{n\in \N} \right)$ be the direct limit of $\cal{D}$ in $\vlic$.  Assume that $e_{n,m}$ is injective and $e_{n,m}[\vlat{E}_n]$ is a band in $\vlat{E}_m$ for all $n\leq m$ in $\N$.  If $\vlat{E}_n$ is perfect for every $n\in\N$ then so is $\vlat{E}$.
\end{cor}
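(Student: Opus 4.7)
The plan is to deduce this corollary directly from Theorem \ref{Thm: Direct limit of perfect spaces is perfect.}, which says that $\vlat{E}$ is perfect as soon as each dual map $e_{n,m}^\sim : \ordercontn{(\vlat{E}_m)} \to \ordercontn{(\vlat{E}_n)}$ is surjective. Thus the only thing left to establish is this surjectivity; the hypothesis that each $e_{n,m}[\vlat{E}_n]$ is a band in $\vlat{E}_m$ is tailored precisely to provide it, by giving us a band projection with which to extend order continuous functionals.

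Concretely, I would fix $n \leq m$ in $\N$ and argue as follows. Since $\vlat{E}_m$ is perfect, it is Dedekind complete, so the band $\vlat{B} \defeq e_{n,m}[\vlat{E}_n]$ is a projection band; let $P_{n,m}$ denote the corresponding band projection, which by Proposition \ref{Prop: Properties of band projections} is an order continuous lattice homomorphism of $\vlat{E}_m$ onto $\vlat{B}$. Because $e_{n,m}$ is an injective lattice homomorphism with range $\vlat{B}$, it is a lattice isomorphism $\vlat{E}_n \to \vlat{B}$, and so its inverse $e_{n,m}^{-1} : \vlat{B} \to \vlat{E}_n$ is a lattice isomorphism as well; in particular it is order continuous (any lattice isomorphism preserves arbitrary suprema and infima). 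Given $\varphi \in \ordercontn{(\vlat{E}_n)}$, I would set
\[
\psi \defeq \varphi \circ e_{n,m}^{-1} \circ P_{n,m} : \vlat{E}_m \to \R,
\]
which, as a composition of order continuous linear maps, lies in $\ordercontn{(\vlat{E}_m)}$. For every $u \in \vlat{E}_n$ we have $e_{n,m}(u) \in \vlat{B}$, hence $P_{n,m}(e_{n,m}(u)) = e_{n,m}(u)$, so
\[
e_{n,m}^\sim(\psi)(u) \;=\; \psi(e_{n,m}(u)) \;=\; \varphi(u).
\]
Thus $e_{n,m}^\sim(\psi) = \varphi$ and $e_{n,m}^\sim$ is surjective on order continuous duals.

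With the surjectivity of each $e_{n,m}^\sim$ verified, Theorem \ref{Thm: Direct limit of perfect spaces is perfect.} yields that $\vlat{E}$ is perfect, finishing the argument. There is no genuine obstacle in this proof: all the work lies in writing down the right extension of $\varphi$, and once $\vlat{B}$ is recognised as a projection band (by Dedekind completeness of $\vlat{E}_m$) the remaining checks---order continuity of $e_{n,m}^{-1}$ and of the composition $\psi$---are routine.
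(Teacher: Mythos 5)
Your proposal is correct and follows essentially the same route as the paper: both reduce the corollary to the surjectivity of each $e_{n,m}^\sim$ so that Theorem \ref{Thm: Direct limit of perfect spaces is perfect.} applies, and both obtain that surjectivity by using the band projection of $\vlat{E}_m$ onto $e_{n,m}[\vlat{E}_n]$ (available since perfectness gives Dedekind completeness) composed with the inverse of the lattice isomorphism $e_{n,m}:\vlat{E}_n\to e_{n,m}[\vlat{E}_n]$. The paper phrases this via commuting diagrams of adjoints while you write the extension $\varphi\circ e_{n,m}^{-1}\circ P_{n,m}$ explicitly, but the argument is the same.
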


\begin{proof}

We show that $e_{n,m}^\sim$ is surjective for all $n\leq m$ in $\N$. Then the result follows directly from Theorem~\ref{Thm: Direct limit of perfect spaces is perfect.}. We observe that each $\vlat{E}_n$ is Dedekind complete and thus has the projection property.  Fix $n\leq m$ in $\N$.  Let $P_{m,n}:\vlat{E}_m\to e_{n,m}[\vlat{E}_n]$ be the band projection onto $e_{n,m}[\vlat{E}_n]$.  The diagram
\[
\begin{tikzcd}[cramped]
\vlat{E}_n \arrow[rd, "e_{n,m}"'] \arrow[rr, "e_{n,m}"] & & \vlat{E}_m \arrow[dl, "P_{m,n}"]\\
& e_{n,m}[\vlat{E}_n]
\end{tikzcd}
\]
commutes.  Therefore
\[
\begin{tikzcd}[cramped]
\ordercontn{(\vlat{E}_{m})} \arrow[rr, "e_{n,m}^\sim"] & & \ordercontn{(\vlat{E}_n)}\\
& \ordercontn{(e_{n,m}[\vlat{E}_n])} \arrow[lu, "P_{m,n}^\sim"] \arrow[ru, "e_{n,m}^\sim"']
\end{tikzcd}
\]
commutes as well.  Since $e_{n,m}:\vlat{E}_n\to e_{n,m}[\vlat{E}_n]$ is an isomorphism, so is $e_{n,m}^\sim: \ordercontn{(e_{n,m}[\vlat{E}_n])}\to \ordercontn{\left( \vlat{E}_n \right)}$. It follows from the above diagram that $e_{n,m}^\sim:\ordercontn{(\vlat{E}_m)}\to \ordercontn{(\vlat{E}_n)}$ is a surjection.
\end{proof}


\begin{cor}
Let $\vlat{E}$ be a vector lattice.  Assume that there exists an increasing sequence $(\varphi_n)$ of positive order continuous functionals on $\vlat{E}$ such that $\displaystyle\bigcup \carrier{\varphi_n}=\vlat{E}$ and, for every $n\in\N$, $\carrier{\varphi_n}$ is perfect.  Then $\vlat{E}$ is perfect.
\end{cor}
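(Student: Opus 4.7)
The plan is to realise $\vlat{E}$ as the direct limit of the sequence $(\carrier{\varphi_n})_{n \in \N}$ and then invoke Corollary \ref{Cor:  Inductive limit of sequence of perfects is perfect}, which already packages exactly the conclusion we want for sequential direct limits of perfect spaces in $\nvlic$ along band inclusions.

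First, I would check that the hypotheses set up the right directed family of ideals. Each $\carrier{\varphi_n}$ is a band in $\vlat{E}$ (the carrier of every order bounded functional is a band, and the null ideal of an order continuous functional is itself a band). Since $(\varphi_n)$ is increasing and positive, $0 \leq \varphi_n \leq \varphi_{n+1}$ forces $\nullid{\varphi_{n+1}} \subseteq \nullid{\varphi_n}$, hence $\carrier{\varphi_n} \subseteq \carrier{\varphi_{n+1}}$. Together with the hypothesis $\bigcup \carrier{\varphi_n} = \vlat{E}$, Example \ref{Exm:  Inductive limit main example} applies: letting $e_{n,m} : \carrier{\varphi_n} \hookrightarrow \carrier{\varphi_m}$ and $e_n : \carrier{\varphi_n} \hookrightarrow \vlat{E}$ denote the inclusions, the pair $\cal{D} \defeq ((\carrier{\varphi_n})_{n \in \N}, (e_{n,m})_{n \leq m})$ is a direct system in $\nvlic$ and $\cal{S} \defeq (\vlat{E}, (e_n)_{n \in \N})$ is its direct limit in $\nvlic$, a fortiori in $\vlic$.

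Next I would verify the band condition needed for Corollary \ref{Cor:  Inductive limit of sequence of perfects is perfect}. The map $e_{n,m}$ is obviously injective, and $e_{n,m}[\carrier{\varphi_n}] = \carrier{\varphi_n}$ sits inside $\carrier{\varphi_m}$ as a band: $\carrier{\varphi_n}$ is a band in $\vlat{E}$, and since $\carrier{\varphi_m}$ is itself a band in $\vlat{E}$ it is a regular sublattice, so any subset of $\carrier{\varphi_n}^+$ that has a supremum in $\carrier{\varphi_m}$ has the same supremum in $\vlat{E}$, and this supremum therefore lies in $\carrier{\varphi_n}$. Combined with the fact that $\carrier{\varphi_n}$ is already an ideal in $\carrier{\varphi_m}$, this shows $\carrier{\varphi_n}$ is a band in $\carrier{\varphi_m}$.

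Finally, each $\carrier{\varphi_n}$ is perfect by hypothesis, so Corollary \ref{Cor:  Inductive limit of sequence of perfects is perfect} applies to the direct system $\cal{D}$ and yields that $\vlat{E} = \ind{\cal{D}}$ is perfect. There is essentially no obstacle in this argument; the only point requiring a moment of care is the transfer of the band property from $\vlat{E}$ to the intermediate ambient space $\carrier{\varphi_m}$, which is handled by the regular-sublattice observation above.
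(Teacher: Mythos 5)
Your proposal is correct and follows essentially the same route as the paper: realise $\vlat{E}$ as the direct limit in $\nvlic$ of the increasing chain of carriers via Example \ref{Exm:  Inductive limit main example} and then apply Corollary \ref{Cor:  Inductive limit of sequence of perfects is perfect}. The only difference is that you spell out the verifications (monotonicity of the carriers from $\varphi_n\leq\varphi_{n+1}$, and that $\carrier{\varphi_n}$ is a band in $\carrier{\varphi_m}$) that the paper leaves implicit, which is harmless and arguably an improvement.
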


\begin{proof}
For all $n\leq m$ denote by $e_{n,m}:\carrier{\varphi_n}\to\carrier{\varphi_m}$ and $e_n:\carrier{\varphi_n}\to\vlat{E}$ the inclusion maps.  By Example \ref{Exm:  Inductive limit main example}, $\cal{D}\defeq \left( (\carrier{\varphi_n})_{n\in \N},(e_{n,m})_{n\leq m}\right)$ is a direct system in $\nvlic$, and $\cal{S} \defeq \left(\vlat{E},(e_n)_{n\in\N}\right)$ is the direct limit of $\cal{D}$ in $\nvlic$.  By Corollary \ref{Cor:  Inductive limit of sequence of perfects is perfect}, $\vlat{E}$ is perfect.
\end{proof}

\subsection{Decomposition theorems for $\contX$ as a dual space}\label{Subsection:  Structure theorems for C(X) as a dual space}

This section deals with decomposition theorems for spaces $\contX$ of continuous, real valued functions which are order dual spaces.  In particular, we show that the naive generalization of Theorems \ref{Thm:  C(K) Dual space char} and \ref{Thm:  C(K) Dual space char order dual version} to the non-compact case fails, and present an alternative approach via inverse limits. Specialising Corollary \ref{Cor:  Perfect spaces as inverse limit of perfect carriers} to $\contX$ yields the desired decomposition theorem.  In order to facilitate the discussion to follow we recall some basic facts concerning the structure of the carriers of positive functionals on $\contX$.  Throughout this section, $X$ denotes a realcompact space. Recall from Section~\ref{Section:  Introduction} that the realcompactification of a Tychonoff space $Y$ is denoted as $\upsilon Y$.

Let $0\leq \varphi\in \orderdual{\contX}$.  According to Theorem \ref{Thm:  Riesz Representation Theorem for C(X)} there exists a measure $\mu_\varphi \in\vlat{M}_c(X)^+$ so that
\[
\varphi(u) = \int u \thinspace d\mu_\varphi,~~ u\in\contX.
\]
Denote by $S_\varphi$ the support of the measure $\mu_\varphi$.  The null ideal of $\varphi$ is given by
\[
\nullid{\varphi} = \{u\in\contX ~:~ u(x) = 0 \text{ for all } x\in S_{\varphi}\}.
\]
Indeed, the inclusion $\{u\in\contX ~:~ u(x) = 0 \text{ for all } x\in S_{\varphi}\}\subseteq \nullid{\varphi}$ is clear.  For the reverse inclusion, consider $u\in\contX$ so that $u(x_0)\neq 0$ for some $x_0\in S_\varphi$.  Then there exist a neighbourhood $V$ of $x_0$ and a number $\epsilon>0$ so that $|u|(x)>\epsilon$ for all $x\in V$.  Because $x_0\in S_\varphi$, $\mu_\varphi(V)>0$.  Therefore
\[
\varphi(|u|) \geq \int_V |u| \thinspace d\mu_\varphi \geq \epsilon\mu_\varphi(V)>0
\]
so that $u\notin \nullid{\varphi}$.  It therefore follows that
\[
\carrier{\varphi}=\{u\in\contX ~:~ u(x) = 0 \text{ for all } x\in X\setminus S_\varphi\}.
\]
The band $\carrier{\varphi}$ is a projection band if and only if $S_\varphi$ is open, hence compact and open, see \cite[Theorem 6.3]{KandicVavpeticPositivity2019}.  In this case we identify $\carrier{\varphi}$ with $\cont(S_\varphi)$ and the band projection $P_\varphi:\contX\to \carrier{\varphi}$ is given by restriction of $u\in\contX$ to $S_\varphi$.

\begin{prop}\label{Prop:  Carriers in C(X) are perfect}
Let $X$ be extremally disconnected.  Then $\carrier{\varphi}$ is perfect for every $0\neq \varphi\in\ordercontn{\contX}$.
\end{prop}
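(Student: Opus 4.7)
The plan is to reduce to the case of a compact Stonean space and invoke the classical decomposition theorems for $\cont(K)$. By Theorems \ref{Thm:  Riesz Representation Theorem for C(X)} and \ref{Thm:  Order continuous functionals on C(X) are normal measures}, the functional $\varphi$ corresponds to a non-zero compactly supported normal Radon measure $\mu_\varphi$; since $\mu_\varphi \neq 0$, the support $S_\varphi$ is non-empty and compact. From the proof of Theorem \ref{Thm:  Order continuous functionals on C(X) are normal measures}, $S_\varphi$ is regular-closed in $X$. The first step is to observe that regular-closed subsets of an extremally disconnected space are automatically open: if $S_\varphi = \overline{\mathrm{int}(S_\varphi)}$ then extremal disconnectedness makes this set open, so $S_\varphi$ is clopen. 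An open (hence clopen) subspace of an extremally disconnected space is itself extremally disconnected, so $S_\varphi$ is a compact Stonean space.

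Next, I would identify $\carrier{\varphi}$ with $\cont(S_\varphi)$, using the description of carriers recalled in the preamble to this subsection (this works precisely because $S_\varphi$ is clopen, so $\carrier{\varphi}$ is a projection band and the band projection is restriction to $S_\varphi$). The measure $\mu_\varphi$ restricted to the Borel sets of $S_\varphi$ is a strictly positive normal Radon measure on $S_\varphi$, so the union of the supports of the normal measures on $S_\varphi$ is (trivially) dense in $S_\varphi$; therefore $S_\varphi$ is hyper-Stonean.

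Now I would apply Theorem \ref{Thm:  C(K) Dual space char} to $S_\varphi$: since $S_\varphi$ is hyper-Stonean, $\cont(S_\varphi)$ is a Banach lattice dual, and its predual is (isometrically lattice isomorphic to) $\vlat{N}(S_\varphi)$, which by Theorem \ref{Thm:  Order continuous functionals on C(X) are normal measures} may in turn be identified with $\ordercontn{\cont(S_\varphi)}$. Because $\vlat{N}(S_\varphi)$ is an AL-space it has order continuous norm, so every order bounded linear functional on $\vlat{N}(S_\varphi)$ is automatically order continuous; consequently the Banach dual $\vlat{N}(S_\varphi)^\ast$ coincides with $\ordercontn{\vlat{N}(S_\varphi)}$ as vector lattices. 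Thus $\cont(S_\varphi)$ is lattice isomorphic to the order continuous dual of a vector lattice, i.e.\ it has an order continuous predual, and by the equivalence noted at the start of Subsection \ref{Subsection:  Perfect spaces} it is perfect. Since $\carrier{\varphi} \cong \cont(S_\varphi)$, this gives the claim.

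The only real subtlety, rather than difficulty, is the verification that regular-closed plus extremally disconnected forces clopenness; once that is in hand, every subsequent step is an invocation of a result already established in Sections 1 and 2.
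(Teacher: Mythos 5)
Your proof is correct and takes essentially the same route as the paper: identify $\carrier{\varphi}$ with $\cont(S_\varphi)$ for the compact clopen support $S_\varphi$ and invoke the classical Dixmier--Grothendieck characterisation to produce a predual, from which perfectness follows. The paper shortcuts your topological work by checking condition (iii) of Theorem \ref{Thm:  C(K) Dual space char order dual version} directly on the band ($\carrier{\varphi}$ is Dedekind complete and $|\varphi|$ is strictly positive and order continuous on it) and then citing the fact that order dual spaces are perfect, whereas you verify hyper-Stoneanness of $S_\varphi$ and recover perfectness from the order continuity of the AL-norm on $\vlat{N}(S_\varphi)$; both arguments are sound.
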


\begin{proof}
Let $0\neq \varphi\in\ordercontn{\contX}$. Since $\contX$ is Dedekind complete, so is $\carrier{\varphi}$. Furthermore, $|\varphi|$ is strictly positive and order continuous on $\carrier{\varphi}$. Thus $\carrier{\varphi}$ has a separating order continuous dual. By Theorem~\ref{Thm:  C(K) Dual space char order dual version}, $\carrier{\varphi} = \cont(S_\varphi)$ has a Banach lattice predual; that is, $\carrier{\varphi}$ is an order dual space. Therefore $\carrier{\varphi}$ is perfect by \cite[Theorem~110.2]{Zaanen1983RSII}.
\end{proof}

\begin{thm}\label{Thm:  C(X) perfect iff order dual iff x is Xion}
Let $X$ be a realcompact space.  Denote by $S$ the union of the supports of all order continuous functionals\footnote{Equivalently, all compactly supported normal Radon measures on $X$.} on $\contX$.  The following statements are equivalent. \begin{enumerate}
    \item[(i)] There exists a vector lattice $\vlat{E}$ so that $\cont(X)$ is lattice isomorphic to $\orderdual{\vlat{E}}$.
    \item[(ii)] $\contX$ is perfect.
    \item[(iii)] $X$ is extremally disconnected and $\upsilon S = X$; that is,
    \[
    	\contX \ni u \longmapsto \left.u\right|_{S} \in \cont (S)
    \]is a lattice isomorphism.
\end{enumerate}
\end{thm}

\begin{proof}
That (i) implies (ii) follows from \cite[Theorem~110.2]{Zaanen1983RSII}. The argument in the proof of \cite[Theorem 2]{Xiong1983} shows that (ii) implies (iii), and \cite[Theorem 1]{Xiong1983} shows that (iii) implies (i). Thus the statements (i), (ii) and (iii) are equivalent.
\end{proof}

A naive attempt to generalise Theorem \ref{Thm:  C(K) Dual space char order dual version} (iv) is to replace the $\ell^\infty$-direct sum in that result with the Cartesian product of the carriers of a maximal singular family in $\ordercontn{\cont(X)}$.  In next result and the example to follow, we show that this approach is not correct.

\begin{prop}\label{Prop:  Partial decomposition result for order dual C(X)}
Let $X$ be an extremally disconnected realcompact space, and let $\cal{F}$ be a maximal (with respect to inclusion) singular family of positive order continuous linear functionals on $\contX$.  Consider the following statements. \begin{enumerate}
    \item [(i)] The map
    \[
    \contX\ni u \longmapsto (P_{\varphi}(u))\in \dsprod_{\varphi\in\cal{F}}\carrier{\varphi}
    \]
    is a lattice isomorphism.
    \item [(ii)] $\contX$ is perfect.
    \item[(iii)] There exists a vector lattice $\vlat{E}$ so that $\cont(X)$ is lattice isomorphic to $\orderdual{\vlat{E}}$.
\end{enumerate}
Then (i) implies (ii), and (ii) and (iii) are equivalent.
\end{prop}

\begin{proof}
By Theorem \ref{Thm:  C(X) perfect iff order dual iff x is Xion}, (ii) and (iii) are equivalent.  Assume that (i) is true.  By Theorem \ref{Thm:  Properties of product of vector lattices.} (v) and (vii), $\ordercontnbidual{\contX}$ is isomorphic to $\dsprod \ordercontnbidual{(\carrier{\varphi})}$.  But each $\carrier{\varphi}$ is perfect so that $\dsprod \ordercontnbidual{(\carrier{\varphi})}$ is isomorphic to $\dsprod \carrier{\varphi}$, hence $\contX$ is isomorphic to $\ordercontnbidual{\contX}$.
\end{proof}

\begin{example}\label{Exm:  C(X) decomponsition counterexample}
As is well known, $\cont(\beta\N)=\ell^\infty$ is perfect, hence an order dual space.  For every $x\in\N$, denote by $\delta_x:\cont(\beta\N)\to\R$ the point mass centred at $x$.  Then $\cal{F}=\{\delta_x ~:~ x\in\N\}$ is a maximal singular family in $\ordercontn{\cont(\beta\N)} \cong \ell^1$.  Since $\carrier{\delta_x}=\R$ for every $x\in\N$, it follows that $\dsprod\carrier{\delta_x}=\R^\omega$.  Therefore $\dsprod\carrier{\delta_x}$ does not have a strong order unit.  Since $\cont(\beta\N)$ contains a strong order unit,
\[
\cont(\beta\N)\ni u \longmapsto (P_{\delta_x}(u))\in\dsprod \carrier{\delta_x}
\]
is not an isomorphism.
\end{example}

The final result of this section offers a solution to the decomposition problem for a space $\contX$ which is an order dual space.  We refer the reader to the notation used in Example \ref{Exm:  Projective limits of bands} and Theorem \ref{Thm:  Projective limit of perfect spaces is perfect}.

\begin{thm}\label{Thm:  Structure theorem for C(X) a dual space}
Let $X$ be an extremally disconnected realcompact space. Denote by $S$ the union of the supports of all order continuous functionals on $\contX$. The following statements are equivalent.
\begin{enumerate}
    \item[(i)] There exists a vector lattice $\vlat{E}$ so that $\cont(X)$ is lattice isomorphic to $\orderdual{\vlat{E}}$.
    \item[(ii)] $\contX$ is perfect.
    \item[(iii)] $\upsilon S = X$.
    \item[(iv)] $P_{\vlat{M}_{\rm n}}:\contX\to \proj{\cal{I}_{\vlat{M}_{\rm n}}}$ is a lattice isomorphism.
\end{enumerate}
\end{thm}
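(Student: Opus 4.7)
The strategy is to reduce the statement to previously established results. The equivalence of (i), (ii) and (iii) is already contained in Theorem \ref{Thm:  C(X) perfect iff order dual iff x is Xion}: under the standing hypothesis that $X$ is extremally disconnected, statement (iii) of our theorem is exactly the condition $\upsilon S = X$ appearing there, while (i) and (ii) are verbatim the corresponding statements. So only the equivalence of (ii) and (iv) requires work.

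The plan for (ii) $\Longleftrightarrow$ (iv) is to identify the ideal $\vlat{M}_{\rm n}\subseteq \bands{\contX}$, consisting of the carriers of all positive order continuous functionals on $\contX$, with the ideal $\vlat{M}_p$ of Corollary \ref{Cor:  Perfect spaces as inverse limit of perfect carriers}, consisting of those carriers which happen to be perfect. The inclusion $\vlat{M}_p\subseteq \vlat{M}_{\rm n}$ is immediate from the definitions. For the reverse inclusion I invoke Proposition \ref{Prop:  Carriers in C(X) are perfect}: since $X$ is extremally disconnected, every $\carrier{\varphi}$ with $0<\varphi\in\ordercontn{\contX}$ is perfect, and so lies in $\vlat{M}_p$. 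Hence $\vlat{M}_{\rm n}=\vlat{M}_p$ and consequently $\cal{I}_{\vlat{M}_{\rm n}}=\cal{I}_{\vlat{M}_p}$ and $P_{\vlat{M}_{\rm n}}=P_{\vlat{M}_p}$.

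With this identification in hand, I simply apply Corollary \ref{Cor:  Perfect spaces as inverse limit of perfect carriers} (ii) to the Dedekind complete vector lattice $\contX$ (Dedekind completeness of $\contX$ follows from extremal disconnectedness of $X$). That corollary states precisely that $P_{\vlat{M}_p}$ is a lattice isomorphism if and only if $\contX$ is perfect, which yields (iv) $\Longleftrightarrow$ (ii).

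The only point requiring care is the identification $\vlat{M}_{\rm n}=\vlat{M}_p$; once this is in place the theorem follows by assembling previously proved results. I anticipate no substantive obstacle, since Proposition \ref{Prop:  Carriers in C(X) are perfect} has already done the analytic work of verifying that carriers of order continuous functionals on $\contX$ inherit the structure of perfect $\cont(K)$-spaces for compact extremally disconnected $K$.
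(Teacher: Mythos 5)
Your proposal is correct and follows essentially the same route as the paper: reduce to the equivalence of (ii) and (iv) via Theorem \ref{Thm:  C(X) perfect iff order dual iff x is Xion}, observe that Proposition \ref{Prop:  Carriers in C(X) are perfect} forces $\vlat{M}_{\mathrm n}=\vlat{M}_p$, and then invoke Corollary \ref{Cor:  Perfect spaces as inverse limit of perfect carriers} (ii). No gaps.
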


\begin{proof}
By Theorem~\ref{Thm:  C(X) perfect iff order dual iff x is Xion}, it suffices to show that (ii) and (iv) are equivalent.  Since $\carrier{\varphi}$ is perfect for every $0\leq \varphi \in \ordercontn{\contX}$ by Proposition \ref{Prop:  Carriers in C(X) are perfect}, this follows immediately from Corollary \ref{Cor:  Perfect spaces as inverse limit of perfect carriers}.
\end{proof}

\subsection{Structure theorems}\label{Subsection:  Structure theorems}

Let $\vlat{E}$ be an Archimedean vector lattice.  In Example \ref{Exm:  Inductive limit of principle ideals} it is shown that the principal ideals of $\vlat{E}$ form a direct system in $\nvlic$, and that $\vlat{E}$ can be expressed as the direct limit of this system.  In this section we exploit this result and the duality results in Section \ref{Section:  Dual spaces} to obtain structure theorems for vector lattices and their order duals.

A frequently used technique in the theory of vector lattices is to reduce a problem to one confined to a fixed principal ideal $\vlat{E}_u$ of a space $\vlat{E}$.  Once this is achieved, the problem becomes equivalent to one in a space $\cont(K)$ of continuous functions on some compact Hausdorff space $K$ via the Kakutani Representation Theorem, see \cite{Kakutani1941} or \cite[Theorem 2.1.3]{Meyer-Nieberg1991}. For instance, this technique is used in \cite[Theorem 3.8.6]{Meyer-Nieberg1991} to study tensor products of Banach lattices. The following result is essentially a formalization of this method in the language of direct limits.

\begin{thm}\label{Thm:  Structure theorem}
Let $\vlat{E}$ be an Archimedean, relatively uniformly complete vector lattice.  For all $0<u\leq v$ there exist compact Hausdorff spaces $K_u$ and $K_v$ and injective, interval preserving normal lattice homomorphisms $e_{u,v}:\cont(K_u)\to \cont(K_v)$ and $e_u:\cont(K_u)\to \vlat{E}$ so that the following is true. \begin{enumerate}
    \item[(i)] $\vlat{E}_u$ is lattice isomorphic to $\cont(K_u)$ for every $0<u\in\vlat{E}$.
    \item[(ii)] $\cal{D}_\vlat{E}\defeq \left((\cont(K_u))_{0<u\in\vlat{E}},(e_{u,v})_{u\leq v}\right)$ is a direct system in $\nvlic$
    \item [(iii)] $\cal{S}_\vlat{E}\defeq \left(\vlat{E},(e_u)_{0<u\in\vlat{E}}\right)$ is the direct limit of $\cal{D}_{\vlat{E}}$ in $\nvlic$.
    \item[(iv)] $\vlat{E}$ is Dedekind complete if and only if $K_u$ is Stonean for every $0<u\in\vlat{E}$.
    \item[(v)] If $\vlat{E}$ is perfect then $K_u$ is hyper-Stonean for every $0<u\in\vlat{E}$.
\end{enumerate}
\end{thm}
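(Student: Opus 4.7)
The plan is to identify the principal ideals of $\vlat{E}$ with $\cont(K_u)$-spaces via Kakutani's representation theorem, then transfer the direct system structure from Example \ref{Exm:  Inductive limit of principle ideals} through these isomorphisms.

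First I would prove (i). Fix $0 < u \in \vlat{E}$. The principal ideal $\vlat{E}_u$ is Archimedean (inherited) and relatively uniformly complete: if $(v_n)$ is relatively uniformly Cauchy in $\vlat{E}_u$ with regulator $w \in \vlat{E}_u$, then $v_n \to v$ relatively uniformly in $\vlat{E}$ (with the same regulator), and since $|v - v_m| \leq \epsilon w \in \vlat{E}_u$ for large $m$, $v \in \vlat{E}_u$. With the order-unit norm $\|v\|_u \defeq \inf\{\lambda > 0 : |v| \leq \lambda u\}$, $\vlat{E}_u$ becomes an AM-space with unit $u$ (completeness of the norm follows from relative uniform completeness); hence by the Kakutani Representation Theorem there exists a compact Hausdorff space $K_u$ and a lattice isomorphism $\phi_u : \cont(K_u) \to \vlat{E}_u$ sending $\onefunction_{K_u}$ to $u$.

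For (ii) and (iii), Example \ref{Exm:  Inductive limit of principle ideals} gives that the inclusion maps $\iota_{u,v} : \vlat{E}_u \to \vlat{E}_v$ and $\iota_u : \vlat{E}_u \to \vlat{E}$ make $((\vlat{E}_u), (\iota_{u,v}))$ a direct system in $\nvlic$ with direct limit $(\vlat{E}, (\iota_u))$. I would then define
\[
e_{u,v} \defeq \phi_v^{-1} \circ \iota_{u,v} \circ \phi_u, \qquad e_u \defeq \iota_u \circ \phi_u,
\]
each of which is an injective, interval preserving normal lattice homomorphism as a composition of such (the $\phi_u$ being lattice isomorphisms, and inclusions of ideals having all the requisite properties). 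The compatibility diagrams follow immediately from the definitions, so $\cal{D}_\vlat{E}$ is a direct system in $\nvlic$ and $\cal{S}_\vlat{E}$ is a compatible system. Applying Proposition \ref{Prop:  Morphisms between inductive limits}~(ii) with the isomorphisms $\phi_u$ transfers the direct-limit property from the principal-ideal system to $\cal{D}_\vlat{E}$, yielding (iii).

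For (iv), since every transition map $e_{u,v}$ is injective and the system lies in $\vlic$, Theorem \ref{Thm:  Inductive Limit Permanence}~(iv) shows that $\vlat{E}$ is Dedekind complete if and only if each $\cont(K_u)$ is Dedekind complete. The classical fact that $\cont(K)$ is Dedekind complete if and only if $K$ is extremally disconnected (i.e.\ Stonean) finishes the proof.

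Finally for (v), assume $\vlat{E}$ is perfect. Then $\vlat{E}$ is Dedekind complete, so by (iv) every $K_u$ is already Stonean. Perfectness also forces $\preann{\ordercontn{\vlat{E}}} = \{0\}$, since the canonical embedding $\sigma : \vlat{E} \to \ordercontnbidual{\vlat{E}}$ being a lattice isomorphism (in particular injective) is exactly the statement that the order continuous functionals separate points. Now fix $0 < u$. Restriction to the ideal $\vlat{E}_u$ preserves order continuity: if $A \downarrow 0$ in $\vlat{E}_u$ and $0 \leq w \leq a$ for every $a \in A$ with $w \in \vlat{E}$, then $w$ lies in the ideal $\vlat{E}_u$ and hence $w = 0$, so $A \downarrow 0$ in $\vlat{E}$. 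Consequently $\vlat{E}_u \cong \cont(K_u)$ is Dedekind complete and has a separating order continuous dual, and Theorem \ref{Thm:  C(K) Dual space char order dual version} then forces $K_u$ to be hyper-Stonean. The main subtlety in the whole argument is verifying in (iii) that the universal property transfers faithfully through the Kakutani isomorphisms; once the compatible diagrams are written down, however, Proposition \ref{Prop:  Morphisms between inductive limits}~(ii) delivers this essentially for free.
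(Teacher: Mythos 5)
Your proposal is correct and follows essentially the same route as the paper: principal ideals are unital AM-spaces, Kakutani gives the $\cont(K_u)$, the direct-system structure is transported from Example \ref{Exm:  Inductive limit of principle ideals} through the Kakutani isomorphisms, (iv) follows from Theorem \ref{Thm:  Inductive Limit Permanence}, and (v) from the fact that Dedekind completeness and a separating order continuous dual pass to ideals, combined with Theorem \ref{Thm:  C(K) Dual space char order dual version}. The only difference is that you spell out details (relative uniform completeness of $\vlat{E}_u$, order continuity of restrictions to ideals) that the paper leaves to citations.
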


\begin{proof}
According to \cite[Proposition 1.2.13]{Meyer-Nieberg1991} every principal ideal in $\vlat{E}$ is a unital $AM$-space. Therefore the statements in (i), (ii) and (iii) follow immediately from Example \ref{Exm:  Inductive limit of principle ideals} and Kakutani's Representation Theorem for $AM$-spaces \cite{Kakutani1941}. The proof of (iv) follows immediately from Theorem \ref{Thm:  Inductive Limit Permanence} and \cite[Proposition 2.1.4]{Meyer-Nieberg1991}.

For the proof of (v), assume that $\vlat{E}$ is perfect.  Then, in particular, $\vlat{E}$ is Dedekind complete and has a separating order continuous dual.  Therefore the same is true for each $\vlat{E}_u$.  By (i), $\cont(K_u)$ is Dedekind complete and has a separating order continuous dual.  It follows from Theorems \ref{Thm:  C(K) Dual space char} and \ref{Thm:  C(K) Dual space char order dual version} that $K_u$ is hyper-Stonean.
\end{proof}

\begin{cor}\label{Cor:  Structure of duals ito spaces of measures}
Let $\vlat{E}$ be an Archimedean, relatively uniformly complete vector lattice.  There exist an inverse system $\cal{I}\defeq \left((\vlat{M}(K_\alpha))_{\alpha\in I},(p_{\beta,\alpha})_{\beta \scc \alpha}\right)$ in $\nvlic$, with each $K_\alpha$ a compact Hausdorff space, and normal lattice homomorphisms $p_\alpha :\orderdual{\vlat{E}}\to \vlat{M}(K_\alpha)$, so that $\cal{S}\defeq \left(\orderdual{\vlat{E}},(p_\alpha)_{\alpha \in I}\right)$ is the inverse limit of $\cal{I}$ in $\nvlc$.
\end{cor}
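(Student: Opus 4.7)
The plan is to combine three earlier results: the direct limit representation of $\vlat{E}$ given by Theorem \ref{Thm:  Structure theorem}, the duality Theorem \ref{Thm:  Dual of ind sys in VLIC is proj of duals in NVL}, and the Riesz-type representation Theorem \ref{Thm:  Riesz Representation Theorem for C(X)} to replace the abstract duals by spaces of Radon measures.

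First I would invoke Theorem \ref{Thm:  Structure theorem} to express $\vlat{E}$ as the direct limit in $\nvlic$ of $\cal{D}_{\vlat{E}} \defeq \left((\cont(K_u))_{0 < u\in \vlat{E}},(e_{u,v})_{u\leq v}\right)$, where each $K_u$ is a compact Hausdorff space.  By Proposition \ref{Prop:  Dual system of inductive system is projective system}, the dual system $\cal{D}_{\vlat{E}}^\sim \defeq \left((\orderdual{\cont(K_u)})_{0 < u\in\vlat{E}},(e_{u,v}^\sim)_{u\leq v}\right)$ is an inverse system in $\nvlic$, and Theorem \ref{Thm:  Dual of ind sys in VLIC is proj of duals in NVL} produces a lattice isomorphism $T:\orderdual{\vlat{E}}\to \vlat{F}$ with $\proj \cal{D}_{\vlat{E}}^\sim = (\vlat{F},(p_u')_{u})$ in $\nvlc$ such that $p_u'\circ T = e_u^\sim$ for every $0 < u \in \vlat{E}$.

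Next, for each $0 < u \in \vlat{E}$, Theorem \ref{Thm:  Riesz Representation Theorem for C(X)} applied to the (trivially realcompact) compact Hausdorff space $K_u$ gives a lattice isomorphism $T_u:\orderdual{\cont(K_u)} \to \vlat{M}_c(K_u) = \vlat{M}(K_u)$ (the equality holding because $K_u$ is compact).  Transport the connecting maps by setting $p_{v,u} \defeq T_u \circ e_{u,v}^\sim \circ T_v^{-1}: \vlat{M}(K_v)\to \vlat{M}(K_u)$ for $u\leq v$.  Since each $T_u$ is a lattice isomorphism and each $e_{u,v}^\sim$ is a normal interval preserving lattice homomorphism (by Proposition \ref{Prop:  Dual system of inductive system is projective system}), each $p_{v,u}$ is a normal interval preserving lattice homomorphism, and the cocycle condition for $\cal{I}\defeq \left((\vlat{M}(K_u))_{0 < u\in\vlat{E}},(p_{v,u})_{u\leq v}\right)$ is inherited from that of $\cal{D}_{\vlat{E}}^\sim$; so $\cal{I}$ is an inverse system in $\nvlic$.

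Finally, the family $(T_u)_{0<u\in\vlat{E}}$ provides a levelwise isomorphism between $\cal{D}_{\vlat{E}}^\sim$ and $\cal{I}$ which commutes with the connecting maps by the definition of $p_{v,u}$.  Proposition \ref{Prop:  Morphisms between projective limits}~(ii) then yields a lattice isomorphism from $\vlat{F}$ onto $\proj \cal{I}$ commuting with the projections at each level.  Composing this with $T$ and defining $p_u \defeq T_u \circ e_u^\sim :\orderdual{\vlat{E}}\to \vlat{M}(K_u)$ shows that $\cal{S} = (\orderdual{\vlat{E}},(p_u)_{0<u\in\vlat{E}})$ is the inverse limit of $\cal{I}$ in $\nvlc$.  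There is no real obstacle here; the only point demanding any care is tracking the identifications carefully and verifying that each $p_u$ is a normal lattice homomorphism, which is immediate from Theorem \ref{Thm:  Adjoints of interval preserving vs lattice homomorphisms}~(i) together with the isomorphism property of $T_u$.
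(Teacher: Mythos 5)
Your proposal is correct and follows exactly the route the paper takes: its proof of this corollary is the one-line observation that it "follows immediately from Theorems \ref{Thm:  Structure theorem} and \ref{Thm:  Dual of ind sys in VLIC is proj of duals in NVL}, and the Riesz Representation Theorem." You have simply spelled out the transport of structure along the isomorphisms $\orderdual{\cont(K_u)}\cong\vlat{M}(K_u)$ that the paper leaves implicit.
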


\begin{proof}
The result follows immediately from Theorems \ref{Thm:  Structure theorem} and \ref{Thm:  Dual of ind sys in VLIC is proj of duals in NVL}, and the Riesz Representation Theorem.
\end{proof}

In order to obtain a converse of Corollary \ref{Cor:  Structure of duals ito spaces of measures} we require a more detailed description of the interval preserving normal lattice homomorphisms $e_{u,v}:\cont(K_u)\to \cont(K_v)$ in Theorem \ref{Thm:  Structure theorem}.  Let $X$ and $Y$ be topological spaces and $p: X\to Y$ a continuous function.  Recall from \cite[p.~20]{Bilokopytov2021} that $p$ is \emph{almost open} if for every non-empty open subset $U$ of $X$, $\text{int}\left( \overline{p\left[ U \right]} \right) \neq \emptyset$.  It is clear that all open maps are almost open and thus every homeomorphism is almost open.

\begin{prop}\label{Prop:  Chacterization of injective interval preserving lattice homomorphisms on C(K)}
Let $K$ and $L$ be compact Hausdorff spaces and $T:\cont(K)\to\cont(L)$ a positive linear map.  $T$ is a lattice homomorphism if and only if there exist a unique $0<w\in\cont(L)$ and a unique continuous function $p:\cozeroset{w}\to K$ so that
\begin{eqnarray}
T(u)(x) = \left\{ \begin{array}{lll}
w(x)u(p(x)) & \text{if} & x\in \cozeroset{w} \bigskip \\
0 & \text{if} & x\in\zeroset{w} \\
\end{array}
\right.\label{EQ:  Prop-Chacterization of injective interval preserving lattice homomorphisms on C(K)}
\end{eqnarray}
for all $u\in\cont(K)$.  In particular, $w=T(\onefunction_K)$.

Assume that $T$ is a lattice homomorphism.  Then the following statements are true.\begin{enumerate}
    \item[(i)] $T$ is order continuous if and only if $p$ is almost open.
    \item[(ii)] $T$ is injective if and only if $p[\cozeroset{w}]$ is dense in $K$.
    \item[(iii)] $T$ is interval preserving if and only if $p[\cozeroset{w}]$ is $\cont^\ast$-embedded in $K$ and $p$ is a homeomorphism onto $p[\cozeroset{w}]$.
\end{enumerate}
\end{prop}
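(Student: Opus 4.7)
The plan is to first establish the weighted-composition representation, then derive (i)--(iii) as properties of the pair $(w,p)$. For the representation, set $w \defeq T(\onefunction_K)$, which is forced by evaluating the formula at $\onefunction_K$. For each $x \in \cozeroset{w}$ the functional $\delta_x \circ T: \cont(K) \to \R$ is a positive lattice homomorphism with $(\delta_x \circ T)(\onefunction_K) = w(x) > 0$; since the non-zero positive lattice homomorphisms on $\cont(K)$ are exactly the strictly positive multiples of point evaluations (a consequence of the Kakutani representation theorem, or equivalently the fact that maximal lattice ideals of $\cont(K)$ are the evaluation kernels), we obtain a unique $p(x) \in K$ with $\delta_x \circ T = w(x)\,\delta_{p(x)}$. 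Continuity of $p$ on $\cozeroset{w}$ follows since $x_\alpha \to x$ in $\cozeroset{w}$ forces $u(p(x_\alpha)) = T(u)(x_\alpha)/w(x_\alpha) \to u(p(x))$ for every $u \in \cont(K)$, and $\cont(K)$ separates points of $K$. Uniqueness of $w$ and $p$, and the converse implication, are routine.

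Statement (ii) follows because $T(u)=0$ iff $u$ vanishes on $p[\cozeroset{w}]$, which by Urysohn's lemma forces $u=0$ iff $p[\cozeroset{w}]$ is dense in $K$. For (i), I argue both directions by contraposition. If $p$ is not almost open, choose non-empty open $U \subseteq \cozeroset{w}$ with $\overline{p[U]}$ of empty interior, and let $A$ consist of all $u \in [0,\onefunction_K]$ with $u \equiv 1$ on $\overline{p[U]}$; since $\overline{p[U]}$ has empty interior, $\inf A = 0$ in $\cont(K)$, but by the representation $T(u) \geq w \cdot \onefunction_U$ on $U$ for every $u \in A$, so $\inf T[A] \neq 0$ in $\cont(L)$. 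Conversely, assume $p$ is almost open and $A \downarrow 0$ in $\cont(K)$, and suppose $0 \leq g \leq T(u)$ for all $u \in A$; then $g$ vanishes on $\zeroset{w}$, and if $g > 0$ somewhere on $\cozeroset{w}$, a non-empty open $V \subseteq \cozeroset{w}$ and a constant $c > 0$ satisfy $u \geq c$ on $p[V]$ for all $u \in A$. Almost-openness then gives a non-empty open $W \subseteq \operatorname{int}\overline{p[V]}$ on which $u \geq c$ for all $u \in A$ by continuity, contradicting $A \downarrow 0$ in $\cont(K)$.

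The main obstacle is (iii). For the converse direction, assume $p$ is a homeomorphism onto the $\cont^\ast$-embedded subset $p[\cozeroset{w}]$ of $K$. Given $0 \leq u_0 \in \cont(K)$ and $0 \leq v \leq T(u_0)$ in $\cont(L)$, the map $\tilde{u}(p(x)) \defeq v(x)/w(x)$ on $p[\cozeroset{w}]$ is well-defined, bounded, and continuous (by continuity of $p^{-1}$ on $p[\cozeroset{w}]$). By $\cont^\ast$-embedding it extends to some $\tilde{u}' \in \cont(K)$, and $u \defeq (0 \vee \tilde{u}') \wedge u_0 \in [0,u_0]$ satisfies $T(u) = v$ since $u$ agrees with $\tilde{u}$ on $p[\cozeroset{w}]$ and the representation pushes the lattice operations through pointwise at each $x \in \cozeroset{w}$.

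For the forward direction of (iii), assume $T$ is interval preserving. Given $f \in \cont^\ast(p[\cozeroset{w}])$ with $0 \leq f \leq \onefunction$, define $v(x) \defeq w(x)\,f(p(x))$ on $\cozeroset{w}$ and $v(x) \defeq 0$ on $\zeroset{w}$; continuity of $v$ on all of $L$, including across the boundary of $\cozeroset{w}$, follows because $w$ vanishes there while $f$ is uniformly bounded. Then $0 \leq v \leq w = T(\onefunction_K)$, so the interval preserving hypothesis produces $u \in [0,\onefunction_K]$ with $T(u) = v$, and the representation forces $u \circ p = f$ on $\cozeroset{w}$, so $u$ extends $f$ from $p[\cozeroset{w}]$ to $K$. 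The same test-function strategy, applied to extensions of Urysohn functions separating two putatively coincident distinct preimages, gives injectivity of $p$, while applying it to extensions of functions separating a point $y \in p[\cozeroset{w}]$ from closed sets $F \subseteq p[\cozeroset{w}]$ not containing $y$ yields continuity of $p^{-1}$. The hardest step, and the technical crux of the argument, is verifying continuity of $v$ at the boundary of $\cozeroset{w}$: this is precisely where boundedness of $f$ -- and hence the choice of $\cont^\ast$- rather than $\cont$-embedding -- is essential.
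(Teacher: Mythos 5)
Your proposal is correct and, for the representation formula, part (ii), and part (iii), it follows essentially the same route as the paper: the paper cites the weighted-composition representation from the literature rather than rederiving it via the point functionals $\delta_x\circ T$, and its proof of (iii) uses exactly your test functions ($v=w\cdot(f\circ p)$ extended by zero, with continuity at $\partial\cozeroset{w}$ coming from $0\le v\le \|f\|_\infty w$, and truncation $(0\vee \tilde u')\wedge u_0$ for the converse). The genuine divergence is in (i): the paper simply cites \cite[Theorem 4.4]{vanImhoff2018} and \cite[Theorem 7.1 (iii)]{Bilokopytov2021}, whereas you give a self-contained two-sided contrapositive argument. Your argument for (i) is sound, but in both directions you should make explicit the step that converts a pointwise lower bound on an open set into a nonzero \emph{continuous} lower bound, since infima in $\cont(K)$ and $\cont(L)$ are not computed pointwise: e.g.\ when $p$ is not almost open, the conclusion $\inf T[A]\neq 0$ needs a Urysohn function $0<g\le w$ supported in $U$ rather than the discontinuous $w\cdot\onefunction_U$, and symmetrically the final contradiction with $A\downarrow 0$ needs a nonzero continuous function supported in $W$ and bounded by $c$. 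Finally, in the forward direction of (iii) your treatment of injectivity of $p$ and continuity of $p^{-1}$ is only a sketch (and the phrase about ``extensions of Urysohn functions separating preimages'' is garbled --- the test functions live on $L$, dominated by $w$, and are pulled back through $T$ by interval preservation rather than extended); the paper spells these out with an explicit net argument for $p^{-1}$, and your write-up would need the same level of detail to be complete.
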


\begin{proof}
The first part of the result is well known, see for instance \cite[Theorem 4.25]{AbramovichAliprantis2002}.  Now suppose that $T$ is a lattice homomorphism.  The statement (i) follows from \cite[Theorem 4.4]{vanImhoff2018}, or, from \cite[Theorem 7.1 (iii)]{Bilokopytov2021}.

We prove (ii).  Assume that $p[\cozeroset{w}]$ is dense in $K$.  Let $u\in\cont(K)$ satisfy $T(u)=0$.  Then $w(x)u(p(x))=0$ for all $x\in \cozeroset{w}$.  Hence $u(z)=0$ for all $z\in p[\cozeroset{w}]$.  Since $p[\cozeroset{w}]$ is dense in $K$ it follows that $u=0$.  Thus $T$ is injective.  Conversely, suppose that $p[\cozeroset{w}]$ is not dense in $K$.  Then there exists $0<u\in\cont(K)$ so that $u(z)=0$ for all $z\in p[\cozeroset{w}]$; that is, $u(p(x))=0$ for all $x\in \cozeroset{w}$.  Hence $T(u)(x) = w(x)u(p(x))=0$ for all $x\in \cozeroset{w}$.  By definition $T(u)(x)=0$ for all $x\in \zeroset{w}$ so that $T(u)=0$.  Therefore $T$ is not injective.  Thus (ii) is proved.

Lastly we verify (iii).  Suppose that $T$ is interval preserving.  We first show that $p[\cozeroset{w}]$ is $\cont^\ast$-embedded in $K$. Consider $0\leq f\in \contb(p[\cozeroset{w}])$.  We must show that there exists a function $g\in\cont(K)$ so that $g(z)=f(z)$ for all $z\in p[\cozeroset{w}]$.  We may assume that $f\leq \onefunction_{p[\cozeroset{w}]}$.  Define $v:L\to \R$ by setting
\[
v(x)\defeq \left\{ \begin{array}{lll}
w(x)f(p(x)) & \text{if} & x\in \cozeroset{w} \bigskip \\
0 & \text{if} & x\in\zeroset{w} \\
\end{array}
\right.
\]
for every $x\in K$.  It is clear that $v$ is continuous on $\cozeroset{w}$ and on the interior of $\zeroset{w}$.  For every other point $x\in K$, continuity of $v$ follows from the inequality $0\leq v \leq w$.  From this last inequality and the fact that $T$ is interval preserving it follows that there exists $0\leq g\leq \onefunction_K$ so that $T(g)=v$.  If $x\in p[\cozeroset{w}]$ then $w(x)f(p(x)) = v(x) = T(g)(x) = w(x)g(p(x))$ so that $f(p(x))=g(p(x))$; that is, $g(z)=f(z)$ for all $z\in p[\cozeroset{w}]$.

Next we show that $p$ is a homeomorphism onto $p[\cozeroset{w}]$.  First we show that $p$ is injective.  Consider distinct $x_0,x_1\in \cozeroset{w}$ and suppose that $p(x_0)=p(x_1)$.  There exists $v \in \cont(L)$ with $0 < v \leq w = T(\onefunction_K)$ such that $v(x_0)=0$ and $v(x_1)>0$.  Because $T$ is interval preserving there exists $0<u\leq \onefunction_K$ in $\cont(K)$ so that $T(u)=v$.  Then $u(p(x_0))=0$ and $u(p(x_1))>0$, contradicting the assumption that $p(x_0)=p(x_1)$.  Therefore $p$ is injective.

It remains to verify that $p^{-1}$ is continuous.  Let $(x_i)$ be a net in $\cozeroset{w}$ and $x\in\cozeroset{w}$ so that $(p(x_i))$ converges to $p(x)$ in $K$.  Suppose that $(x_i)$ does not converge to $x$.  Passing to a subnet of $(x_i)$ if necessary, we obtain a neighbourhood $V$ of $x$ so that $x_i\notin V$ for all $i$.  Therefore there exists a function $0<v\leq w$ in $\cont(L)$ so that $v(x)>0$ and $v(x_i)=0$ for all $i$.  Because $T$ is interval preserving there exists a function $u\in\cont(K)$ so that $T(u)=v$.  In particular, $w(x)u(p(x))=v(x)>0$ so that $u(p(x))>0$, but $w(x_i)u(p(x_i))=v(x_i)=0$ so that $u(p(x_i))=0$ for all $i$.  Therefore $(u(p(x_i)))$ does not converge to $u(p(x))$, contradicting the continuity of $u$.  Hence $(x_i)$ converges to $x$ so that $p^{-1}$ is continuous.

Conversely, suppose that $p$ is a homeomorphism onto $p[\cozeroset{w}]$, and that $p[\cozeroset{w}]$ is $\cont^\ast$-embedded in $K$.  Let $0<u\in\cont(K)$ and $0\leq v\leq T(u)$ in $\cont(L)$.  Define $f:p[\cozeroset{w}]\to\R$ by setting
\[
f(z)\defeq \frac{1}{w(p^{-1}(z))}v(p^{-1}(z)),~~ z\in p[\cozeroset{w}].
\]
Because $p^{-1}:p[\cozeroset{w}] \to \cozeroset{w}$ is continuous, $f$ is continuous.  Furthermore, $0\leq f(z)\leq u(z)$ for all $z\in p[\cozeroset{w}]$.  Therefore $f$ is a bounded continuous function on $p[\cozeroset{w}]$.  By assumption there exists a continuous function $g:K\to \R$ so that $g(z)=f(z)$ for all $z\in p[\cozeroset{w}]$.  Since $0\leq f\leq u$ on $p[\cozeroset{w}]$, the function $g$ may to chosen so that $0\leq g\leq u$.  For $x\in\cozeroset{w}$ we have
\[
T(g)(x)=w(x)g(p(x))=w(x)f(p(x))=\frac{w(x)v(x)}{w(x)}=v(x),
\]
and for $x\in\zeroset{w}$ we have $v(x)=0=T(g)(x)$.  Therefore $T(g)=v$ so that $T$ is interval preserving.
\end{proof}

\begin{thm}\label{Thm:  Structure theorem for order duals}
Let $\vlat{E}$ be a vector lattice.  The following statements are equivalent. \begin{enumerate}
    \item[(i)] There exists a relatively uniformly complete Archimedean vector lattice $\vlat{F}$ so that $\vlat{E}$ is lattice isomorphic to $\vlat{F}^\sim$.
    \item[(ii)] There exists an inverse system $\cal{I}\defeq \left((\vlat{M}(K_\alpha))_{\alpha \in I},(p_{\beta,\alpha})_{\beta \scc \alpha}\right)$ in $\nvlic$, with each $K_\alpha$ a compact Hausdorff space, such that the following holds. \begin{enumerate}
            \item[(a)] For each $\beta \scc \alpha$ in $I$ there exist a function $w\in \cont(K_\beta)^+$ and a homeomorphism $t:\cozeroset{w}\to t[\cozeroset{w}]\subseteq K_\alpha$ onto a dense $\cont^\star$-embedded subspace of $K_\alpha$ so that for every $\mu\in \vlat{M}(K_\beta)$,
                \[
                p_{\beta , \alpha}(\mu)(A) = \int_{p^{-1}[A]} w \thinspace d\mu,~~ A\in\borel{K_\alpha}.
                \]
            \item[(b)] For every $\alpha\in I$ there exists a normal lattice morphism ${p_\alpha :\vlat{E}\to\vlat{M}(K_\alpha)}$ such that $\proj{\cal{I}} = \left( \vlat{E},(p_\alpha)_{\alpha \in I} \right)$ in $\nvlc$.
        \end{enumerate}
\end{enumerate}
\end{thm}

\begin{proof}[Proof that (i) implies (ii)]
By Theorem \ref{Thm:  Structure theorem} there exist a direct system \linebreak $\cal{D} \defeq \left(  (\cont(K_\alpha))_{\alpha\in I}, (e_{\alpha,\beta})_{\alpha \precc \beta} \right)$ in $\nvlic$, with each $K_\alpha$ a compact Hausdorff space, and interval preserving normal lattice homomorphisms ${e_\alpha: \cont(K_\alpha)\to \vlat{F}}$ so that $\cal{S}\defeq \left(\vlat{F},(e_\alpha)_{\alpha\in I}\right)$ is the direct limit of $\cal{D}$ in $\nvlic$.  Note that $e_{\alpha,\beta}$ is injective for all $\alpha \precc \beta$ in $I$.  By Theorem \ref{Thm:  Dual of ind sys in VLIC is proj of duals in NVL} and the Riesz Representation Theorem \cite[Theorem 18.4.1]{Semadeni1971}, $\orderdual{\cal{S}} \defeq \left( \vlat{E},(e_\alpha^\sim)_{\alpha \in I}\right)$ is the inverse limit of the inverse system $\orderdual{\cal{D}} \defeq \left(\vlat{M}(K_\alpha),(e_{\alpha,\beta}^\sim)_{\alpha \precc \beta} \right)$ in $\nvlc$.  Thus the claim in (b) holds.

Fix $\beta \scc \alpha$ in $I$.  We show that $e_{\alpha,\beta}^\sim$ is of the from given in (a).  By Proposition \ref{Prop:  Chacterization of injective interval preserving lattice homomorphisms on C(K)} there exist $w\in \cont(K_\beta)^+$ and a homeomorphism $t:\cozeroset{w}\to t[\cozeroset{w}]\subseteq K_\alpha$ onto a dense $\cont^\star$-embedded subspace of $K_\alpha$ so that
\[
 e_{\alpha,\beta}(u) (x) = \left\{ \begin{array}{lll}
w(x)u(t(x)) & \text{if} & x\in \cozeroset{w} \bigskip \\
0 & \text{if} & x\in\zeroset{w} \\
\end{array}
\right.
\]
for all $u\in \cont(K_\alpha)$.  Let $T:\cont(K_\alpha)\to \contb(\cozeroset{w})$ and $M_w:\contb(\cozeroset{w})\to \cont(K_\beta)$ be given by $T(u)=u\circ t$ and $M_w (v) = wv$ for all $u\in\cont(K_\alpha)$ and $v\in \contb(\cozeroset{w})$, with $wv$ defined as identically zero outside $\cozeroset{w}$.  Then $T$ and $M_w$ are positive operators and $e_{\alpha , \beta} = M_w\circ T$; hence $e_{\alpha,\beta}^\sim = T^\sim\circ M_w^\sim$.  It follows from \cite[Theorems 3.6.1 \& 9.1.1]{Bogachev2007} that $T^\sim( \mu)(A) = \mu(t^{-1}[A])$ for every $\mu\in \vlat{M}(\cozeroset{w})$ and $A\in\borel{K_\alpha}$.  The Riesz Representation Theorem shows that, for each $\nu \in \vlat{M}(K_\beta)$ and every Borel set $B$ in $\cozeroset{w}$,
\[
M_w^\sim(\nu)(B) = \int_Bw \thinspace d\nu.
\]
Hence for $\mu\in \vlat{M}(K_\beta)$ and $A\in\borel{K_\alpha}$,
\[
e_{\alpha,\beta}^\sim (\mu) (A) = \int_{t^{-1}[A]} w \thinspace d\mu
\]
as claimed.
\end{proof}

\begin{proof}[Proof that (ii) implies (i)]
Fix $\beta \scc \alpha$ in $I$ and consider the function $w\in \cont(K_\beta)^+$ and  the homeomorphism $t:\cozeroset{w}\to t[\cozeroset{w}]\subseteq K_\alpha$ given in (b).  Define the map $e_{\alpha,\beta}:\cont(K_\alpha)\to \cont(K_\beta)$ as
\[
e_{\alpha,\beta}(u)(x) = \left\{ \begin{array}{lll}
w(x)u(t(x)) & \text{if} & x\in \cozeroset{w} \bigskip \\
0 & \text{if} & x\in\zeroset{w} \\
\end{array}
\right.
\]
We show that $\cal{D}\defeq \left((\cont(K_\alpha))_{\alpha\in I},(e_{\alpha,\beta})_{\alpha \precc \beta}\right)$ is a direct system in $\nvlic$.

It follows by Proposition \ref{Prop:  Chacterization of injective interval preserving lattice homomorphisms on C(K)} that each $e_{\alpha,\beta}$ is an injective interval preserving normal lattice homomorphism.  It remains to show that $e_{\alpha,\gamma} = e_{\beta,\gamma}\circ e_{\alpha,\beta}$ for all $\alpha\precc \beta \precc \gamma$ in $I$.  An argument similar to that in the proof that (i) implies (ii) shows that $e_{\alpha,\beta}^\sim = p_{\beta,\alpha}$ for all $\alpha \precc\beta$; hence $e_{\alpha,\beta}^{\sim\sim} = p_{\beta,\alpha}^\sim$.  By Proposition \ref{Prop:  Dual system of projective system is inductive system}, $\cal{I}^\sim \defeq \left( (\vlat{M}(K_\alpha)^\sim)_{\alpha\in I}, (p^\sim_{\beta , \alpha})_{\beta\scc \alpha}\right)$ is a direct system in $\nvlic$ and therefore $e_{\alpha,\gamma}^{\sim\sim}=e_{\beta,\gamma}^{\sim\sim}\circ e_{\alpha,\beta}^{\sim\sim}$ for all $\alpha \precc \beta \precc \gamma$ in $I$.  Since $\cont(K_\alpha)$ has a separating order dual for every $\alpha\in I$, it follows that $e_{\alpha,\gamma}=e_{\beta,\gamma}\circ e_{\alpha,\beta}$.  Hence $\cal{D}$ is a direct system in $\nvlic$.

Since each $e_{\alpha,\beta}$ is injective, $\ind{\cal{D}}\defeq \left(\vlat{F},(e_\alpha)_{\alpha\in I}\right)$ exists in $\nvlic$ by Theorem \ref{Thm:  Existence of Inductive Limits in NVLI}.  Since $\cont(K_\alpha)$ is Archimedean and relatively uniformly complete for each $\alpha\in I$ it follows from Theorem \ref{Thm:  Inductive Limit Permanence} (i) and (v) that $\vlat{F}$ is also Archimedean and relatively uniformly complete.  Because $e_{\alpha,\beta}^\sim = p_{\beta,\alpha}$ for all $\alpha\precc\beta$ in $I$, $\orderdual{\cal{D}}=\cal{I}$.  Therefore, by Theorem \ref{Thm:  Dual of ind sys in VLIC is proj of duals in NVL}, there exists a lattice isomorphism $T: \vlat{F}^\sim \to \vlat{E}$ such that the diagram
\[
\begin{tikzcd}[cramped]
\vlat{F}^\sim \arrow[rd, "e^\sim_\alpha"'] \arrow[rr, "T"] & & \vlat{E} \arrow[dl, "p_\alpha"]\\
& \vlat{M}(K_\alpha)
\end{tikzcd}
\]
commutes for all $\alpha\in I$.  This completes the proof.
\end{proof}

\bibliographystyle{amsplain}
\bibliography{Limitsofvectorlattices}

\end{document}